\tikzset{int/.style={circle,draw,fill,inner sep=1.4pt,outer sep=0pt},dot/.style={circle,fill,inner sep=0pt,outer sep=0pt}}
\numberwithin{equation}{section}
\theoremstyle{plain}
  \newtheorem{thm}{Theorem}[section]
  \newtheorem{prop}[thm]{Proposition}
  \newtheorem{cor}[thm]{Corollary}
  \newtheorem{conjecture}{Conjecture}
  \newtheorem{lemma}[thm]{Lemma}
\theoremstyle{definition}
  \newtheorem{defi}[thm]{Definition}
 \newtheorem*{thm*}{Theorem}
\newcommand{\K}{{\mathbb{K}}}
\newcommand{\Z}{{\mathbb{Z}}}
\newcommand{\N}{{\mathbb{N}}}
\newcommand{\mA}{\mathrm{A}}
\newcommand{\mB}{\mathrm{B}}
\newcommand{\mC}{\mathrm{C}}
\newcommand{\mV}{\mathrm{V}}
\newcommand{\mE}{\mathrm{E}}
\newcommand{\mF}{\mathrm{F}}
\newcommand{\mH}{\mathrm{H}}
\newcommand{\mP}{\mathrm{P}}
\newcommand{\Id}{\mathrm{Id}}
\newcommand{\CC}{\mathrm{C}}
\newcommand{\DD}{\mathrm{D}}
\newcommand{\fGC}{\mathrm{fGC}}
\newcommand{\GC}{\mathrm{GC}}
\newcommand{\HGC}{\mathrm{HGC}}
\newcommand{\fHGC}{\mathrm{fHGC}}
\newcommand{\fHGCc}{\mathrm{fHGCc}}
\newcommand{\fHGCd}{\mathrm{fHGCd}}
\newcommand{\HC}{\mathrm{UR}}
\newcommand{\HL}{\mathrm{HL}}
\newcommand{\fGCc}{\mathrm{fGCc}}
\newcommand{\OC}{\fGC_1^{\geq 1}[-3]}
\newcommand{\OCC}{\fGC_1^{\dagger}[-3]}
\newcommand{\gra}{\mathit{gra}}
\newcommand{\grt}{\mathfrak {grt}}
\DeclareMathOperator{\sgn}{sgn}
\newcommand{\mxto}[1]{{\overset{#1}{\longmapsto}}}
\begin{document}
\title{Differentials on graph complexes III - deleting a vertex}

\author{Marko \v Zivkovi\' c}
\address{Institute of Mathematics\\ University of Zurich\\ Winterthurerstrasse 190 \\ 8057 Zurich, Switzerland}
\email{the.zivac@gmail.com}


\thanks{The author has been partially supported by the Swiss National Science Foundation, grant 200021 150012 and SwissMAP NCCR.}

\begin{abstract}
We prove that the hairy graph complex with the extra differential is almost acyclic. This justifies the construction of hairy graph cohomology classes by the waterfall mechanism. The main idea used in the paper is a new differential, deleting a vertex. We hope that the new differential can have further applications in the study of graph cohomology.
\end{abstract}

\maketitle

\section{Introduction}

Generally speaking, graph complexes are graded vector spaces of formal linear combinations of isomorphism classes of some kind of graphs, with the standard differential defined by vertex splitting (or, dually, edge contraction). The various graph cohomology theories are arguably some of the most fascinating objects in homological algebra. Each of graph complexes play a certain role in a subfield of homological algebra or algebraic topology. They have an elementary and simple combinatorial definition, yet we know almost nothing about what their cohomology actually is.

In this paper we consider two kinds of graph complexes. The most basic graph complexes are introduced by Maxim Kontsevich. These complexes come in versions $\GC_n$, where $n$ ranges over integers (see \ref{ss:GC} for the definition). Physically, $\GC_n$ is formed by vacuum Feynman diagrams of a topological field theory in dimension $n$. Alternatively, $\GC_n$ governs the deformation theory of $E_n$ operads in algebraic topology \cite{grt}. Some examples of graphs are:
$$
\begin{tikzpicture}[baseline=-1ex,scale=.7]
\node[int] (a) at (0,0) {};
\node[int] (b) at (1,0) {};
\draw (a) edge (b);
\end{tikzpicture}
\;,\quad
\begin{tikzpicture}[baseline=-1ex,scale=.7]
\node[int] (a) at (0,0) {};
\node[int] (b) at (1,0) {};
\draw (a) edge (b);
\draw (a) edge[bend left=30] (b);
\draw (a) edge[bend right=30] (b);
\end{tikzpicture}
\;,\quad
\begin{tikzpicture}[baseline=-1ex,scale=.7]
\node[int] (a) at (0,0) {};
\node[int] (b) at (90:1) {};
\node[int] (c) at (210:1) {};
\node[int] (d) at (330:1) {};
\draw (a) edge (b);
\draw (a) edge (c);
\draw (a) edge (d);
\draw (b) edge (c);
\draw (b) edge (d);
\draw (c) edge (d);
\end{tikzpicture}
\;.
$$

The other kind of graph complexes we consider are \emph{hairy graph complexes}. These are complexes spanned by graphs with external legs (``hairs''). These complexes come in versions $\HGC_{m,n}$ where $m$ and $n$ range over integers (see \ref{ss:HGC} for the definition). They compute the rational homotopy of the spaces of embeddings of disks modulo immersions, fixed at the boundary $ \overline{\mathrm{Emb}_\partial}(\mathbb{D}^m,\mathbb{D}^n)$, provided that $n-m\geq 3$, see \cite{FTW}. Some examples of hairy graphs are:
$$
\begin{tikzpicture}[baseline=-1ex,scale=.7]
\node[int] (a) at (0,0) {};
\draw (a) edge (90:.3);
\draw (a) edge (120+90:.3);
\draw (a) edge (240+90:.3);
\end{tikzpicture}
\;,\quad
\begin{tikzpicture}[baseline=-2ex,scale=.7]
\node[int] (v1) at (-1,0){};
\node[int] (v2) at (0,.8){};
\node[int] (v3) at (1,0){};
\node[int] (v4) at (0,-.8){};
\draw (v1)  edge (v2);
\draw (v1)  edge (v4);
\draw (v2)  edge (v3);
\draw (v3)  edge (v4);
\draw (v4)  edge (v2);
\draw (v1)  edge (-1.3,0);
\draw (v3)  edge (1.3,0);
\end{tikzpicture}
\;,\quad
\begin{tikzpicture}[baseline=-1ex,scale=.7]
\node[int] (a) at (0,0) {};
\node[int] (b) at (90:1) {};
\node[int] (c) at (210:1) {};
\node[int] (d) at (330:1) {};
\draw (b) edge (0,1.3);
\draw (a) edge (b);
\draw (a) edge (c);
\draw (a) edge (d);
\draw (b) edge (c);
\draw (b) edge (d);
\draw (c) edge (d);
\end{tikzpicture}
\;.
$$

Both kind of complexes split into the product of subcomplexes with fixed loop order and hairy graph complex splits into the product of subcomplexes with fixed number of hairs, cf.\ \eqref{eq:split1} and \eqref{eq:split3}. Furthermore, the complexes $\GC_n$ and $\GC_{n'}$, respectively $\HGC_{m,n}$ and $\HGC_{m',n'}$, are isomorphic up to some unimportant degree shifts if $m\equiv m' \text{ mod } 2$ and 
$n\equiv n' \text{ mod } 2$. Hence it suffices to understand 2 possible cases of $\GC_n$ and 4 possible cases of $\HGC_{m,n}$ according to parity of $m$ and $n$.

The long standing open problem we are attacking in this paper is the following.

\smallskip

{\bf Open Problem:} Compute the cohomologies $H\left(\GC_n\right)$ and $H\left(\HGC_{m,n}\right)$.

\smallskip

Very little is known about those homologies, and a very few tools are available to compute them. The most non-trivial result is that $H^0(\GC_2)$ is isomorphic to Grothendiech-Teichm\"uller Lie algebra $\grt_1$, shown by Willwacher in \cite{grt}. There are also some results connecting hairy and non-hairy complexes, see \cite{vogel} and \cite{TW}.

This paper is the third in the series of papers \cite{DGC1}, \cite{DGC2}, dealing with extra differentials on graph complexes. The basic idea is to deform the standard differential $\delta$ to $\delta+\delta^{extra}$ making the complex (almost) acyclic. The extra peace does not fix loop order or the number of hairs, and the spectral sequence can be found such that the standard differential is the first differential. Therefore, on the first page of the spectral sequence we see the standard cohomology we are interested in, and because the whole differential is acyclic, classes of it cancel with each other on further pages, cf.\ Table \ref{tbl:evencanceling2}. Therefore, classes come in pairs.

In the first paper of the series \cite{DGC1} the deformed differential was introduced for non-hairy graphs $\GC_n$ for both parity of $n$. In the second paper of the series \cite{DGC2} the deformed differential was introduced for hairy graphs $\HGC_{m,n}$ for even $m$ and both parities of $n$, while for odd $m$ the existence of the suitable extra differential $\Delta$ was just conjectured. We call the spectral sequence arising from this differential the \emph{first}.

\begin{conjecture}[{\cite[Conjecture 1]{DGC2}}]
\label{conj:main}
$H\left(\HGC_{-1,n},\delta+\Delta\right)=0$ for all $n$. 
\end{conjecture}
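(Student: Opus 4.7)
The plan is to prove acyclicity by \emph{enlarging} the differential with an auxiliary ``delete a vertex'' differential $\nabla$ (the new tool promised by the paper's title), so that the total operator $\delta+\Delta+\nabla$ is manifestly acyclic on $\HGC_{-1,n}$, and then to extract the vanishing of $H(\HGC_{-1,n},\delta+\Delta)$ via a spectral sequence whose first page is exactly this cohomology.

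First I would define $\nabla$: on a hairy graph $\Gamma$, set $\nabla\Gamma=\sum_v\nabla_v\Gamma$, where $v$ ranges over the internal vertices and $\nabla_v\Gamma$ is obtained by deleting $v$ and recording each of its incident half-edges as a new hair on the opposite endpoint (discarding the hairs that were already attached to $v$, or redistributing them in some canonical way). The signs are dictated by the odd-$m$ orientation conventions: one orders the deleted half-edges and compares with the ordering on the resulting new hairs. One then has to verify the three identities $\nabla^2=0$, $[\delta,\nabla]=0$, and $[\Delta,\nabla]=0$, so that $(\delta+\Delta+\nabla)^2=0$. The key grading feature is that $\nabla$ strictly lowers the number of internal vertices, while $\delta$ and $\Delta$ do not.

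Next I would prove that $(\HGC_{-1,n},\delta+\Delta+\nabla)$ is acyclic. My preferred route is an explicit contracting homotopy $h$ with $(\delta+\Delta+\nabla)h+h(\delta+\Delta+\nabla)=\Id$, constructed by distinguishing a canonical hair or half-edge on the graph and ``reversing'' the appropriate piece of the three-term differential. Failing a clean direct homotopy, a fallback is a nested spectral sequence in which $\nabla$ is the leading differential on some inner filtration: its cohomology should collapse onto a small model (graphs with a single internal vertex, or a combinatorially trivial quotient), which one can handle by bare hands.

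Finally I would filter $\HGC_{-1,n}$ by the number of internal vertices so that $\nabla$ strictly lowers the filtration degree and $\delta+\Delta$ preserves or raises it. On the first page of the resulting spectral sequence the differential is $\delta+\Delta$, and the abutment is $H(\HGC_{-1,n},\delta+\Delta+\nabla)=0$ by the previous step. Since each fixed-loop-order, fixed-hair-number subcomplex is bounded in internal vertex number (valence bounds from the reduced complex ensure finiteness), the spectral sequence converges, forcing $H(\HGC_{-1,n},\delta+\Delta)=0$.

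The main obstacle will be Step 2: the acyclicity of the three-piece differential. Three simultaneous graph operations --- vertex splitting, hair addition, and vertex deletion --- interact in intricate ways, and the signs for $m=-1$ (where edges and hairs are odd-graded) are the traditional pitfall in this subject. A successful contracting homotopy will likely require a delicate case split according to whether the graph contains a distinguished configuration (e.g.\ a vertex of low valence, an antenna, or a specific hair pattern), together with an inductive argument ensuring that the homotopy does not produce graphs killed by their automorphisms. Getting the compatibility $[\delta,\nabla]+[\Delta,\nabla]=0$ with the correct signs is a non-trivial bookkeeping task that must be handled before any of the spectral sequence work becomes meaningful.
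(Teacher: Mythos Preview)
Your plan has a fatal logical gap in the final spectral-sequence step. Suppose you succeed in making $(\HGC_{-1,n},\delta+\Delta+\nabla)$ acyclic and set up a filtration whose $E_1$-page is $H(\HGC_{-1,n},\delta+\Delta)$. Convergence to zero tells you only that every class on $E_1$ is eventually killed by some higher differential $d_r$; it does \emph{not} tell you that $E_1=0$. What you would have rediscovered is precisely the ``waterfall'' pairing mechanism described in the introduction, not a proof of acyclicity. (Indeed the paper uses exactly this kind of argument in the \emph{other} direction: once $\delta+\Delta$ is known to be acyclic, the spectral sequence on the number of hairs pairs up the classes of $H(\cdot,\delta)$.)

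There is a second, more concrete obstruction. The vertex-number filtration you propose is not respected by the total differential: $\delta$ raises $v$ by one, $\Delta$ preserves $v$, and your $\nabla$ lowers $v$ by one. Neither the increasing nor the decreasing filtration by $v$ is stable under $\delta+\Delta+\nabla$, so the spectral sequence does not exist as stated. Even if you repair this, the associated graded differential on the vertex filtration is $\Delta$ alone, not $\delta+\Delta$. Finally, note that the target statement is not literally true for even $n$: the paper shows $H(\HGC_{-1,0},\delta+\Delta)$ is one-dimensional, generated by the three-hair star $\sigma_3$. Any argument that purports to prove full acyclicity uniformly in $n$ is therefore doomed.

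For comparison, the paper's route is essentially the reverse of yours. It first analyses $(\fHGC_{-1,n},\Delta)$ directly (with an auxiliary ``delete the hairy vertex'' map $D^{(1)}$ acting only between the one-hair and zero-hair pieces), shows this is almost acyclic, \emph{replaces} the hairless part by the image of $\Delta+D^{(1)}$ to kill the residual classes, and only then introduces $\delta$ via the spectral sequence on $v$ whose $d_0$ is $\Delta$ (not $\delta+\Delta$). The vertex-deletion operators $D$, $D^{(1)}$, $D^{(2)}$ reconnect edges to other vertices rather than converting them to hairs, and they are used as auxiliary homotopies on small-hair pieces, not as a third global differential.
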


Furthermore, \cite{grt}, \cite{TW} and \cite {TW2} introduce another deformed differentials on $\HGC_{n,n}$ and $\HGC_{n-1,n}$ making them quasi-isomorphic to $\GC_n$:
\begin{equation*}
 H\left(\HGC_{n,n},D'\right)\cong H\left(\GC_n\right), \quad\quad H\left(\HGC_{n-1,n},D'\right)\cong H\left(\GC_n\right).
\end{equation*}
One checks that all classes that come from $H\left(\GC_n\right)$ live in one-hair part of $H\left(\HGC\right)$. Therefore, the spectral sequence argument and the canceling mechanism can be used for this extra differential for other classes of $H\left(\HGC\right)$, so they come in pairs. We call the spectral sequence arising from $D'$ the \emph{second}. This pairs are different from those that come from the first spectral sequence.

One can start from one-hair classes that come from $H\left(\GC\right)$, find its pair using the first spectral sequence, then find its pair using the second spectral sequence, and so on. This mechanism is called ``waterfall mechanism''. It is depicted in \cite[Figure 1]{DGC2} for even $m$, and still pending in \cite[Figure 2]{DGC2} for odd $m$.

The main purpose of this paper is to prove Conjecture \ref{conj:main} (\cite[Conjecture 1]{DGC2}), making the tentative waterfall mechanism real also in this case.
\begin{thm}
\label{thm:main}
\begin{itemize}\item[]
\item The Complex $\left(\HGC_{-1,1},\delta+\Delta\right)$ is acyclic.
\item The Cohomology $H\left(\HGC_{-1,0},\delta+\Delta\right)$ is one-dimensional, the class being represented by a graph with one vertex and three hairs on it.
\end{itemize}
\end{thm}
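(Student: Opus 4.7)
The plan is to introduce, as the paper's title suggests, a new operator $\nabla$ on (a suitable auxiliary version of) the hairy graph complex that \emph{deletes a vertex}. For a hairy graph $\Gamma$, $\nabla\Gamma$ is the signed sum, over internal vertices $v$, of the graph obtained from $\Gamma$ by removing $v$ and converting each of its incident edges into a hair. Because deleting a vertex easily produces degenerate graphs (tadpoles, multi-edges, extra small components, hairs incident to hairs), I would work in a suitably enlarged complex — plausibly the complex $\fHGCd$ hinted at in the preamble — and afterwards compare with $\HGC_{-1,n}$ by the standard argument that the subcomplex of ``bad'' graphs is acyclic.

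The central algebraic step is then to prove an identity of the form
\begin{equation*}
[\delta+\Delta,\,\nabla] = N,
\end{equation*}
where $N$ is a semisimple operator counting some combinatorial quantity (for instance a linear combination of the number of edges and hairs). The computation is a term-by-term matching: the off-diagonal contributions, where $\nabla$ acts at a vertex different from the one touched by $\delta+\Delta$, cancel in pairs, while the diagonal contributions — where both operators interact with the same vertex — assemble into the counting operator $N$. Such local cancellation arguments are standard in the graph-complex literature, but here the bookkeeping is more delicate than usual because $\Delta$ itself rewires hairs and edges, and because the odd parity $m=-1$ produces extra signs whenever hairs at a single vertex are permuted.

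Once the commutator identity is in place, a Koszul-style argument shows that the cohomology of $\delta+\Delta$ is concentrated on $\ker N$. Since $N$ is a counting operator, $\ker N$ should consist essentially of graphs with a single vertex and no internal edges, i.e.\ a lone vertex carrying only hairs. Because $m=-1$ makes hairs antisymmetric at their incident vertex, only a vertex with an odd number of hairs survives; the standard degree and loop-order conventions built into $\HGC_{-1,n}$ then leave the kernel empty for $n=1$ (giving full acyclicity) and one-dimensional, spanned precisely by the three-hair vertex, for $n=0$.

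The hard part, I expect, is step one: fixing the auxiliary complex and sign conventions so that $\nabla$ is an honest chain operator (in particular $\nabla^2=0$) and so that the commutator with $\delta+\Delta$ is clean. Two adjacent vertices can be deleted in either order, and the convention must make these agree up to sign; more subtly, the interaction of $\nabla$ with $\Delta$ requires enumerating all ways in which a vertex-deletion can interfere with an edge- or hair-modification. Once these combinatorial incidence cases are settled, the descent from $\fHGCd$ back to $\HGC_{-1,n}$ and the final identification of the surviving three-hair class should be routine, if still delicate.
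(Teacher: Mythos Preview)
Your proposed homotopy has a basic degree problem that blocks the whole strategy. In $\HGC_{-1,0}$ the degree is $d=e+1$, so a contracting operator must lower $e$ by exactly one; in $\HGC_{-1,1}$ the degree is $d=v+1-h$, so it must lower $v-h$ by one. Your $\nabla$ (delete a vertex, turn each incident edge into a hair on the other endpoint) changes the degree by $-j$ in the first case and by $-1-j+\ell$ in the second, where $j$ is the number of edges and $\ell$ the number of hairs at the deleted vertex. So $\nabla$ is not even a map of fixed degree, and no identity of the form $[\delta+\Delta,\nabla]=N$ with $N$ a semisimple counting operator can hold. Even granting such an identity, your identification of $\ker N$ is off: for $n=0$ hairs are \emph{symmetric} (the sign representation $\sgn_h$ appears only for $n$ odd in \eqref{def:HfHGCi}), so every star $\sigma_a$ with $a\geq 3$ lies in the zero-edge part, and you would have to explain separately why only $\sigma_3$ survives.

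The paper's ``deleting a vertex'' is a different operator: $D$ (and its hairy cousins $D^{(1)}$, $D^{(2)}$, $D^{(p)}$) removes a vertex and \emph{reconnects its edges to the remaining vertices in all possible ways}, rather than turning them into hairs. The commutators one actually gets are not counting operators: for instance $\delta D - D\delta=\nabla$ on $\fGC_0$ (Proposition~\ref{DelEvenDd}), where $\nabla$ \emph{adds an edge}, and $\delta D^{(1)}-D^{(1)}\delta=\Delta$ on $\mH^1\fHGC_{-1,n}$ (Proposition~\ref{prop:D1}). These identities do not give a one-step Koszul argument; instead they are used to enlarge the complex by a carefully designed hairless piece $\HL_n$, prove that $(\mH^\flat\fHGC,\Delta+D^{(1)})$ is almost acyclic by induction on the number of vertices, and then run a chain of spectral sequences (on $v$, on $h$, and on the number of connected components) through several intermediate complexes $\fHGC^\dagger$, $\fHGC^{\dagger\ddagger}$, $\fHGC^*$ to descend to $\HGC_{-1,n}$. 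For $n=0$ the disconnected case is genuinely nontrivial and requires the explicit morphisms $\pi_f$ of Subsection~\ref{ss:pif} relating the hairy complex to $(\fGC_0^{\geq 1},\tilde\delta)$. The argument is substantially longer and more delicate than a single commutator computation.
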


We prove the theorem by using a new operation called \emph{deleting a vertex}. In non-hairy complex $\GC_n$ we denote it by $D$ and it deletes a vertex and reconnects its edges to other vertices in all possible ways, summed over all vertices. For $n=-1$ $D$ is of degree 1, and under some weak assumptions it is again a differential.

We hope that deleting a vertex $D$ can have further uses. However, in this paper we only get one small extra result in Corollary \ref{cor:even2}, strengthening the result from the first paper in this series about $H\left(\fGC_n\right)$ for even $n$ (cf. \cite[Corollary 4]{DGC1}). Computer calculations imply that the complex $\left(\GC_{-1},D\right)$ is far from acyclic, but there is a hope that $\left(\GC_{-1},\delta+D\right)$ is acyclic. If so, it will lead to a kind of waterfall mechanism in $\GC_n$ for odd $n$.

In the hairy complex we will introduce a sort of deleting a vertex only in certain cases with low number of hairs, needed in this paper. They are probably auxiliary maps with less general importance than $D$.

As shown in Theorem \ref{thm:main} $\left(\HGC_{-1,n},\delta+\Delta\right)$ is acyclic for $n$ odd, while for $n$ even it has one class of cohomology. By the definition, those complexes are spanned by connected graphs. Disconnected version stays acyclic for $n$ odd, but for $n$ even the class in connected version produces a number of non-trivial classes in disconnected version. They are studied in the second claim of Proposition \ref{prop:bHGCc}. It introduces a list of quasi-isomorphisms from some part of non-hairy graph complexes to the disconnected version of the hairy graph complex.

\subsection*{Structure of the paper}
In Section \ref{sec:background} we define notions, recall results needed in this paper and systematize the notation.
Section \ref{s:D} introduces the deleting a vertex $D$ in non-hairy graphs and provides some results about it.
In Section \ref{s:contraints} we introduce some new subcomplexes of hairy graph complex needed later in the paper, while Section \ref{s:Dh} introduces few maps that delete vertices in hairy complex and shows some needed results.
Finally, in sections \ref{s:Hevenodd} and \ref{s:Hoddeven} we prove the first and the second part of Theorem \ref{thm:main}. Appendix \ref{app:groupinv} clarifies the use of complexes with distinguishable vertices. All straightforward (\ref{app:standard}) or technical (\ref{app:technical}) results are moved to the last two sections of the appendix.

\subsection*{Acknowledgements}
I am very grateful to my PhD adviser Thomas Willwacher for reading the draft of this paper and many suggestions. I also thank Anton Khoroshkin for a fruitful discussion.
\bibliographystyle{plain}

\section{Background and definitions}\label{sec:background}
In this section we recall basic notation and several results shown in the literature that will be used in the paper.

\subsection{General notation}

We work over a field $\K$ of characteristic zero. All vector spaces and differential graded vector spaces are assumed to be $\K$-vector spaces. 

Graph complexes as vector spaces are generally defined by the graphs that span them. When we say \emph{a graph} in a graph complex, we only mean the base graph, while any linear combination (or a series) of graphs will be called \emph{an element} of the graph complex.

Let $\left([C],d\right)$ be a graph complex spanned by a set of graphs $C$ and $[D]\subset[C]$ be a subspace spanned by a set of graphs $D\subset C$. If $[D]$ is closed under the differential, $([D],d)$ is a subcomplex.
Let $D^c=S\setminus D$ be the complement of $D$ in $C$. If $[D^c]$ is closed under the differential, $([D^c],d)$ is a subcomplex, and the quotient $([C]/[D^c],d)$ is well defined. In that case, we will identify $[D]$ with $[C]/[D^c]$ and by abuse of notation talk about the complex $([D],d)$, meaning the complex $\left([C]/[D^c],d\right)$. It is the complex similar to $([C],d)$ where all graphs in $D^c$ are identified with $0$.

\subsection{Kontsevich's graph complexes}
\label{ss:GC}
In this subsection we quickly recall the construction of the hairless graph complexes. For more details see \cite{grt}, or for more elementary definition see \cite{eulerchar}.

Consider the set of directed graphs $\gra_{v,e}^{\geq i}$ with $v>0$ distinguishable vertices and $e\geq 0$ distinguishable directed edges (numbered from $1$ to $v$, respectively $e$), all vertices being at least $i$-valent ($i\in\{0,1,2,3\}$), without tadpoles (edges that start and end at the same vertex).

For $n\in\Z$ we define a degree of an element of $\gra_{v,e}^{\geq i}$ to be $d=(v-1)n+(1-n)e$. We may say that the degree of a vertex is $n$ and the degree of an edge is $1-n$. Let
\begin{equation}
V_{v,e}^{\geq i}:=\left\langle\gra_{v,e}^{\geq i}\right\rangle[-(v-1)n-(1-n)e]
\end{equation}
be the vector space of formal series of $\gra_{v,e}^{\geq i}$ with coefficients in $\K$. It is a graded vector space with non-zero term only in degree $d=(v-1)n+(1-n)e$.

Let $S_k$ be the $k$-th symmetric group. There is a natural right action of the group $S_v\times \left(S_e\ltimes S_2^{\times e}\right)$ on $\gra_{v,e}^{\geq i}$, where $S_v$ permutes vertices, $S_e$ permutes edges and $S_2^{\times e}$ changes the direction of edges.
Let $\sgn_v$, $\sgn_e$ and $\sgn_2$ be one-dimensional representations of $S_v$, respectively $S_e$, respectively $S_2$, where the odd permutation reverses the sign. They can be considered as representations of the whole product $S_v\times \left(S_e\ltimes S_2^{\times e}\right)$.

The \emph{full graph complex} is
\begin{equation}
\label{def:fGCi}
 \fGC_n^{\geq i}:=\left\{
\begin{array}{ll}
\prod_{v,e}\left(V_{v,e}^{\geq i}\otimes\sgn_e\right)_{S_v\times \left(S_e\ltimes S_2^{\times e}\right)}
\simeq\prod_{v,e}\left(V_{v,e}^{\geq i}\otimes\sgn_e\right)^{S_v\times \left(S_e\ltimes S_2^{\times e}\right)}
\qquad&\text{for $n$ even,}\\
\prod_{v,e}\left(V_{v,e}^{\geq i}\otimes\sgn_v\otimes\sgn_2^{\otimes e}\right)_{S_v\times \left(S_e\ltimes S_2^{\times e}\right)}
\simeq\prod_{v,e}\left(V_{v,e}^{\geq i}\otimes\sgn_v\otimes\sgn_2^{\otimes e}\right)^{S_v\times \left(S_e\ltimes S_2^{\times e}\right)}
\qquad&\text{for $n$ odd.}
\end{array}
\right.
\end{equation}
Here the group in the subscript means taking the space of coinvariants, and the group in superscript means taking the space of invariants. Because the group is finite, the two spaces are isomorphic to each other, as stated.

The differential $\delta$ on $\fGC_n$ is defined as follows:
\begin{equation}
 \delta(\Gamma):=\sum_{x\in V(\Gamma)}\frac{1}{2}s_x(\Gamma) - a_x(\Gamma),
\end{equation}
where $V(\Gamma)$ is the set of vertices of $\Gamma$, $s_x$ stands for ``splitting of $x$'' and means inserting
\begin{tikzpicture}[scale=.5]
 \node[int] (a) at (0,0) {};
 \node[int] (b) at (1,0) {};
 \draw (a) edge (b);
\end{tikzpicture}
instead of the vertex $x$ and summing over all possible ways of connecting the edges that have been connected to $x$ to the new two vertices, and $a_x$ stands for ``adding an edge at $x$'' and  means adding
\begin{tikzpicture}[scale=.5]
 \node[int] (a) at (0,0) {};
 \node[int] (b) at (1,0) {};
 \draw (a) edge (b);
 \node[above left] at (a) {$\scriptstyle x$};
\end{tikzpicture}
on the vertex $x$. Unless $x$ is an isolated vertex, $a_x$ will cancel two terms of the splitting $s_x$. To precisely define the sign of the resulting graph, we set that, before acting of $S_v\times \left(S_e\ltimes S_2^{e}\right)$, a new vertex and an edge get the next free number and the edge is directed towards the new vertex. If a vertex next to an $N$-fold edge ($N$ edges between same two vertices) is split such that one new vertex gets $k$ edges, and another $N-k$, there is a factor $\binom{N}{k}$, coming from distinguishing edges. One can check that the minimal valence condition $\geq i$ for $i\in\{0,1,2,3\}$ is preserved under the differential, so the complex is well defined. It is also clear that the differential is indeed of degree $1$.

Since the differential does not change the number $e-v$, the full graph complex $\fGC_n^{\geq i}$ splits into the direct product of subcomplexes:
\begin{equation}
\label{eq:splitb}
\left(\fGC_n^{\geq i},\delta\right)=\left(\prod_{b\in\Z}\mB^b\fGC_n^{\geq i},\delta\right),
\end{equation}
where $\mB^b\fGC_n^{\geq i}$ is the part of $\fGC_n^{\geq i}$ spanned by graphs in which $e-v=b$.

For the chosen parity of $n$, the actual number $n$ matters only for the degree shift of the subcomplexes $\mB^b\fGC_n^{\geq i}$ in the product. If we know the cohomology for one $n$, cohomology for another $n$ of the same parity is straightforward. Therefore, in this paper we will only deal with the complexes $\fGC_0^{\geq i}$ and $\fGC_1^{\geq i}$, as representatives for the even and the odd complexes. Note that in $\fGC_0^{\geq i}$ the degree is $d=e$ and in $\fGC_1^{\geq i}$ the degree is $d=v-1$.

Let
\begin{equation}
\label{def:fGCci}
\fGCc_n^{\geq i}\subset\fGC_n^{\geq i}
\end{equation}
be the subcomplex spanned by the connected graphs.

If $i=0$ we will usually omit the superscript $\geq 0$, e.g.\ we consider
\begin{equation}
\label{def:fGC}
\fGC_n:=\fGC_n^{\geq 0}.
\end{equation}
We introduce a shorter notation
\begin{equation}
\label{def:GC}
\GC_n:=\fGCc_n^{\geq 3}
\end{equation}
because that complex is particularly important.

Cohomology of these subcomplexes is related to the cohomology of the full complexes, as shows the following result.
\begin{prop}[{\cite[Proposition 3.4]{grt}}]
\label{prop:Simpl}
\begin{equation*}
 H\left(\fGCc_0\right)=H\left(\fGCc_0^{\geq 2}\right)=H\left(\GC_0\right)\oplus\bigoplus_{j\geq 1}\K[-4j-1],
\end{equation*}
\begin{equation*}
 H\left(\fGCc_1\right)=H\left(\fGCc_1^{\geq 2}\right)=H\left(\GC_1\right)\oplus\bigoplus_{j\geq 0}\K[-4j-2].
\end{equation*}
\end{prop}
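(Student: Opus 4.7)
The plan is to prove both equalities by an inductive valence argument, peeling off low-valence vertices one valence at a time.

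First I would show $H(\fGCc_n) = H(\fGCc_n^{\geq 2})$: the kernel of the quotient map consists of connected graphs with at least one vertex of valence $\leq 1$, and this subcomplex is acyclic via an antenna-contraction homotopy. Any such graph decomposes uniquely as a $\geq 2$-valent core with dangling trees attached; within a tree, the $a_x$ part of $\delta$ freely adjoins a $1$-valent leaf, giving a free polynomial-type structure whose cohomology is concentrated on the core alone. The degenerate single-vertex and single-edge cases must be checked separately but do not survive.

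Next I would use the short exact sequence
\begin{equation*}
 0 \to L \to \fGCc_n^{\geq 2} \to \GC_n \to 0,
\end{equation*}
where $L$ is the subcomplex of graphs containing at least one $2$-valent vertex. A second contracting argument slides $2$-valent vertices along maximal $2$-valent chains and collapses chains adjacent to $\geq 3$-valent vertices, so what remains of $L$ are pure cycles $L_k$, $k \geq 2$. Equivalently, filter $\fGCc_n^{\geq 2}$ by the number of $\geq 3$-valent vertices; the $E_1$ page has only the pure-cycle summand outside of $\GC_n$. The long exact sequence splits because each surviving $L_k$ is a cocycle whose $\delta$ lands in $L_{k+1}$ of opposite rotational parity and hence vanishes.

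A symmetry count gives the list of surviving $L_k$. For $n$ even only $\sgn_e$ contributes: rotation has sign $(-1)^{k-1}$ and reflection through a vertex has sign $(-1)^{(k-1)/2}$, so $L_k$ is nonzero precisely when $k \equiv 1 \pmod 4$; excluding the forbidden tadpole $k=1$ leaves $k = 4j+1$, $j \geq 1$, in degree $d = e = 4j+1$, matching the stated shift $\K[-4j-1]$. For $n$ odd the relevant signs come from $\sgn_v \otimes \sgn_2^{\otimes e}$, with an additional $(-1)^k$ factor in the reflection because all $k$ edge orientations are reversed; this yields $k \equiv 3 \pmod 4$, i.e.\ $k = 4j+3$, $j \geq 0$, in degree $d = v-1 = 4j+2$, matching $\K[-4j-2]$. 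The main obstacle is bookkeeping the signs of both contracting homotopies consistently with the symmetry conventions, and handling the low-valence degenerate cases (isolated vertex, single edge, double edge); these are exactly the places where the index $j$ could shift by one.
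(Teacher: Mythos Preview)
The paper does not prove this statement; it is quoted verbatim from \cite[Proposition 3.4]{grt} without argument. Your outline is essentially the proof given there: first strip $1$-valent vertices by an antenna-contraction homotopy, then collapse $2$-valent chains so that only the core in $\GC_n$ and the pure loop graphs $L_k$ remain, and finally determine which $L_k$ survive by the dihedral symmetry count. Your parity analysis of the rotation and reflection signs is correct and recovers the stated degree shifts.

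One small correction of direction: with the vertex-splitting differential used here, it is $\GC_n=\fGCc_n^{\geq 3}$ that sits as the quotient (equivalently, the graphs with a $2$-valent vertex form the subcomplex), not the other way round; the paper's general convention in \S2.1 about identifying a set of graphs with the quotient by its complement is exactly what makes the notation $\fGCc_n^{\geq 3}$ behave like a complex. This does not affect your argument, since the long exact sequence still splits for the same reason you give (each surviving loop class is already a cocycle and $\delta$ cannot map it into $\GC_n$). The degenerate low cases you flag (the isolated vertex, the single edge, the double edge for odd $n$) are indeed the only places requiring separate inspection, and they behave as you say.
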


\subsection{The spectral sequence of \cite{DGC1}}
\label{ss:DGC1}

In the first paper of this series we introduced deformed differentials on the graph complexes above. In this paper we only need the even case, $n=0$. There is an extra differential $\nabla$ on $\fGC_0$ that acts by adding one edge in all possible ways. Every edge is added twice, once from one vertex to the other and once in the other way round. It holds that $\delta\nabla+\nabla\delta=0$, so the differential on $\fGC_0$ can be deformed to $\delta+\nabla$. The result we need is the following.
\begin{prop}[{\cite[Corollary 4]{DGC1}}]
There is a spectral sequence converging to
\[
 H\left(\fGCc_0, \delta+\nabla\right)=0
\]
whose $E^1$ term is 
\[
 H\left(\fGCc_0,\delta\right).
\]
\end{prop}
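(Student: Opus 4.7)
The plan is to exploit the splitting \eqref{eq:splitb}, which decomposes $\fGCc_0$ according to the invariant $b = e - v$. First I would verify the elementary fact that the standard differential $\delta$ preserves $b$ (splitting a vertex adds one vertex and one edge), while the extra differential $\nabla$ strictly raises $b$ by one (adding an edge leaves the vertex count fixed). Consequently the descending filtration $F^p\fGCc_0 := \prod_{b\geq p} \mB^b\fGCc_0$ is preserved by $\delta + \nabla$, with $\delta$ being the filtration-preserving piece and $\nabla$ strictly increasing the filtration degree by one.

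Next I would set up the associated spectral sequence. On the $E^0$ page the induced differential is exactly $\delta$ acting on the associated graded $\bigoplus_b \mB^b\fGCc_0$, so by the splitting \eqref{eq:splitb} the $E^1$ page is
\[
E^1 \;=\; \bigoplus_b H\bigl(\mB^b\fGCc_0,\delta\bigr) \;=\; H(\fGCc_0,\delta).
\]
To address convergence, I would observe that in $\fGC_0$ the degree of a graph equals the number of edges $e$. In a fixed total degree $d$, a graph in $\mB^b\fGCc_0$ has $e=d$ and $v=d-b$, so at most finitely many isomorphism classes contribute. Hence in every fixed homological degree the filtration is bounded and the spectral sequence is a classically convergent one, abutting to $H(\fGCc_0,\delta+\nabla)$.

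It remains to prove the vanishing $H(\fGCc_0,\delta+\nabla)=0$, which is the real content of the proposition and the main obstacle. For this I would invoke the construction of \cite{DGC1}: the point is to produce a contracting homotopy for $\delta+\nabla$ on the connected full complex. A natural strategy is to use an \emph{adding a vertex} type operation $h$ dual to $\nabla$ (for instance, inserting a bivalent vertex on a chosen edge, or splitting off an isolated or univalent piece, normalized to pick out a preferred position), arranged so that the anticommutator $(\delta+\nabla)h + h(\delta+\nabla)$ acts as a nonzero scalar depending on a numerical invariant of the graph. The difficulty is that no such $h$ works globally on $\fGCc_0$ without enlarging the class of allowed graphs (isolated and low-valent vertices, tadpoles), so one must first compare $\fGCc_0$ with the version containing these extra graphs, show that the extra subcomplexes are acyclic on their own under $\delta+\nabla$, and then read off the vanishing of $H(\fGCc_0,\delta+\nabla)$. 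The combinatorial bookkeeping in this last step, namely checking that the homotopy is well defined up to signs and symmetry and produces a nonzero eigenvalue, is the technically heavy part that is carried out in \cite{DGC1}; here I would simply quote that result once the spectral sequence framework above is in place.
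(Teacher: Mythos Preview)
The paper does not prove this proposition; it is quoted verbatim from \cite[Corollary~4]{DGC1} as background, with no proof given here. So there is nothing in the paper to compare your argument against. Your setup of the spectral sequence --- filtering by $b=e-v$, identifying $\delta$ as the filtration-preserving piece and $\nabla$ as raising $b$ by one, and checking boundedness in each degree via $d=e$ --- is correct and is exactly the standard construction that yields the $E^1$ page $H(\fGCc_0,\delta)$.

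Your sketch for the vanishing $H(\fGCc_0,\delta+\nabla)=0$, however, is off track. You propose an ``adding a vertex'' homotopy dual to $\nabla$, but $\nabla$ adds an \emph{edge}, not a vertex; a homotopy for $\nabla$ should move in the direction of removing edges, not inserting vertices. The actual argument in \cite{DGC1} (echoed in this paper in Corollary~\ref{cor:nabla}) runs via a \emph{second} spectral sequence, this time on the number of vertices $v$: since $\nabla$ preserves $v$ and $\delta$ increases it, the first differential on that spectral sequence is $\nabla$, and one computes $H(\fGCc_0,\nabla)$ directly (it is essentially one-dimensional, generated by $\lambda$), after which the remaining class is killed on the next page. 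Since you ultimately defer to \cite{DGC1} anyway, your proposal is not wrong, but the heuristic you offer for how that citation works is misleading.
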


The result implies that the homological classes of $H\left(\fGCc_0,\delta\right)$ have to be canceled on later pages of the spectral sequence. Therefore they come in pairs.

\subsection{Hairy graph complex}
\label{ss:HGC}
The hairy graph complexes $\HGC_{m,n}$ in general are defined and studied in the second paper of this series, \cite{DGC2}. There are essentially four types of complexes depending on the parity of $n$ and $m$. In this paper we are interested in two types, those with parity of edges and hairs being the opposite, i.e.\ $m$ is odd. We will deal with the complexes $\HGC_{-1,0}$ and $\HGC_{-1,1}$ as representatives of those types.

For $h\in\N_0$ let $\mH^h\fHGC_{-1,n}^{\geq i}$ be the complex spanned by graphs similar to those of $\fGC_n^{\geq i}$, but with $h$ hairs attached to vertices, strictly defined as follows.
Consider the set of directed graphs $\gra_{v,e,h}^{\geq i}$ with $v>0$ distinguishable vertices, $e\geq 0$ distinguishable directed edges and $h\geq 0$ distinguishable hairs attached to some vertices, all vertices being at least $i$-valent ($i\in\{0,1,2,3\}$), without tadpoles (edges that start and end at the same vertex). In hairy graphs, the valence includes also the attached hairs.

For $n,m\in\Z$ let a degree of an element of $\gra_{v,e,h}^{\geq i}$ be $d=-m+vn+(1-n)e+(m+1-n)h$. In this paper we consider only the case $m=-1$, so $d=1+vn+(1-n)e-nh$. Let
\begin{equation}
V_{v,e,h}^{\geq i}:=\langle\gra_{v,e,h}^{\geq i}\rangle[-1-vn-(1-n)e+nh]
\end{equation}
be the vector space of formal series of $\gra_{v,e,h}^{\geq i}$ with coefficients in $\K$. It is a graded vector space with non-zero term only in degree $d=1+vn+(1-n)e-nh$.

There is a natural right action of the group $S_v\times S_h\times \left(S_e\ltimes S_2^{\times e}\right)$ on $\gra_{v,e,h}^{\geq i}$, where $S_v$ permutes vertices, $S_h$ permutes hairs, $S_e$ permutes edges and $S_2^{\times e}$ changes the direction of edges.
Let $\sgn_v$, $\sgn_h$, $\sgn_e$ and $\sgn_2$ be one-dimensional representations of $S_v$, respectively $S_h$, respectively $S_e$, respectively $S_2$, where the odd permutation reverses the sign. They can be considered as representations of the whole product $S_v\times S_h\times \left(S_e\ltimes S_2^{\times e}\right)$.

The \emph{full hairy graph complex} is
\begin{equation}
\label{def:HfHGCi}
 \mH^h\fHGC^{\geq i}_{-1,n}:=\left\{
\begin{array}{ll}
\prod_{v,e}\left(V_{v,e,h}^{\geq i}\otimes\sgn_e\right)_{S_v\times S_h\times \left(S_e\ltimes S_2^{\times e}\right)}
\qquad&\text{for $n$ even,}\\
\prod_{v,e}\left(V_{v,e,h}^{\geq i}\otimes\sgn_v\otimes\sgn_h\otimes\sgn_2^{\otimes e}\right)_{S_v\times S_h\times \left(S_e\ltimes S_2^{\times e}\right)}
\qquad&\text{for $n$ odd.}
\end{array}
\right.
\end{equation}
Because the group is finite, the space of invariants may replace the space of coinvariants.

The differential is similar to the one of $\fGC_n^{\geq i}$:
\begin{equation}
 \delta(\Gamma)=\sum_{x\in V(\Gamma)}\frac{1}{2}s_x(\Gamma) - a_x(\Gamma) - h(x) e_x(\Gamma),
\end{equation}
where in ``splitting of $x$'' $s_x$ hairs are also attached in all possible ways to the new two vertices. There is a factor $\binom{N}{k}$ for splitting a vertex with $N$ hairs into vertices with $k$ and $N-k$ hairs, like in splitting of a multiple edge. ``Adding an edge at $x$'' $a_x$ is the same as before and $e_x$ stands for ``extracting a hair at $x$'' and means adding
\begin{tikzpicture}[scale=.5]
 \node[int] (a) at (0,0) {};
 \node[int] (b) at (1,0) {};
 \draw (a) edge (b);
 \node[above left] at (a) {$\scriptstyle x$};
 \draw (b) edge (1.3,0.3);
\end{tikzpicture}
on the vertex $x$ instead of one hair, while $h(x)$ is the number of hairs on the vertex $x$. Unless $x$ is an isolated vertex with a hair, $h(x)e_x$ will cancel two terms of the splitting $s_x$. It is easily seen that $\delta$ is indeed a differential, i.e.\ $\delta^2=0$. In particular
\begin{equation}
\label{eq:HGC-GCshift}
\mH^0\fHGC_{-1,n}^{\geq i}=\fGC_n^{\geq i}[-1-n].
\end{equation}

We define the full hairy graph complex
\begin{equation}
\label{def:fHGCi}
 \fHGC_{-1,n}^{\geq i}:=\prod_{h\geq 0}\mH^h\fHGC_{-1,n}^{\geq i}.
\end{equation}
We often need a subcomplex without hairless part:
\begin{equation}
\label{def:H>fHGCi}
 {\mH^{\geq 1}\fHGC_{-1,n}^{\geq i}}:=\prod_{h\geq 1}\mH^h\fHGC_{-1,n}^{\geq i}.
\end{equation}

The differential does not change the number $e-v$ again, so the full hairy graph complex $\fHGC_{-1,n}^{\geq i}$ splits into the direct product of (in each degree) finite dimensional subcomplexes:
\begin{equation}
\label{eq:splitbhairy}
\left(\fHGC_{-1,n}^{\geq i},\delta\right)=\left(\prod_{b\in\Z}\mB^b\fHGC_{-1,n}^{\geq i},\delta\right),
\end{equation}
where $\mB^b\fHGC_{-1,n}^{\geq i}$ is the part of $\fHGC_n^{\geq i}$ spanned by graphs where $e-v=b$. The same is true for $\mH^{\geq 1}\fHGC_{-1,n}^{\geq i}$.

Similarly to the non-hairy complex, for a chosen parity of $n$, the actual number $n$ matters only for the degree shift of the subcomplexes $\mB^b\fHGC_{-1,n}^{\geq i}$ in the product. If we know the cohomology for one $n$, cohomology for another $n$ of the same parity is straightforward. Therefore, in this paper we will only deal with complexes $\fHGC_{-1,0}^{\geq i}$ and $\fHGC_{-1,1}^{\geq i}$, and their subcomplexes, as representatives for even and odd complexes. Note that in $\fHGC_{-1,0}^{\geq i}$ the degree is $d=e+1$ and in $\fHGC_{-1,1}^{\geq i}$ the degree is $d=v+1-h$.

Similarly to the non-hairy complex, let 
\begin{equation}
\label{def:fHGCci}
\fHGCc_{-1,n}^{\geq i}\subset\fHGC_{-1,n}^{\geq i}
\end{equation}
be the subcomplex spanned by connected graphs.

In the hairy case, we skip the superscript if $i=1$, e.g.\ we consider
\begin{equation}
\label{def:fHGC}
\fHGC_{-1,n}=\fHGC_{-1,n}^{\geq 1},
\end{equation}
because those complexes will be used the most, unlike in the non-hairy case where skipping superscript means including all valences.
We also introduce a shorter notation
\begin{equation}
\label{def:HGC}
\HGC_{-1,n}:=\mH^{\geq 1}\fHGCc_{-1,n}^{\geq 3}.
\end{equation}
This complex is the closest to the one defined in \cite{DGC2}. Strictly speaking, $\HGC_{-1,n}$ for odd $n$ defined in \cite{DGC2} has another graph that we do not allow here, the graph with no vertices and two hairs. It comes naturally if a hair is understood as an edge towards 1-valent vertex of another type, being the case in that paper.


\subsection{Extra differential in hairy graph complexes}

In \cite{DGC2} an extra differential $\Delta$ has been defined on $\HGC_{-1,n}$ that anti-commutes with $\delta$, so that $\delta+\Delta$ is also a differential.
Here, on ${\fHGC_{-1,n}^{\geq i}}$ we define the following additional operation $\Delta:\mH^h\fHGC_{-1,n}^{\geq i}\rightarrow\mH^{h-1}\fHGC_{-1,n}^{\geq i}$:
\begin{equation}
 \Delta\Gamma\,=\sum_{x\in V(\Gamma)}\Delta_x=\sum_{x\in V(\Gamma)}h(x)\tilde\Delta_x,
\end{equation}
where $\tilde\Delta_x$ deletes a hair on vertex $x$ and connects $x$ to other vertices in all possible ways with a new edge, and $\Delta_x=h(x)\tilde\Delta_x$ for $h(x)$ being the number of hairs on $x$. To precisely define the sign of the resulting graph, we set that, before the acting of $S_v\times S_h\times \left(S_e\ltimes S_2^{\times e}\right)$, a new edge get the next free number. Restricted to $\HGC_{-1,n}$, $\Delta$ is the same differential as defined in \cite{DGC2}. It is again straightforward that $\Delta$ squares to zero and anti-commutes with $\delta$, so $\delta+\Delta$ is also a differential.

\subsection{Some simple graphs}

Here we define some simple graphs. They live in the hairy graph complexes, and if they have no hairs, also in the standard graph complexes.

Let
\begin{equation}
\sigma_a:=
\begin{tikzpicture}[baseline=-.5ex]
\node[int] (a) at (0,0) {};
\draw (a) edge (90:.2);
\draw (a) edge (360/7+90:.2);
\node[dot] at (2*360/7+90:.15) {};
\node[dot] at (2.5*360/7+90:.15) {};
\node[dot] at (3*360/7+90:.15) {};
\draw (a) edge (4*360/7+90:.2);
\draw (a) edge (5*360/7+90:.2);
\draw (a) edge (6*360/7+90:.2);
\end{tikzpicture}
\end{equation}
be a graph with one vertex and $a\geq 0$ hairs, and we call it a \emph{star}.
Let
\begin{equation}
\lambda_a:=
\begin{tikzpicture}
 \node[int] (a) at (0,0) {};
 \node[int] (b) at (.5,0) {};
 \draw (a) edge (b);
 \draw (a) edge (360/7:.2);
 \draw (a) edge (2*360/7:.2);
 \draw (a) edge (3*360/7:.2);
 \draw (a) edge (4*360/7:.2);
 \node[dot] at (5*360/7:.15) {};
 \node[dot] at (5.5*360/7:.15) {};
 \node[dot] at (6*360/7:.15) {};
\end{tikzpicture}
\end{equation}
with $a-1$ hairs on one vertex for $a\geq 1$.
Hairless graphs that exist in $\fGC_n$ are denoted by simpler notation $\sigma:=\sigma_0$ and $\lambda:=\lambda_1$.
Because of symmetry reasons in $\fHGC_{-1,1}$ there can not be more than one hair on the same vertex, so in that complex only $\sigma_1$ and $\lambda_2$ exist, together with hairless $\sigma_0$ and $\lambda_1$.

For two graphs $\Gamma$ and $\Gamma'$, the graph $\Gamma\cup\Gamma'$ is the disconnected graph that consists of $\Gamma$ and $\Gamma'$. For the matter of sign, all vertices, edges and hairs of the first graph come before those of the second one. For a graph $\Gamma$ and $n\geq 1$, $\Gamma^{\cup n}$ is the graph that consist of $n$ copies of the graphs $\Gamma$ put together.

The following graph in $\fHGC_{-1,n}$ will be used often:
\begin{equation}
\alpha:=\sum_{n\geq 1}\frac{1}{n!}\; \sigma_1^{\cup n}=\;
\begin{tikzpicture}[baseline=-.5ex]
\node[int] (a) at (0,0) {};
\draw (a) edge (0,.2);
\end{tikzpicture}\;
+\frac{1}{2}\;
\begin{tikzpicture}[baseline=-.5ex]
\node[int] (a) at (0,0) {};
\draw (a) edge (0,.2);
\end{tikzpicture}\;
\begin{tikzpicture}[baseline=-.5ex]
\node[int] (a) at (0,0) {};
\draw (a) edge (0,.2);
\end{tikzpicture}\;
+\frac{1}{3!}\;
\begin{tikzpicture}[baseline=-.5ex]
\node[int] (a) at (0,0) {};
\draw (a) edge (0,.2);
\end{tikzpicture}\;
\begin{tikzpicture}[baseline=-.5ex]
\node[int] (a) at (0,0) {};
\draw (a) edge (0,.2);
\end{tikzpicture}\;
\begin{tikzpicture}[baseline=-.5ex]
\node[int] (a) at (0,0) {};
\draw (a) edge (0,.2);
\end{tikzpicture}\;
+\dots
\end{equation}

\subsection{List of graph complexes}

For the reader's convenience, we provide Table \ref{tbl:gc} of all graph complexes used and defined in the paper, with a short explanation and the reference to the definition.

The most of notations of graph complexes, except some technical complexes, are of the form $\mathrm{pNe_i^s}$, meaning as follows.
\begin{itemize}
 \item The first small letter $\mathrm{p}$ is either $\mathrm{f}$, meaning full complex, either omitted. General complexes are full, but some its subcomplexes are particularly important, and they are labeled without $\mathrm{f}$.
 \item Capital letters $\mathrm{N}$ is the main type of complex. It can be:
 \begin{itemize}
  \item $\mathrm{GC}$ - graph complex,
  \item $\mathrm{HGC}$ - hairy graph complex. 
 \end{itemize}
 \item The ending $\mathrm{e}$ deals with connectivity and if stated means:
 \begin{itemize}
  \item $\mathrm{c}$ - connected graphs,
  \item $\mathrm{d}$ - disconnected graphs.
 \end{itemize}
 \item The subscript $\mathrm{i}$ defines degrees and parity (1 number for ordinary complexes and 2 numbers for hairy complexes).
 \item The superscript $\mathrm{s}$ deals mostly with valences and if stated means:
 \begin{itemize}
  \item $\geq i$ - all vertices are at least $i$-valent,
  \item $\dagger$ - all vertices at least 1-valent and no connected component $\lambda_1$,
  \item ${\ddagger}$ - all vertices at least 1-valent and no connected component $\sigma_1$ or $\lambda_2$,
  \item ${\dagger\ddagger}$ - all vertices at least 1-valent and no connected component $\lambda_1$, $\sigma_1$ or $\lambda_2$,
  \item $*$ - all vertices at least 2-valent and  hairy vertices at least 3-valent, see Definition \ref{defi:bounded}.
 \end{itemize} 
\end{itemize}

\begin{table}[h]
\begin{tabular}{| c | l | c |}
\hline
Notation & Explanation & Reference  \\
\hline
$\fGC_n^{\geq i}$ & Graph complex with graphs whose vertices are at least $i$-valent & \eqref{def:fGCi}  \\
\hline
$\fGC_n$ & $=\fGC_n^{\geq 0}$ & \eqref{def:fGC}  \\
\hline
$\fHGC_{-1,n}^{\geq i}$ & Hairy graph complex with graphs whose vertices are at least $i$-valent & \eqref{def:fHGCi}  \\
\hline
$\fHGC_{-1,n}$ & $=\fHGC_{-1,n}^{\geq 1}$ & \eqref{def:fHGC}  \\
\hline
$\mathrm{pNc_i^s}$ & Subcomplex of $\mathrm{pN_i^s}$ spanned by connected graphs & \eqref{def:fGCci}, \eqref{def:fHGCci} \\
\hline
$\GC_n$ & $=\fGCc_n^{\geq 3}$ & \eqref{def:GC}  \\
\hline
$\mH^{\geq 1}\mathrm{pHGCe_i^s}$ & $\subset\mathrm{pHGCe_i^s}$ without hairless graphs & \eqref{def:HfHGCi}  \\
\hline
$\HGC_{-1,n}$ & $=\mH^{\geq 1}\fGCc_n^{\geq 3}$ & \eqref{def:HGC}  \\
\hline
${\mathrm{fHGC^\dagger}_{-1,n}}$ & $\subset\mathrm{fHGC}_{-1,n}$ with no connected component $\lambda_1$ & Definition \ref{defi:bounded} \\
\hline
${\mathrm{fHGC^\ddagger}_{-1,n}}$ & $\subset\mathrm{fHGC}_{-1,n}$ with no connected component $\sigma_1$ or $\lambda_2$ & Definition \ref{defi:bounded} \\
\hline
${\mathrm{fHGC^{\dagger\ddagger}}_{-1,n}}$ & ${\mathrm{fHGC^\dagger}_{-1,n}}\cap{\mathrm{fHGC^\ddagger}_{-1,n}}$ & Definition \ref{defi:bounded} \\
\hline
$\fHGC_{-1,n}^{/\ddagger}$ & $\fHGC_{-1,n}/\fHGC_{-1,n}^{\ddagger}$ & Definition \ref{defi:unbounded} \\
\hline
$\fHGC_{-1,n}^{\dagger/{\dagger\ddagger}}$ & $\fHGC_{-1,n}^{\dagger}/\fHGC_{-1,n}^{{\dagger\ddagger}}$ & Definition \ref{defi:unbounded} \\
\hline
${\HC_{-1,n}}$ & $\subset\fHGC_{-1,n}^{\dagger}$ with all connected components in $\sigma_1$ or $\lambda_2$ & Definition \ref{defi:unbounded} \\
\hline
$\HL_1$ & $=\left(\Delta+D^{(1)}\right)\left(\mH^1\fHGC_{-1,1}^\dagger\right)$ & \eqref{def:HL1} \\
\hline
$\HL_0$ & $=\Delta\left(\mH^1\fHGC_{-1,0}\right)$ & \eqref{def:HL0} \\
\hline
$\mH^\flat{\mathrm{pHGC}_{-1,n}^{\mathrm{s}}}$ & $=\mH^{\geq 1}\mathrm{pHGC}_{-1,n}^{\mathrm{s}}\oplus\HL_n$ & \makecell{\eqref{def:fHGC1'}, \eqref{def:bHGC1'}, \eqref{def:fHGC13'}, \\ \eqref{def:fHGC0'}, \eqref{def:bHGC0'}} \\
\hline
$\fHGCd_{-1,0}$ & $\subset \fHGC_{-1,0}$ spanned by disconnected graphs & \eqref{def:bHGCd} \\
\hline
\end{tabular}
\vspace{5pt}
\caption{\label{tbl:gc}
The table of graph complexes used in the paper. Universal letters $\mathrm{p}$, $\mathrm{N}$, $\mathrm{e}$, $\mathrm{i}$ and $\mathrm{s}$ mean any letter, and also without the letter.}
\end{table}

\subsection{Splitting of complexes and spectral sequences}

We often need subcomplex of the particular complex spanned by graphs that have some number fixed, e.g.\ number of vertices. Let $v$ be the number of vertices in a graph, $e$ the number of edges, $h$ the number of hairs, $c$ the number of connected components and $p$ the number of hairy connected components (connected components with at least one hair). Prefixes are listed in Table \ref{tbl:prefix}.

\begin{table}[h]
\begin{tabular}{| c | l |}
\hline
Prefix & Explanation  \\
\hline
$\mV^k$ & Graphs with $v=k$ \\
\hline
$\mE^k$ & Graphs with $e=k$ \\
\hline
$\mH^k$ & Graphs with $h=k$ \\
\hline
$\mC^k$ & Graphs with $c=k$ \\
\hline
$\mB^k$ & Graphs with $e-v=k$ \\
\hline
$\mA^k$ & Graphs with $e+h=k$ \\
\hline
$\mF^k$ & Graphs with $e+h-v=k$ \\
\hline
$\mP^k$ & Graphs with $p=k$ \\
\hline
\end{tabular}
\vspace{5pt}
\caption{\label{tbl:prefix}
The table of prefixes that determine certain subcomplexes of a complex. The prefixes mean the subcomplex of the complex that follows, spanned by graphs with the fixed number stated.}
\end{table}

We have already used prefixes $\mB$ and $\mH$ in \eqref{eq:splitb}, \eqref{def:HfHGCi} and \eqref{eq:splitbhairy}.

Let $\Gamma$ be an element of a graph complex $\CC$. We also use prefixes from above in front of the graph $\Gamma$, indicating the part of $\Gamma$ with fixed number. E.g.\ $\mH^5\Gamma$ is the part of $\Gamma$ with $5$ hairs.

Some differentials do not change some numbers from the above, so the complex splits as a direct product of complexes with fixed that number, e.g.
\begin{equation}
\label{eq:split1}
\left(\fGC_n^{\geq i},\delta\right)=\prod_{b\in\Z}\left(\mB^b\fGC_n^{\geq i},\delta\right),
\end{equation}
\begin{equation}
\left(\fGC_n^{\geq i},\nabla\right)=\prod_{v\in\N}\left(\mV^v\fGC_n^{\geq i},\nabla\right),
\end{equation}
\begin{equation}
\label{eq:split3}
\left(\fHGC_{-1,n}^{\geq i},\delta\right)=\prod_{b\in\Z}\left(\mB^b\fHGC_{-1,n}^{\geq i},\delta\right)=
\prod_{h\in\N}\left(\mH^h\fHGC_{-1,n}^{\geq i},\delta\right)=
\prod_{b\in\Z}\prod_{h\in\N}\left(\mB^b\mH^h\fHGC_{-1,n}^{\geq i},\delta\right),
\end{equation}
\begin{equation}
\label{eq:split4}
\left(\fHGC_{-1,n}^{\geq i},\Delta\right)=\prod_{v\in\N}\left(\mV^v\fHGC_{-1,n}^{\geq i},\Delta\right)=
\prod_{a\in\Z}\left(\mA^a\fHGC_{-1,n}^{\geq i},\Delta\right)=
\prod_{v\in\N}\prod_{a\in\Z}\left(\mA^a\mV^v\fHGC_{-1,n}^{\geq i},\Delta\right),
\end{equation}
\begin{equation}
\label{eq:split5}
\left(\fHGC_{-1,n}^{\geq i},\delta+\Delta\right)=\prod_{f\in\Z}\left(\mF^f\fHGC_{-1,n}^{\geq i},\delta+\Delta\right).
\end{equation}

We also use superscripts in the prefix of the form of inequality, e.g.\ $\mV^{\geq k}$, that obviously means the subcomplex spanned by graphs which fulfill the inequality, e.g.
\begin{equation}
\mV^{\geq v}\CC=\prod_{k\geq v}\mV^k\CC.
\end{equation}

We have already used this notation in \eqref{def:H>fHGCi}.
Those subcomplexes with inequality often form a filtration of the complex. To this filtration a spectral sequence is associated. We say that the spectral sequence is \emph{on the number} given by the prefix.
E.g.\ the spectral sequence of $\left(\fGC_n^{\geq i},\delta+\Delta\right)$ on the number of vertices is the one associated to the filtration $\left(\mV^{\geq v}\fGC_n^{\geq i},\delta+\Delta\right)$. It is $\geq v$, not $\leq v$, because the differential $\delta+\Delta$ can increase, but not decrease the number of vertices. Its first differential is clearly $\Delta$.

We say that a spectral sequence converges correctly if it converges to the cohomology of the whole complex. To ensure correct convergence standard spectral sequence arguments, i.e.\ those from \cite[Appendix C]{DGC1}, are used. For doing so, it is often useful for a complex to be finite dimensional in each degree. It is mostly not the case, but after splitting a complex as in \eqref{eq:split1}--\eqref{eq:split4}, each subcomplex often has that property. So, the spectral sequence arguments can be used for each of them, and so we can compute the cohomology of the whole complex.

The following superscript will also be used.
\begin{equation}
\mB^{<f,par}\CC:=\prod_{i>0}\mB^{f-2i}\CC,
\end{equation}
i.e.\ it is the product of subcomplexes with $e-v<f$ of the same parity as $f$.

As explained in the appendix \ref{app:groupinv} we use the notation $\bar\mV^v\CC$ for the space similar to $\mV^v\CC$ but with distinguishable vertices, i.e.\ the space of coinvariants of other groups (permuting edges, hairs, etc.) before taking coinvariants of the symmetric group $S_v$ that permutes vertices. It holds that
\begin{equation}
\mV^v\CC=\left(\bar\mV^v\CC\right)^{S_v}.
\end{equation}
Let $S_{v-1}$ act on $\bar\mV^v\CC$ as sub-action of $S_v$ that permutes the first $v-1$ vertices, leaving the last vertex. We define
\begin{equation}
\dot\mV^v\CC:=\left(\bar\mV^v\CC\right)^{S_{v-1}}.
\end{equation}
We may need the total spaces
\begin{equation}
\bar\mV\CC:=\prod_{v\geq 1}\bar\mV^v\CC,
\end{equation}
\begin{equation}
\dot\mV\CC:=\prod_{v\geq 1}\dot\mV^v\CC.
\end{equation}

\section{Deleting a vertex in non-hairy graphs}
\label{s:D}

In this section we introduce a new operation $D$ on the non-hairy graph complexes $\fGC_n$ which we call ``deleting a vertex''. Under some weak conditions, it holds that $D^2=0$ and a grading can be settled such that $D$ is a differential. We also obtain one further result about the spectral sequence of \cite{DGC1} for $n=0$. The results will be used for the hairy graph complex in Section \ref{s:Hoddeven}.

\subsection{Even case}
For a graph $\Gamma\in\fGC_0$ we call the set of its vertices $V(\Gamma)$ and define
\begin{equation}
 D(\Gamma) \,:= \sum_{x\in V(\Gamma)}D_x = \sum_{x\in V(\Gamma)}(-1)^{v(x)}\tilde D_x
\end{equation}
where $v(x)$ is the valence of the vertex $x$, $\tilde D_x$ deletes the vertex $x$ and sums over all ways of reconnecting edges that were connected to $x$ to the other vertices, skipping graphs with a tadpole, and $D_x=(-1)^{v(x)}\tilde D_x$. An example is sketched in Figure \ref{fig:exD}.

\begin{figure}[h]
$$
\begin{tikzpicture}[baseline=0ex]
\node[int] (a) at (0,0) {};
\node[int] (b) at (90:1) {};
\node[int] (c) at (210:1) {};
\node[int] (d) at (330:1) {};
\node[above left] at (b) {$\scriptstyle x$};
\draw (a) edge (b);
\draw (a) edge (c);
\draw (a) edge (d);
\draw (b) edge (c);
\draw (b) edge (d);
\draw (c) edge (d);
\end{tikzpicture}
\quad\mxto{D_x}\quad(-1)^3\;
\begin{tikzpicture}[baseline=0ex]
\node[int] (a) at (0,0) {};
\node[int] (c) at (210:1) {};
\node[int] (d) at (330:1) {};
\draw (a) edge[->] (0,1);
\draw (a) edge (c);
\draw (a) edge (d);
\draw (c) edge[->] (-.8,.5);
\draw (d) edge[->] (.8,.5);
\draw (c) edge (d);
\end{tikzpicture}
$$
\caption{\label{fig:exD}
Example of the action $D_x$, deleting the vertex $x$. The graph at the right means the sum over all graphs that can be formed by attaching ends of arrows to vertices, without making a tadpole.}
\end{figure}
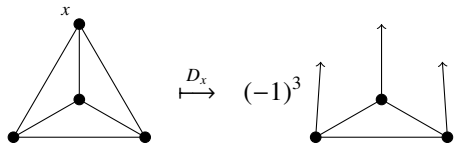

We can restrict $D$ to $\fGC_0^{\geq 1}$ and $\fGC_0^{\geq 2}$. The following propositions about $D$ are stated in the broadest space possible, i.e.\ in $\fGC_0^{\geq i}$ for the smallest $i$ where it holds.

\begin{prop}
\label{DelEvenD2}
In $\fGC_0^{\geq 2}$ it holds that
\begin{equation*}
D^2=0.
\end{equation*}
\end{prop}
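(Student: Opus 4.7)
The plan is to show that $D_y D_x \Gamma = 0$ already for every ordered pair of distinct vertices $(x,y)$, so that the double sum $D^2 = \sum_{x\neq y} D_y D_x$ vanishes termwise. In $\fGC_0$ the $\sgn_e$ representation forces any graph with two parallel edges to be zero (a transposition of the two edges fixes the underlying labelled graph but acts as $-1$ on $\sgn_e$), so every non-zero class has a representative with no multi-edges, which I fix throughout.

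Fix a pair $(x,y)$ of distinct vertices and let $e_{xy}\in\{0,1\}$ denote the number of edges between $x$ and $y$ in $\Gamma$. Partition the edges incident to $\{x,y\}$ into three classes: type $A$, edges from $x$ to a vertex in $V(\Gamma)\setminus\{x,y\}$ (so $|A| = v(x)-e_{xy}$); type $B$, edges from $y$ to a vertex in $V(\Gamma)\setminus\{x,y\}$; and type $C$, edges between $x$ and $y$. A term of $D_y D_x \Gamma$ is indexed by an intermediate state $(r,s)$, where $r$ records the reconnection of the $x$-ends carried out by $D_x$ and $s$ records the subsequent reconnection by $D_y$ of the ends still attached to $y$ in the post-$D_x$ graph. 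I will group these terms by their \emph{final configuration} $F$, which specifies, for each reconnected edge-end, its position in the final graph $\Gamma_F$.

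For a fixed $F$ one checks that each type $A$ edge (originally $x$--$w$, with final end-position $w_1\notin\{x,y,w\}$) admits exactly two compatible intermediate states: \emph{direct}, in which $D_x$ sends the $x$-end straight to $w_1$ and $D_y$ does not touch the edge; and \emph{via $y$}, in which $D_x$ routes the $x$-end to $y$ (producing a temporary edge $y$--$w$) and then $D_y$ reconnects the new $y$-end to $w_1$. In both cases the tadpole-avoidance constraints hold automatically, since $w\neq y$ and $w_1\neq w$. Each type $B$ edge and each type $C$ edge admits a unique compatible intermediate state. Thus the intermediate states producing $F$ are parameterised by subsets $T\subseteq A$, where $T$ is the set of type $A$ edges routed via $y$. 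Tracking signs: $D_x$ contributes $(-1)^{v(x)}$, and in the post-$D_x$ graph the valence of $y$ equals $v(y)+|T|$ — the type $B$ edges still touch $y$, the type $C$ edges still touch $y$ (with new other endpoints), and each edge in $T$ adds one new incidence at $y$ — so $D_y$ contributes $(-1)^{v(y)+|T|}$. Summing over $T$ yields
\[
[\Gamma_F]\bigl(D_y D_x \Gamma\bigr) \;=\; (-1)^{v(x)+v(y)}\sum_{T\subseteq A}(-1)^{|T|} \;=\; (-1)^{v(x)+v(y)}(1-1)^{|A|},
\]
which vanishes whenever $|A|\geq 1$.

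Finally, the proposition's hypotheses force $|A|\geq 1$: the no-multi-edge choice of representative gives $e_{xy}\leq 1$, and the $\geq 2$-valence condition gives $v(x)\geq 2$, so $|A|=v(x)-e_{xy}\geq 1$. Hence $D_y D_x\Gamma = 0$ for every ordered pair, and therefore $D^2\Gamma = 0$. The delicate points in filling in the details are the sign bookkeeping — most notably, the fact that after $D_x$ the type $C$ edges still contribute $e_{xy}$ to the valence of $y$, so that the valence becomes $v(y)+|T|$ rather than $v(y)-e_{xy}+|T|$ — and the verification that none of the allowed ``via $y$'' reconnections creates a forbidden tadpole at the intermediate stage. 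Neither of these would yield cancellation without the hypothesis $v\geq 2$, which is precisely what rules out the pathological case $|A|=0$ (where $x$ would be connected to the rest of the graph only through $y$ and a multi-edge).
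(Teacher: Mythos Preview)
Your proof is correct and follows essentially the same strategy as the paper's: show $D_yD_x\Gamma=0$ for each ordered pair by exploiting the freedom to route edges from $x$ either directly or via $y$, with the sign flipping because the valence of $y$ changes by one. The paper streamlines this by fixing a single edge $f$ from $x$ to a vertex other than $y$ and pairing terms by toggling only $f$'s route, whereas you sum over all subsets $T\subseteq A$ to get $(1-1)^{|A|}$; both rely on the same observation that $|A|\geq 1$ because $v(x)\geq 2$ and multi-edges vanish in $\fGC_0$.
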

\begin{proof}
For a graph $\Gamma$
\begin{equation*}
D^2(\Gamma)\,=\sum_{\substack{x,y\in V(\Gamma)\\x\neq y}}D_yD_x(\Gamma).
\end{equation*}
We now fix $x,y\in V(\Gamma)$, $x\neq y$.
Since $D$ does not change the number of edges, we can distinguish them. Vertices of $\Gamma\in\fGC_0^{\geq 2}$ are at least 2-valent, so there exist an edge between $x$ and another vertex that is not $y$. We choose one of them and call it $f$.

$D_yD_x$ first deletes vertex $x$, reconnects its edges to other vertices, deletes vertex $y$ and reconnects its edges, including those that came from $x$, to other vertices. Let us fix one way of reconnecting edges that are not $f$ and the final destination of $f$. For that way of reconnecting, in $D_yD_x$ there are two terms: the one where $f$ goes directly to its final destination with $D_x$ and the one where $f$ goes to $y$ with $D_x$ and to its final destination with $D_y$. In the later the valence of $y$ while applying $D_y$ is by $1$ bigger than its valence while applying $D_y$ in the former term, making the terms cancel each other because of the sign dependence on valence. It follows that $D_yD_x(\Gamma)=0$ and therefore $D^2(\Gamma)=0$.
\end{proof}

\begin{prop}
\label{DelEvenD4}
In $\fGC_0^{\geq 1}$ it holds that
\begin{equation*}
D^4=0.
\end{equation*}
\end{prop}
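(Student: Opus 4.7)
The plan is to extend the pairing argument used in the proof of Proposition \ref{DelEvenD2}. Recall that for distinct vertices $x,y$ of $\Gamma\in\fGC_0^{\geq 2}$, one pairs the terms of $D_yD_x(\Gamma)$ by choosing an edge $f$ from $x$ to a vertex different from $y$: either $f$'s dangling end is reconnected directly to its final destination by $D_x$, or it is routed through $y$ by $D_x$ and then reconnected to the same destination by $D_y$. The valence of $y$ at the moment $D_y$ is applied differs by $1$ between the two cases, the factor $(-1)^{v(y)}$ produces opposite signs, and the two terms cancel. Such an edge $f$ exists in $\fGC_0^{\geq 2}$ because in the even complex $\fGC_0$ multi-edges vanish by $\sgn_e$-antisymmetry, so any vertex of valence at least $2$ is adjacent to at least two distinct neighbors.

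In $\fGC_0^{\geq 1}$ this pairing fails in exactly one situation: when $x$ is $1$-valent and its unique neighbor is $y$. Consequently $D_yD_x(\Gamma)=0$ in $\fGC_0^{\geq 1}$ except for these bad configurations, and $D^2$ is supported entirely on pairs in which a $1$-valent vertex is deleted followed by its unique neighbor.

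Writing
\[
D^4(\Gamma)=\sum_{(x_1,x_2,x_3,x_4)} D_{x_4}D_{x_3}D_{x_2}D_{x_1}(\Gamma)
\]
over ordered quadruples of distinct vertices, I would apply the pairing twice. Applied to the inner composition $D_{x_2}D_{x_1}$ it kills every term unless $x_1$ is $1$-valent in $\Gamma$ with unique neighbor $x_2$. Applied to $D_{x_4}D_{x_3}$ acting on the intermediate graph $D_{x_2}D_{x_1}(\Gamma)$, it kills every term unless $x_3$ is $1$-valent in that intermediate graph with unique neighbor $x_4$. This reduces $D^4(\Gamma)$ to a sum over quadruples satisfying both conditions simultaneously.

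To finish, I would cancel the surviving terms by constructing a sign-reversing involution on the set of such quadruples, obtained by reordering the four deletions. The guiding principle is that the final graph depends only on the global reconnection pattern, which can typically be realized by more than one ordering of the deletions, with the valence-based signs from distinct orderings giving opposite contributions. The main obstacle is precisely this last step: the condition ``$x_3$ is $1$-valent in the intermediate graph'' is not manifestly symmetric in the four vertices, so natural candidates such as swapping the pairs $(x_1,x_2)\leftrightarrow(x_3,x_4)$ need not preserve the set of surviving quadruples. One must therefore either find a more subtle involution, or argue that the forced chain of $1$-valencies produces a configuration which itself vanishes, for instance by creating a multi-edge that dies by $\sgn_e$-antisymmetry.
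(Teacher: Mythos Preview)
Your reduction is correct up to the point where you identify that $D_{x_2}D_{x_1}(\Gamma)$ can be nonzero only when $x_1$ is $1$-valent with unique neighbor $x_2$. But from there you pair the \emph{wrong} vertices, and this is why you are left hunting for an involution that you cannot find. The paper's proof shows that no cancellation whatsoever is needed: every single term $D_{x_4}D_{x_3}D_{x_2}D_{x_1}(\Gamma)$ already vanishes individually.

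The missing idea is to apply the $D^2$-criterion to the \emph{overlapping} pair $(x_2,x_3)$ rather than the disjoint pair $(x_3,x_4)$. Concretely: once you know $x_1$ is $1$-valent with neighbor $x_2$, look at $D_{x_3}D_{x_2}$ acting on a term of $D_{x_1}(\Gamma)$. By the same criterion, this is nonzero only if $x_2$ is $1$-valent there with neighbor $x_3$. But deleting $x_1$ did not change the valence of $x_2$ (the sole edge at $x_1$ still has its other end at $x_2$), so $x_2$ was already $1$-valent in $\Gamma$, and hence $\{x_1,x_2\}$ is an isolated $\lambda$-component of $\Gamma$. Now repeat the same three-step reasoning one level up: $D_{x_4}D_{x_3}D_{x_2}$ nonzero on $D_{x_1}(\Gamma)$ forces $\{x_2,x_3\}$ to be a $\lambda$-component of $D_{x_1}(\Gamma)$. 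In particular $x_3$ is $1$-valent there, and its unique edge is precisely the reconnected edge coming from $x_1$. Therefore $x_3$ had valence $0$ in $\Gamma$, contradicting the $\geq 1$ constraint. So the term is zero and $D^4=0$.

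Your proposed involution, swapping $(x_1,x_2)\leftrightarrow(x_3,x_4)$ or similar, was never going to close the argument cleanly, because (as you noticed) the condition on $x_3$ lives in an intermediate graph that depends on the choices already made. The alternative you floated at the end, that the forced chain of $1$-valencies makes the configuration vanish outright, is exactly the right instinct; you just needed to chain the constraints through $x_2$ rather than jumping to $x_3$.
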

\begin{proof}
From the previous proof we see that $D_yD_x(\Gamma)$ may only be non-zero if $x$ is $1$-valent vertex with an edge connecting to $y$.

So $D_zD_yD_x(\Gamma)\neq 0$ implies $x$ is of that kind. But also $D_zD_y(D_x(\Gamma))\neq 0$ implies $y$ is $1$-valent and connected to $z$ in $D_x(\Gamma)$. Since $D_x$ did not change the valence of $y$, already in $\Gamma$ it was $1$-valent, and vertices $x$ and $y$ formed $\lambda=
\begin{tikzpicture}[scale=.5]
 \node[int] (a) at (0,0) {};
 \node[int] (b) at (1,0) {};
 \draw (a) edge (b);
\end{tikzpicture}$.

Now let us pick a term in $D^4(\Gamma)$, say $D_wD_zD_yD_x(\Gamma)$. It being non-zero implies $D_zD_yD_x(\Gamma)\neq 0$, so $x$ and $y$ form $\lambda$.
Similarly, $D_wD_zD_y(D_x(\Gamma))\neq 0$ imply $y$ and $z$ form $\lambda$ in $D_x(\Gamma)$. Since the edge at $z$ came from $x$, $z$ was isolated in $\Gamma$ what is not possible in $\fGC_0^{\geq 1}$. Therefore $D_wD_zD_yD_x(\Gamma)$ is always $0$ and $D^4=0$.
\end{proof}

Recall the extra differential $\nabla:\fGC_0\rightarrow\fGC_0$ that acts by adding one edge in all possible ways defined in \cite{DGC1} and cited in \ref{ss:DGC1}.

\begin{prop}
\label{DelEvenDd}
In $\fGC_0$ it holds that
\begin{equation*}
\delta D - D \delta = \nabla.
\end{equation*}
\end{prop}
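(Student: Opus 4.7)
The plan is to expand the commutator term by term and classify contributions by the interaction pattern of the two operations. Writing $\delta = \sum_y(\frac{1}{2}s_y - a_y)$ and $D = \sum_x (-1)^{v(x)}\tilde D_x$, the composites $\delta D(\Gamma)$ and $D\delta(\Gamma)$ each unfold into double sums indexed by a vertex where $D$ acts and a vertex where $\delta$ acts, but the two intermediate vertex sets differ: $\delta$ creates new vertices (via $s_y$ or $a_y$) while $D$ destroys one. This asymmetry is the source of the residual $\nabla$.

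First I would dispose of the \emph{independent-action} terms, where $x \neq y$ are distinct original vertices of $\Gamma$. Here $\delta_y \tilde D_x(\Gamma)$ (in $\delta D$) and $\tilde D_x \delta_y(\Gamma)$ (in $D\delta$) must be matched. The delicate point is that $\tilde D_x$ can reconnect some of $x$'s edges to $y$, after which $\delta_y$ processes $y$ in its enlarged state; conversely, $s_y$ distributes $y$'s original edges to $y$ and a new $y'$, after which $\tilde D_x$ may reconnect $x$'s edges to $y$ or to $y'$. One verifies that the bijection between distributions matches, that the binomial factors $\binom{N}{k}$ for multi-edges are consistent, and that the sign $(-1)^{v(x)}$ of $D_x$ is the same in both compositions because $\delta_y$ does not alter $v(x)$. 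These terms therefore cancel in $\delta D - D\delta$.

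Next I would extract the \emph{new-vertex residue}: contributions in $D\delta$ where $D$ deletes a vertex just manufactured by $\delta$. There are three subcases — $\tilde D_z$ acting after $-a_y$ (with $z$ the newly added $1$-valent vertex), $\tilde D_{y'}$ acting after $s_y$, and $\tilde D_y$ acting after $s_y$. The $-a_y$-then-$\tilde D_z$ subcase reconnects the pendant edge $(y,z)$ to every $w \neq y$, yielding $\Gamma$ with an extra edge from $y$ to $w$; summed over $y$ and $w$ this is precisely $\nabla(\Gamma)$ up to sign. The key observation is that the $s_y$ subcases where all original edges end up on one side of the split produce, as elements of the coinvariants, the same graph as $a_y(\Gamma)$, so after combining with the $-a_y$ piece one obtains a clean organization in which only the $\nabla$-shaped residue survives.

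The remaining $s_y$-then-delete terms (where the two halves each carry $\geq 1$ original edges, so the deleted half has valence $\geq 2$) must cancel among themselves, the expected mechanism being the $S \leftrightarrow S^c$ symmetry of the edge-distribution sum together with the valence-driven sign flip in $(-1)^{v(y')+1}$. The principal obstacle throughout the proof is bookkeeping: $(-1)^{v(x)}$ shifts whenever a reconnection alters the valence of $x$, the $\binom{N}{k}$ factors for multi-edges must be tracked through both orderings, and the $\frac{1}{2}$ in $\delta$ must be apportioned consistently. The cleanest implementation is to work at the level of distinguishable labeled vertices (as in $\bar{\mV}$ from the appendix), matching individual labeled terms, and only take $S_v$-coinvariants at the end; the conceptual identity is straightforward, but the sign-tracking is where the work lies.
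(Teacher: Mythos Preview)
Your identification of where $\nabla$ emerges (from $D_z a_y$, i.e.\ deleting the new antenna vertex) matches the paper. However, the cancellation structure you propose does not close.

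The claim that the ``independent-action'' terms cancel is only correct for the splitting piece: indeed $s_yD_x=D_xs_y$ for $x\neq y$. It fails for the antenna piece. In $D_x a_y(\Gamma)$ the antenna vertex $z$ exists before $D_x$ acts, so edges from $x$ may be reconnected to $z$; these contributions have no counterpart in $a_yD_x(\Gamma)$. Thus $a_yD_x\neq D_xa_y$, and the off-diagonal antenna terms survive your first step. You also do not account for the diagonal term $D_ya_y$ in $D\delta$ (delete $y$ right after attaching an antenna to it).

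Correspondingly, your proposed fate for the ``$s_y$-then-delete'' terms is wrong. The $S\leftrightarrow S^c$ symmetry of the split does not produce a cancellation: it gives $D_ys_y=D_zs_y$, which is exactly what absorbs the $\tfrac12$ in $\delta$, leaving a single copy of $D_zs_y$. That copy is not zero. The paper's mechanism is different: fix an edge $f$ at the old $y$ and track whether it reaches its final location directly or via the new vertex $z$; the two routes differ in $v(z)$ by one and cancel, so only terms where \emph{all} original edges leave $y$ survive. What remains is precisely $-\sum_{w\neq y}a_wD_y$, which then cancels the orphaned $-a_wD_y$ terms from $\delta D$. The leftover antenna terms $\sum_{w\neq y}D_ya_w$ from $D\delta$ are in turn cancelled by the diagonal $D_ya_y$. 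So the actual pairing is cross-wise (split-then-delete against antenna-in-$\delta D$; off-diagonal antenna against diagonal antenna), not the block-diagonal pairing you outlined.
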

\begin{proof}
\begin{multline*}
\left(\delta D (\Gamma) - D \delta (\Gamma)\right) = \delta\left(\sum_{x}D_x(\Gamma)\right) - D\left(\sum_y\frac{1}{2}s_y(\Gamma) - a_y(\Gamma)\right) = 
\sum_{x}\sum_{\substack{y\\y\neq x}}\left(\frac{1}{2}s_yD_x(\Gamma)-a_yD_x(\Gamma)\right) \\
- \frac{1}{2}\sum_y\left(\sum_{\substack{x\\x\neq y}}D_xs_y(\Gamma)+D_ys_y(\Gamma)+D_zs_y(\Gamma)\right)
+ \sum_y\left(\sum_{\substack{x\\x\neq y}}D_xa_y(\Gamma)+D_ya_y(\Gamma)+D_za_y(\Gamma)\right) = \\
=\frac{1}{2}\sum_{\substack{x,y\\x\neq y}}s_yD_x(\Gamma)
-\sum_{\substack{x,y\\x\neq y}}a_yD_x(\Gamma)
-\frac{1}{2}\sum_{\substack{x,y\\x\neq y}}D_xs_y(\Gamma)
-\sum_xD_zs_x(\Gamma)
+\sum_{\substack{x,y\\x\neq y}}D_xa_y(\Gamma)
+\sum_xD_xa_x(\Gamma)
+\sum_xD_za_x(\Gamma),
\end{multline*}
where $x$ and $y$ run through $V(\Gamma)$ and $z$ is the name of the newly added vertex.

For different $x,y\in V(\Gamma)$ it easily follows that
\begin{equation*}
s_yD_x(\Gamma)=D_xs_y(\Gamma).
\end{equation*}

The term $D_zs_x(\Gamma)$ first splits a new vertex $z$ from the vertex $x$, connects them with an edge, say $g$, and reconnects some of the edges from $x$ to $z$. Afterwards it deletes vertex $z$ and reconnects edges from it to other vertices, possibly back to $x$, and reconnects $g$ also to other vertex.
The final result is that some of the edges are reconnected from $x$ to other vertices, and there is a new edge $g$ connecting $x$ and some other vertex.
The edge $g$ can be seen as added at the end, and before that, since number of edges is not changed, we can distinguish edges.
Suppose there is an edge that at the end stays at $x$, and call it $f$. We fix one way of reconnecting all other edges.
For that way of reconnecting there are two terms in $D_zs_x(\Gamma)$: one where $f$ stays at $x$ and one where it is reconnected to $z$ and back to $x$, and they cancel each other because valence of $z$ differ by one in them.
Therefore everything what survives from $D_zs_x(\Gamma)$ is reconnecting all edges from $x$ to other vertices and adding an edge $g$ from $x$ to some other vertex, say $y$. That is exactly the same as deleting vertex $x$ and adding an edge at $y$, with the opposite sign because the valence of the vertex being deleted differs by one. So we get
\begin{equation*}
D_zs_x(\Gamma)=-\sum_{\substack{y\\y\neq x}}a_yD_x(\Gamma).
\end{equation*}

An easy argument, that is left to the reader, implies
\begin{equation*}
D_xa_x(\Gamma)=-\sum_{\substack{y\\y\neq x}}D_xa_y(\Gamma).
\end{equation*}

The last term $\sum_xD_za_x(\Gamma)$ clearly adds an edge from $x$ to some other vertex, so it holds that
\begin{equation*}
\sum_xD_za_x(\Gamma)=\nabla(\Gamma).
\end{equation*}
By equations obtained, all except the last term of the above expression cancel, and the claimed formula follows.
\end{proof}

Propositions \ref{DelEvenD2} and \ref{DelEvenDd} easily imply $D\nabla+\nabla D=0$ in $\fGC_0^{\geq 2}$. But we need a bit stronger result.

\begin{prop}
\label{DelEvenDnab}
In $\fGC_0^{\geq 1}$ it holds that
\begin{equation*}
D\nabla+\nabla D=0.
\end{equation*}
\end{prop}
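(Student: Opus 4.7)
The plan is to prove $D\nabla + \nabla D = 0$ by a direct combinatorial case analysis on each graph $\Gamma\in\fGC_0^{\geq 1}$, in the same spirit as the proof of Proposition \ref{DelEvenDd}. Expand
\[
 D\nabla\,\Gamma \,=\, \sum_{x\in V(\Gamma)}\sum_{(a,b)} D_x\,\nabla_{(a,b)}\,\Gamma,
\]
where $\nabla_{(a,b)}$ adds one directed edge from $a$ to $b$, and split the sum according to the relationship between the deleted vertex $x$ and the new edge's endpoints: case (I) $x\notin\{a,b\}$, case (II) $x=a$, and case (III) $x=b$.

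In case (I) the new edge $g_{ab}$ is not incident to $x$, so $D_x$ does not touch it; the valence of $x$ is unchanged and one obtains exactly the same graph (with the same sign $(-1)^{v(x)}$) as the corresponding summand of $\nabla D\,\Gamma$. Summed over all $x$ and all $(a,b)$ with $a,b\neq x$, case (I) therefore equals $\nabla D\,\Gamma$. In cases (II) and (III) the new edge is incident to $x$ and gets reconnected by $D_x$; crucially, the valence of $x$ in $\Gamma + g_{xb}$ is $v(x)+1$, so the sign becomes $(-1)^{v(x)+1}$, opposite to that of case (I). In case (II), the $x$-end of $g_{xb}$ is reconnected to some $z_0 \in V(\Gamma)\setminus\{x,b\}$, producing $\tilde D_x\Gamma$ with an extra edge $(z_0,b)$. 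Holding $x$ fixed and letting $b$ range over $V(\Gamma)\setminus\{x\}$ and $z_0$ over $V(\Gamma)\setminus\{x,b\}$, the pair $(z_0,b)$ sweeps through every ordered pair of distinct vertices of $V(\Gamma)\setminus\{x\}$ exactly once, which is precisely the index set of $\nabla$ acting on $\tilde D_x\Gamma$. Hence case (II) contributes $-\nabla D\,\Gamma$, and case (III) contributes the same by a symmetric argument with the roles of $a$ and $b$ swapped. Collecting the three contributions gives $D\nabla\,\Gamma = \nabla D\,\Gamma - 2\,\nabla D\,\Gamma = -\nabla D\,\Gamma$, which is the claim.

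The crux of the argument is the sign shift $(-1)^{v(x)}\mapsto(-1)^{v(x)+1}$ between case (I) and cases (II)/(III); this is exactly what causes the three contributions to combine with the correct signs to cancel. The main thing to verify carefully is that the reconnection constraints on $z_0$ (namely $z_0\neq x$ because $x$ is deleted, and $z_0\neq b$ to avoid a tadpole) match exactly the domain of $\nabla$ on $V(\Gamma)\setminus\{x\}$, and that the label carried by the $\nabla$-added edge is preserved under $D_x$ so no additional permutation sign is introduced. The $\geq 1$ valence assumption is not essential for the combinatorial identity itself; it merely ensures that $D$ acts inside the complex and avoids degenerate situations such as the deletion of the unique vertex of a single-vertex graph. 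As a sanity check, in $\fGC_0^{\geq 2}$ the same identity can also be obtained abstractly by applying $D$ to $\delta D-D\delta=\nabla$ from both sides and using $D^2=0$ (Proposition~\ref{DelEvenD2}); that shortcut is unavailable in $\fGC_0^{\geq 1}$, so the direct argument above is what is needed here.
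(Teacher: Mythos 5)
Your proposal is correct and follows essentially the same route as the paper's own proof: both decompose $D\nabla$ according to whether the $\nabla$-added edge is incident to the deleted vertex, and both use the valence shift $v(x)\mapsto v(x)+1$ to get the sign flip yielding $D\nabla=\nabla D-2\nabla D=-\nabla D$. The only cosmetic difference is that you index the three cases by the edge's initial endpoints while the paper tracks how the edge arrives at its final position, which is the same case split.
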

\begin{proof}
$D\nabla$ puts an edge in all possible ways and then deletes a vertex, say $x$. If the new edge has been connected to $x$, it is moved to another vertex. Let finally the new edge connect vertices $y$ and $z$. To that position it can come in three different ways:
\begin{itemize}
\item directly been connected to $y$ and $z$ by $\nabla$, what is exactly the corresponding term from $\nabla D$;
\item  been connected from $y$ to $x$ by $\nabla$ and then moved to $z$ by $D_x$, what is the negative of the term in $\nabla D$ because of the sign changes in deleting vertex $x$ with one more valence;
\item and the same from $z$ to $x$, what is also the negative of the term in $\nabla D$.
\end{itemize}
All terms in $\nabla D$ are come like this, so indeed $D\nabla=\nabla D-2\nabla D=-\nabla D$.
\end{proof}

\begin{prop}
\label{DelEvenD2nab}
In $\fGC_0^{\geq 1}$ it holds that
\begin{equation*}
\nabla D^2=0.
\end{equation*}
\end{prop}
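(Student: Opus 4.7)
The strategy is to exhibit $D^2(\Gamma)$ as $\nabla$ applied to an explicit element, and then conclude by invoking $\nabla^2=0$ (which follows from $(\delta+\nabla)^2=0$, $\delta^2=0$ and $\delta\nabla+\nabla\delta=0$, all recalled in Section~\ref{ss:DGC1}).

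From the argument already used in the proof of Proposition~\ref{DelEvenD2}, for $\Gamma\in\fGC_0^{\geq 1}$ the summand $D_yD_x(\Gamma)$ vanishes unless $x$ is a $1$-valent vertex whose unique edge $e_0$ joins $x$ to $y$. So $D^2(\Gamma)$ reduces to the sum of $D_yD_x(\Gamma)$ over precisely such pairs $(x,y)$.

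Next I would analyse $D_yD_x(\Gamma)$ combinatorially. Write the edges at $y$ as $e_0,e_1,\dots,e_{k-1}$, with $e_i$ joining $y$ to $y_i$ for $i\geq 1$ and $k=v(y)$. The operation $D_x$ removes $x$ and relocates the free end of $e_0$ to some vertex $w\neq y$, contributing the sign $(-1)^{v(x)}=-1$. Subsequently $D_y$ removes $y$ and reattaches the $y$-ends of $e_0,e_1,\dots,e_{k-1}$ to vertices $u_0,u_1,\dots,u_{k-1}$, contributing the sign $(-1)^k$. The decisive observation is that the ordered summation over $(w,u_0)$ — with $w$ from the $D_x$ step and $u_0$ from the $D_y$ step — ranges over all ordered pairs of distinct vertices in $V(\Gamma)\setminus\{x,y\}$, which is precisely the action of $\nabla$ placing the (directed) edge $e_0$ (recall $\nabla$ sums over both orientations of each new edge). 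The remaining $u_i$ for $i\geq 1$ are independent reconnections. Collecting everything,
$$
D_yD_x(\Gamma)\;=\;(-1)^{k+1}\,\nabla(\Xi_{x,y}),
$$
where $\Xi_{x,y}$ is the element obtained from $\Gamma$ by deleting $x$, $y$, and $e_0$, and summing over all reconnections $u_i\neq y_i$ of the edges $e_1,\dots,e_{k-1}$ at $y_1,\dots,y_{k-1}$. Summing over pairs $(x,y)$ gives $D^2(\Gamma)=\nabla(\Xi(\Gamma))$ with $\Xi(\Gamma)=\sum_{(x,y)}(-1)^{v(y)+1}\Xi_{x,y}$, and therefore
$$
\nabla D^2(\Gamma)\;=\;\nabla^2(\Xi(\Gamma))\;=\;0.
$$

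The main obstacle is the careful sign and multiplicity bookkeeping in the identity $D_yD_x(\Gamma)=(-1)^{k+1}\nabla(\Xi_{x,y})$: one must check that the ordered $(w,u_0)$-summation matches $\nabla$'s ``each edge added in both directions'' convention exactly, and that the signs $(-1)^{v(x)}=-1$ and $(-1)^{v(y)}=(-1)^k$ from the two applications of $D$ combine as claimed. A minor related point is that $\Xi_{x,y}$ a priori lives in $\fGC_0$ rather than $\fGC_0^{\geq 1}$ (deleting $x,y$ can create isolated vertices), but this is harmless since $\nabla^2=0$ holds in $\fGC_0$.
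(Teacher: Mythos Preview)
Your argument is correct and follows essentially the same approach as the paper's proof. Both identify that $D_yD_x(\Gamma)$ (for the only surviving pairs, where $x$ is $1$-valent with its edge going to $y$) amounts to deleting $x$, $y$, and the edge between them, reconnecting the remaining edges at $y$, and then adding one edge in all possible ways --- i.e.\ $\nabla$ applied to an explicit element --- so that $\nabla D^2$ becomes an instance of $\nabla^2=0$; the paper phrases this last step informally (``adding one edge on one place and another edge on another place cancels with adding edges in the opposite order'') where you invoke $\nabla^2=0$ directly. Incidentally, your caveat about $\Xi_{x,y}$ possibly lying outside $\fGC_0^{\geq 1}$ is unnecessary: since $x$ is $1$-valent with its edge to $y$, any other vertex $z$ either has an edge to some $w\notin\{x,y\}$ (preserved in $\Xi_{x,y}$) or is some $y_i$ (and then retains the reconnected edge $e_i$), so no isolated vertices arise.
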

\begin{proof}
From the proof of Proposition \ref{DelEvenD2} the term $D_yD_x(\Gamma)$ of $D^2(\Gamma)$ may only be non-zero if $x$ is a $1$-valent vertex with an edge connecting to $y$. Then $D_yD_x$ deletes $x$, $y$ and the edge between them, reconnects all other edges from $y$ elsewhere and adds an edge in all possible ways. Then $\nabla$ adds another edge in all possible ways. Adding one edge on one place and another edge on another place cancels with adding edges in the opposite order, hence the result.
\end{proof}

%
%

\subsection{Odd case}
Like in the even case, for a graph $\Gamma\in\fGC_1$ we define
\begin{equation}
 D(\Gamma) \,:= \sum_{x\in V(\Gamma)}D_x = \sum_{x\in V(\Gamma)}(-1)^{v(x)}\tilde D_x.
\end{equation}
Note that multiple edges are now possible. Similarly as with splitting, if an $N$-fold edge went to the vertex being deleted, there is a factor $\binom{N}{k_1,k_2,\dots}$ where $k_i$ is the number of edges from that multiple edge that go to the vertex $i$.
We can restrict $D$ to $\fGC_1^{\geq 1}$.

\begin{prop}
\label{DelOddD2}
On $\fGC_1^{\geq 1}$ it holds that
\begin{equation*}
D^2=0.
\end{equation*}
\end{prop}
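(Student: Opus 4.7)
The plan is to expand
\[
D^2(\Gamma) \;=\; \sum_{\substack{x,y \in V(\Gamma)\\ x \neq y}} D_y D_x(\Gamma)
\]
and show that for each ordered pair $(x,y)$ the inner summand $D_y D_x(\Gamma)$ already vanishes in $\fGC_1^{\geq 1}$. I would follow the pairing strategy of Proposition~\ref{DelEvenD2} as far as it reaches, and supplement it by one new involution specifically tailored to the odd setting. The split is driven by the quantity $M$, the number of edges at $x$ whose other endpoint is not $y$.

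In the first case $M \geq 1$, I would run the same pairing used in the even proof. For each of these $M$ edges, the $x$-endpoint can either be pushed by $\tilde D_x$ directly to its final vertex, or first routed to $y$ by $\tilde D_x$ and then pushed off by $\tilde D_y$; both routes produce the same final graph, but the valence of $y$ at the moment $D_y$ is evaluated differs by one. Letting $k \in \{0,\dots,M\}$ denote the number of these edges that pass through $y$, the total sign factor becomes $(-1)^{v(x)+v(y)+k}$, and $\sum_{k=0}^M \binom{M}{k}(-1)^k = 0$ kills the contribution. In contrast to the even case, the hypothesis $\geq 1$ is compatible with $M = 0$, and that is the case that needs a genuinely new argument.

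In the second case $M = 0$, the vertex $x$ is glued to $y$ by a multi-edge of multiplicity $N = v(x) \geq 1$ and has no further edges. Such a configuration never arises in Proposition~\ref{DelEvenD2} because in $\fGC_0$ multi-edges vanish by $\sgn_e$; in the odd complex they persist, and I would cancel them by exploiting $\sgn_2^{\otimes e}$ instead. After $D_y D_x$, each of the $N$ original $x$-$y$ edges has been rerouted to a directed edge $z_i \to w_i$ with $z_i,w_i \in V(\Gamma)\setminus\{x,y\}$ and $w_i \neq z_i$. I would define the involution on rerouting data that sends $(z_1,w_1) \mapsto (w_1,z_1)$ while fixing all other components. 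The symmetric constraint $z_1 \neq w_1$ makes this a fixed-point-free involution, and the two paired labeled graphs differ only in the direction of that one edge. Since $\sgn_2$ assigns this direction reversal the sign $-1$ in $\fGC_1^{\geq 1}$, the paired contributions cancel, and $D_y D_x(\Gamma) = 0$ also in this case.

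I expect the only genuinely new step over the even proof to be the involution of the second case. The main conceptual point is just the observation that the odd-complex signs $\sgn_2$—absent in the even setting—compensate exactly for the appearance of multi-edges, which is precisely what lets this odd analogue work under the weaker valence hypothesis $\geq 1$ rather than the $\geq 2$ needed for Proposition~\ref{DelEvenD2}. Everything else is bookkeeping: verifying that constraints are symmetric, that signs are as claimed, and that the two pairings are compatible with the $S_e$-action needed to pass to the invariant space.
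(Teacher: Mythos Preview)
Your proposal is correct and follows essentially the same approach as the paper's proof: the same case split on whether $x$ has an edge to a vertex other than $y$, the same even-case pairing in the first case, and in the second case the same direction-reversal cancellation of one of the rerouted $x$--$y$ edges via the $\sgn_2$ sign. The paper phrases the $M=0$ case as ``$D_yD_x(\Gamma)$ equals $D_y(\Gamma')$ with $k$ edges added in all possible ways, and adding an edge one way cancels with the opposite way,'' which is precisely your involution $(z_1,w_1)\mapsto(w_1,z_1)$.
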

\begin{proof}
Like in the proof of Proposition \ref{DelEvenD2} we write
\begin{equation*}
D^2(\Gamma)\,=\sum_{\substack{x,y\in V(\Gamma)\\x\neq y}}D_yD_x(\Gamma),
\end{equation*}
fix $x,y\in V(\Gamma)$ and distinguish edges. If there is an edge from $x$ to a vertex other than $y$ the same reasoning from Proposition \ref{DelEvenD2} leads to $D_yD_x(\Gamma)=0$. If not, all edges from $x$ go to $y$ and let there be $k>0$ of them. Let $\Gamma'$ be the graph obtained from $\Gamma$ by deleting vertex $x$ and all $k$ edges at $x$. Then $D_yD_x(\Gamma)$ is actually $D_y(\Gamma')$ where we add $k$ edges in all possible ways. But adding an edge from one vertex to another cancels with adding an opposite edge, leading to the conclusion $D_yD_x(\Gamma)=0$. Therefore it again holds that $D^2=0$.
\end{proof}

\begin{prop}
\label{DelOddDd}
On $\fGC_1$ it holds that
\begin{equation*}
\delta D + D \delta = 0.
\end{equation*}
\end{prop}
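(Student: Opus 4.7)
The plan is to mirror the computation from Proposition \ref{DelEvenDd} almost verbatim, tracking the odd-case sign conventions and showing that the $\nabla$ contribution from the even case is replaced by zero. First I would expand $\delta D + D\delta$ into the same families of composite operations used there: $s_y D_x$, $a_y D_x$, $D_x s_y$, $D_x a_y$ for $x \neq y$, together with the diagonal pieces $D_z s_x$, $D_z a_x$, and $D_x a_x$, where $z$ denotes the new vertex created by $s_\cdot$ or $a_\cdot$. The statement uses the anticommutator $\delta D + D\delta$ rather than the commutator because $D$ has degree $-1$ in $\fGC_1$, hence is odd.

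Next I would verify the pairwise cancellations in the odd sign conventions. For $x \neq y$ the operations $s_y$ and $D_x$ commute as combinatorial operations, and the signs — from the $(-1)^{v(x)}$ inside $D_x$, the $\sgn_v$ on vertex permutations, and the $\sgn_2^{\otimes e}$ on edge reversals — combine so that $s_y D_x + D_x s_y = 0$ termwise, and similarly $a_y D_x + D_x a_y = 0$. The diagonal pieces $D_z s_x$ and $D_x a_x$ reduce, by the same pairing argument used in the even case (fix a reconnection pattern and pair the configuration in which a distinguished edge is left untouched with the one in which it is first routed through $z$ and then re-routed; the two differ by a single unit shift in $v(z)$ and hence cancel via the sign in $D_z$), to expressions of the form $\pm\sum_{y\neq x}a_y D_x$ and $\pm\sum_{y\neq x}D_x a_y$ respectively, killing the leftover cross terms.

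The one genuinely new point compared to the even case is the vanishing of $\sum_x D_z a_x$. In the even case this term produced $\nabla(\Gamma)$, because each unordered pair of distinct vertices produces one new edge, counted once from each endpoint. In the odd complex these two counts give the same underlying edge but with opposite orientations; the $\sgn_2$ factor in the definition of $\fGC_1$ turns reversal into a minus sign, so the two contributions cancel pairwise and $\sum_x D_z a_x = 0$. Combining everything gives $\delta D + D\delta = 0$.

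The main obstacle will be the sign bookkeeping. One has to track $\sgn_v$ on vertex permutations and $\sgn_2$ on edge reversals simultaneously with the intrinsic $(-1)^{v(x)}$ inside $D_x$ and the implicit renumbering of the new vertex and new edge introduced by $s_\cdot$ and $a_\cdot$. I would fix once and for all a convention for the direction of newly created edges and carry out each cancellation locally at a single edge or vertex, reducing the whole proof to finitely many small sign verifications.
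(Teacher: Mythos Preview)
Your proposal is correct and follows essentially the same approach as the paper: the paper's proof simply states that the argument of Proposition~\ref{DelEvenDd} goes through with care for signs, and that the final term $\sum_x D_z a_x$ vanishes because adding an edge one way cancels with adding the opposite edge via the $\sgn_2$ action. Your write-up is in fact more detailed than the paper's two-sentence proof, explicitly identifying the degree reason for the anticommutator and isolating each cancellation, but the underlying strategy is identical.
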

\begin{proof}
The argument, with a bit of care for the signs, is the same as in the proof of Proposition \ref{DelEvenDd}. Only in the last term adding an edge from one vertex to another cancels with adding the opposite one, leading to the result $0$.
\end{proof}

\subsection{More about the spectral sequence of \cite{DGC1} for the even case}

Recall from \cite[Corollary 4]{DGC1} that $H\left(\fGCc_0, \delta+\nabla\right)=0$ and that there is a spectral sequence converging to it whose $E^1$ term (i.e.\ the first page) is $H\left(\fGCc_0,\delta\right)$ with the differential $\nabla$. The spectral sequence is on the number $b=e-v$. The following corollary calculates homologies of some similar complexes.

\begin{cor}
\begin{enumerate}\item[]
\item $H\left(\fGCc_0^{\geq 2}, \delta+\nabla\right)=0$;
\item $H\left(\fGC_0^{\geq 2}, \delta+\nabla\right)=0$.
\end{enumerate}
\end{cor}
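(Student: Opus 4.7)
The plan is to deduce both parts from the acyclicity of $(\fGCc_0,\delta+\nabla)$ established in \cite[Corollary 4]{DGC1} by a spectral sequence argument; no genuinely new cohomological input is required.

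For part (1) I would consider the natural inclusion $\iota\colon\fGCc_0^{\geq 2}\hookrightarrow\fGCc_0$. It is a chain map for both $\delta$ and $\nabla$ (each is defined by the same recipe on graphs, and $\nabla$ cannot lower a valence), and it preserves the decomposition by $b=e-v$. Filtering both complexes by $F^p=\prod_{b\geq p}\mB^b(-)$ yields a spectral sequence whose $E^1$-page is $H(\mB^b(-),\delta)$ and whose $d_1$ is induced by $\nabla$. Proposition~\ref{prop:Simpl} says that $\iota$ is a $\delta$-quasi-isomorphism, and since it preserves $b$ it is in fact a quasi-isomorphism on each $\mB^b$-summand. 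Hence the two $E^1$-pages, and consequently all subsequent pages, are canonically identified, giving $H(\fGCc_0^{\geq 2},\delta+\nabla)\cong H(\fGCc_0,\delta+\nabla)=0$.

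For part (2) I would exploit the fact that in the even case the disconnected complex is the completed graded symmetric algebra on its connected part: $\fGC_0^{\geq 2}\cong\widehat S^{\geq 1}(\fGCc_0^{\geq 2})$, with $\delta$ acting as a derivation. Here $\nabla$ decomposes as $\nabla_1+\nabla_2$, where $\nabla_1$ adds an edge inside a single connected component (a derivation) while $\nabla_2$ joins two distinct components by a new edge and so decreases the number $c$ of components by one. I would then filter by $c\leq k$; this filtration is preserved by $\delta+\nabla$ and the associated graded sees only $\delta+\nabla_1$, which is the derivation extension of $(\delta+\nabla)\big|_{\fGCc_0^{\geq 2}}$ from $\fGCc_0^{\geq 2}$ to $S^k(\fGCc_0^{\geq 2})$. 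The graded K\"unneth formula, together with part~(1), then gives
\[
H\bigl(S^k(\fGCc_0^{\geq 2}),\,\delta+\nabla_1\bigr)\cong S^k\bigl(H(\fGCc_0^{\geq 2},\delta+\nabla)\bigr)=S^k(0)=0
\]
for every $k\geq 1$, so $E^1=0$ and hence $H(\fGC_0^{\geq 2},\delta+\nabla)=0$.

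The hard part will be the bookkeeping for convergence of the two spectral sequences, handled by the standard arguments of \cite[Appendix C]{DGC1}: in each total degree $d=e$ the $b$-filtration in part (1) has $v=d-b$ fixed so only finitely many graphs contribute, and the $c$-filtration in part (2) is bounded since the $\geq 2$-valence condition forces $c\leq v\leq e=d$. Both filtrations are thus finite-length in each degree, which is precisely what is needed for the spectral sequence machinery to apply.
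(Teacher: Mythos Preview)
Your proposal is correct and follows essentially the same strategy as the paper. For part (1) the paper phrases the argument via the mapping cone of the inclusion rather than by comparing the two spectral sequences directly, and for part (2) the paper states the spectral sequence on $c$ more tersely without naming the decomposition $\nabla=\nabla_1+\nabla_2$, but the underlying arguments and the convergence checks are the same.
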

\begin{proof}
\begin{enumerate}\item[]
\item On the mapping cone of the inclusion $\left(\fGCc_0^{\geq 2}, \delta+\nabla\right)\hookrightarrow\left(\fGCc_0, \delta+\nabla\right)$ we set up a spectral sequence mentioned above, on the number $b=e-v$. The complex with the first differential is the mapping cone of the inclusion $\left(\fGCc_0^{\geq 2},\delta\right)\hookrightarrow\left(\fGCc_0,\delta\right)$. It is acyclic by Proposition \ref{prop:Simpl}. The spectral sequence converges correctly because the space in each degree $e$ is finitely dimensional, so the whole mapping cone is acyclic. That leads to $H\left(\fGCc_0^{\geq 2}, \delta+\nabla\right)=H\left(\fGCc_0, \delta+\nabla\right)=0$.


\item On $H\left(\fGC_0^{\geq 2}, \delta+\nabla\right)$ we set up a spectral sequence on the number of connected components. It clearly converges to the cohomology of the whole complex. In the $c$-th row there is a complex
$$
\left(\left(\fGCc_0^{\geq 2}, \delta+\nabla\right)^{\otimes c}\right)^{S_c}[1-c]
$$
that is acyclic by (1), hence the result.
\end{enumerate}
\end{proof}

On $\fGC_0^{\geq 1}$ we define a conjugated differential:
\begin{equation}
\label{TildeDeltac}
\tilde\delta:=e^{ D}(\delta+\nabla)e^{- D}.
\end{equation}
Since $D$ is nilpotent, it is well defined. Using results of Propositions \ref{DelEvenD4} to \ref{DelEvenD2nab} we can calculate $\tilde\delta$ explicitly:
\begin{multline}
\label{TildeDeltac2}
\tilde\delta=\left(\Id+ D+\frac{ D^2}{2}+\frac{ D^3}{6}\right)
(\delta+\nabla)
\left(\Id- D+\frac{ D^2}{2}-\frac{ D^3}{6}\right)=\\
=\delta+\nabla+ D\delta+ D\nabla-\delta  D-\nabla  D -  D\delta  D
+\frac{ D^2\delta}{2}-\frac{ D^2\delta D}{2}
+\frac{\delta D^2}{2}+\frac{ D\delta D^2}{2}=
\delta +  D\nabla.
\end{multline}

Note that the differential $\tilde\delta$ can not change $b=e-v$ by an odd amount. Therefore complexes with that differential split into the direct sum of two complexes, one with even and one with odd $b$.
The following result is now straightforward.

\begin{cor}
\label{cor:even2}
There is a spectral sequence converging to
$$
H\left(\fGC_0^{\geq 2}, \tilde\delta\right)=0
$$
whose first page is
$$
H\left(\fGC_0^{\geq 2},\delta\right)
$$
Furthermore, in all spectral sequences differentials on odd pages are $0$.
\end{cor}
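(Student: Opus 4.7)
My plan is to exploit the conjugation relation $\tilde\delta=e^{D}(\delta+\nabla)e^{-D}$, which presents $(\fGC_0^{\geq 2},\tilde\delta)$ as an isomorphic twist of $(\fGC_0^{\geq 2},\delta+\nabla)$, whose cohomology was already shown to vanish in the preceding corollary. The first step is to verify the preservation claim $D\!\left(\fGC_0^{\geq 2}\right)\subseteq\fGC_0^{\geq 2}$. By inspection of the reconnection rule for $D_x$, every remaining vertex $y\neq x$ keeps all of its originally incident edges — only the $x$-ends of $x$'s former edges are relocated — and any reconnection landing at $y$ only increases its valence, so no vertex drops below valence $2$. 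Combined with $D^2=0$ from Proposition \ref{DelEvenD2}, this makes $e^{\pm D}=\Id\pm D$ inverse automorphisms of $\fGC_0^{\geq 2}$ intertwining the two differentials, whence $H\!\left(\fGC_0^{\geq 2},\tilde\delta\right)\cong H\!\left(\fGC_0^{\geq 2},\delta+\nabla\right)=0$ by part (2) of the preceding corollary.

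For the spectral sequence I would use the decreasing filtration $F^p:=\prod_{b\geq p}\mB^b\fGC_0^{\geq 2}$. By the explicit formula \eqref{TildeDeltac2}, $\tilde\delta=\delta+D\nabla$, where $\delta$ preserves $b=e-v$ and $D\nabla$ raises it by $2$; hence $\tilde\delta$ respects $F^\bullet$ and the associated graded differential $d_0$ is $\delta$, giving $E_1=H\!\left(\fGC_0^{\geq 2},\delta\right)$. Convergence to $H\!\left(\fGC_0^{\geq 2},\tilde\delta\right)=0$ is a standard boundedness argument (cf.\ \cite[Appendix C]{DGC1}): in each cohomological degree $d=e$ the min-valence condition forces $v\leq d$, so $b$ ranges over the finite set $\{0,\dots,d\}$ and the filtration is bounded in every degree. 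The vanishing of odd-page differentials is then a parity observation — $\tilde\delta$ shifts $b$ only by the even amounts $0$ and $2$, whereas $d_r$ on the $r$-th page necessarily shifts filtration degree by exactly $r$, so $d_r=0$ for every odd $r$.

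The chief technicality is the preservation step: without $D(\fGC_0^{\geq 2})\subseteq\fGC_0^{\geq 2}$ the operators $e^{\pm D}$ would not act on $\fGC_0^{\geq 2}$ at all, and one would be forced either to work in the ambient $\fGC_0^{\geq 1}$ and then argue that $\tilde\delta$ descends to a natural quotient, or to verify $\tilde\delta^2=0$ by hand on $\fGC_0^{\geq 2}$ via the formula $\tilde\delta=\delta+D\nabla$. With the preservation lemma in place, however, the vanishing of the cohomology follows at once from conjugation, and the spectral sequence together with its odd-page behaviour is read off directly from the shift decomposition of $\tilde\delta$.
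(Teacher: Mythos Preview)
Your proof is correct and follows essentially the same route as the paper: conjugation by $e^{\pm D}$ (well-defined on $\fGC_0^{\geq 2}$ since $D$ preserves it and $D^2=0$ there) transports the acyclicity of $(\fGC_0^{\geq 2},\delta+\nabla)$ to $(\fGC_0^{\geq 2},\tilde\delta)$, and the filtration by $b=e-v$ gives the spectral sequence with $E_1=H(\fGC_0^{\geq 2},\delta)$, bounded in each degree, with odd-page differentials vanishing by the parity splitting. The paper simply declares the result ``straightforward'' after noting the parity observation, so you have supplied exactly the details it omits.
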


The corollary is similar to \cite[Corollary 4]{DGC1}, but this time the complex includes disconnected graphs. Indeed, \cite[Corollary 4]{DGC1} implies the same result for disconnected graphs, and in this sense our result is weaker. There is the filtration on $b=e-v$ in both cases, but different total differentials $\delta+\nabla$ and $\tilde\delta$, that are both acyclic. So, in both cases there are cancellations of classes in $H\left(\fGC_0^{\geq 2},\delta\right)$ as drawn in Table \ref{tbl:evencanceling2}. For disconnected graphs our result is stronger because we know that there are no cancellations on odd pages.

\begin{table}[h]
\begin{tikzpicture}
\matrix (mag) [matrix of nodes,ampersand replacement=\&]
{
{}\& 0 \& 1 \& 2 \& 3 \& 4 \& 5 \& 6 \& 7 \& 8 \& 9 \&10 \&11 \&12 \&13 \&14 \&15 \&16 \&17 \&18 \&19 \&20 \&21 \&22 \&23 \&24 \&25 \&26 \&27 \\
-1\& 0 \& 0 \& 0 \& 0 \& 0 \& 0 \& 0 \& 0 \& 0 \& 0 \& 0 \& 0 \& 0 \& 0 \& 0 \& 0 \& 0 \& 0 \& 0 \& 0 \& 0 \& 0 \& 0 \& 0 \& 0 \& 0 \& 0 \& 0 \\
0 \&   \& 0 \& 0 \& 0 \& 0 \& 1 \& 0 \& 0 \& 0 \& 1 \& 0 \& 0 \& 0 \& 1 \& 1 \& 0 \& 0 \& 1 \& 1 \& 0 \& 0 \& 1 \& 2 \& 0 \& 0 \& 1 \& 2 \& 1 \\
1 \&   \&   \& 0 \& 0 \& 0 \& 0 \& 0 \& 0 \& 0 \& 0 \& 0 \& 0 \& 0 \& 0 \& 0 \& 0 \& 0 \& 0 \& 0 \& 0 \& 0 \& 0 \& 0 \& 0 \& 0 \& 0 \& 0 \& 0 \\
2 \&   \&   \&   \& 0 \& 0 \& 0 \& 1 \& 0 \& 0 \& 0 \& 0 \& 1 \& 0 \& 0 \& 0 \& 1 \& 0 \& 0 \& 0 \& 1 \& 1 \& 0 \& 0 \& 0 \& 1 \& 1 \& 0 \& 0 \\
3 \&   \&   \&   \&   \& 0 \& 0 \& 0 \& 0 \& 0 \& 0 \& 0 \& 0 \& 0 \& 0 \& 0 \& 0 \& 0 \& 0 \& 0 \& 0 \& 0 \& 0 \& 0 \& 0 \& 0 \& 0 \& 0 \& 0 \\
4 \&   \&   \&   \&   \&   \& 0 \& 0 \& 0 \& 0 \& 0 \& 1 \& 0 \& 1 \& 0 \& 0 \& 1 \& 0 \& 1 \& 0 \& 1 \& 0 \& 1 \& 0 \& 1 \& 1 \& 1 \& 1 \& 0 \\
5 \&   \&   \&   \&   \&   \&   \& 0 \& 0 \& 0 \& 0 \& 0 \& 0 \& 0 \& 0 \& 0 \& 1 \& 0 \& 0 \& 0 \& 0 \& 1 \& 0 \& 0 \& 0 \& 1 \& 0 \& 0 \& 0 \\
6 \&   \&   \&   \&   \&   \&   \&   \& 0 \& 0 \& 0 \& 0 \& 0 \& 0 \& 0 \& 1 \& 0 \& 1 \& 0 \& 1 \& 1 \& 0 \& 1 \& 0 \& 2 \& 0 \& 1 \& 0 \& 2 \\
7 \&   \&   \&   \&   \&   \&   \&   \&   \& 0 \& 0 \& 0 \& 0 \& 0 \& 0 \& 0 \& 0 \& 1 \& 0 \& 0 \& 1 \& 0 \& 2 \& 0 \& 0 \& 1 \& 1 \& 1 \& 0 \\
8 \&   \&   \&   \&   \&   \&   \&   \&   \&   \& 0 \& 0 \& 0 \& 0 \& 0 \& 0 \& 0 \& 0 \& 0 \& 1 \& 0 \& 2 \& 1 \& 1 \& 1 \& 1 \& 2 \& 1 \& 2 \\
9 \&   \&   \&   \&   \&   \&   \&   \&   \&   \&   \& 0 \& 0 \& 0 \& 0 \& 0 \& 0 \& 0 \& 0 \& 0 \& 0 \& 1 \& 0 \& 1 \& 2 \& 0 \& 3 \& 0 \& 3 \\
10\&   \&   \&   \&   \&   \&   \&   \&   \&   \&   \&   \& 0 \& 0 \& 0 \& 0 \& 0 \& 0 \& 0 \& 0 \& 0 \& 0 \& 0 \& 2 \& 0 \& ? \& ? \& ? \& ? \\
11\&   \&   \&   \&   \&   \&   \&   \&   \&   \&   \&   \&   \& 0 \& 0 \& 0 \& 0 \& 0 \& 0 \& 0 \& 0 \& 0 \& 0 \& 0 \& 0 \& 2 \& ? \& ? \& ? \\
12\&   \&   \&   \&   \&   \&   \&   \&   \&   \&   \&   \&   \&   \& 0 \& 0 \& 0 \& 0 \& 0 \& 0 \& 0 \& 0 \& 0 \& 0 \& 0 \& 0 \& 0 \& 3 \& ? \\
};
\draw (mag-2-1.north west) -- (mag-1-29.south east);
\draw (mag-1-2.north west) -- (mag-15-1.south east);
\draw[-latex, thick] (mag-3-7) edge (mag-5-8);
\draw[-latex, thick] (mag-3-11) edge (mag-7-12);
\draw[-latex, thick] (mag-5-13) edge (mag-7-14);
\draw[-latex, thick] (mag-3-15) edge (mag-9-16);
\draw[-latex, thick] (mag-8-17) edge (mag-10-18);
\draw[-latex, thick] (mag-8-22) edge (mag-10-23);
\draw[-latex, thick] (mag-10-21) edge (mag-12-22);
\draw[-latex, thick] (mag-11-23) edge (mag-13-24);
\draw[-latex, thick] (mag-12-25) edge (mag-14-26);
\draw[-latex, thick] (mag-10-23) edge (mag-12-24);
\end{tikzpicture}
\caption{\label{tbl:evencanceling2} Table of dimensions of cohomology $H\left(\fGC_0,\delta\right)$. The column number represents the number of edges $e$ and the row number represents $b=e-v$. Known cancellations are depicted by arrows.}
\end{table}

\section{Constraints on hairy graphs}
\label{s:contraints}

In this section we study subcomplexes of Hairy graph complex spanned by graphs that fulfill certain constraints, e.g.\ minimal valence of vertices.

\subsection{Simplifying full graph complex}

Proposition \ref{prop:Simpl} (\cite[Proposition 3.4]{grt}) can easily be extended to the hairy case as follows.

\begin{prop}
\label{prop:fHGC-fHGC3}
\begin{equation*}
H\left(\fHGCc_{-1,n}^{\geq 0},\delta\right)=H\left(\fHGCc^{\geq 2}_{-1,n},\delta\right)=
H\left(\fHGCc_{-1,n}^{\geq 3},\delta\right)\oplus\bigoplus_{\substack{j\geq 3\\j\equiv 2n+1\mod 4}}\K[-1-j].
\end{equation*}
\end{prop}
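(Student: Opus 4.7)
The plan is to split the complex by the number of hairs and reduce the statement to the non-hairy Proposition \ref{prop:Simpl}, observing that all the extra classes beyond $H\left(\fHGCc_{-1,n}^{\geq 3}\right)$ arise entirely from the hairless summand.

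Since the differential $\delta$ preserves the number of hairs $h$, each $\mH^h\fHGCc_{-1,n}^{\geq i}$ is a subcomplex and the whole complex decomposes as
$$\fHGCc_{-1,n}^{\geq i} = \prod_{h\geq 0}\mH^h \fHGCc_{-1,n}^{\geq i},$$
and the same for the $\geq 2$ and $\geq 3$ variants. For the hairless factor $h=0$, the identification \eqref{eq:HGC-GCshift} gives $\mH^0\fHGCc_{-1,n}^{\geq i} = \fGCc_n^{\geq i}[-1-n]$, so Proposition \ref{prop:Simpl} applies directly. A short index check confirms that the extra summand $\bigoplus_{k\geq 1}\K[-4k-1]$ for $n=0$, respectively $\bigoplus_{k\geq 0}\K[-4k-2]$ for $n=1$, after the shift by $[-1-n]$, becomes precisely $\bigoplus_{j\geq 3, j\equiv 2n+1\mod 4}\K[-1-j]$, matching the stated formula.

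The remaining task is to show that for each $h\geq 1$ the three cohomologies $H\left(\mH^h\fHGCc_{-1,n}^{\geq 0}\right)$, $H\left(\mH^h\fHGCc_{-1,n}^{\geq 2}\right)$ and $H\left(\mH^h\fHGCc_{-1,n}^{\geq 3}\right)$ coincide, so that the $h\geq 1$ factors contribute no extra summand. The approach is to mimic the spectral sequence argument underlying Proposition \ref{prop:Simpl}: filter by the number of vertices of valence $\geq 3$, so that on the $E_1$-page one is left with graphs assembled from chains of $2$-valent vertices and trees of $\leq 1$-valent vertices grafted onto a $\geq 3$-valent core. The crucial observation is that in a \emph{connected} graph carrying at least one hair, no closed loop of $2$-valent vertices can occur --- such a loop is $2$-regular and would form its own connected component, hence be hairless --- and similarly no isolated $0$-valent vertex can exist. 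Thus the only contributions surviving the contraction of $2$-valent chains and the removal of low-valence trees are graphs in which every vertex has valence $\geq 3$, yielding the desired equality without any residual loop or antenna classes.

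The main technical obstacle is the careful verification that the antenna and chain-contraction spectral sequence of \cite{grt} still functions in the presence of hairs; the sign bookkeeping needs attention but is routine, and convergence is standard because for each fixed $b$ the piece $\mB^b\mH^h\fHGCc_{-1,n}^{\geq i}$ is finite-dimensional in every degree, so the tools of \cite[Appendix C]{DGC1} apply directly. Assembling the $h=0$ computation with the $h\geq 1$ vanishing of extras gives the stated three-way decomposition.
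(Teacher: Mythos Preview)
Your proposal is correct and is essentially the same approach as the paper's: the paper's proof is the single line ``the same argument as in \cite[Proposition 3.4]{grt}'', and what you have written is precisely a sketch of how that argument runs in the hairy setting, together with the observation that the extra loop classes live only in the $h=0$ summand. Your degree-shift bookkeeping and the remark that a $2$-regular cycle component is necessarily hairless are exactly the points one needs to verify when transporting the argument.
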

\begin{proof}
The same argument as in \cite[Proposition 3.4]{grt}.
\end{proof}

In the subcomplex $\fHGCc=\fHGCc^{\geq 1}\subset\fHGCc^{\geq 0}$ only one graph is excluded, the one vertex graph $\sigma=\sigma_0=\bullet$. This makes the cohomology $H\left(\fHGCc_{-1,n},\delta\right)$ different from $H\left(\fHGCc_{-1,n}^{\geq 0},\delta\right)=H\left(\fHGCc^{\geq 2}_{-1,n},\delta\right)$ by one class. The class is represented by $\lambda=\lambda_1=
\begin{tikzpicture}[scale=.5,baseline=-.65ex]
 \node[int] (a) at (0,0) {};
 \node[int] (b) at (1,0) {};
 \draw (a) edge (b);
\end{tikzpicture}$.
If this graph is excluded too, the resulting complex will again be quasi-isomorphic to the original one. In the next definition we define that complex and some other ``excluding'' complexes that does not change the cohomology by the following proposition.

\begin{defi}
\label{defi:bounded}
Let $\fHGC^{\dagger}\subset\fHGC$ be the subcomplex spanned by graphs that do not have $\lambda$ as a connected component.

Let the \emph{bounded graph complex} $\fHGC^{\ddagger}\subset\fHGC$ be the subcomplex spanned by graphs that do not have $\sigma_1=
\begin{tikzpicture}[baseline=-.5ex]
\node[int] (a) at (0,0) {};
\draw (a) edge (0,.2);
\end{tikzpicture}$
or
$
\lambda_2=
\begin{tikzpicture}[baseline=-.5ex]
\node[int] (a) at (0,0) {};
\node[int] (b) at (.5,0) {};
\draw (a) edge (0,.2);
\draw (a) edge (b);
\end{tikzpicture}$
as a connected component. Let $\fHGC^{\dagger\ddagger}=\fHGC^{\dagger}\cap\fHGC^{\ddagger}$.

Let $\fHGC^{*}\subset\fHGC^{\geq 2}$ be the subcomplex spanned by graphs that do not have 2-valent vertex with a hair.
\end{defi}

\begin{prop}
\label{prop:fHGCc-d-dd-fHGCc*}
\begin{equation*}
H\left(\fHGCc^{\geq 0}_{-1,n},\delta\right)=
H\left(\fHGCc^{\dagger}_{-1,n},\delta\right)=
H\left(\fHGCc^{{\dagger\ddagger}}_{-1,n},\delta\right)=
H\left(\fHGCc^{\geq 2}_{-1,n},\delta\right)=
H\left(\fHGCc_{-1,n}^{*},\delta\right),
\end{equation*}
\begin{equation*}
H\left(\fHGCc_{-1,n},\delta\right)=
H\left(\fHGCc^{\ddagger}_{-1,n},\delta\right).
\end{equation*}
\end{prop}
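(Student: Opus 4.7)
The plan is to chain the five complexes of claim~(a) through a sequence of subcomplex inclusions, analyse each quotient, and combine them with the equality $H(\fHGCc^{\geq 0}_{-1,n})=H(\fHGCc^{\geq 2}_{-1,n})$ already provided by Proposition~\ref{prop:fHGC-fHGC3}. Claim~(b) will follow from the same technique.

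For the three inclusions $\fHGCc^{\dagger}_{-1,n}\subset\fHGCc^{\geq 0}_{-1,n}$, $\fHGCc^{\dagger\ddagger}_{-1,n}\subset\fHGCc^{\dagger}_{-1,n}$ and $\fHGCc^{\ddagger}_{-1,n}\subset\fHGCc_{-1,n}$, the respective quotients are all two-dimensional. The first is spanned by $\sigma_0$ (the unique connected graph with a $0$-valent vertex) and by $\lambda$, while the latter two are each spanned by $\sigma_1$ and $\lambda_2$. A direct computation gives $\delta\sigma_0=c_n\lambda$ with $c_n\ne 0$ (only the splitting $\tfrac12 s_x$ contributes, since $a_x$ is killed by the tadpole prohibition and there is no hair to extract) and $\delta\sigma_1=c_n'\lambda_2$ with $c_n'\ne 0$ (both $\tfrac12 s_x$ and $-h(x)e_x$ produce $\lambda_2$ and their signs combine rather than cancel). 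Hence these three quotients are acyclic, and the corresponding long exact sequences in cohomology yield
$$H(\fHGCc^{\geq 0}_{-1,n})=H(\fHGCc^{\dagger}_{-1,n})=H(\fHGCc^{\dagger\ddagger}_{-1,n})$$
as well as claim~(b); combining with Proposition~\ref{prop:fHGC-fHGC3} then gives the fourth equality $H(\fHGCc^{\dagger\ddagger}_{-1,n})=H(\fHGCc^{\geq 2}_{-1,n})$.

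The remaining equality $H(\fHGCc^{\geq 2}_{-1,n})=H(\fHGCc^*_{-1,n})$ is the core of the argument. First I would verify that $\fHGCc^*_{-1,n}$ is a subcomplex of $\fHGCc^{\geq 2}_{-1,n}$: the only terms of $\delta$ that could create a $2$-valent hairy vertex are the component of $\tfrac12 s_x$ in which the newly split vertex receives a single hair and no old edges, and the full term $h(x)e_x$; a matching of coefficients ($\tfrac12\cdot 2\cdot h(x)=h(x)$) shows that these two contributions cancel identically. It then remains to prove that the quotient $\fHGCc^{\geq 2}_{-1,n}/\fHGCc^*_{-1,n}$, spanned by at-least-$2$-valent connected graphs carrying at least one $2$-valent hairy vertex, is acyclic. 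I would approach this via a spectral sequence, using the finite-dimensionality of each $\mB^b\mH^h\fHGCc^{\geq 2}_{-1,n}$ from \eqref{eq:splitbhairy} to ensure correct convergence.

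The main obstacle is the construction of an explicit contracting homotopy on the appropriate page of this spectral sequence. The natural candidate pairs each graph with a designated $2$-valent hairy vertex to the graph obtained by absorbing that vertex into its neighbor, so that the hair sits directly on the neighbor — in the same spirit as the antenna-contraction arguments used by Willwacher in the proof of Proposition~\ref{prop:Simpl}. The subtlety is that absorbing a $2$-valent hairy vertex can turn its neighbor into a $2$-valent hairy vertex, so the homotopy must be constructed in a way that handles such cascading absorptions coherently, typically by a combinatorially weighted sum over all $2$-valent hairy vertices of the graph.
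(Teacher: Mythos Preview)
Your approach is correct and is precisely what the paper's one-word proof (``Straightforward'') is pointing to: chain the inclusions, show the tiny quotients $\{\sigma_0,\lambda\}$ and $\{\sigma_1,\lambda_2\}$ are acyclic, invoke Proposition~\ref{prop:fHGC-fHGC3} for the link to $\geq 2$, and then run a Willwacher-style contraction for the $*$ quotient.

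There is, however, a slip in your explicit computations. The term $a_x$ does \emph{not} produce a tadpole: by definition $a_x$ attaches a new univalent vertex to $x$ by a fresh edge, so $a_x(\sigma_0)=\lambda$ and $a_x(\sigma_1)=\lambda_2$ are perfectly legal graphs. The correct counts are
\[
\delta(\sigma_0)=\tfrac12\lambda-\lambda=-\tfrac12\lambda,\qquad
\delta(\sigma_1)=\lambda_2-\lambda_2-\lambda_2=-\lambda_2,
\]
the three contributions being $\tfrac12 s_x$, $-a_x$, and $-h(x)e_x$ respectively. So your conclusions $c_n\neq 0$ and $c_n'\neq 0$ stand, but not for the reasons you give; you should correct the justification before using it.

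For the $*$ step your subcomplex check is right (the two symmetric terms of $\tfrac12 s_x$ in which one side receives exactly one hair and no old edges together carry weight $h(x)$, matching $-h(x)e_x$), and the contracting-homotopy plan you sketch is exactly the standard one. The cascading issue you flag is real but is handled, as in the proof of Proposition~\ref{prop:Simpl} in \cite{grt}, by filtering on the number of vertices that are \emph{not} $2$-valent hairy: on the associated graded the differential only lengthens chains of $2$-valent hairy vertices, and the absorption map becomes an honest homotopy inverse up to a nonzero scalar.
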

\begin{proof}
Straightforward.
\end{proof}

\begin{cor}
\label{cor:fHGC-d-dd-fHGC*}
\begin{equation*}
H\left(\fHGC^{\geq 0}_{-1,n},\delta\right)=
H\left(\fHGC^{\dagger}_{-1,n},\delta\right)=
H\left(\fHGC^{{\dagger\ddagger}}_{-1,n},\delta\right)=
H\left(\fHGC^{\geq 2}_{-1,n},\delta\right)=
H\left(\fHGC_{-1,n}^{*},\delta\right),
\end{equation*}
\begin{equation*}
H\left(\fHGC_{-1,n},\delta\right)=
H\left(\fHGC^{\ddagger}_{-1,n},\delta\right).
\end{equation*}
\end{cor}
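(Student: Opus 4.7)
The plan is to bootstrap from the connected case (Proposition \ref{prop:fHGCc-d-dd-fHGCc*}) via the standard identification of the full hairy graph complex with the graded-symmetric algebra on its connected subcomplex.

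The key observation is that each of the constraint classes $X \in \{\geq 0,\,\dagger,\,\ddagger,\,{\dagger\ddagger},\,\geq 2,\,*\}$ (and the empty constraint) defining the subcomplexes $\fHGC^X_{-1,n} \subset \fHGC_{-1,n}$ is \emph{component-local}: a graph lies in $\fHGC^X_{-1,n}$ if and only if each of its connected components lies in $\fHGCc^X_{-1,n}$. For the valence conditions $\geq i$ and $*$ this is clear since valence is a vertex-local property; for the conditions $\dagger, \ddagger, {\dagger\ddagger}$ it holds by definition, since they forbid certain fixed isomorphism types as connected components. Hence, as differential graded vector spaces,
\begin{equation*}
\fHGC^X_{-1,n} \;\cong\; \prod_{c\geq 0} \Bigl(\bigl(\fHGCc^X_{-1,n}\bigr)^{\otimes c}\Bigr)_{S_c},
\end{equation*}
with the $S_c$-action on the components incorporating the sign conventions inherited from \eqref{def:HfHGCi}.

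Given the connected-level quasi-isomorphisms $\fHGCc^X_{-1,n} \hookrightarrow \fHGCc^Y_{-1,n}$ supplied by Proposition \ref{prop:fHGCc-d-dd-fHGCc*}, I would argue that they induce quasi-isomorphisms on the full complexes. The K\"unneth formula over the field $\K$ implies that $(-)^{\otimes c}$ preserves quasi-isomorphisms, and Maschke's theorem (valid in characteristic zero) ensures that taking $S_c$-coinvariants also commutes with cohomology, so $(-)^{\otimes c}_{S_c}$ preserves quasi-isomorphisms as well. To pass from each fixed $c$ to the full product, one splits by $b = e - v$ and by $h$ using \eqref{eq:split3}: in each piece $\mB^b \mH^h \fHGC^X_{-1,n}$ only finitely many values of $c$ contribute in any given degree, so cohomology commutes with the product, and the quasi-isomorphism passes to the full complex.

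The conceptual content is entirely in the connected case, which is already handled by Proposition \ref{prop:fHGCc-d-dd-fHGCc*}. The only points requiring care are the component-locality check for each constraint class and the compatibility of sign conventions in the symmetric product decomposition; both are routine combinatorial verifications rather than genuine obstacles.
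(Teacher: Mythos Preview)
Your proof is correct and follows essentially the same approach as the paper's: the paper's proof simply reads ``The complexes are just the symmetric product of their connected parts, so the corollary directly follows from the proposition,'' and you have supplied the details of that symmetric-product argument (component-locality of the constraints, K\"unneth, exactness of $S_c$-coinvariants in characteristic zero, and the finiteness check via \eqref{eq:split3}).
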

\begin{proof}
The complexes are just the symmetric product of their connected parts, so the corollary directly follows from the proposition. 
\end{proof}

By Proposition \ref{prop:fHGC-fHGC3} all classes missing in ``3-valent complex'' are in hairless part, so Proposition \ref{prop:fHGCc-d-dd-fHGCc*} implies also the following corollary.

\begin{cor}
\label{cor:H>fHGCc3-H>fHGCc}
\begin{multline*}
H\left(\mH^{\geq 1}\fHGCc^{\geq 0}_{-1,n},\delta\right)=
H\left(\mH^{\geq 1}\fHGCc_{-1,n},\delta\right)=
H\left(\mH^{\geq 1}\fHGCc^{\dagger}_{-1,n},\delta\right)=
H\left(\mH^{\geq 1}\fHGCc^{\ddagger}_{-1,n},\delta\right)=\\
=H\left(\mH^{\geq 1}\fHGCc^{{\dagger\ddagger}}_{-1,n},\delta\right)=
H\left(\mH^{\geq 1}\fHGCc^{\geq 2}_{-1,n},\delta\right)=
H\left(\mH^{\geq 1}\fHGCc_{-1,n}^{*},\delta\right)=
H\left(\HGC_{-1,n},\delta\right).
\end{multline*}
\end{cor}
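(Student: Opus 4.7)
The plan is to combine Propositions \ref{prop:fHGC-fHGC3} and \ref{prop:fHGCc-d-dd-fHGCc*} with the observation that $\delta$ preserves the number of hairs $h$. Indeed, none of $s_x$, $a_x$, $e_x$ alter $h$: extraction replaces one hair on $x$ by a new edge ending in a new vertex that carries that hair. Hence every complex $\mathrm{CC}$ listed in the corollary splits as $\mathrm{CC}=\mH^0 \mathrm{CC}\oplus \mH^{\geq 1}\mathrm{CC}$ as a cochain complex, and accordingly on cohomology.

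First I would handle the valence constraints $\geq 0,\ \geq 2,\ \geq 3$. By \eqref{eq:HGC-GCshift}, $\mH^0\fHGCc_{-1,n}^{\geq i}\cong \fGCc_n^{\geq i}[-1-n]$, so Proposition \ref{prop:Simpl} determines the hairless cohomology, and the extra summands $\bigoplus_j\K[-1-j]$ appearing in Proposition \ref{prop:fHGC-fHGC3} are inherited from precisely these hairless loop classes. Subtracting the hairless part from Proposition \ref{prop:fHGC-fHGC3} gives $H(\mH^{\geq 1}\fHGCc_{-1,n}^{\geq 0})=H(\mH^{\geq 1}\fHGCc_{-1,n}^{\geq 2})=H(\mH^{\geq 1}\fHGCc_{-1,n}^{\geq 3})=H(\HGC_{-1,n})$.

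Next I would feed the remaining equalities of Proposition \ref{prop:fHGCc-d-dd-fHGCc*} through the same splitting: for each identity $H(A)=H(B)$ in that proposition, I would verify $H(\mH^0 A)=H(\mH^0 B)$, from which the splitting forces $H(\mH^{\geq 1}A)=H(\mH^{\geq 1}B)$. Concretely, $\mH^{\geq 1}\fHGCc^{\dagger}_{-1,n}=\mH^{\geq 1}\fHGCc_{-1,n}$ and $\mH^{\geq 1}\fHGCc^{\dagger\ddagger}_{-1,n}=\mH^{\geq 1}\fHGCc^{\ddagger}_{-1,n}$ on the nose, since the excluded graph $\lambda$ is hairless; $\mH^0\fHGCc_{-1,n}=\mH^0\fHGCc^{\ddagger}_{-1,n}$, since the excluded $\sigma_1,\lambda_2$ are hairy; and $\mH^0\fHGCc^{\geq 2}_{-1,n}=\mH^0\fHGCc^{*}_{-1,n}$, since the $*$-constraint only bites on hairy vertices. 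Combining each such equality of hairless parts with the corresponding equality from Proposition \ref{prop:fHGCc-d-dd-fHGCc*} yields the desired identity of $\mH^{\geq 1}$ cohomologies.

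The only case requiring any genuine care is the $\ddagger$ case: the excluded graphs $\sigma_1,\lambda_2$ both lie in $\mH^{\geq 1}$, so one cannot identify the two subcomplexes directly. However, because these graphs are invisible in $\mH^0$, the splitting together with the total equality $H(\fHGCc_{-1,n})=H(\fHGCc^{\ddagger}_{-1,n})$ still forces the $\mH^{\geq 1}$ parts to agree without our needing to exhibit explicit representatives. No machinery beyond the two cited propositions and the hair-grading of $\delta$ is required.
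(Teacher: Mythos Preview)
Your proof is correct and follows essentially the same approach as the paper's one-sentence proof: both rely on the $h$-grading preserved by $\delta$ to split off the hairless part, and then use Propositions~\ref{prop:fHGC-fHGC3} and~\ref{prop:fHGCc-d-dd-fHGCc*} together with the observation that the extra loop classes are hairless. You have simply spelled out in detail what the paper leaves as a terse remark.
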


Note that
$\left(\mH^{\geq 1}\fHGCc^{\geq 0}_{-1,n},\delta\right)=
\left(\mH^{\geq 1}\fHGCc_{-1,n},\delta\right)=
\left(\mH^{\geq 1}\fHGCc^{\dagger}_{-1,n},\delta\right)$
and
$\left(\mH^{\geq 1}\fHGCc^{\ddagger}_{-1,n},\delta\right)
=\left(\mH^{\geq 1}\fHGCc^{{\dagger\ddagger}}_{-1,n},\delta\right)$ already as complexes.

\subsection{The bounded graph complex with the extra differential}

The motivation for introducing the bounded graph complex is the following.
In the hairy complexes $\left(\fHGC_{-1,n},\delta+\Delta\right)$ the number of hairs $h$ is unbounded. That makes the spectral sequences on $h$ (that has $\delta$ as the first differential) and the one on the number of connected components $c$ unbounded from above and we can not use standard results for convergence.

The complex splits as in \eqref{eq:split5}:
\begin{equation*}
\left(\fHGC_{-1,n},\delta+\Delta\right)=\prod_{f\in\Z}\left(\mF^f\fHGC_{-1,n},\delta+\Delta\right)
\end{equation*}
where $f=e+h-v$. In every $\mF^f\fHGC_{-1,n}$ for the fixed degree $d=1+vn+(1-n)e-nh$, the number of edges $e=d+nf-1$ is fixed too.
So, increasing the number of hairs $h$ increases the number of vertices $v$ by the same amount. Since $e$ is fixed, that increase will eventually force a graph to have connected components that are stars $\sigma_1=
\begin{tikzpicture}[baseline=-.5ex]
\node[int] (a) at (0,0) {};
\draw (a) edge (0,.2);
\end{tikzpicture}$.
The purpose of this subsection is to show that components $\sigma_1$ are mostly irrelevant for the cohomology, so we can disallow them and bound $h$ from above while preserving the result about cohomology.

\begin{defi}
\label{defi:unbounded}
The \emph{unbounded hairy graph complex} $\fHGC_{-1,n}^{/\ddagger}$ is the quotient complex $\fHGC_{-1,n}/\fHGC_{-1,n}^{\ddagger}$.
Similarly, $\fHGC_{-1,n}^{\dagger/{\dagger\ddagger}}$ is the quotient complex $\fHGC_{-1,n}^{\dagger}/\fHGC_{-1,n}^{{\dagger\ddagger}}$. They are spanned by graphs that have at least one connected component $\sigma_1$ or $\lambda_2$.

The \emph{unbounded remainder} $\HC_{-1,n}\subset\fHGC_{-1,n}^{\dagger}$ is the subcomplex spanned by graphs with all connected components $\sigma_1$ or $\lambda_2$.
\end{defi}


Lemma \ref{lem:HC} from the appendix implies that $H\left(\HC_{-1,n},\delta+\Delta\right)$ is one-dimensional, the class being represented by
$$\alpha=\sum_{n\geq 1}\frac{1}{n!} \sigma_1^{\cup n}.$$

\begin{prop}
\label{prop:uHGC}
$H\left(\fHGC_{-1,n}^{\dagger/{\dagger\ddagger}},\delta+\Delta\right)$ is one-dimensional, the class being represented by $\alpha$.
%
\end{prop}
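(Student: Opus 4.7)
The plan is to show that the inclusion $\HC_{-1,n}\hookrightarrow\fHGC_{-1,n}^{\dagger/\dagger\ddagger}$ is a quasi-isomorphism and then invoke Lemma~\ref{lem:HC}, which gives $H(\HC_{-1,n},\delta+\Delta)=\K\cdot[\alpha]$. The inclusion is well defined: any substantial connected component produced by $(\delta+\Delta)\Gamma$ for $\Gamma\in\HC_{-1,n}\subset\fHGC_{-1,n}^\dagger$ lies in $\fHGC_{-1,n}^{\dagger\ddagger}$ and vanishes in the quotient.

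Since $\delta+\Delta$ preserves $f=e+h-v$ by \eqref{eq:split5} and since $\HC_{-1,n}$ sits entirely in the $f=0$ factor (as $f(\sigma_1)=f(\lambda_2)=0$), it suffices to prove $H(\mF^0\fHGC_{-1,n}^{\dagger/\dagger\ddagger},\delta+\Delta)=\K\cdot[\alpha]$ and $H(\mF^f\fHGC_{-1,n}^{\dagger/\dagger\ddagger},\delta+\Delta)=0$ for every $f\neq 0$. On each $\mF^f$ summand set up the decreasing filtration $F^{\geq v}$ by total vertex count: $\delta$ strictly increases $v$ by one while $\Delta$ preserves $v$, so the filtration is preserved by $\delta+\Delta$. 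At fixed $f$ and fixed degree, $e=f+d-1$ is fixed and each graded piece $\mV^v\mF^f\fHGC_{-1,n}^{\dagger/\dagger\ddagger}$ is finite-dimensional, so the standard arguments (cf.\ \cite[Appendix C]{DGC1}) yield convergence of the spectral sequence to $H(\mF^f,\delta+\Delta)$.

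The $E_1$ page is $H(\mV^v\mF^f\fHGC_{-1,n}^{\dagger/\dagger\ddagger},\Delta)$. At $(v,f)=(1,0)$ the only graph surviving the quotient is $\sigma_1$, on which $\Delta$ vanishes, giving $E_1^{1,0}=\K\cdot[\sigma_1]$. Elsewhere we claim $\Delta$ is acyclic. The essential mechanism is the $\Delta$-operation applied to the hair of a $\sigma_1$-component: it deletes the hair and reconnects the bare vertex to a second vertex, producing either a $\lambda_2$ (if the second vertex belongs to another $\sigma_1$) or a larger substantial component (which is killed by the quotient). For $f=0$ and $v\geq 2$ this pairs $\sigma_1^{\cup v}$ (degree $1$) with $\sigma_1^{\cup(v-2)}\cup\lambda_2$ (degree $2$) bijectively, noting that $\lambda_2^{\wedge\ell}=0$ for $\ell\geq 2$ in both parities; the remaining mixed graphs (substantial components carrying $\sigma_1$-tails in higher degrees) are paired by the same $\Delta$-merging principle. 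For $f\neq 0$ an analogous but more involved enumeration---where $\sigma_1$-hairs are converted into edges into substantial or into $\lambda_2$-components---shows $\Delta$-acyclicity. Consequently $E_1$ is concentrated at $(v,f)=(1,0)$ and one-dimensional; higher differentials vanish for support reasons, so $E_\infty=E_1$, and by convergence $H(\fHGC_{-1,n}^{\dagger/\dagger\ddagger},\delta+\Delta)$ is one-dimensional. The cocycle $\alpha$ from Lemma~\ref{lem:HC}, whose leading term in $F^{\geq v}$ is precisely $\sigma_1$, is the unique non-trivial generator. The main obstacle is the $\Delta$-acyclicity of $\mV^v\mF^f\fHGC_{-1,n}^{\dagger/\dagger\ddagger}$ for $f\neq 0$ (and for the mixed sector at $f=0$, $v\geq 2$), which requires a detailed enumeration-and-pairing analysis of the finitely many surviving graph configurations.
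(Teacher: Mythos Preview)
Your argument has a genuine gap: the claimed $\Delta$-acyclicity of $\mV^v\mF^f\fHGC_{-1,n}^{\dagger/\dagger\ddagger}$ for $(v,f)\neq(1,0)$ is false in general. For $n=0$ take $\sigma_1\cup\sigma_2$ at $(v,f)=(2,1)$: applying $\Delta$ at either vertex produces a connected two-vertex graph which is neither $\sigma_1$ nor $\lambda_2$, hence lies in $\fHGC^{\dagger\ddagger}$ and vanishes in the quotient. So $\sigma_1\cup\sigma_2$ is a $\Delta$-cycle there, and since a $\Delta$-preimage would require $e=-1$ it is not a boundary. The same reasoning applies to every $\sigma_1\cup\sigma_a$ with $a\geq 2$, so the $E_1$ page carries infinitely many classes beyond the one at $(1,0)$. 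You flag this step yourself as ``the main obstacle'', and it cannot be closed as stated; at best one would have to compute the true $E_1$ and then track all later differentials, which is substantially harder than what you sketch. A secondary concern is convergence: at fixed $f$ and degree the total edge number $e=f+d-1$ is fixed, but $v$ is unbounded above (one may adjoin arbitrarily many copies of $\sigma_1$), so the filtration $F^{\geq v}$ is not finite and the bounded-filtration argument does not apply directly.

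The paper avoids both problems by filtering instead on the number of edges in the \emph{main part} (the union of components other than $\sigma_1$ and $\lambda_2$). That number is bounded by the total $e$, so convergence is immediate. The first differential then acts only within the secondary $\HC$-part, and Lemma~\ref{lem:HC} reduces it to the single class $\alpha$ tensored with an arbitrary main part $\Gamma\in\fHGC_{-1,n}^{\dagger\ddagger}$. On the next page the induced differential on the main part is $\Gamma\mapsto\Delta(\Gamma)+\sum_x\frac{1}{2}s_x$, which Lemma~\ref{lem:antennas} shows is acyclic, leaving only the empty main part, i.e.\ $[\alpha]$. The choice of filtration is the decisive idea: it isolates the $\HC$-factor, whose cohomology is already known, rather than confronting the full $\Delta$-complex on the quotient.
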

\begin{proof}

Let $\tilde\Gamma\in\fHGC^{\dagger/{\dagger\ddagger}}_{-1,n}$ be a graph. It may be either $\tilde\Gamma\in\HC_{-1,n}$ or we can write $\tilde\Gamma=\Gamma\cup\gamma$ where $\Gamma\in\fHGC_{-1,n}^{\dagger\ddagger}$ and $\gamma\in\HC_{-1,n}$. In the latter case we call $\Gamma$ the \emph{main part} of $\tilde\Gamma$ and $\gamma$ the \emph{secondary} part of $\tilde\Gamma$. In the former case the whole graph is the secondary part, while the main part is empty.

Let us set up a spectral sequence of $\left(\fHGC_{-1,n}^{\dagger/{\dagger\ddagger}},\delta+\Delta\right)$ on the number of edges in the main part, empty main part having 0 edges. It is easily seen that the differential can not decrease that number. To ensure the correct convergence we split unbounded complex similarly as the full complex:
\begin{equation*}
\left(\fHGC_{-1,n}^{\dagger/{\dagger\ddagger}},\delta+\Delta\right)=\prod_{f\in\Z}\left(\mF^f\fHGC_{-1,n}^{\dagger/{\dagger\ddagger}},\delta+\Delta\right).
\end{equation*}
In every $\mF^f\fHGC_{-1,n}^{\dagger/{\dagger\ddagger}}$ for the fixed degree $d=1+vn+(1-n)e-nh$, the total number of edges $e=d+nf-1$ is fixed too, so the number of edges in the main part is bounded and therefore the spectral sequence converges correctly.

One can check that the first part of the differential is the one that acts within the secondary part only. Therefore the complexes on the first page are the direct product of complexes for the fixed main part, which are all clearly isomorphic to $\HC_{-1,n}$. Therefore, using Lemma \ref{lem:HC}, on the first page of the spectral sequence we have classes represented by $\Gamma\cup\alpha$ for $\Gamma\in\fHGC_{-1,n}^{\dagger\ddagger}$, together with the class $[\alpha]$ itself.

On the second page matters the part of the differential that acts within the main part only, and the one that connects the main part to the secondary part. The element is now uniquely determined by the main part $\Gamma$ so we can investigate what does the differential do to it:
$$
\Gamma\mapsto\delta(\Gamma)+\Delta(\Gamma) + \sum_{x\in V(\Gamma)} h(x) e_x + \sum_{x\in V(\Gamma)} a_x = \Delta(\gamma) + \sum_{x\in V(\Gamma)} \frac{1}{2} s_x.
$$
The first sum corresponds to connecting a hair to one $\sigma_1$ in $\alpha$, and the second sum corresponds to connecting a hair from one $\sigma_1$ to the main part.

Lemma \ref{lem:antennas} shows that this complex is acyclic. So, only the class without the main part, $[\alpha]$ survives. That what was to be demonstrated.
\end{proof}

\begin{cor}
\label{cor:uHGC}
$H\left(\fHGC_{-1,0}^{/\ddagger},\delta+\Delta\right)$ is two-dimensional, the classes being represented by $\alpha$ and $\lambda\cup\alpha$.
\end{cor}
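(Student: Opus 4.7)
The plan is to adapt the spectral sequence argument from the proof of Proposition \ref{prop:uHGC}, now allowing $\lambda=\lambda_1$ to appear as a component of the main part. Every $\tilde\Gamma\in\fHGC_{-1,0}^{/\ddagger}$ is written as $\tilde\Gamma=\Gamma\cup\gamma$ with $\Gamma\in\fHGC_{-1,0}^{\ddagger}$ (no $\sigma_1$ or $\lambda_2$ components, but $\lambda$ components now permitted) and $\gamma\in\HC_{-1,0}$ nonempty, or with $\Gamma$ empty. I would filter by the number of edges in the main part, with correct convergence from the $\mF^f$-splitting, and collapse each secondary factor via Lemma \ref{lem:HC} to obtain on the first page the classes $\Gamma\cup[\alpha]$ for $\Gamma\in\fHGC_{-1,0}^{\ddagger}$ together with $[\alpha]$ from the empty main part.

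On the second page the induced differential on the main part takes the same combined form
\[
\Gamma\mapsto\delta(\Gamma)+\Delta(\Gamma)+\sum_{x\in V(\Gamma)}h(x)e_x+\sum_{x\in V(\Gamma)}a_x
\]
as in Proposition \ref{prop:uHGC}. The task then becomes showing that the cohomology of this complex on $\fHGC_{-1,0}^{\ddagger}$ is one-dimensional with generator $\lambda$. The key input, special to $n=0$, is the vanishing $\lambda^{\cup 2}=0$: swapping the two $\lambda$ components transposes two edges and picks up a sign $-1$ via $\sgn_e$ in the even case. Hence every graph in $\fHGC_{-1,0}^{\ddagger}$ with two or more $\lambda$ components is already zero, and the complex splits as a vector space into the zero-$\lambda$ piece $\fHGC_{-1,0}^{\dagger\ddagger}$ and the one-$\lambda$ piece.

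I would then run a secondary spectral sequence on the number of $\lambda$ components. The zero-$\lambda$ piece is acyclic by Lemma \ref{lem:antennas}. The one-$\lambda$ piece under its internal differential likewise collapses by Lemma \ref{lem:antennas} on the $\fHGC_{-1,0}^{\dagger\ddagger}$-factor (with $\lambda$ inert), except for the contribution of the lone graph $\lambda$ itself: one checks that $\lambda$ is a cocycle (being hairless kills $\Delta$ and all $h(x)e_x$ terms, $\delta(\lambda)=0$ by the usual $\tfrac12 s_x=a_x$ cancellation at $1$-valent vertices, and $\sum_{x\in V(\lambda)}a_x(\lambda)$ produces $P_3$ which vanishes for $n=0$ by the edge-swap antisymmetry) and that it is not a coboundary, the only candidate preimage $\sigma_0$ being excluded by the minimum-valence constraint. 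The merge part of the differential from the one-$\lambda$ to the zero-$\lambda$ piece vanishes on $\lambda$ alone since $\lambda$ has no hairs.

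The hardest step is therefore the secondary spectral sequence bookkeeping --- verifying precisely that the one-$\lambda$ piece isolates the single class $[\lambda]$ after Lemma \ref{lem:antennas} is applied to the non-$\lambda$ factor, and that no extraneous classes survive the merge differential. Granting this, the second page of the main spectral sequence contains exactly $[\alpha]$ (empty main) and $[\lambda\cup\alpha]$ (main part $\lambda$), yielding the asserted two-dimensional cohomology.
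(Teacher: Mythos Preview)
Your argument is correct, but it takes a longer road than the paper does. The paper exploits Proposition \ref{prop:uHGC} as a black box: it first observes (as you also do) that $\lambda^{\cup 2}=0$ for $n=0$, so $\fHGC_{-1,0}^{/\ddagger}$ splits as $\fHGC_{-1,0}^{\dagger/\dagger\ddagger}\oplus\bigl(\fHGC_{-1,0}^{\dagger/\dagger\ddagger}\cup\lambda\bigr)$, runs the two-row spectral sequence on the number of $\lambda$'s \emph{before} anything else, and then applies Proposition \ref{prop:uHGC} directly to each row to get one class $[\alpha]$ in each. Checking that $(\delta+\Delta)(\lambda\cup\alpha)=0$ (the only nontrivial piece being the $\Delta$-cross-term, which produces $P_3$-components and hence vanishes) finishes the argument.

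You instead reopen the proof of Proposition \ref{prop:uHGC}, allow $\lambda$ in the main part from the outset, and only perform the $\lambda$-split at the Lemma \ref{lem:antennas} stage. This forces you to redo the main-part-edge spectral sequence and the Lemma \ref{lem:HC} collapse, and then to argue separately that the one-$\lambda$ piece of $\fHGC_{-1,0}^{\ddagger}$ under the induced differential reduces to Lemma \ref{lem:antennas} on the $\fHGC_{-1,0}^{\dagger\ddagger}$ factor (with $\lambda$ inert, since $\sum_x\frac{1}{2}s_x(\lambda)=P_3=0$). All of this is fine, and your checks that $\lambda$ is a cocycle and not a coboundary are accurate. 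The payoff of the paper's ordering is simply economy: splitting by $\lambda$ first lets Proposition \ref{prop:uHGC} do all the work, whereas your ordering re-proves that proposition inside the corollary.
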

\begin{proof}
A graph in $\fHGC_{-1,0}^{/\ddagger}$ can have at most one connected component $\lambda$ because of the symmetry reasons. So it is $\Gamma\in\fHGC_{-1,0}^{\dagger/\dagger\ddagger}$ or $\Gamma\cup\lambda$ for $\Gamma\in\fHGC_{-1,0}^{\dagger/\dagger\ddagger}$. So
$$
\left(\fHGC_{-1,0}^{/\ddagger},\delta+\Delta\right)=\left(\fHGC_{-1,0}^{\dagger/\dagger\ddagger}\oplus\fHGC_{-1,0}^{\dagger/\dagger\ddagger}\cup\lambda,\delta+\Delta\right).
$$
We set up a spectral sequence on the number of $\lambda$-s (two rows) that clearly converges correctly, and on the first page we have two copies of $\left(\fHGC_{-1,0}^{\dagger/\dagger\ddagger},\delta+\Delta\right)$. By Proposition \ref{prop:uHGC} there is a class of cohomology in both rows, namely $\alpha$ and $\lambda\cup\alpha$. It is easily seen that they go to $0$ by the whole differential, so they represent classes of the whole cohomology.
\end{proof}

\section{Deleting vertices in hairy graphs}
\label{s:Dh}

\subsection{One hair}

Let $D^{(1)}:\mH^1\fHGC_{-1,n}\rightarrow\mH^0\fHGC_{-1,n}$ be defined on a graph $\Gamma$ as
\begin{equation}
\label{def:D1}
D^{(1)}(\Gamma)=(-1)^{v(x)}\tilde{D}_x
\end{equation}
where $x$ is the vertex with the hair, $v(x)$ is its valence and $\tilde{D}_x$ deletes vertex $x$ and the hair, and sums over all ways of reconnecting edges that were connected to $x$ to the other vertices, skipping graphs with a tadpole. An example is sketched in Figure \ref{fig:exD1}.

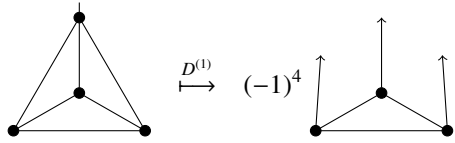
\begin{figure}[h]
$$
\begin{tikzpicture}[baseline=0ex]
\node[int] (a) at (0,0) {};
\node[int] (b) at (90:1) {};
\node[int] (c) at (210:1) {};
\node[int] (d) at (330:1) {};
\draw (b) edge (0,1.2);
\draw (a) edge (b);
\draw (a) edge (c);
\draw (a) edge (d);
\draw (b) edge (c);
\draw (b) edge (d);
\draw (c) edge (d);
\end{tikzpicture}
\quad\mxto{D^{(1)}}\quad(-1)^4\;
\begin{tikzpicture}[baseline=0ex]
\node[int] (a) at (0,0) {};
\node[int] (c) at (210:1) {};
\node[int] (d) at (330:1) {};
\draw (a) edge[->] (0,1);
\draw (a) edge (c);
\draw (a) edge (d);
\draw (c) edge[->] (-.8,.5);
\draw (d) edge[->] (.8,.5);
\draw (c) edge (d);
\end{tikzpicture}
$$
\caption{\label{fig:exD1}
Example of the action $D^{(1)}$, deleting the hairy vertex. The graph at the right means the sum over all graphs that can be formed by attaching ends of arrows to vertices, skipping graphs with a tadpole.}
\end{figure}

Also, the ``pushing the hair'' $\tilde D^{(p)}:\mH^1\fHGC_{-1,n}\rightarrow\mH^1\fHGC_{-1,n}$ be defined on a graph $\Gamma$ as
\begin{equation}
\tilde D^{(p)}(\Gamma)=(-1)^{v(x)}\sum_{y}v(x,y)\tilde{D}^y_x(\Gamma)
\end{equation}
where $x$ is again the vertex with the hair, $y$ runs through all vertices of $\Gamma$, $v(x,y)$ is the number of edges between vertices $x$ and $y$, and whenever $v(x,y)>0$ $\tilde{D}^y_x$ deletes vertex $x$, the hair and one edge between $x$ and $y$, sums over all ways of reconnecting the other edges that were connected to $x$ to the other vertices, skipping graphs with a tadpole, and adds a hair on $y$.

For a graph $\Gamma\in\mH^1\fHGC_{-1,n}^{\geq 2}$ let 
\begin{equation}
D^{(p)}(\Gamma):=\frac{1}{v(x)-1}\tilde D^{(p)}(\Gamma),
\end{equation}
where $x$ is the hairy vertex.
An example is sketched in Figure \ref{fig:exDp}.

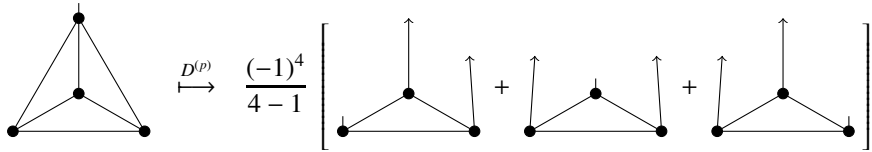
\begin{figure}[h]
$$
\begin{tikzpicture}[baseline=0ex]
\node[int] (a) at (0,0) {};
\node[int] (b) at (90:1) {};
\node[int] (c) at (210:1) {};
\node[int] (d) at (330:1) {};
\draw (b) edge (0,1.2);
\draw (a) edge (b);
\draw (a) edge (c);
\draw (a) edge (d);
\draw (b) edge (c);
\draw (b) edge (d);
\draw (c) edge (d);
\end{tikzpicture}
\quad\mxto{D^{(p)}}\quad\frac{(-1)^4}{4-1}\;\left[\;
\begin{tikzpicture}[baseline=0ex]
\node[int] (a) at (0,0) {};
\node[int] (c) at (210:1) {};
\node[int] (d) at (330:1) {};
\draw (a) edge[->] (0,1);
\draw (a) edge (c);
\draw (a) edge (d);
\draw (c) edge (-.866,-.3);
\draw (d) edge[->] (.8,.5);
\draw (c) edge (d);
\end{tikzpicture}
\;+\;
\begin{tikzpicture}[baseline=0ex]
\node[int] (a) at (0,0) {};
\node[int] (c) at (210:1) {};
\node[int] (d) at (330:1) {};
\draw (a) edge (0,.2);
\draw (a) edge (c);
\draw (a) edge (d);
\draw (c) edge[->] (-.8,.5);
\draw (d) edge[->] (.8,.5);
\draw (c) edge (d);
\end{tikzpicture}
\;+\;
\begin{tikzpicture}[baseline=0ex]
\node[int] (a) at (0,0) {};
\node[int] (c) at (210:1) {};
\node[int] (d) at (330:1) {};
\draw (a) edge[->] (0,1);
\draw (a) edge (c);
\draw (a) edge (d);
\draw (c) edge[->] (-.8,.5);
\draw (d) edge (.866,-.3);
\draw (c) edge (d);
\end{tikzpicture}
\;\right]
$$
\caption{\label{fig:exDp}
Example of the action $D^{(p)}$, pushing the hair. The graphs at the right mean the sum over all graphs that can be formed by attaching ends of arrows to vertices, without making a tadpole.}
\end{figure}

\begin{lemma}
\label{lem:D1'}
On $\mH^1\fHGC_{-1,n}^{\geq 2}$ it holds that
$$
D^{(1)}=\Delta D^{(p)}.
$$
\end{lemma}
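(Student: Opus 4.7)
The plan is to expand both sides of the identity on a single graph $\Gamma \in \mH^1\fHGC_{-1,n}^{\geq 2}$ with hairy vertex $x$ of valence $v(x) \geq 2$, and match them via a direct combinatorial identity. Since $\tilde D^y_x(\Gamma)$ carries a single hair, placed on $y$, the extra differential $\Delta$ acts on it as $\tilde\Delta_y$, so that
\begin{equation*}
\Delta D^{(p)}(\Gamma) \,=\, \frac{(-1)^{v(x)}}{v(x)-1}\sum_y v(x,y)\,\tilde\Delta_y\tilde D^y_x(\Gamma).
\end{equation*}

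The key observation is that $\tilde\Delta_y\tilde D^y_x$ is a ``bookkept'' version of $\tilde D_x$ in which one edge incident to $x$ has been singled out. For this singled-out edge, originally between $x$ and $y$, $\tilde D^y_x$ first removes it and records a hair at $y$; then $\tilde\Delta_y$ consumes that hair and reconnects $y$ to an arbitrary $z\neq y$ by a fresh edge. The net effect on this edge is precisely that its $x$-endpoint is moved to some $z\neq y$, which is exactly the no-tadpole constraint already built into $\tilde D_x$ for this edge. For the remaining $v(x)-2$ edges at $x$, $\tilde D^y_x$ reconnects them to other vertices in the same way as $\tilde D_x$ does. Hence the graphs produced by $\tilde\Delta_y\tilde D^y_x(\Gamma)$ are exactly those produced by $\tilde D_x(\Gamma)$, each decorated by a choice of a distinguished edge at $x$. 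Summing over $y$ with weight $v(x,y)$ amounts to summing over the $v(x)-1$ edges incident to $x$, each being chosen as the singled-out edge exactly once, so that
\begin{equation*}
\sum_y v(x,y)\,\tilde\Delta_y\tilde D^y_x(\Gamma) \,=\, (v(x)-1)\,\tilde D_x(\Gamma).
\end{equation*}
Substituting back cancels the combinatorial factor and yields $\Delta D^{(p)}(\Gamma) = (-1)^{v(x)}\tilde D_x(\Gamma) = D^{(1)}(\Gamma)$.

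The main obstacle is careful sign bookkeeping. In $\tilde D^y_x$ an edge is deleted while a hair is created (with some relabeling of edges and hairs), and in $\tilde\Delta_y$ the hair is destroyed while a fresh edge is created carrying the next free number and a chosen orientation. One needs to verify that the composite sign arising from these relabelings and from the $S_2$ action on edge directions is precisely the $(-1)^{v(x)}$ already built into the definitions of $D^{(1)}$ and $D^{(p)}$, so that no residual sign survives in the identification. This is routine with the conventions fixed in Section \ref{sec:background}, but it is the part of the argument that must be carried out with patience rather than by combinatorial pattern matching alone.
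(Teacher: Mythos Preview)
Your proposal is correct and matches the paper's approach; the paper's own proof is simply the single word ``Clear.'' You have spelled out the combinatorial identity that the paper leaves implicit: singling out one of the $v(x)-1$ edges at $x$ as the one routed through the hair-then-$\Delta$ mechanism recovers $\tilde D_x$ with multiplicity $v(x)-1$, which cancels the normalising factor in $D^{(p)}$.
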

\begin{proof}
Clear.
\end{proof}

\begin{lemma}
\label{lem:D1''}
On $\mH^1\fHGC_{-1,0}^{*}$ it holds that 
$$
D^{(1)}D^{(p)}=0.
$$
\end{lemma}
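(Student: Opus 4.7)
The plan is to analyze $D^{(1)}D^{(p)}(\Gamma)$ for $\Gamma\in\mH^1\fHGC_{-1,0}^{*}$ with the unique hair at a vertex $x$, by constructing a sign-reversing involution on the set of terms produced by the composition. Expanding, one obtains a sum indexed by pairs $(y,R)$, where $y$ runs over neighbors of $x$ and $R$ is a ``routing'' specifying, for each $x$-incident edge $xz$ with $z\neq y$, whether the $D^{(p)}$-step reconnects the $x$-end directly to its final destination in $V(\Gamma)\setminus\{x,y\}$ or first routes it to $y$ (to be re-reconnected afterwards by $D^{(1)}$), together with the final destinations of all edges originally at $y$. Since $n=0$ is even, the complex $\fHGC_{-1,0}$ has no multiple edges, so $v(x,y)=1$ and the number of ``other'' $x$-edges equals $v(x)-2$.

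A direct bookkeeping shows that the total sign of the term indexed by $(y,R)$ is $\tfrac{(-1)^{v(x)+v_R(y)}}{v(x)-1}$, where $v_R(y)=v(y)+m_R$ is the valence of $y$ just before $D^{(1)}$ fires and $m_R$ is the number of ``other'' $x$-edges routed via $y$ in $R$. The key observation is that flipping the routing of any single such edge, while keeping its final destination and all other choices fixed, changes $m_R$ by exactly $\pm 1$ and hence flips the overall sign; moreover both routes yield the same labelled graph on $V(\Gamma)\setminus\{x,y\}$, since in each case the resulting edge runs from the non-$x$ endpoint $z$ of that $x$-edge to the chosen destination $w$, with identical tadpole-avoidance constraint $w\neq z$.

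I would then define the involution by flipping the direct/via-$y$ status of a distinguished ``other'' $x$-edge (say, the one with the smallest edge label among those incident to $x$ but not to $y$). For this to be well-defined, $x$ must have at least one neighbor other than $y$, which is precisely where the star condition $*$ enters: hairy vertices are required to be at least $3$-valent, so $v(x)\geq 3$ and $x$ has at least two edges; since there is exactly one edge from $x$ to $y$, at least one edge $xz$ with $z\neq y$ exists. The involution has no fixed points and reverses sign, so all terms cancel in pairs and $D^{(1)}D^{(p)}(\Gamma)=0$.

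The main obstacle I anticipate is the careful sign and multiplicity bookkeeping: checking that the ``direct'' and ``via-$y$'' routes really contribute identical labelled graphs with equal combinatorial weight (no hidden edge-relabelling factors), and that the tadpole-avoidance constraints match exactly so that the involution is a genuine bijection between the two halves of the indexing set. The multinomial factors $\binom{N}{k_1,\dots}$ associated with multiple edges in the definition of $\tilde D_x^y$ do not intervene here thanks to the no-multiple-edge property of the even case; this is the essential reason why the lemma is restricted to $n=0$ and why the involution argument cannot be naively transplanted to odd $n$.
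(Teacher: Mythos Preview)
Your proposal is correct and follows essentially the same approach as the paper's proof: both argue by choosing an edge at $x$ not incident to $y$ (which exists precisely because of the $*$-constraint and the absence of multiple edges for $n=0$) and observing that the two routings of that edge---directly to its final destination versus first to $y$ and then onward via $D^{(1)}$---produce identical graphs with opposite signs due to the parity change in the valence of $y$. Your version is more explicit about the sign bookkeeping and the matching of tadpole constraints, but the cancellation mechanism is the same.
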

\begin{proof}
Recall that in $\fHGC_{-1,0}$ there are no multiple edges.
Let $\Gamma\in\mH^1\fHGC_{-1,n}^{*}$ be a graph, and $x$ the hairy vertex in $\Gamma$. One term in $D^{(1)}D^{(p)}(\Gamma)$ will delete the hair, the vertex $x$, one of its neighbours $y$ and edge between them, and reconnect all edges that were connected to $x$ and $y$ elsewhere.

Recall that the constraint $*$ means that all vertices are at least 2-valent, and all hairy vertices are at least 3-valent. Therefore $x$ has at least two edges adjacent to it. Chose an edge $e$ at $x$ that did not go towards $y$. That edge can go directly to its final destination, or first to $y$ and then to final destination. Those two terms will cancel, implying the result.
\end{proof}

\begin{prop}
\label{prop:D1}
In $\mH^1\fHGC_{-1,n}$ it holds that
$$
\delta D^{(1)}-D^{(1)}\delta=\Delta.
$$
\end{prop}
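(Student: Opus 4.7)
The plan is to mimic the proof of Proposition \ref{DelEvenDd}. Let $x$ denote the unique hairy vertex of $\Gamma \in \mH^1 \fHGC_{-1,n}$. Using $\delta = \sum_y \left(\tfrac{1}{2} s_y - a_y\right) - e_x$ (where only $y = x$ contributes to the hair-extraction piece), I expand
\begin{equation*}
\delta D^{(1)}(\Gamma) = \sum_{y \neq x} \left( \tfrac{1}{2} s_y - a_y \right) D^{(1)}(\Gamma), \qquad D^{(1)} \delta(\Gamma) = \sum_{y} D^{(1)} \left( \tfrac{1}{2} s_y - a_y \right)(\Gamma) - D^{(1)} e_x(\Gamma).
\end{equation*}

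Next I would identify three cancellations, analogous to those used in the proof of Proposition \ref{DelEvenDd}. First, splitting a non-hairy vertex commutes with deleting the hairy one: $s_y D^{(1)} = D^{(1)} s_y$ for $y \neq x$, because splitting $y$ does not change the valence of $x$ and the two orders of operation produce the same sum of graphs; this cancels all $s_y$ terms with $y \neq x$ between the two sums. Second, by the same pairing argument used in Proposition \ref{DelEvenDd} — namely, for any configuration in which an original edge of $x$ ends up at the ``remaining half'' $x_1 = x$ of the splitting, the configuration in which that edge instead went to the new vertex $z$ and then reconnected back to $x$ is its partner, the two differing by one in the valence of $z$ and hence carrying opposite signs — one obtains the hairy analog $\tfrac{1}{2} D^{(1)} s_x = -\sum_{y \neq x} a_y D^{(1)}$. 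Third, $D^{(1)} a_x = -\sum_{y \neq x} D^{(1)} a_y$, since both sides enumerate the same sum of graphs (after identifying the new vertex $z$ produced by $a_x$ with the reconnection target of the $x$-end of the newly added edge), differing only by a factor $-1$ coming from the valence of $x$ being increased by one in $a_x(\Gamma)$.

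After these three identities are applied, the only surviving contribution in $\delta D^{(1)} - D^{(1)} \delta$ is $D^{(1)} e_x(\Gamma)$, which I compute directly: $e_x(\Gamma)$ creates a new $1$-valent hairy vertex $z$ connected to $x$ by the new edge; applying $D^{(1)}$ then deletes $z$ and reconnects its single edge from $x$ to any $y \neq x$, yielding precisely $\tilde\Delta_x(\Gamma) = \Delta(\Gamma)$ (with the sign $(-1)^{v(z)} = -1$ matching the sign conventions for the newly numbered edge in $\tilde\Delta_x$). The main obstacle will be the sign bookkeeping: verifying that the factors $(-1)^{v(x)}$ and $(-1)^{v(x)+1}$ from the valences of $x$ and of the new vertex $z$ in different intermediate graphs align correctly, and that the various conventions for the new edge and new vertex used in $\tilde\Delta_x$, $a_y$, $s_y$ and $e_x$ fit together consistently across both parities of $n$.
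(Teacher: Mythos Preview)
Your proposal is essentially identical to the paper's proof: the paper also expands $\delta D^{(1)} - D^{(1)}\delta$ and invokes the three identities $s_y D_x = D_x s_y$ for $y\neq x$, $D_z s'_x = -\sum_{y\neq x} a_y D_x$ (equivalently your $\tfrac12 D^{(1)} s_x = -\sum_{y\neq x} a_y D^{(1)}$, using $s_x = 2s'_x$), and $D_x a_x = -\sum_{y\neq x} D_x a_y$, after which only $D_z e_x = \Delta$ remains. One small slip: in your last paragraph, the new vertex $z$ produced by $e_x$ is $2$-valent (one edge plus one hair), so $(-1)^{v(z)} = +1$, not $-1$; this is exactly the sort of sign check you already flagged as the main remaining work.
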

\begin{proof}
Let $\Gamma\in\fHGC_{-1,n}^1$ be a graph and let $x$ be the hairy vertex in $\Gamma$. Then
\begin{multline*}
\delta D^{(1)} (\Gamma) - D^{(1)} \delta (\Gamma) = \\
=\sum_{\substack{y\\y\neq x}}\left(\frac{1}{2}s_yD_x(\Gamma)-a_yD_x(\Gamma)\right)
-D_x\sum_{\substack{y\\y\neq x}}\left(\frac{1}{2}s_y(\Gamma)-a_y(\Gamma)\right)
-D_x\left(s'_x(\Gamma)-a_x(\Gamma)\right)+D_ze_x(\Gamma)=\\
=\frac{1}{2}\sum_{\substack{y\\x\neq y}}s_yD_x(\Gamma)
-\sum_{\substack{y\\x\neq y}}a_yD_x(\Gamma)
-\frac{1}{2}\sum_{\substack{y\\x\neq y}}D_xs_y(\Gamma)
+\sum_{\substack{y\\x\neq y}}D_xa_y(\Gamma)
-D_zs'_x(\Gamma)
+D_xa_x(\Gamma)
+D_ze_x(\Gamma),
\end{multline*}
where $y$ runs through $V(\Gamma)$, $z$ is the name of the newly added vertex and $s'_x$ is the part of $s_x$ which moves the hair to the new vertex $z$. It holds that $s_x=2s'_x$.

Using the same arguments as in the proof of Proposition \ref{DelEvenDd} it follows that:
\begin{equation*}
s_yD_x(\Gamma)=D_xs_y(\Gamma),
\end{equation*}
\begin{equation*}
D_zs'_x(\Gamma)=-\sum_{\substack{y\\y\neq x}}a_yD_x(\Gamma),
\end{equation*}
\begin{equation*}
D_xa_x(\Gamma)=-\sum_{\substack{y\\y\neq x}}D_xa_y(\Gamma).
\end{equation*}
It clearly holds that
\begin{equation*}
D_ze_x(\Gamma)=\Delta(\Gamma),
\end{equation*}
so the claimed formula follows.
\end{proof}

\subsection{Two hairs}

In the case with even hairs we define $D^{(2)}:\mH^2\fHGC_{-1,0}\rightarrow\mH^0\fHGC_{-1,0}$. Let $\Gamma\in\mH^2\fHGC_{-1,0}$ be a graph whose both hairs are on the $3$-valent vertex $y$, and let $x$ be another end of the only edge at $y$, like in the picture:
$$
\begin{tikzpicture}[scale=1.4,baseline=1ex]
 \node[int] (a) at (0,0) {};
 \node[int] (b) at (0,.5) {};
 \draw (a) edge (.3,-.3);
 \draw (a) edge (.1,-.4);
 \draw (a) edge (-.3,-0.3);
 \draw (a) edge (-.1,-.4);
 \draw (a) edge (b);
 \draw (b) edge (-.1,.6);
 \draw (b) edge (.1,.6);
 \node[above left] at (a) {$\scriptstyle x$};
 \node[below right] at (b) {$\scriptstyle y$};
\end{tikzpicture}
$$
We call this structure \emph{a flower} on the vertex $y$ with the root $x$. Then
\begin{equation}
D^{(2)}(\Gamma):=(-1)^{v(x)}\tilde{D}_{x,y}
\end{equation}
where $v(x)$ is the valence of $x$, $\tilde{D}_{x,y}$ deletes the flower, i.e.\ vertices $x$ and $y$, the edge between them and both hairs, and sums over all ways of reconnecting other edges that were connected to $x$ to the other vertices, skipping graphs with a tadpole. For the matter of sign we consider the edge in the flower to be the last one. If $\Gamma$ is not of that type, $D^{(2)}(\Gamma)=0$. An example is sketched in Figure \ref{fig:exD2}.

\begin{figure}[h]
$$
\begin{tikzpicture}[scale=1,baseline=3ex]
\node[int] (a) at (0,0) {};
\node[int] (b) at (90:1) {};
\node[int] (c) at (210:1) {};
\node[int] (d) at (330:1) {};
\node[int] (e) at (0,2) {};
\draw (e) edge (-.14,2.14);
\draw (e) edge (.14,2.14);
\draw (b) edge (e);
\draw (a) edge (b);
\draw (a) edge (c);
\draw (a) edge (d);
\draw (b) edge (c);
\draw (b) edge (d);
\draw (c) edge (d);
\end{tikzpicture}
\quad\mxto{D^{(2)}}\quad(-1)^4\;
\begin{tikzpicture}[baseline=0ex]
\node[int] (a) at (0,0) {};
\node[int] (c) at (210:1) {};
\node[int] (d) at (330:1) {};
\draw (a) edge[->] (0,1);
\draw (a) edge (c);
\draw (a) edge (d);
\draw (c) edge[->] (-.8,.5);
\draw (d) edge[->] (.8,.5);
\draw (c) edge (d);
\end{tikzpicture}
$$
\caption{\label{fig:exD2}
Example of the action $D^{(2)}$, deleting the flower. The graph at the right means the sum over all graphs that can be formed by attaching ends of arrows to vertices, without making a tadpole.}
\end{figure}
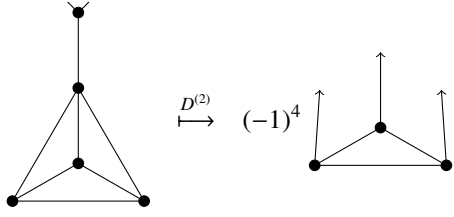

\begin{prop}
\label{prop:D2}
\begin{itemize}
 \item[]
 \item In the connected part $\mH^2\fHGCc_{-1,0}$ it holds that
$$
D^{(1)}\Delta=2\nabla\left(\delta D^{(2)}+ D^{(2)}\delta\right).
$$
 \item In $\mH^2\fHGC_{-1,1}$ it holds that
$$D^{(1)}\Delta=0.$$
\end{itemize}
\end{prop}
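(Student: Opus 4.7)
The plan is to expand both sides directly on a representative graph $\Gamma\in\mH^2\fHGC_{-1,n}$ and to match terms, using the same style of pairing-and-cancellation argument as in the proofs of Propositions \ref{DelEvenDd} and \ref{prop:D1}.

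\textbf{Odd case.} In $\fHGC_{-1,1}$ the antisymmetry of hairs (the factor $\sgn_h$) forces the two hairs of $\Gamma$ to sit on two distinct vertices $x_1$ and $x_2$. Hence
$$\Delta(\Gamma)=\tilde\Delta_{x_1}(\Gamma)+\tilde\Delta_{x_2}(\Gamma),$$
and applying $D^{(1)}$ produces two sums of $\mH^0$-graphs: in the first a new edge is added at $x_1$ and $x_2$ is then deleted with reconnection; in the second the roles of $x_1$ and $x_2$ are swapped. After the deletion there are no hairs left, so modulo the $S_v$-action on the underlying $\mH^0$-graphs, the two sums parametrise the same set of outputs, and I plan to show that they appear with opposite signs. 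The sign check uses the $\sgn_v$ arising from the vertex swap $x_1\leftrightarrow x_2$, the $\sgn_h$ from the accompanying hair swap, the valence-dependent sign $(-1)^{v(\cdot)}$ in $D^{(1)}$ (which differs between the two pieces by the parity change induced when the new $\tilde\Delta$-edge lands on the vertex that is later deleted by $D^{(1)}$), and the $\sgn_2$ of the newly-created edge. These factors combine to $-1$, yielding $D^{(1)}\Delta=0$.

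\textbf{Even case.} I would expand $D^{(1)}\Delta(\Gamma)$ as a sum indexed by: which hair is promoted by $\Delta$, the target $z$ of the new edge, and the reconnection pattern when $D^{(1)}$ subsequently deletes the remaining hairy vertex. Following the pairing mechanism of Propositions \ref{DelEvenDd} and \ref{prop:D1}, one groups terms by a designated edge incident to the vertex deleted by $D^{(1)}$: if this edge does not go directly to the vertex on which $\Delta$ acted, one finds two terms differing only by whether the edge is first rerouted through the target $z$, with opposite valence signs. These pairs cancel, and what survives is exactly the case in which the vertex deleted by $D^{(1)}$ is attached only to the vertex on which $\Delta$ acted, that is, the two together form a flower — either already present in $\Gamma$ (giving the $D^{(2)}\delta$-contribution) or created by a preceding vertex-split of $\delta$ (giving $\delta D^{(2)}$). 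The residual freedom of choosing the target of the new $\Delta$-edge is exactly $\nabla$, and the factor $2$ accounts for the two equivalent hairs of the flower that could have been the one promoted by $\Delta$.

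\textbf{Main obstacle.} The technical heart of the proof is sign bookkeeping: the valence-dependent signs of $D^{(1)}$ and $D^{(2)}$, the signs from $\Delta$, $\delta$ and $\nabla$, the ordering conventions for newly-created edges and vertices, and the multiplicities $\binom{N}{k}$ arising from multi-edges and multi-hairs must all be carefully reconciled. In the even case one further uses the connectedness hypothesis on $\Gamma$ to prevent the would-be flower from sitting in a disconnected component isolated from the rest of the graph, which would spoil the identification with $2\nabla(\delta D^{(2)}+D^{(2)}\delta)$.
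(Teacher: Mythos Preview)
Your odd-case argument does not work as stated. You try to cancel $D^{(1)}\tilde\Delta_{x_1}(\Gamma)$ against $D^{(1)}\tilde\Delta_{x_2}(\Gamma)$ via the swap $x_1\leftrightarrow x_2$, but these two operations are genuinely asymmetric: one adds an edge at $x_1$ and then deletes $x_2$, the other does the reverse. When the local structures near $x_1$ and $x_2$ differ there is no bijection between their outputs, and the sign factors you list ($\sgn_v$, $\sgn_h$, $\sgn_2$, the valence sign) cannot manufacture one. The paper's argument is different and simpler: it shows that \emph{each} term $D_y\Delta_x(\Gamma)$ vanishes on its own. The new $\Delta$-edge $f$ issuing from $x$ either lands directly at its final target $w\neq y$, or first lands at $y$ and is then rerouted to $w$ by $D_y$; for a fixed rerouting of the remaining edges at $y$ these two contributions differ only by one in the valence of $y$, so the signs $(-1)^{v(y)}$ cancel. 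This is exactly the internal-rerouting mechanism of Proposition~\ref{DelEvenDd}, applied to the single new edge $f$, and it already gives $D^{(1)}\Delta=0$ in $\mH^2\fHGC_{-1,1}$.

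Your even-case outline has the geometry wrong. When both hairs sit on the same vertex $y$ (which is allowed for $n$ even), $\Delta$ removes one hair from $y$ and $D^{(1)}$ then deletes $y$ itself; so the vertex on which $\Delta$ acts and the vertex $D^{(1)}$ deletes are the \emph{same}, not two adjacent vertices forming a flower as you describe. You also have the roles of $\delta D^{(2)}$ and $D^{(2)}\delta$ reversed: $\delta D^{(2)}$ requires the flower already in $\Gamma$, while $D^{(2)}\delta$ can act after $\delta$ has created one. The paper proceeds by a four-way case split on the placement of the two hairs: (1) on two distinct vertices --- both sides vanish, the left by the internal cancellation above and the right because $D^{(2)}$ and $D^{(2)}\delta$ see no flower; (2) both on a $2$-valent vertex --- connectedness forces $\Gamma=\sigma_2$, checked by hand; (3) both on a $3$-valent vertex, an actual flower at $y$ with root $x$ --- here one expands $\delta D^{(2)}+D^{(2)}\delta$ term by term in the style of Proposition~\ref{DelEvenDd} and matches against $D^{(1)}\Delta$; (4) both on a vertex of valence $\geq 4$ --- only $D^{(2)}\delta$ contributes, via the unique splitting that produces a flower. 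A single uniform pairing argument of the kind you sketch does not distinguish these cases, and in particular does not explain why the right-hand side is nonzero precisely when a flower is (or can be made) present.
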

\begin{proof}
We do the proof of the first claim for four different cases of $\Gamma\in\mH^2\fHGCc_{-1,0}$:
\begin{enumerate}
\item $\Gamma$ has hairs on two different vertices, $x$ and $y$. By the definition $\delta D^{(2)}(\Gamma)=\delta(0)=0$. Differential $\delta$ can not move hairs to the same vertex, so it is also $D^{(2)}\delta(\Gamma)=0$. The left-hand side is
$$
D^{(1)}\Delta(\Gamma)=D^{(1)}(\Delta_x(\Gamma)+\Delta_y(\Gamma))=D_y\Delta_x(\Gamma)+D_x\Delta_y(\Gamma).
$$
In the first term $\Delta_x$ first deletes the hair on $x$ and connects an edge $f$ from $x$ to all other vertices. Then $D_y$ deletes a vertex $y$ and reconnects all its edges to other vertices in all possible ways. If $f$ has been connected to $y$, it is also reconnected to all other vertices, what is exactly the same term as if $f$ has been connected at first to its final destination, but with the opposite sign. So, the terms cancel and $D_y\Delta_x(\Gamma)=0$. The same argument leads to $D_x\Delta_y(\Gamma)=0$, so the formula holds.

\item $\Gamma$ has both hairs on the same vertex that is $2$-valent. Because $\Gamma$ is connected it must be $\Gamma=\sigma_2=
\begin{tikzpicture}[baseline=-.5ex]
\node[int] (a) at (0,0) {};
\draw (a) edge (30:.2);
\draw (a) edge (150:.2);
\end{tikzpicture}$,
and the formula is easily checked.

\item $\Gamma$ has both hairs on the same vertex $y$ that is $3$-valent. Let $x$ be another end of the only edge at $y$, i.e.\ there is a flower on $y$ with the root $x$.
\begin{multline*}
\delta D^{(2)}(\Gamma)+D^{(2)}\delta(\Gamma)=\\
\sum_{\substack{w\\w\neq x,y}}\left(\frac{1}{2}s_wD^{(2)}(\Gamma)-a_wD^{(2)}(\Gamma)\right)
+D^{(2)}\sum_{\substack{w\\w\neq x,y}}\left(\frac{1}{2}s_w(\Gamma)-a_w(\Gamma)\right)
+D^{(2)}\left(\frac{1}{2}s_x(\Gamma)-a_x(\Gamma)\right)
+D^{(2)}f_y(\Gamma)
\end{multline*}
where $s'_y$ is the term of $s_y$ that splits $y$ to $y$ and $z$ where both hairs stay at $y$ and the edge goes to $z$. Other terms of $s_y$ cancel out with $a_y$ and $h(y)e_y$. Using similar arguments as in the proof of Proposition \ref{DelEvenDd} one easily checks that
$$
s_wD^{(2)}(\Gamma)=-D^{(2)}s_w(\Gamma),
$$
$$
\frac{1}{2}D^{(2)}s_x(\Gamma)=\sum_{\substack{w\\w\neq x,y}}a_wD^{(2)}(\Gamma),
$$
$$
D^{(2)}a_x(\Gamma)=-\sum_{\substack{w\\w\neq x,y}}D^{(2)}a_w(\Gamma),
$$
$$
2\nabla D^{(2)}s'_y(\Gamma)=D^{(1)}\Delta(\Gamma),
$$
so the formula follows.

\item $\Gamma$ has both hairs on the same vertex $x$ that is more than $3$-valent. Then by the definition $\delta D^{(2)}(\Gamma)=\delta(0)=0$ and the only term that remains from $D^{(2)}\delta(\Gamma)$ is $D^{(2)}s'_x(\Gamma)$, where $s'_x$ is the term of $s_x$ that splits $x$ to $x$ and $y$ where both hairs stay at $x$ and all edges go to $y$. It still holds $2\nabla D^{(2)}s'_y(\Gamma)=D^{(1)}\Delta(\Gamma)$, so the formula follows.
\end{enumerate}

The proof of the second claim of the proposition is the same as the first case above.
\end{proof}

Note that the previous proposition fails for the disconnected graph $\Gamma\cup\sigma_2\in\mH^2\fHGC_{-1,0}$ where $\Gamma\in\mH^0\fHGC_{-1,0}$.

\subsection{Three hairs}
\begin{prop}
\label{prop:D3}
For the connected part $\mH^3\fHGCc_{-1,0}$ it holds that
$$
\nabla D^{(2)}\Delta=0.
$$
\end{prop}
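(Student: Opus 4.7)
The plan is to prove $\nabla D^{(2)}\Delta(\Gamma) = 0$ for $\Gamma \in \mH^3\fHGCc_{-1,0}$ by case analysis on the distribution of the three hairs, mirroring the four-case structure used in the proof of Proposition~\ref{prop:D2}. The key observation is that $D^{(2)}$ acts nontrivially only on graphs containing a \emph{flower} --- a 3-valent vertex bearing two hairs and a single edge --- so one must identify when $\Delta(\Gamma)$ can contain such a flower. If the three hairs of $\Gamma$ lie on three distinct vertices, $\Delta(\Gamma)$ has at most one hair per vertex and no flower appears. If all three hairs lie on a single vertex $y$, the only way to produce a flower at $y$ after $\tilde\Delta_y$ is for $y$ to be isolated in $\Gamma$, which by connectedness forces $\Gamma=\{y\}$, and then $\tilde\Delta_y(\Gamma)=0$. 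The remaining case is $\Gamma$ having two hairs on a vertex $y$ and one hair on a distinct vertex $z$; here $\tilde\Delta_y$ produces no 2-hair vertex, and $\tilde\Delta_z$ does so only if $y$ is already 3-valent in $\Gamma$, a flower rooted at the unique neighbor $x$ of $y$.

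Within this surviving configuration I would split $D^{(2)}\tilde\Delta_z(\Gamma) = \sum_w D^{(2)}\tilde\Delta_z^w(\Gamma)$ according to where the new edge introduced by $\tilde\Delta_z$ lands. When $w=x$, the added edge becomes incident to the root and $D^{(2)}$ reconnects it together with the $k$ other non-flower edges of $x$. When $w$ is any other non-deleted vertex, the added edge is untouched and $D^{(2)}$ only reconnects the $k$ original edges at $x$. If $z$ is neither equal nor adjacent to $x$ in $\Gamma$, both families produce identical resulting graphs with opposite signs (the valence of $x$ differs by one between the two cases), and they cancel pairwise; here $D^{(2)}\tilde\Delta_z(\Gamma) = 0$ already, and $\nabla$ is not needed.

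The subtle subcase is when $z$ equals or is adjacent to the root $x$: the ``edge routed through $x$'' contribution vanishes by the double-edge rule, the cancellation above breaks, and $D^{(2)}\tilde\Delta_z(\Gamma)$ is genuinely nonzero. This is exactly where the outer $\nabla$ becomes essential. I would show that in this subcase the output of $D^{(2)}\tilde\Delta_z(\Gamma)$ lies in the image of $\nabla\colon\fGC_0\to\fGC_0$, so that applying $\nabla$ once more yields $\nabla^2=0$, which holds in the even case by edge antisymmetry. The main obstacle is the careful edge-label and sign bookkeeping needed to exhibit an explicit $\nabla$-primitive; the required identities should parallel the case~3 analysis in the proof of Proposition~\ref{prop:D2}, applied one hair higher.
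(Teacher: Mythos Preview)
Your strategy matches the paper's: first argue that $D^{(2)}\Delta(\Gamma)$ can be nonzero only when $\Gamma$ already contains a flower at some $y$ with root $x$, then split on the location of the third hair relative to $x$. The paper's Case~1 (third hair at $w\neq x$) asserts $D^{(2)}\Delta=0$ outright, so $\nabla$ plays no role there; only Case~2 ($w=x$) needs $\nabla$, via the observation that $D^{(2)}\Delta_x(\Gamma)$ is an ``add one edge in all possible ways'' expression and is therefore killed by a further $\nabla$ through edge antisymmetry.

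Your refinement --- singling out the subcase ``$z$ adjacent to $x$, $z\neq x$'' --- correctly notices that the direct-vs-routed-through-$x$ pairing breaks there, since the routed term dies by the double-edge rule. But your conclusion that $D^{(2)}\tilde\Delta_z(\Gamma)$ is ``genuinely nonzero'' is wrong: it is still zero, by a different cancellation. After $D^{(2)}$, both the new edge $f$ (added by $\Delta_z$) and the rerouted old edge $e_1$ (formerly $z\text{--}x$) are edges out of $z$ into surviving vertices; interchanging their destinations yields the same final graph with the labels of $f$ and $e_1$ transposed, hence opposite sign in $\fHGC_{-1,0}$, and the terms cancel pairwise. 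So the adjacent subcase belongs with the paper's Case~1, not Case~2, and your plan to exhibit a nontrivial $\nabla$-primitive there would come up empty. The only subcase where $\nabla$ is genuinely needed is $z=x$, and there your approach coincides with the paper's.
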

\begin{proof}
The only possibility that $D^{(2)}\Delta(\Gamma)$ is not $0$ is when $\Delta(\Gamma)$ has a flower, say at a vertex $y$ with the root $x$. If an edge from $x$ to $y$ is created by $\Delta$, $y$ was disconnected from the rest of the graph in $\Gamma$, what is not possible. So the flower already existed in $\Gamma$. We have 2 cases:
\begin{enumerate}
\item The third hair in $\Gamma$ is on the vertex $w\neq x$. Then the part of $\Delta$ that saves the flower deletes the hair on $w$ and makes an edge between $w$ and another vertex $z$ that is not $x$. If $z=x$, $D^{(2)}$ will move it again to another vertex, cancelling the term where $\Delta$ sent it to its final destination immediately.
\item The third hair in $\Gamma$ is on the vertex $x$. $\Delta$ deletes it and makes an edge between $x$ and another vertex. $D^{(2)}$ then reconnects the edge from $x$ to another vertex, so the resulting action is adding an edge in all possible ways. Then $\nabla$ adds another edge in all possible ways. Adding one edge on one place and another edge on another place cancels with adding edges in the opposite order.
\end{enumerate}
\end{proof}

Note that the previous proposition fails for the disconnected graph $\Gamma\cup\sigma_3\in\mH^3\fHGC_{-1,0}$ where $\Gamma\in\mH^0\fHGC_{-1,0}$.

\section{Hairy complex, even edges and odd hairs}
\label{s:Hevenodd}

In this section we prove the first part of Theorem \ref{thm:main}, i.e.\ that $H\left(\HGC_{-1,1},\delta+\Delta\right)=0$. The following diagram describes the way to do that. `Almost acyclic' means that there are only a few classes of cohomology that are easy to calculate.
$$
\begin{tikzpicture}[scale=1.3]
 \node (a) at (0,-.2) {};
 \node (b) at (0,-1) {$\left(\fHGC_{-1,1}\oplus\fGC_1^{\geq 1}[-3],\Delta+D^{(1)}\right)$ is almost acyclic, except for the classes without hairs};
 \node (b1) at (0,-2) {$\left(\fHGC_{-1,1}^\dagger\oplus\fGC_1^\dagger[-3],\Delta+D^{(1)}\right)$ is almost acyclic, except for the classes without hairs};
 \node (c) at (0,-3) {$\left(\mH^\flat\fHGC_{-1,1}^\dagger,\Delta+D^{(1)}\right)$ is almost acyclic};
 \node (d) at (0,-4) {$\left(\mH^\flat\fHGC_{-1,1}^\dagger,\delta'+\Delta+D^{(1)}\right)$ is almost acyclic};
 \node (e) at (0,-5) {$\left(\mH^\flat\fHGC_{-1,1}^{\dagger\ddagger},\delta'+\Delta+D^{(1)}\right)$ is acyclic};
 \node (f) at (0,-6) {$\left(\mH^\flat\fHGC^*_{-1,1},\delta'+\Delta+D^{(1)}\right)$ is acyclic};
 \node (g) at (0,-7) {$\left(\mH^{\geq 1}\fHGC^*_{-1,1},\delta+\Delta\right)$ is acyclic};
 \node (h) at (0,-8) {$\left(\mH^{\geq 1}\fHGCc_{-1,1}^*,\delta+\Delta\right)$ is acyclic};
 \node (i) at (0,-9) {$\left(\HGC_{-1,1},\delta+\Delta\right)$ is acyclic};
 \draw (a) edge[->,double] node[right] {\ref{prop:fHGCDeltaOdd}} (b);
 \draw (b) edge[->,double] node[right] {\ref{prop:fHGC+DeltaOdd}} (b1);
 \draw (b1) edge[->,double] node[right] {\ref{cor:DeltaOdd}} (c);
 \draw (c) edge[->,double] node[right] {\ref{prop:fHGC-1'}} (d);
 \draw (d) edge[->,double] node[right] {\ref{prop:bHGC-1}} (e);
 \draw (e) edge[->,double] node[right] {\ref{prop:HGCo-1}} (f);
 \draw (f) edge[->,double] node[right] {\ref{prop:HGCdis-1}} (g);
 \draw (g) edge[->,double] node[right] {\ref{prop:fHGCc1}} (h);
 \draw (h) edge[->,double] node[right] {\ref{prop:HGC1}} (i);
\end{tikzpicture}
$$

Can this be shown in a shorter way? The easiest way to show that a complex with a differential $\delta+\Delta$ is acyclic is to make the spectral sequence in which the first differential is $\Delta$, and to use the fact that the complex with the differential $\Delta$ is acyclic. But neither $\left(\mH^{\geq 1}\fHGC_{-1,1}^{\geq 1},\Delta\right)$ nor $\left(\fHGC_{-1,1}^{\geq 1},\Delta\right)$ is acyclic, there are classes with $1$ or no hairs. For technical reasons we need to disallow $\lambda$ as a connected component and change the constraint $\geq 1$ to $\dagger$. We then change the hairless part in $\left(\fHGC_{-1,1}^\dagger,\Delta\right)$ to kill classes with 1 or no hairs and make the new complex, $\left(\mH^\flat\fHGC_{-1,1}^\dagger,\Delta\right)$, almost acyclic (Corollary \ref{cor:DeltaOdd}).

The spectral sequence argument now leads to the conclusion that the complex with the differential $\delta+\Delta$ is almost acyclic (Proposition \ref{prop:fHGC-1'}). But it is not our intended result, there is a complicated hairless part. To remove it (Proposition \ref{prop:HGCo-1}), we need a change of constraint to $*$ (at least 2-valent vertices, at least 3-valent hairy vertices). The standard result that the change of constraint does not change the cohomology of the standard differential $\delta$ (Corollary \ref{cor:fHGC-d-dd-fHGC*}), can be used in the spectral sequence with the standard differential being the first one (Proposition \ref{prop:HGCo-1}). But the spectral sequence is bounded, and hence converges correctly, only if we change to the bounded complex $\mH^\flat\fHGC_{-1,1}^{\dagger\ddagger}$ before that (Proposition \ref{prop:bHGC-1}).

Recall that in the complex $\HGC_{-1,1}$ and all other complexes we are working with in this section, the degree is $d=v+1-h$.

\subsection{The differential $\Delta$}
\label{ss:Delta}

In this subsection we want to study the cohomology of $\left(\fHGC_{-1,1},\Delta\right)$. We will actually study a slightly different complex with an extra term $\mH^0\fHGC_{-1,1}[-1]$: 
\begin{equation}
\left(\fHGC_{-1,1}\oplus\mH^0\fHGC_{-1,1}[-1],\Delta+D^{(1)}\right)
\end{equation}
where $\Delta:\fHGC_{-1,1}\rightarrow\fHGC_{-1,1}$ and $D^{(1)}:\mH^1\fHGC_{-1,1}\rightarrow\mH^0\fHGC_{-1,1}[-1]$ is deleting a hairy vertex defined in \eqref{def:D1}. Recall that the degree is $d=v+1-h$, so the degree shift $[-1]$ is necessarily to make the differential of the degree $+1$. Proposition \ref{prop:D2} shows that $D^{(1)}\Delta=0$ in $\fHGC_{-1,1}$, so the differential indeed squares to $0$.

Recall \eqref{eq:HGC-GCshift}, so $\mH^0\fHGC_{-1,1}[-1]=\fGC_{1}^{\geq 1}[-3]$. For simplicity we use the latter notation.

Similarly to \eqref{eq:split4}, our new complex splits as the product of subcomplexes with fixed number of vertices $v$, with the extra term having $v-1$ vertices:
\begin{equation}
\label{eq:split41}
\left(\fHGC_{-1,1}\oplus\fGC_1^{\geq 1}[-3],\Delta+D^{(1)}\right)=\prod_{v\in\N}\left(\mV^v\fHGC_{-1,1}\oplus\mV^{v-1}\fGC_1^{\geq 1}[-3],\Delta+D^{(1)}\right).
\end{equation}
In each subcomplex, the degree $d=v+1-h$ is up to the shift equal to the negative number of hairs $-h$. We may write it as:
$$
\begin{tikzpicture}[scale=1.6]
 \node at (0,.6) {$v+1$};
 \node at (-2,.6) {$v$};
 \node at (-4,.6) {$v-1$};
 \node at (-6,.6) {$v-2$};
 \node at (-7.5,.6) {$d=$};
 \node (a0) at (0,0) {$\mH^0\mV^v\fHGC_{-1,1}$};
 \node at (0,-.4) {$\oplus$};
 \node (b0) at (0,-.8) {$\mV^{v-1}\OC$};
 \node (a1) at (-2,0) {$\mH^1\mV^v\fHGC_{-1,1}$};
 \node (a2) at (-4,0) {$\mH^2\mV^v\fHGC_{-1,1}$};
 \node (a3) at (-6,0) {$\mH^3\mV^v\fHGC_{-1,1}$};
 \node (a4) at (-7.5,0) {$\dots$};
 \draw (a4) edge[->] node[above] {$\Delta$} (a3);
 \draw (a3) edge[->] node[above] {$\Delta$} (a2);
 \draw (a2) edge[->] node[above] {$\Delta$} (a1);
 \draw (a1) edge[->] node[above] {$\Delta$} (a0);
 \draw (a1) edge[->] node[below] {$D^{(1)}$} (b0);
\end{tikzpicture}
$$

\begin{prop}
\label{prop:fHGCDeltaOdd}
\begin{itemize}\item[]
\item $H\left(\mV^1\fHGC_{-1,1},\Delta\right)$ is one-dimensional, the class being represented by $\sigma_1=
\begin{tikzpicture}[baseline=-.5ex]
\node[int] (a) at (0,0) {};
\draw (a) edge (0,.2);
\end{tikzpicture}\,$;
\item $H_d\left(\mV^v\fHGC_{-1,1}\oplus\mV^{v-1}\OC,\Delta+D^{(1)}\right)=0$ for $v>1$ and $d\leq v$.
\end{itemize}
\end{prop}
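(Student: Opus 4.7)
The first claim is immediate: in $\mV^1\fHGC_{-1,1}$ there is only one vertex, which must be at least $1$-valent (and carry no tadpole), and the antisymmetry of hairs forbids more than one hair on a single vertex; so the complex is the one-dimensional line spanned by $\sigma_1$, which is manifestly $\Delta$-closed since there is no other vertex to connect a new edge to. For the second claim, note that $\mV^{v-1}\OC$ is concentrated in degree $v+1$, so it contributes nothing in the range $d\leq v$, and the degree formula $d=v+1-h$ turns the condition $d\leq v$ into hair count $h\geq 1$. Since $D^{(1)}$ only exits $\mH^1\mV^v$, it will be enough to prove the auxiliary vanishing
\[
H_h\bigl(\mV^v\fHGC_{-1,1},\,\Delta\bigr)=0 \qquad \text{for all } h\geq 1 \text{ and } v\geq 2,
\]
for then every $\Delta$-cocycle in $\mH^1\mV^v$ is already $\Delta$-exact, so in particular every $(\Delta+D^{(1)})$-cocycle in degree $v$ is exact, and for $d<v$ the operator $D^{(1)}$ plays no role at all.

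To prove this vanishing I will pass to distinguishable vertices, which is harmless in characteristic zero because $\mV^v\fHGC_{-1,1}=(\bar\mV^v\fHGC_{-1,1}\otimes\sgn_v)^{S_v}$. On labeled vertices, odd-hair antisymmetry forces at most one hair per vertex while even-edge symmetry simply records multiplicities, yielding a canonical identification of the ``$\geq 0$-valence'' labeled complex with a Koszul complex
\[
\bar\mV^v\fHGC_{-1,1}^{\geq 0}\;\cong\;\Bigl(\mathbb{K}[y_{ij}]_{1\leq i<j\leq v}\otimes\Lambda[\zeta_1,\ldots,\zeta_v],\ \Delta=\sum_{i=1}^v f_i\,\partial_{\zeta_i}\Bigr),\qquad f_i:=\sum_{j\neq i}y_{ij},
\]
where $y_{ij}$ encodes an edge between vertices $i$ and $j$ and $\zeta_i$ encodes a hair at vertex $i$. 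For $v\geq 3$ the linear forms $f_1,\ldots,f_v$ are linearly independent: the relation $\sum c_if_i=0$ reads $c_i+c_j=0$ for every $i\neq j$, which admits only the trivial solution as soon as at least three of the $c_i$ are coupled. Linearly independent linear forms in a polynomial ring form a regular sequence, so the Koszul cohomology is concentrated in $\zeta$-degree zero and $H_h(\bar\mV^v\fHGC_{-1,1}^{\geq 0},\Delta)=0$ for $h\geq 1$.

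To descend from the ``$\geq 0$-valence'' Koszul complex to the true complex $\fHGC_{-1,1}=\fHGC_{-1,1}^{\geq 1}$ (no isolated vertices), I will use the short exact sequence
\[
0\to\bar\mV^v\fHGC_{-1,1}^{\geq 1}\to\bar\mV^v\fHGC_{-1,1}^{\geq 0}\to Q\to 0,
\]
where $Q$ is spanned by graphs with at least one isolated vertex. Filtering $Q$ by the set of isolated vertices identifies each associated graded piece with a labeled Koszul complex on strictly fewer non-isolated vertices, and an induction on $v$ combined with the long exact sequence in homology produces the desired vanishing in $h\geq 1$. The base $v=2$ must be handled by hand since then $f_1=f_2=y_{12}$ and regularity fails, but the only leftover Koszul class $\zeta_1-\zeta_2$ has edge degree zero, i.e.\ represents a graph with an isolated vertex, and is therefore killed by the $\geq 1$-valence restriction; so vanishing holds here too. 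Taking $\sgn_v$-invariants recovers the unlabeled statement and closes the argument. The main obstacle is precisely this descent from $\fHGC^{\geq 0}$ to $\fHGC^{\geq 1}$ in the Koszul picture: the isolated-vertex subspace is not $\Delta$-invariant, so the induction must be organised as a carefully set up spectral sequence, and the interplay of the three sign representations $\sgn_h$, $\sgn_v$ and $\sgn_2$ has to be tracked consistently throughout the inclusion–exclusion.
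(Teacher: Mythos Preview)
Your reduction of the second claim to the auxiliary vanishing $H_h\bigl(\mV^v\fHGC_{-1,1},\Delta\bigr)=0$ for all $h\geq 1$ is sound, and the Koszul model for the labeled $\geq 0$ complex is a natural idea. But the key step fails: the linear forms $f_1,\dots,f_v$ are \emph{not} linearly independent. In $\fHGC_{-1,1}$ (odd $n$) the edges carry the sign representation $\sgn_2$ of direction reversal, so the correct extension is $y_{ij}=-y_{ji}$, not $y_{ij}=y_{ji}$. With $f_i=\sum_{j\neq i} y_{ij}$ and antisymmetric $y$'s, the coefficient of $y_{ij}$ (for $i<j$) in $\sum_k c_k f_k$ is $c_i-c_j$, not $c_i+c_j$; the relation $\sum_k c_k f_k=0$ therefore reads $c_i=c_j$ for all $i,j$, and $\sum_i f_i=0$ is a genuine linear dependence. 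Intrinsically: adding a directed edge $x\to j$ and one $j\to x$ cancel under $\sgn_2$, so summing the ``hair $\mapsto$ outgoing edge'' maps over all vertices gives zero. Hence $(f_1,\dots,f_v)$ is never a regular sequence for $v\geq 2$, and the Koszul cohomology of $\bar\mV^v\fHGC_{-1,1}^{\geq 0}$ is nonzero at $h=1$ (it is $\K[y]/(f_1,\dots,f_{v-1})$, one copy for the single relation).

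So your auxiliary vanishing at $h=1$ is false on $\fHGC^{\geq 0}$, and whether that $h=1$ class dies upon passing to $\fHGC^{\geq 1}$ and taking $\sgn_v$-invariants is precisely the ``main obstacle'' you flag but do not resolve. The paper does not attempt to prove vanishing at $h=1$: its inductive spectral-sequence argument (filtering by the valence of a chosen vertex) establishes $H_h(V^v,\Delta)=0$ only for $h\geq 2$, explicitly allows a residual class at $h=1$, and then uses $D^{(1)}$ to inject any such class into the $\mV^{v-1}\OC$ summand in degree $v+1$. In other words, $D^{(1)}$ is not incidental in the paper's proof but essential---it is exactly what disposes of the potential $h=1$ class your Koszul argument cannot rule out. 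Your argument does recover the $h\geq 2$ range (a single linear relation among the $f_i$ forces the Koszul cohomology to concentrate in $h\leq 1$), so a repair would be to keep the Koszul model for $h\geq 2$ and supply a separate argument at $h=1$ using $D^{(1)}$, which is essentially what the paper does.
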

Note that the second claim of the proposition does not say anything about the cohomology at degree $d=v+1$, and for $d\geq v+2$ it is trivially $0$.
\begin{proof}
$\mV^{v-1}\OC$ is isomorphic to the subcomplex of $\mV^v\fGC_1[-2]$ spanned by graphs with an isolated vertex, the isomorphism being adding an isolated vertex $\cup\sigma_0$. Since the proposition does not say anything about the cohomology at degree $v+1$ we may safely replace $\mV^{v-1}\OC$ with the whole $\mV^v\fGC_1[-2]$. The purpose is to make $D^{(1)}$ not change the number of vertices, so the new $D^{(1)}:\mV^v\mH^1\fHGC_{-1,1}\rightarrow\mV^v\fGC_1[-2]$ reconnects all edges from the hairy vertex and deletes it, but restores the vertex without its hair.

Since the differential does not change the number of vertices, we can use Proposition \ref{prop:Hinv} and work with fixed number of vertices and distinguish them. Let $V^v:=\bar\mV^v\fHGC_{-1,1}$ and $W^v:=\bar\mV^v\fGC_1[-2]$.

The whole complex $\mV^1\fHGC_{-1,1}$ is one dimensional and generated by $\sigma_1$, so the first statement of the proposition is clear.

We will show that for $v>1$ $H_{v+1-h}\left(V^v+W^v,\Delta+D^{(1)}\right)=0$ for $h\geq 1$ by induction on $v$. That will conclude the proof of the proposition. The claim includes the claim that $H_{v+1-h}(V^v,\Delta)=0$ for $h\geq 2$. It is also true for $v=1$, and it will actually be used as the assumption of the induction. So, we may use the case $v=1$ as the base of the induction. Suppose now that the assumption is true for $v-1$ vertices, $v\geq 2$.

On $V^v$ we choose one vertex, say the last one, and set up a spectral sequence on the total valence $s$ of non-chosen vertices. So, an edge between non-chosen vertices counts twice, a hair on non-chosen vertex and an edge between non-chosen vertex and the chosen vertex counts ones, and hair on the chosen vertex does not count. The differential $\Delta$ can not decrease $s$ and splits $\Delta=\Delta_0+\Delta_1$ where $\Delta_0$ is the part that does not change $s$. $\Delta_0$ connects a hair from a non-chosen vertex to the chosen vertex and $\Delta_1$ connects something to a non-chosen vertex increasing $s$ always by $1$.

As in \eqref{eq:split4}, we can furthermore split the complex as the product of subcomplexes with fixed $a=e+h$:
\begin{equation*}
(V^v,\Delta)=\prod_{a\in\Z}\left(\mA^aV^v,\Delta\right).
\end{equation*}
For fixed degree $v+1-h$, i.e.\ fixed number of hairs $h$, $s$ can get only finitely possible values, so the spectral sequence converges correctly in each $\left(\mA^aV^v,\Delta\right)$, and therefore in the whole $(V^v,\Delta)$.

On the first page of the spectral sequence there is the cohomology $(V^v,\Delta_0)$. Let $\beta:V^v\rightarrow V^v$ be the sum over all edges at the chosen vertex of deleting that edge (as heading towards the chosen edge for the matter of sign) and putting a hair on non-chosen vertex that was connected to that edge, unless it makes the chosen vertex 0-valent, being forbidden by definition. If the chosen vertex is not hairless and 1-valent, it is clear that $\Delta_0\beta+\beta\Delta_0=C\Id$ where $C$ is the number of edges at the chosen vertex plus the number of hairs on non-chosen vertices. So $H\left(V^v,\Delta_0\right)=0$ unless the chosen vertex is isolated with a hair and there are no other hairs, or it is hairless 1-valent vertex.

By gluing a 1-vertex graph $\cup\sigma_0$ or $\cup\sigma_1$ in complexes $V^v$ with distinguishable vertices we mean adding a new vertex with the highest number, so that it becomes chosen.

Every graph of the form $\Gamma\cup\sigma_1$, $\Gamma\in \mH^0V^{v-1}$, clearly represents a cohomology class. Let us call it a class of the \emph{first type}. Graphs with a hairless $1$-valent chosen vertex would not form classes if the vertex would be allowed to be $0$-valent. Therefore, cutting that possibility implies that classes are represented by graphs of the form $\Delta_0(\Gamma\cup\sigma_0)=: c(\Gamma)$ for $\Gamma\in V^{v-1}$. Let us call them classes of the \emph{second type}. It is easily seen that $c$ is an isomorphism of degree $2$, so classes of the second type on the second page of the spectral sequence are indeed equal to classes of $H(V^{v-1}[-2])$. Classes are sketched in Figure \ref{fig:ssVOdd}.

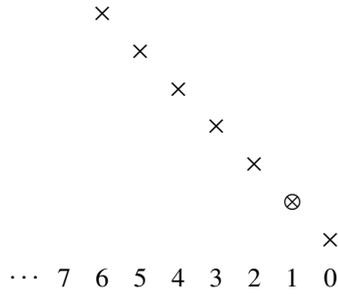
\begin{figure}[h]
\begin{tikzpicture}[scale=.5]
 \node at (0,-1) {$0$};
 \node at (-1,-1) {$1$};
 \node at (-2,-1) {$2$};
 \node at (-3,-1) {$3$};
 \node at (-4,-1) {$4$};
 \node at (-5,-1) {$5$};
 \node at (-6,-1) {$6$};
 \node at (-7,-1) {$7$};
 \node at (-8,-1) {$\dots$};
 \node at (0,0) {$\times$};
 \node at (-1,1) {$\otimes$};
 \node at (-2,2) {$\times$};
 \node at (-3,3) {$\times$};
 \node at (-4,4) {$\times$};
 \node at (-5,5) {$\times$};
 \node at (-6,6) {$\times$};
\end{tikzpicture}
\caption{\label{fig:ssVOdd}
Classes on the first page of the spectral sequence of $V^v$. The numbers at the bottom are the numbers of hairs $h$, while the degree is $d=v+1-h$. Classes of the second type are labeled by $\times$ and the position where there are both classes is labeled by $\otimes$.}
\end{figure}

Let $\CC$ be the complex isomorphic to the line of the second-type classes of the second page of the spectral sequence and it is depicted in Figure \ref{fig:ssSecondOdd}.
$\mH^hV^{v-1}$ is part of degree $d=v-h$ in $V^{v-1}$, and of degree $d=v+2-h$ in $V^{v-1}[-2]$. Here we set up the 2-row spectral sequence with the first type class $\Gamma\cup\sigma_1$, $\Gamma\in \mH^0V^{v-1}$, in the first row, and $V^{v-1}[-2]$ represented second type classes in the second row. First type class $\Gamma\cup\sigma_1$ is sent to the graph obtained from $\Gamma$ by adding an antenna in all possible ways. The isomorphic element in $\mH^1V^{v-1}$ is $\chi^1(\Gamma)$, where $\chi^1$ adds a hair in all possible ways.

\begin{figure}[h]
\begin{tikzpicture}[scale=2]
 \node at (-1,-.5) {$v+1$};
 \node at (-2,-.5) {$v$};
 \node at (-3,-.5) {$v-1$};
 \node at (-4,-.5) {$v-2$};
 \node at (-5,-.5) {$v-3$};
 \node at (-6,-.5) {$d=$};
 \node (b2) at (-2,.7) {$\mH^0 V^{v-1}$};
 \node (a1) at (-1,0) {$\mH^1 V^{v-1}$};
 \node (a2) at (-2,0) {$\mH^2 V^{v-1}$};
 \node (a3) at (-3,0) {$\mH^3 V^{v-1}$};
 \node (a4) at (-4,0) {$\mH^4 V^{v-1}$};
 \node (a5) at (-5,0) {$\mH^5 V^{v-1}$};
 \node (a6) at (-6,0) {$\dots$};
 \draw (a6) edge[->] node[below] {$\Delta$} (a5);
 \draw (a5) edge[->] node[below] {$\Delta$} (a4);
 \draw (a4) edge[->] node[below] {$\Delta$} (a3);
 \draw (a3) edge[->] node[below] {$\Delta$} (a2);
 \draw (a2) edge[->] node[below] {$\Delta$} (a1);
 \draw (b2) edge[->] node[above] {$\chi^1$} (a1);
\end{tikzpicture}
\caption{\label{fig:ssSecondOdd} The complex $\CC$ split into the another 2-row spectral sequence.}
\end{figure}

The second row is, by the induction hypothesis, acyclic for the degree $d\leq v$.
We do not know whether the degree $d=v$ first type class $\Gamma$ from the first row is canceled with something in the second row at the degree $d=v+1$. If it is, $H_d(V^v)=0$ for degree $d\leq v$.

If it is not, $H_d(V^v)=0$ for degree $d\leq v-1$ and there is a class in degree $d=v$ in $\CC$ represented by $\Gamma+\Gamma_2(\Gamma)$ for $\Gamma_2(\Gamma)\in \mH^2V^{v-1}$ ($\Gamma_2$ depends on $\Gamma$), where $\chi^1(\Gamma)+\Delta(\Gamma_2(\Gamma))=0$. Back in the starting complex $V^v$ the isomorphic element is $\Gamma\cup\sigma_1+\Delta_0(\Gamma_2(\Gamma)\cup\sigma_0)$ from the second page. It is clearly sent to $0$ by the whole $\Delta$, so it represents a class of the starting complex $V^v$, in the degree $d=v$.

Now we come back to the whole complex $\left(V^v+W^v,\Delta+D^{(1)}\right)$. Let us set up a spectral sequence of three rows: $V^v$, the part of $W^v$ where the chosen vertex is isolated, and the rest of $W^v$ (see Figure \ref{fig:ssVWOdd}).
The spectral sequence clearly converges correctly.
On the first page in the first row in degree $d=v$ ($h=1$) we may have classes represented by $\Gamma\cup\sigma_1+\Delta_0(\Gamma_2(\Gamma)\cup\sigma_0)$ for some $\Gamma\in \mH^0V^{v-1}$ and $\Gamma_2(\Gamma)\in \mH^2V^{v-1}$, as shown above. In the degree $d=v+1$ ($h=0$) of the first page we are not interested.
In the other rows in the degree $d=v+1$ ($h=0$) there is still the whole space, particularly in the second row there is $\Gamma\cup\sigma_0$ for every $\Gamma\in \mH^0V^{v-1}$.

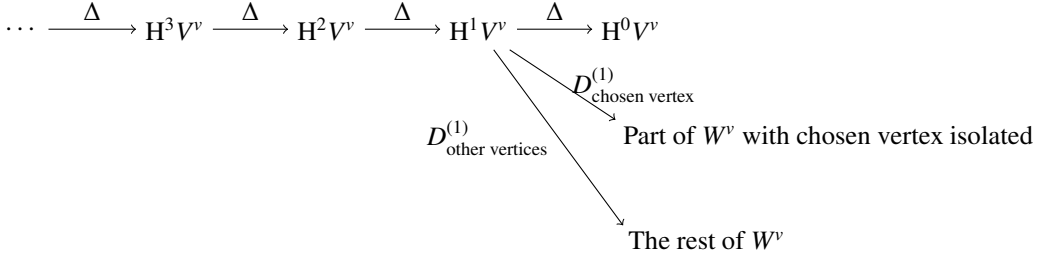
\begin{figure}[h]
\begin{tikzpicture}[scale=2]
 \node (a0) at (0,0) {$\mH^0V^{v}$};
 \node (b0) at (1.3,-.7) {Part of $W^v$ with chosen vertex isolated};
 \node (c0) at (0.5,-1.4) {The rest of $W^v$};
 \node (a1) at (-1,0) {$\mH^1V^{v}$};
 \node (a2) at (-2,0) {$\mH^2V^{v}$};
 \node (a3) at (-3,0) {$\mH^3V^{v}$};
 \node (a4) at (-4,0) {$\dots$};
 \draw (a4) edge[->] node[above] {$\Delta$} (a3);
 \draw (a3) edge[->] node[above] {$\Delta$} (a2);
 \draw (a2) edge[->] node[above] {$\Delta$} (a1);
 \draw (a1) edge[->] node[above] {$\Delta$} (a0);
 \draw (a1) edge[->] node[right] {$D^{(1)}_{\text{chosen vertex}}$} (-.1,-.6);
 \draw (a1) edge[->] node[left] {$D^{(1)}_{\text{other vertices}}$} (-.05,-1.3);
\end{tikzpicture}
\caption{\label{fig:ssVWOdd}Complex $\left(V^v+W^v,\Delta+D^{(1)}\right)$ split into 3-row spectral sequence.}
\end{figure}

On the second page $\Gamma\cup\sigma_1+\Delta_0(\Gamma_2(\Gamma)\cup\sigma_0)$ is mapped by part of $D^{(1)}$ to the part of $W^v$ where the chosen vertex is isolated, i.e.\ chosen vertex has been deleted (and restored). The only part of $\Gamma\cup\sigma_1+\Delta(\Gamma_2(\Gamma)\cup\sigma_0)$ that has a hair on the chosen vertex is $\Gamma\cup\sigma_1$, so the differential is actually $\Gamma\cup\sigma_1+\Delta(\Gamma_2(\Gamma)\cup\sigma_0)\mapsto\Gamma\cup\sigma_0$. It is clearly an injection, making the cohomology in the degree $d=v$ acyclic.

So, in both cases, if the class represented by $\Gamma\cup\sigma_1+\Delta_0(\Gamma_2(\Gamma)\cup\sigma_0)$ in degree $d=v$ of $H\left(V^v,\Delta\right)$ exists or not, in the whole complex $H_d\left(V^v+W^v,\Delta+D^{(1)}\right)=0$ for degrees $d\leq v$. That was to be demonstrated.
\end{proof}

\subsection{Removing $\lambda$}
In this subsection we transform the result to the complex $\fHGC_{-1,1}^\dagger$ that does not have $\lambda=
\begin{tikzpicture}[baseline=-.65ex]
  \node[int] (v) at (0,0) {};
  \node[int] (w) at (0.5,0) {};
  \draw (v) edge (w);
\end{tikzpicture}
$ as a connected component.

\begin{prop}
\label{prop:fHGC+DeltaOdd}
\begin{itemize}\item[]
\item $H\left(\mV^1\fHGC_{-1,1}^\dagger,\Delta\right)$ is one-dimensional, the class being represented by $\sigma_1=
\begin{tikzpicture}[baseline=-.5ex]
\node[int] (a) at (0,0) {};
\draw (a) edge (0,.2);
\end{tikzpicture}\,$;
\item $H_d\left(\mV^v\fHGC_{-1,1}^\dagger\oplus\mV^{v-1}\OCC,\Delta+D^{(1)}\right)=0$ for $v>1$ and $d\leq v$.
\end{itemize}
\end{prop}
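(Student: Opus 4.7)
My plan is to derive the second claim from Proposition \ref{prop:fHGCDeltaOdd} using a short exact sequence, while the first claim is immediate: a single-vertex graph cannot have $\lambda$ as a connected component, so $\mV^1\fHGC_{-1,1}^\dagger = \mV^1\fHGC_{-1,1}$ and the previous proposition gives the result.

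First I would verify that $\mV^v\fHGC_{-1,1}^\dagger \oplus \mV^{v-1}\OCC$ is a subcomplex of $\mV^v\fHGC_{-1,1} \oplus \mV^{v-1}\OC$ under $\Delta + D^{(1)}$, i.e.\ that neither operation can produce a new $\lambda$ component. For $\Delta$, the only way to create an isolated edge $\{u,v\}$ is for the removed hair and new edge to form a connected component on two $1$-valent hairless vertices, which forces one endpoint to have been $0$-valent before, excluded by the $\geq 1$-valent constraint. For $D^{(1)}$, a short case analysis on the preserved edges, outgoing reconnections of edges originally at the hairy vertex, and incoming reconnections to the candidate endpoints $u,v$ shows that every non-contradictory scenario either requires $\Gamma$ to already contain $\lambda$ or yields a double edge $uv$ rather than a single edge.

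Set $Q := \bigl(\mV^v\fHGC_{-1,1}/\mV^v\fHGC_{-1,1}^\dagger\bigr) \oplus \bigl(\mV^{v-1}\OC/\mV^{v-1}\OCC\bigr)$, spanned by graphs with at least one $\lambda$ component. The long exact sequence associated to the short exact sequence of complexes, together with the vanishing from Proposition \ref{prop:fHGCDeltaOdd}, yields isomorphisms $H_d(\mV^v\fHGC_{-1,1}^\dagger \oplus \mV^{v-1}\OCC) \cong H_{d-1}(Q)$ for $d \leq v$, so it suffices to prove $H_d(Q) = 0$ for $d \leq v-1$. I would filter $Q$ by the number $k \geq 1$ of $\lambda$ components; the filtration is finite ($k \leq \lfloor v/2 \rfloor$) so the associated spectral sequence converges. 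On the associated graded piece $Q_k$ the induced differential keeps only the terms preserving $k$, namely those acting within the non-$\lambda$ part, since the ``merging'' terms (where a new edge attaches to a $\lambda$) strictly decrease $k$. The map $\Gamma' \mapsto \Gamma' \cup \lambda^{\cup k}$ shifts the degree by $+2k$ and identifies
\[
(Q_k, d_0) \cong \bigl(\mV^{v-2k}\fHGC_{-1,1}^\dagger \oplus \mV^{v-2k-1}\OCC,\,\Delta+D^{(1)}\bigr)[-2k],
\]
with the degenerate case $v = 2k$ contributing only the one-dimensional class $\lambda^{\cup k}$ at degree $v+1$.

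I would then induct on $v$, with base case $v = 2$ direct: $Q$ is one-dimensional, spanned by $\lambda$ at degree $3$, so vanishes in degrees $\leq 1$. For the inductive step, the $E^1$ page satisfies: when $v - 2k \geq 2$ the inductive hypothesis gives $H_d(Q_k, d_0) = 0$ for $d \leq v$; when $v - 2k = 1$ the underlying complex is $\K\sigma_1$ at degree $1$, contributing a single class at shifted degree $v$; when $v - 2k = 0$ only $\lambda^{\cup k}$ survives at degree $v+1$. In every case the nonzero cohomology sits at $d \geq v$, so $E^1_d = 0$ for $d \leq v-1$, and since this persists to $E^\infty$, we conclude $H_d(Q) = 0$ in the required range. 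The main obstacle is the case analysis verifying $\dagger$-preservation by $D^{(1)}$, which underlies the validity of the short exact sequence.
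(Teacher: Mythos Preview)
Your argument is essentially the paper's: both filter by the number of $\lambda$ components and induct on $v$, using Proposition~\ref{prop:fHGCDeltaOdd} for the full complex and the inductive hypothesis for the pieces with fewer vertices. The paper runs a single spectral sequence on $\mV^v\fHGC_{-1,1}\oplus\mV^{v-1}\OC$ whose bottom row is the $\dagger$ subcomplex; you instead first pass to the quotient $Q$ via the long exact sequence and then filter $Q$, which is the same mechanism repackaged.

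One small correction: your identification $(Q_k,d_0)\cong(\mV^{v-2k}\fHGC_{-1,1}^\dagger\oplus\mV^{v-2k-1}\OCC)[-2k]$ breaks down at the boundary $v-2k=1$. There the graph $\lambda^{\cup k}\in\mV^{v-1}\OC$ (with empty complement) also sits in $Q_k$, so $Q_k$ is two-dimensional, spanned by $\sigma_1\cup\lambda^{\cup k}$ in degree $v$ and $\lambda^{\cup k}$ in degree $v+1$; since $D^{(1)}(\sigma_1\cup\lambda^{\cup k})=\pm\lambda^{\cup k}$, this $Q_k$ is in fact acyclic. Your weaker claim (a single class in degree $v$) still yields $E^1_d=0$ for $d\leq v-1$, so the proof goes through, but the isomorphism should be stated only for $v-2k\geq 2$, with the two remaining cases handled by hand as you already do for $v-2k=0$.
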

\begin{proof}
We again do the proof on induction on $v$. For $v=1$ there can not be any $\lambda$, so $\mV^1\fHGC_{-1,1}^\dagger=\mV^1\fHGC_{-1,1}$, so the result is indeed the same as in the Proposition \ref{prop:fHGCDeltaOdd}. For $v=2$ there is $\lambda$ only in the hairless part and it represents a cohomology class, so in degrees we are considering it does not change the result of Proposition \ref{prop:fHGCDeltaOdd} either.

Let us pick $v>2$ and assume that the proposition holds for any number of vertices smaller than $v$.

On $\left(\mV^v\fHGC_{-1,1}\oplus\mV^{v-1}\OC,\Delta+D^{(1)}\right)$ we set up a spectral sequence on the number of $\lambda$-s. The differential can not increase that number, and it is bounded, so the spectral sequence converges correctly. The lowest row on the first page is our intended complex $\left(\mV^v\fHGC_{-1,1}^\dagger\oplus\mV^{v-1}\OCC,\Delta+D^{(1)}\right)$.

The first differential in the other rows does not effect any $\lambda$, so it is the same as the complex without them, but now with fewer vertices (by 2, 4, etc.), with a degree shift. All of them are acyclic by the assumption of induction in degrees that correspond to more than one hair ($d\leq v-1$). Therefore, if there is a class with a hair ($d\leq v$) in the last row, it can not be cancelled by anything, contradicting the result that the whole complex is acyclic in that degrees (Proposition \ref{prop:fHGCDeltaOdd}).
\end{proof}

To simplify the result we define another complex
\begin{equation}
\mH^\flat\mV^v\fHGC_{-1,1}^\dagger:=\mH^{\geq 1}\mV^v\fHGC_{-1,1}^\dagger\oplus\left(\Delta+D^{(1)}\right) \left(\mH^1\mV^v\fHGC_{-1,1}^\dagger\right).
\end{equation}
We have changed the term with the highest degree $\mH^0\mV^v\fHGC_{-1,1}^\dagger\oplus\mV^{v-1}\OCC$ with its subspace \linebreak
$\left(\Delta+D^{(1)}\right)\left(\mH^1\mV^v\fHGC_{-1,1}^\dagger\right)$, the image of the differential, to ensure the acyclicity at that degree. The whole complex, including all numbers of vertices, is
\begin{equation}
\label{def:fHGC1'}
\mH^\flat\fHGC_{-1,1}^\dagger:=\prod_{v\in\N}\mH^\flat\mV^v\fHGC_{-1,1}^\dagger=\mH^{\geq 1}\fHGC_{-1,1}^\dagger\oplus\HL_1
\end{equation}
where
\begin{equation}
\label{def:HL1}
\HL_1:=\left(\Delta+D^{(1)}\right)\left(\mH^1\fHGC_{-1,1}^\dagger\right)\subset\mH^0\fHGC_{-1,1}^\dagger\oplus\OCC
\end{equation}
is the hairless part.

\begin{cor}
\label{cor:DeltaOdd}
$H\left(\mH^\flat\fHGC_{-1,1}^\dagger,\Delta+D^{(1)}\right)$ is one-dimensional, the class being represented by $\sigma_1$.
\end{cor}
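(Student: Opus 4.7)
The plan is to exploit the vertex-number decomposition $\mH^\flat\fHGC_{-1,1}^\dagger = \prod_{v \in \N} \mH^\flat\mV^v\fHGC_{-1,1}^\dagger$, where by construction the $v$-th factor is $\mH^{\geq 1}\mV^v\fHGC_{-1,1}^\dagger \oplus \HL_1^{(v)}$ with $\HL_1^{(v)} := (\Delta + D^{(1)})(\mH^1\mV^v\fHGC_{-1,1}^\dagger) \subset \mH^0\mV^v\fHGC_{-1,1}^\dagger \oplus \mV^{v-1}\OCC$. This decomposition is preserved by $\Delta + D^{(1)}$ (since $\Delta$ fixes $v$ and $D^{(1)}$ drops $v$ by one, landing precisely in the $\mV^{v-1}\OCC$ summand grouped with the $v$-th factor), so it suffices to compute cohomology factor by factor.

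For $v = 1$, since $n=1$ is odd the hairs anti-commute, so at most one hair fits on a single vertex; hence $\mH^{\geq 1}\mV^1\fHGC_{-1,1}^\dagger = \K \cdot \sigma_1$. Direct inspection shows $\Delta(\sigma_1) = 0$ (no second vertex to receive a new edge) and $D^{(1)}(\sigma_1) = 0$ in $\OCC$ (the putative image has no vertices, hence is excluded from $\fGC_1^{\dagger}$). Thus $\HL_1^{(1)} = 0$, and $\mH^\flat\mV^1\fHGC_{-1,1}^\dagger = \K\sigma_1$ with vanishing differential, yielding the surviving class $[\sigma_1]$.

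For $v > 1$, I compare to the larger complex $(\mV^v\fHGC_{-1,1}^\dagger \oplus \mV^{v-1}\OCC, \Delta + D^{(1)})$ of Proposition \ref{prop:fHGC+DeltaOdd}. Recalling that the degree is $d = v+1-h$, the two complexes literally coincide in all degrees $d \leq v$ (i.e.\ in the hairy part $h \geq 1$), so by the proposition $H^d(\mH^\flat\mV^v\fHGC_{-1,1}^\dagger) = 0$ there. In the remaining top degree $d = v+1$ the modified complex consists of $\HL_1^{(v)}$ alone; by its very definition every element is the image of some element of $\mH^1\mV^v\fHGC_{-1,1}^\dagger$, hence is a boundary, and (there being nothing above) is automatically a cycle, so $H^{v+1}(\mH^\flat\mV^v\fHGC_{-1,1}^\dagger) = 0$.

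The one technical point to verify is that degree-$v$ cohomology is unaffected by restricting the codomain of $\Delta + D^{(1)}: \mH^1\mV^v\fHGC_{-1,1}^\dagger \to \mH^0\mV^v\fHGC_{-1,1}^\dagger \oplus \mV^{v-1}\OCC$ down to the subspace $\HL_1^{(v)}$: the kernel of this map is unchanged (it is still the set of $x$ with $(\Delta+D^{(1)})(x) = 0$), and the image from $\mH^2\mV^v\fHGC_{-1,1}^\dagger$ is identical, so Proposition \ref{prop:fHGC+DeltaOdd} applies verbatim. Assembling all $v$, the only surviving class is $[\sigma_1]$ from the $v=1$ summand, as claimed. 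I expect no real obstacle here beyond carefully tracking degrees and unfolding definitions, the nontrivial content having already been absorbed into Proposition \ref{prop:fHGC+DeltaOdd}.
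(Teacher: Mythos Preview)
Your argument is correct and is precisely the reasoning the paper intends: the corollary is stated without proof because it follows immediately from Proposition~\ref{prop:fHGC+DeltaOdd} together with the very construction of $\mH^\flat\fHGC_{-1,1}^\dagger$, which replaces the top-degree piece by the image of $\Delta+D^{(1)}$ specifically to force acyclicity there. Your factor-by-factor unpacking over $v$, including the explicit check at $v=1$ and the observation that modifying only the codomain in degree $v{+}1$ leaves $H^v$ unchanged, spells out exactly this implicit step.
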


\subsection{The differential $\delta+\Delta$}
\label{ss:deltaDelta}

On $\fHGC_{-1,1}^\dagger$ there is the standard differential $\delta$. We extend it to $\delta':\fHGC_{-1,1}^\dagger\oplus\OC\rightarrow\fHGC_{-1,1}^\dagger\oplus\OC$ as follows
\begin{equation}
\delta'(\Gamma,\gamma)=\left(\delta(\Gamma),\mH^0\Gamma-\delta(\gamma)\right),
\end{equation}
where $\mH^h\Gamma$ is part of $\Gamma$ with $h$ hairs. It clearly squares to $0$ and has degree $1$, so it is a differential.

Differentials $\delta$ and $\Delta$ anti-commute, so $\left(\mH^{\geq 1}\fHGC_{-1,1}^\dagger,\delta+\Delta\right)$ is a complex. Proposition \ref{prop:D1} implies
\begin{multline*}
\left(\Delta+D^{(1)}\right)\delta'(\Gamma,\gamma)+\delta'\left(\Delta+D^{(1)}\right)(\Gamma,\gamma)=\left(\Delta+D^{(1)}\right)\left(\delta(\Gamma),H^0\Gamma-\delta(\gamma)\right)+\delta'\left(\Delta(\Gamma),D^{(1)}\mH^1\Gamma\right)=\\
=\left(\Delta\delta(\Gamma),D^{(1)}\mH^1\delta(\Gamma)\right)+\left(\delta\Delta(\Gamma),\mH^0\Delta(\Gamma)-\delta D^{(1)}\mH^1\Gamma\right)=(0,0),
\end{multline*}
i.e.\ $\delta'$ and $\Delta+D^{(1)}$ anti-commute and $\left(\mH^{\geq 1}\fHGC_{-1,1}^\dagger\oplus\OCC,\delta'+\Delta+D^{(1)}\right)$ is also a complex. Because of the same reason the restriction $\delta':\mH^\flat\fHGC_{-1,1}^\dagger\rightarrow\mH^\flat\fHGC_{-1,1}^\dagger$ is well defined, so $\left(\mH^\flat\fHGC_{-1,1}^\dagger,\delta'\right)$ and $\left(\mH^\flat\fHGC_{-1,1}^\dagger,\delta'+\Delta+D^{(1)}\right)$ are also complexes.

\begin{prop}
\label{prop:fHGC-1'}
$H\left(\mH^\flat\fHGC_{-1,1}^\dagger,\delta'+\Delta+D^{(1)}\right)$ is one-dimensional, the class being represented by \linebreak
$\alpha=\sum_{n\geq 1}\frac{1}{n!} \sigma_1^{\cup n}$.
\end{prop}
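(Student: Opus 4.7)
The plan is to use the spectral sequence on the complex $\left(\mH^\flat\fHGC_{-1,1}^\dagger,\delta'+\Delta+D^{(1)}\right)$ associated to the decreasing filtration by number of vertices, $\mathcal{F}^p:=\mV^{\geq p}\mH^\flat\fHGC_{-1,1}^\dagger$, arranged so that $\Delta+D^{(1)}$ is the first differential and Corollary \ref{cor:DeltaOdd} applies directly. One splits first by $f=e+h-v$, preserved by every component of the differential, and works inside a fixed $\mF^f\mH^\flat\fHGC_{-1,1}^\dagger$. The subtle point is the vertex-count convention on $\HL_1\subset\mH^0\fHGC_{-1,1}^\dagger\oplus\OCC$: for $\Gamma\in\mV^v\mH^1\fHGC_{-1,1}^\dagger$ the pair $\left(\Delta\Gamma,D^{(1)}\Gamma\right)$ has $v$ vertices in its $\mH^0$-part and $v-1$ in its $\OCC$-part, and we declare such a pair to lie in $\mV^v\HL_1$. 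With this convention $\Delta+D^{(1)}$ preserves $v$ (trivially on $\mH^{\geq 2}$ and by design on $\mH^1\to\HL_1$), while the identity $\delta'\left(\Delta\Gamma,D^{(1)}\Gamma\right)=-(\Delta+D^{(1)})(\delta\Gamma)$, which follows from Proposition \ref{prop:D1} together with $\delta\Delta=-\Delta\delta$, shows $\delta'$ raises $v$ by exactly one on $\HL_1$, in agreement with its action on $\mH^{\geq 1}$.

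Thus the spectral sequence has $d_0=\Delta+D^{(1)}$ and $d_1=\delta'$. Since $d_0$ preserves $v$, Corollary \ref{cor:DeltaOdd} factors through the $v$-decomposition and gives $E_1^p=H\left(\mV^p\mH^\flat\fHGC_{-1,1}^\dagger,\Delta+D^{(1)}\right)=\K$ for $p=1$ (generated by $[\sigma_1]$) and zero otherwise. Consequently every higher differential $d_r:E_r^1\to E_r^{1+r}$ with $r\geq 1$ lands in a zero group, so $E_\infty=E_1$ is one-dimensional. Convergence is handled as elsewhere in the paper: inside $\mF^f$ at fixed total degree the complex is a direct product $\prod_v\mV^v$ of finite-dimensional pieces, hence complete with respect to $\mathcal{F}^p$, and $\bigcap_p\mathcal{F}^p=0$.

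To exhibit a representative, one checks directly that $\alpha=\sum_{n\geq 1}\frac{1}{n!}\sigma_1^{\cup n}$ is a cocycle. The operator $D^{(1)}$ kills $\alpha$ since $\sigma_1$ has no edges; using $\delta\sigma_1=-\lambda_2$ one obtains $\delta\!\left(\frac{1}{n!}\sigma_1^{\cup n}\right)=-\frac{1}{(n-1)!}\lambda_2\cup\sigma_1^{\cup(n-1)}$, while the $n(n-1)$ ordered pairs of vertices in $\sigma_1^{\cup n}$ give $\Delta\!\left(\frac{1}{n!}\sigma_1^{\cup n}\right)=\frac{1}{(n-2)!}\lambda_2\cup\sigma_1^{\cup(n-2)}$; after reindexing by $k=n-1$ and $k=n-2$ respectively, the two infinite sums cancel term by term. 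Since the $v=1$ component of $\alpha$ is $\sigma_1$, which represents the generator of $E_\infty^1$, the class $[\alpha]$ is the generator of the total cohomology.

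The main obstacle is the bookkeeping needed to define the vertex-index $v$ on the hybrid space $\HL_1$ in a way simultaneously compatible with $\Delta+D^{(1)}$ (which must preserve $v$) and with $\delta'$ (which must strictly raise $v$); once this convention is correctly imposed, degeneration of the spectral sequence at $E_1$ is immediate from Corollary \ref{cor:DeltaOdd} and the cocycle $\alpha$ supplies the required representative.
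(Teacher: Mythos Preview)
Your argument is correct and follows essentially the same route as the paper: filter by the number of vertices (with the convention, already built into the paper's definition of $\mH^\flat\mV^v\fHGC_{-1,1}^\dagger$, that the $\OCC$-summand of $\HL_1$ carries index $v$ while having $v-1$ vertices), identify the first differential as $\Delta+D^{(1)}$, invoke Corollary~\ref{cor:DeltaOdd} to collapse $E_1$ to a single class at $v=1$, and use the $\mF^f$-splitting for convergence. Your additional explicit verification that $(\delta'+\Delta+D^{(1)})\alpha=0$ and the careful discussion of why $\delta'$ raises the $v$-index on $\HL_1$ are welcome elaborations of points the paper leaves implicit.
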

\begin{proof}
We set up a spectral sequence on $v$ from the splitting \eqref{eq:split41}, such that the first differential is $\Delta+D^{(1)}$. By Corollary \ref{cor:DeltaOdd} on the first page survives only $\sigma_1$.

We can split the complex $\left(\fHGC_{-1,1}^\dagger\oplus\OCC,\delta'+\Delta+D^{(1)}\right)$ as the product of subcomplexes with fixed $f=e+h-v$:
\begin{equation*}
\left(\fHGC_{-1,1}^\dagger\oplus\OCC,\delta'+\Delta+D^{(1)}\right)=\prod_{f\in\Z}\left(\mF^f\fHGC_{-1,1}^\dagger\oplus\mF^f\OCC,\delta'+\Delta+D^{(1)}\right).
\end{equation*}
The same splitting can be done for the subcomplex $\mH^\flat\fHGC_{-1,1}^\dagger$:
\begin{equation*}
\left(\mH^\flat\fHGC_{-1,1}^\dagger,\delta'+\Delta+D^{(1)}\right)=\prod_{f\in\Z}\left(\mF^f\mH^\flat\fHGC_{-1,1}^\dagger,\delta'+\Delta+D^{(1)}\right).
\end{equation*}
For fixed $v$ and degree $d=v+1-h$ the numbers of edges $e$ and hairs $h$ are bounded in each $\mF^f\mH^\flat\fHGC_{-1,1}^\dagger$. The standard spectral sequence argument (e.g.\ \cite[Proposition 19]{DGC1}) implies that the spectral sequence converges correctly in each $\left(\mF^f\mH^\flat\fHGC_{-1,1}^\dagger,\delta'+\Delta+D^{(1)}\right)$, and therefore in the whole $\left(\mH^\flat\fHGC_{-1,1}^\dagger,\delta'+\Delta+D^{(1)}\right)$.

The cohomology of $\left(\mH^\flat\fHGC_{-1,1}^\dagger,\delta'+\Delta+D^{(1)}\right)$ is therefore one-dimensional. One checks that $\alpha$ is mapped to $0$, and since $\sigma_1$ is the highest part of $\alpha$, $\alpha$ represents the class in $H\left(\mH^\flat\fHGC^\dagger,\delta'+\Delta+D^{(1)}\right)$.
\end{proof}

\subsection{Bounded complex}
Let
\begin{equation}
\label{def:bHGC1'}
\mH^\flat\fHGC_{-1,1}^{\dagger\ddagger}:=\mH^{\geq 1}\fHGC_{-1,1}^{\dagger\ddagger}\oplus\HL_1\subset\mH^\flat\fHGC_{-1,1}^\dagger,
\end{equation}
where $\HL_1$ is as in \eqref{def:HL1} and $\fHGC_{-1,1}^{\dagger\ddagger}$ is defined in Definition \ref{defi:bounded}.

\begin{prop}
\label{prop:bHGC-1}
The complex $\left(\mH^\flat\fHGC_{-1,1}^{\dagger\ddagger},\delta'+\Delta+D^{(1)}\right)$ is acyclic.
\end{prop}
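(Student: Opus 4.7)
The plan is to relate this bounded complex to the larger one from Proposition \ref{prop:fHGC-1'} via the short exact sequence of complexes
\[
0 \longrightarrow \mH^\flat\fHGC_{-1,1}^{\dagger\ddagger} \longrightarrow \mH^\flat\fHGC_{-1,1}^{\dagger} \longrightarrow Q \longrightarrow 0,
\]
then compute $H(Q)$ and read the conclusion off the long exact sequence in cohomology.

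First I would identify the quotient $Q$. Since the hairless piece $\HL_1$ appears identically in both $\mH^\flat\fHGC_{-1,1}^{\dagger\ddagger}$ and $\mH^\flat\fHGC_{-1,1}^{\dagger}$ by definitions \eqref{def:fHGC1'} and \eqref{def:bHGC1'}, it cancels in the quotient, so $Q \cong \mH^{\geq 1}\fHGC_{-1,1}^{\dagger}/\mH^{\geq 1}\fHGC_{-1,1}^{\dagger\ddagger}$. This quotient is spanned by hairy graphs possessing at least one connected component equal to $\sigma_1$ or $\lambda_2$. Because each such component already carries a hair, the "at least one hair" condition is automatic and we obtain a natural identification $Q \cong \fHGC_{-1,1}^{\dagger/\dagger\ddagger}$ of graded vector spaces. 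The induced differential is $\delta+\Delta$: the operator $D^{(1)}$ lands inside $\HL_1$, which sits in the subcomplex and therefore vanishes modulo it, while $\delta'$ reduces to $\delta$ on purely hairy elements.

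Second, I would invoke Proposition \ref{prop:uHGC} with $n=1$ to obtain $H(Q,\delta+\Delta) = \K\cdot[\alpha]$, one-dimensional and represented by $\alpha=\sum_{n\geq 1}\frac{1}{n!}\sigma_1^{\cup n}$. By Proposition \ref{prop:fHGC-1'} the middle term has cohomology $\K\cdot[\alpha]$ as well. The quotient map sends the cocycle $\alpha$ to itself, and since every term $\sigma_1^{\cup n}$ contains a $\sigma_1$ component, this image is a nonzero cocycle in $Q$, hence represents the generator of $H(Q)$. Therefore the map $H(\mH^\flat\fHGC_{-1,1}^{\dagger}) \to H(Q)$ is an isomorphism of one-dimensional spaces, and the long exact sequence in cohomology forces $H(\mH^\flat\fHGC_{-1,1}^{\dagger\ddagger}, \delta'+\Delta+D^{(1)}) = 0$ in every degree.

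The main obstacle is the bookkeeping in the first step: one must verify that $\mH^\flat\fHGC_{-1,1}^{\dagger\ddagger}$ is genuinely closed under $\delta'+\Delta+D^{(1)}$ (so that the short exact sequence makes sense as a sequence of complexes) and that $D^{(1)}$ really is killed by passing to $Q$. Checking closure amounts to observing that $\delta$ keeps every connected component connected (so it cannot produce a new $\sigma_1$ or $\lambda_2$ from components of $\dagger\ddagger$-type), that $\Delta$ can only merge components or enlarge them, and that $D^{(1)}$ takes values in $\HL_1\subset \mH^\flat\fHGC_{-1,1}^{\dagger\ddagger}$. Once this is settled, the homological content is carried entirely by Propositions \ref{prop:fHGC-1'} and \ref{prop:uHGC}.
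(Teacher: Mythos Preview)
Your proof is correct and follows essentially the same approach as the paper: the paper phrases it as a two-row spectral sequence on $\fHGC_{-1,1}^{\dagger/\dagger\ddagger}$ and $\mH^\flat\fHGC_{-1,1}^{\dagger\ddagger}$, which is exactly your short exact sequence, and then invokes Propositions \ref{prop:uHGC} and \ref{prop:fHGC-1'} in the same way to conclude that the bounded piece is acyclic. Your additional verification that $\mH^\flat\fHGC_{-1,1}^{\dagger\ddagger}$ is closed under the differential and that $Q$ identifies with $\fHGC_{-1,1}^{\dagger/\dagger\ddagger}$ with differential $\delta+\Delta$ fills in details the paper leaves implicit.
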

\begin{proof}
We write
$$
\left(\mH^\flat\fHGC_{-1,1}^\dagger,\delta'+\Delta+D^{(1)}\right)=
\left(\fHGC_{-1,1}^{\dagger/{\dagger\ddagger}}\oplus\mH^\flat\fHGC_{-1,1}^{\dagger\ddagger},\delta'+\Delta+D^{(1)}\right).
$$
On it we set up a spectral sequence of two obvious rows: $\fHGC_{-1,1}^{\dagger/{\dagger\ddagger}}$ and $\mH^\flat\fHGC_{-1,1}^{\dagger\ddagger}$. The spectral sequence clearly converges correctly.

Proposition \ref{prop:uHGC} implies that in the first page there is a class $[\alpha]$ in the first row. The class survives all pages because of Proposition \ref{prop:fHGC-1'} that says that the whole complex has the class $[\alpha]$. Therefore the second row has to be acyclic. That was to be demonstrated.
\end{proof}

\subsection{At least 2-valent vertices}
Let
\begin{equation}
\label{def:fHGC13'}
\mH^\flat\fHGC^*_{-1,1}:=\mH^{\geq 1}\fHGC^*_{-1,1}\oplus\HL_1\subset\mH^\flat\fHGC_{-1,1}^\dagger.
\end{equation}
Recall Definition \ref{defi:bounded}, that $\fHGC^*_{-1,1}$ is the complex spanned by graphs whose vertices are at least 2-valent, and hairy vertices are at least 3-valent.

\begin{prop}
\label{prop:HGCo-1}
The inclusion $\left({\mH^\flat\fHGC^*_{-1,1}},\delta'+\Delta+D^{(1)}\right)\hookrightarrow\left(\mH^\flat\fHGC_{-1,1}^{\dagger\ddagger},\delta'+\Delta+D^{(1)}\right)$ is a quasi-\linebreak-isomorphism.
\end{prop}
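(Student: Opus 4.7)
My plan is to prove the proposition by showing that the quotient complex is acyclic. The first observation is that both $\mH^\flat\fHGC^*_{-1,1}$ and $\mH^\flat\fHGC_{-1,1}^{\dagger\ddagger}$ share the same hairless summand $\HL_1$, so
$$
Q \;:=\; \mH^\flat\fHGC_{-1,1}^{\dagger\ddagger}/\mH^\flat\fHGC^*_{-1,1} \;=\; \mH^{\geq 1}\fHGC_{-1,1}^{\dagger\ddagger}/\mH^{\geq 1}\fHGC^*_{-1,1}.
$$
On $Q$ the operator $D^{(1)}$ vanishes because its output lies in $\HL_1$, which has been modded out; similarly the $\mH^0\Gamma$ correction in $\delta'$ vanishes because any representative is purely hairy. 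Hence the induced differential reduces to $\delta+\Delta$ (with hairless outputs killed), and by the long exact sequence attached to the short exact sequence $0 \to \mH^\flat\fHGC^*_{-1,1} \to \mH^\flat\fHGC_{-1,1}^{\dagger\ddagger} \to Q \to 0$, the proposition follows from $H(Q,\delta+\Delta) = 0$.

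To prove this vanishing I would split $Q = \prod_{f\in\Z}\mF^f Q$ along $f = e+h-v$ and, within each piece, run the spectral sequence obtained by filtering by the number of hairs. Since $\delta$ preserves $h$ and $\Delta$ decreases it by one, the first differential on the associated graded is $\delta$, and the first page splits as the direct sum over $h \geq 1$ of the cohomologies $H(\mH^h\fHGC_{-1,1}^{\dagger\ddagger}/\mH^h\fHGC^*_{-1,1},\delta)$. By Corollary \ref{cor:fHGC-d-dd-fHGC*} the total cohomologies $H(\fHGC^*_{-1,1},\delta)$ and $H(\fHGC_{-1,1}^{\dagger\ddagger},\delta)$ agree; since $\delta$ preserves the $h$-grading, this identification is compatible with the hair-wise decomposition and each summand vanishes. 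Thus the first page is trivial and $H(\mF^f Q,\delta+\Delta) = 0$ for every $f$.

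The main obstacle I anticipate is ensuring correct convergence: for the spectral-sequence argument to apply, each $\mF^f Q$ must be finite-dimensional in every fixed degree $d = v+1-h$. With $f$ and $d$ fixed the edge count $e = f+d-1$ is determined, so the task is to bound $v$ (equivalently $h$). This is precisely the purpose of the $\dagger\ddagger$ constraint: forbidding the connected components $\lambda_1$, $\sigma_1$, and $\lambda_2$ — the minimal ones carrying no edges or only a single edge — forces every surviving connected component to contribute a bounded-below amount to $e+h$, capping the total number of components and therefore bounding $v$. Verifying this combinatorial estimate component-by-component is the one step that requires real care; every other step is routine spectral-sequence bookkeeping and invocation of the previously established hair-preserving identification of cohomologies.
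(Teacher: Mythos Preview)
Your approach is essentially the paper's: the paper works with the mapping cone rather than the quotient, but the filtration by number of hairs, the splitting along $f=e+h-v$, and the appeal to Corollary~\ref{cor:fHGC-d-dd-fHGC*} on the first page are identical. For the convergence step the paper's argument is cleaner than your component-wise estimate: with $f$ and $d$ fixed the edge count $e=f+d-1$ is fixed, and since the $\ddagger$ constraint (together with $\geq 1$-valence) forbids edgeless vertices, one has $v\le 2e$ directly---your phrasing in terms of bounded-below contributions to $e+h$ is slightly circular because $e+h=f+v$, though it can be repaired by combining with the tree inequality $v\le e+c$.
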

\begin{proof}
We will show that the mapping cone is acyclic. On it let us set up a spectral sequence on the number of hairs $h$.

We again split complexes as the product of subcomplexes with fixed $f=e+h-v$:
\begin{equation*}
\left({\mH^\flat\fHGC^*_{-1,1}},\delta'+\Delta+D^{(1)}\right)=\prod_{f\in\Z}\left(\mF^f{\mH^\flat\fHGC^*_{-1,1}},\delta'+\Delta+D^{(1)}\right),
\end{equation*}
\begin{equation*}
\left(\mH^\flat\fHGC_{-1,1}^{\dagger\ddagger},\delta'+\Delta+D^{(1)}\right)=\prod_{f\in\Z}\left(\mF^f\mH^\flat\fHGC_{-1,1}^{\dagger\ddagger},\delta'+\Delta+D^{(1)}\right).
\end{equation*}
For fixed degree $d=v+1-h$ the number of edges $e$ is fixed in each $\mF^f{\mH^\flat\fHGC^*_{-1,1}}$ and $\mF^f\mH^\flat\fHGC_{-1,1}^{\dagger\ddagger}$. Also, increasing the number of hairs $h$ increases the number of vertices $v$ by the same amount. Since the number of edges $e$ is fixed, for $h$ big enough, there will be an isolated vertex. But that is not possible in either ${\mH^\flat\fHGC^*_{-1,1}}$ or $\mH^\flat\fHGC_{-1,1}^{\dagger\ddagger}$. So, the spectral sequence of the mapping cone of the inclusion $\left(\mF^f{\mH^\flat\fHGC^*_{-1,1}},\delta'+\Delta+D^{(1)}\right)\hookrightarrow\left(\mF^f\mH^\flat\fHGC_{-1,1}^{\dagger\ddagger},\delta'+\Delta+D^{(1)}\right)$
is bounded above and converges correctly for every $f$, and therefore also the spectral sequence of the mapping cone of the whole inclusion $\left({\mH^\flat\fHGC^*_{-1,1}},\delta'+\Delta+D^{(1)}\right)\hookrightarrow\linebreak
\left(\mH^\flat\fHGC_{-1,1}^{\dagger\ddagger},\delta'+\Delta+D^{(1)}\right)$ converges correctly. This convergence is the very reason why we introduced the bounded complex.

On the first page of the spectral sequence, for $h=0$ there is a mapping cone of the identity $\left(\HL_1,\delta'\right)\rightarrow\left(\HL_1,\delta'\right)$, so it is acyclic. For $h>0$ there is a mapping cone of the inclusion $\left(\fHGC^*_{-1,1},\delta\right)\rightarrow\left(\fHGC_{-1,1}^{\dagger\ddagger},\delta\right)$. It is acyclic by Corollary \ref{cor:fHGC-d-dd-fHGC*}. That concludes the proof.
\end{proof}

\subsection{Removing the hairless part}
\begin{prop}
\label{prop:HGCdis-1}
The complex $\left(\mH^{\geq 1}\fHGC^*_{-1,1},\delta+\Delta\right)$ is acyclic.
\end{prop}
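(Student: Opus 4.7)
My plan is to exploit the direct-sum decomposition
\[\mH^\flat\fHGC^*_{-1,1} = \mH^{\geq 1}\fHGC^*_{-1,1} \oplus \HL_1\]
from \eqref{def:fHGC13'}, which extracts the short exact sequence of complexes
\begin{equation*}
0 \to (\HL_1,\delta') \to (\mH^\flat\fHGC^*_{-1,1},\delta'+\Delta+D^{(1)}) \to (\mH^{\geq 1}\fHGC^*_{-1,1},\delta+\Delta) \to 0.
\end{equation*}
First I would verify that $\HL_1$ is preserved by the total differential: the operations $\Delta$ and $D^{(1)}$ vanish on hairless graphs, and Proposition~\ref{prop:D1} together with the standard anti-commutation $\delta\Delta+\Delta\delta=0$ yields $\delta'(\Delta+D^{(1)})\Gamma = -(\Delta+D^{(1)})(\delta\Gamma|_{\mH^1})\in\HL_1$ for every $\Gamma\in\mH^1\fHGC^\dagger_{-1,1}$. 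Then I would check that the induced quotient differential really is $\delta+\Delta$: in $\fHGC^*_{-1,1}$ the $e_x$-piece of $\delta$ is suppressed since every vertex is at least $2$-valent, and on the $\mH^1$-summand the outputs of $\Delta$ and of $D^{(1)}$ both land in $\HL_1$, hence vanish in the quotient and reduce $\delta+\Delta$ on $\mH^1$ to just $\delta$, consistent with the convention that $\Delta\colon\mH^1\to\mH^0$ is regarded as zero inside $\mH^{\geq 1}\fHGC^*_{-1,1}$.

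By Propositions~\ref{prop:bHGC-1} and~\ref{prop:HGCo-1} the middle term is acyclic, so the associated long exact sequence yields
\[H^k(\mH^{\geq 1}\fHGC^*_{-1,1},\delta+\Delta) \cong H^{k+1}(\HL_1,\delta'),\]
reducing the proposition to showing that $(\HL_1,\delta')$ is acyclic.

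The hard part will be this last reduction. I would embed $\HL_1\hookrightarrow M:=\mH^0\fHGC^\dagger_{-1,1}\oplus\OCC\cong\fGC_1^\dagger[-2]\oplus\fGC_1^\dagger[-3]$; the explicit formula $\delta'(\Gamma,\gamma)=(\delta\Gamma,\Gamma-\delta\gamma)$ exhibits $(M,\delta')$ as the mapping cone of the identity on $(\fGC_1^\dagger,\delta)$, so $M$ is acyclic. From the short exact sequence $0\to\HL_1\to M\to M/\HL_1\to 0$ one then obtains $H(\HL_1)\cong H(M/\HL_1)[-1]$, and the remaining task is to show $M/\HL_1$ is acyclic. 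My plan is to realise $M/\HL_1$ as the cokernel of the chain map $\Delta+D^{(1)}\colon (\mH^1\fHGC^\dagger_{-1,1},d)\to (M,\delta')$, where $d$ is the induced differential on the quotient $\mH^{\leq 1}\fHGC^\dagger_{-1,1}/\mH^0\fHGC^\dagger_{-1,1}$, and to analyse it via a spectral sequence on the number of vertices $v$, patterned on the proofs of Propositions~\ref{prop:fHGCDeltaOdd} and~\ref{prop:fHGC-1'}; Corollary~\ref{cor:DeltaOdd}, which identifies $\ker(\Delta+D^{(1)})|_{\mH^1}$ as $\Delta(\mH^2\fHGC^\dagger_{-1,1})+\K\sigma_1$, should ensure that every first-page class cancels and yield the desired vanishing.
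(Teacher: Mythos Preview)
Your reduction via the short exact sequence $0 \to \HL_1 \to \mH^\flat\fHGC^*_{-1,1} \to \mH^{\geq 1}\fHGC^*_{-1,1} \to 0$ is correct and is exactly the two-row spectral sequence the paper sets up; since the middle term is acyclic, the problem is indeed equivalent to showing $H(\HL_1,\delta')=0$. Your further reduction to $H(M/\HL_1,\delta')=0$ via the acyclic mapping cone $M$ is also fine.

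The gap is in your last paragraph. You propose a spectral sequence on $v$ for $(M/\HL_1,\delta')$ and invoke Corollary~\ref{cor:DeltaOdd}, but that corollary computes cohomology with respect to $\Delta+D^{(1)}$, an operation that vanishes identically on the hairless space $M$; the differential you must control on $M/\HL_1$ is $\delta'$, and you do not say what the first differential of your spectral sequence is or how the identification of $\ker(\Delta+D^{(1)})|_{\mH^1}$ produces cancellations there. Unwinding your reductions, $H(\HL_1,\delta')=0$ amounts to the inclusion $\Delta\bigl(\mH^2\fHGC^\dagger_{-1,1}\bigr)\oplus\K\sigma_1\hookrightarrow\mH^1\fHGC^\dagger_{-1,1}$ being a $\delta$-quasi-isomorphism, a non-trivial statement about $1$- and $2$-hair graph cohomology that your sketch does not address.

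The paper does not attempt to compute $H(\HL_1)$ directly. Instead it shows that the connecting homomorphism $H(\mH^{\geq 1}\fHGC^*_{-1,1})\to H(\HL_1)$ is zero by exhibiting, for any closed $\Gamma$, an explicit $\delta'$-primitive of $(\Delta+D^{(1)})(\mH^1\Gamma)$ inside $\HL_1$: namely $\bigl(D^{(1)}(\mH^1\Gamma),0\bigr)$, which lies in $\HL_1$ because it equals $(\Delta+D^{(1)})\bigl(D^{(p)}(\mH^1\Gamma)\bigr)$ by Lemmas~\ref{lem:D1'} and~\ref{lem:D1''}, while Propositions~\ref{prop:D1} and~\ref{prop:D2} give the required identity $\delta'\bigl(D^{(1)}(\mH^1\Gamma),0\bigr)=(\Delta+D^{(1)})(\mH^1\Gamma)$. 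Lemma~\ref{lem:D1''} is exactly where the constraint~$*$ is used (the paper says so explicitly); your approach bypasses $D^{(p)}$ and the constraint~$*$ altogether. If it worked it would make the passage through $\fHGC^*$ unnecessary, but as written the crucial step is missing.
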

\begin{proof}
On $\left({\mH^\flat\fHGC^*_{-1,1}},\delta'+\Delta+D^{(1)}\right)$ we set up a spectral sequence of two obvious rows: $\mH^{\geq 1}\fHGC^*_{-1,1}$ and $\HL_1$. Clearly the spectral sequence converges correctly. Propositions \ref{prop:bHGC-1} and \ref{prop:HGCo-1} imply that the whole complex is acyclic, so all classes of the first page cancel out. We claim that there are no classes on the first page.

Suppose the opposite, that there is a class in the first row represented by $\Gamma\in\mH^{\geq 1}\fHGC^*_{-1,1}$. Let $\Delta=\Delta_0+\Delta_1$ where $\Delta_0:\mH^{\geq 1}\fHGC^*_{-1,1}\rightarrow\mH^{\geq 1}\fHGC^*_{-1,1}$ and $\Delta_1:\mH^{\geq 1}\fHGC^*_{-1,1}\rightarrow\HL_1$. It holds that $(\delta+\Delta_0)(\Gamma)=0$ and $\left(\Delta_1+D^{(1)}\right)(\Gamma)$ represents a class in $\HL_1$.
We write $\Gamma=\sum_{h\geq 1}\mH^h\Gamma$. Then it holds that
$$
\Delta\left(\mH^2\Gamma\right)+\delta\left(\mH^1\Gamma\right)=0
$$
and $\left(\Delta+D^{(1)}\right)\left(\mH^1\Gamma\right)$ represents a class in $\HL_1$.

Let $\gamma:=D^{(p)}\left(\mH^1\Gamma\right)$. It is for sure in $\mH^1\fHGC_{-1,1}^\dagger$. Lemma \ref{lem:D1'} implies
$$
\Delta(\gamma)=D^{(1)}\left(\mH^1\Gamma\right),
$$
and Lemma \ref{lem:D1''} implies
$$
D^{(1)}(\gamma)=0.
$$
This relation was the very reason of introducing the constraint $*$.
Propositions \ref{prop:D1} and \ref{prop:D2} imply that
$$
\Delta\left(\mH^1\Gamma\right)=\delta D^{(1)}\left(\mH^1\Gamma\right)-D^{(1)}\delta\left(\mH^1\Gamma\right)=\delta D^{(1)}\left(\mH^1\Gamma\right)+D^{(1)}\Delta\left(\mH^2\Gamma\right)=\delta D^{(1)}\left(\mH^1\Gamma\right).
$$
The equalities are diagrammatically expressed in Figure \ref{fig:OCcan}.

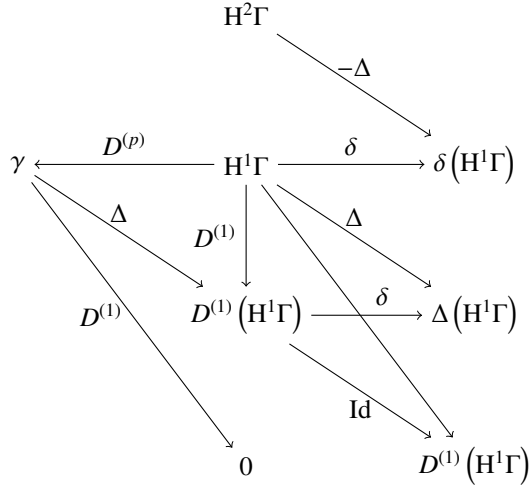
\begin{figure}[h]
\centering
\begin{tikzpicture}[scale=2]
 \node (a1) at (0,0) {$\mH^2\Gamma$};
 \node (b0) at (-1.5,-1) {$\gamma$};
 \node (b1) at (0,-1) {$\mH^1\Gamma$};
 \node (b2) at (1.5,-1) {$\delta\left(\mH^1\Gamma\right)$};
 \node (c1) at (0,-2) {$D^{(1)}\left(\mH^1\Gamma\right)$};
 \node (c2) at (1.5,-2) {$\Delta\left(\mH^1\Gamma\right)$};
 \node (d1) at (0,-3) {$0$};
 \node (d2) at (1.5,-3) {$D^{(1)}\left(\mH^1\Gamma\right)$};
 \draw (a1) edge[->] node[above] {$-\Delta$} (b2);
 \draw (b1) edge[->] node[above] {$\delta$} (b2);
 \draw (b1) edge[->] node[above] {$D^{(p)}$} (b0);
 \draw (b1) edge[->] node[left] {$D^{(1)}$} (c1);
 \draw (b1) edge[->] node[above] {$\Delta$} (c2);
 \draw (b1) edge[->] (d2);
 \draw (b0) edge[->] node[above] {$\Delta$} (c1);
 \draw (b0) edge[->] node[left] {$D^{(1)}$} (d1);
 \draw (c1) edge[->] node[above right] {$\delta$} (c2);
 \draw (c1) edge[->] node[below] {$\Id$} (d2);
\end{tikzpicture}
\caption{\label{fig:OCcan}Maps in $\mH^{\geq 1}\fHGC^*_{-1,1}\oplus\mH^0\fHGC_{-1,1}^\dagger\oplus\OCC$. The rows are, from the bottom up: $\OCC$, $\mH^0\fHGC_{-1,1}^\dagger$, $\mH^1\fHGC_{-1,1}^*$ and $\mH^2\fHGC_{-1,1}^*$.}
\end{figure}

Note that the complex in the figure is bigger than ${\mH^\flat\fHGC^*_{-1,1}}$, its hairless part is the whole $\mH^0\fHGC_{-1,1}^\dagger\oplus\OCC$ instead of only $\HL_1$. Indeed, both mentioned elements of the hairless part $\left(D^{(1)}\left(\mH^1\Gamma\right),0\right)$ and \linebreak
$\left(\delta\left(\mH^1\Gamma\right),D^{(1)}\left(\mH^1\Gamma\right)\right)$ are in $\HL_1$ because they are images of $\gamma$, respectively $\left(\mH^1\Gamma\right)$, under the action of $\left(\Delta+D^{(1)}\right)$.

Since $\left(\Delta+D^{(1)}\right)\left(\mH^1\Gamma\right)=\delta'\left(D^{(1)}\left(\mH^1\Gamma\right),0\right)$, the former is exact in $\HL_1$, contradicting the assumption.
\end{proof}

Proposition \ref{prop:fHGCc1} transforms the result to the connected complex, i.e.\ shows that $\left(\mH^{\geq 1}\fHGCc^*_{-1,1},\delta+\Delta\right)$ is acyclic. The following proposition shows the first part of Theorem \ref{thm:main}, i.e.\ that $\left(\HGC_{-1,1},\delta+\Delta\right)$ is acyclic.

\begin{prop}
\label{prop:HGC1}
The complex $\left(\HGC_{-1,1},\delta+\Delta\right)$ is acyclic.
\end{prop}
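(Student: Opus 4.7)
The plan is to deduce the claim from Proposition \ref{prop:fHGCc1}, which asserts that $\left(\mH^{\geq 1}\fHGCc^*_{-1,1},\delta+\Delta\right)$ is acyclic, by showing that the inclusion
\[
\iota\colon \HGC_{-1,1}\;\hookrightarrow\;\mH^{\geq 1}\fHGCc^*_{-1,1}
\]
is a quasi-isomorphism with respect to $\delta+\Delta$. Since the target is acyclic, this will force $\left(\HGC_{-1,1},\delta+\Delta\right)$ to be acyclic as well. Equivalently, I would prove that the mapping cone $C$ of $\iota$ is acyclic.

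First I would split $C=\prod_{f\in\Z}\mF^f C$, which is legitimate because $f=e+h-v$ is preserved by $\delta+\Delta$ (cf.\ \eqref{eq:split5}). On each summand $\mF^f C$ I would set up the spectral sequence associated to the increasing filtration by the number of hairs, $F^p \mF^f C := \mH^{\leq p}\mF^f C$. This is a filtration by subcomplexes because $\delta$ preserves $h$ while $\Delta$ strictly decreases it, so on the associated graded only $\delta$ survives and the $E_1$ term is computed by $\delta$ alone.

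For convergence I would use the connectedness assumption. Within $\mF^f C$ in fixed degree $d=v+1-h$ the number of edges is pinned to $e=f+d-1$, and for a connected graph one has $e\geq v-1$, hence $v\leq f+d$ and $h=v+1-d\leq f+1$. So on each degree the $h$-filtration is finite, and the spectral sequence converges correctly by the standard arguments of \cite[Appendix C]{DGC1}. The $E_1$ term is the mapping cone of the $\delta$-cochain map $(\HGC_{-1,1},\delta)\hookrightarrow(\mH^{\geq 1}\fHGCc^*_{-1,1},\delta)$, which is a quasi-isomorphism by Corollary \ref{cor:H>fHGCc3-H>fHGCc}; hence $E_1=0$. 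Therefore $\mF^f C$ is acyclic for every $f$, $C$ is acyclic, $\iota$ is a quasi-isomorphism, and the proposition follows from Proposition \ref{prop:fHGCc1}.

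The one genuinely delicate point is verifying that the $h$-filtration is bounded in each degree, which is precisely where connectedness enters. In a disconnected setting arbitrarily many components $\sigma_1$ could appear and the filtration would be unbounded above, which is the phenomenon that forced the introduction of the bounded complexes $\fHGC^{\dagger\ddagger}_{-1,1}$ and of the extra hairless parts $\HL_1$ in the preceding steps of the diagram.
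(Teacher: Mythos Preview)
Your proof is correct and follows essentially the same approach as the paper's: both set up the spectral sequence on the number of hairs on the mapping cone of the inclusion into $\mH^{\geq 1}\fHGCc^*_{-1,1}$, invoke Corollary~\ref{cor:H>fHGCc3-H>fHGCc} on the first page, and conclude from Proposition~\ref{prop:fHGCc1}. Your convergence argument, using connectedness to get $e\geq v-1$ and hence $h\leq f+1$, is more explicit than the paper's terse reference to ``the discussion from the proof of Proposition~\ref{prop:HGCo-1}'', but the content is the same.
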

\begin{proof}
Recall that $\HGC_{-1,1}=\mH^{\geq 1}\fHGCc_{-1,1}^{\geq 3}$. On the mapping cone of the inclusion $\left(\HGC_{-1,1},\delta+\Delta\right)\to\left(\mH^{\geq 1}\fHGCc_{-1,1}^*,\delta+\Delta\right)$ we set up a spectral sequence on the number of hairs. The discussion from the proof of Proposition \ref{prop:HGCo-1} implies that the spectral sequence converges correctly.

On the first page there is the mapping cone of the inclusion $\left(\HGC_{-1,1},\delta\right)\to\left(\mH^{\geq 1}\fHGCc_{-1,1}^*,\delta\right)$. It is acyclic by Corollary \ref{cor:H>fHGCc3-H>fHGCc}, finishing the proof.
\end{proof}

\section{Hairy complex, odd edges and even hairs}
\label{s:Hoddeven}

In this section we prove the second part of Theorem \ref{thm:main}, i.e.\ that $H\left(\HGC_{-1,0},\delta+\Delta\right)$ is one-dimensional, the class being represented by the star $\sigma_3$. The proving strategy is very similar to the one in previous section.
$$
\begin{tikzpicture}[scale=1.3]
 \node (a) at (0,-.2) {};
 \node (b) at (0,-1) {$\left(\fHGC_{-1,0},\Delta\right)$ is almost acyclic, except for the classes without hairs};
 \node (c) at (0,-2) {$\left(\mH^\flat\fHGC_{-1,0},\Delta\right)$ is almost acyclic};
 \node (d) at (0,-3) {$\left(\mH^\flat\fHGC_{-1,0},\delta+\Delta\right)$ is almost acyclic};
 \node (e) at (0,-4) {$\left(\mH^\flat\fHGC_{-1,0}^{\ddagger},\delta+\Delta\right)$ is almost acyclic};
 \node (f) at (0,-5) {$\left(\mH^{\geq 1}\fHGCc_{-1,0}^{\ddagger},\delta+\Delta\right)$ is almost acyclic};
 \node (g) at (0,-6) {$\left(\HGC_{-1,0},\delta+\Delta\right)$ is almost acyclic};
 \draw (a) edge[->,double] node[right] {\ref{prop:fHGCDeltaEven}} (b);
 \draw (b) edge[->,double] node[right] {\ref{cor:Delta}} (c);
 \draw (c) edge[->,double] node[right] {\ref{prop:fHGC'Even}} (d);
 \draw (d) edge[->,double] node[right] {\ref{prop:bHGC0'}} (e);
 \draw (e) edge[->,double] node[right] {\ref{prop:bHGCc}} (f);
 \draw (f) edge[->,double] node[right] {\ref{cor:HGC0}} (g);
\end{tikzpicture}
$$

The problem this time is that $\left(\mH^{\geq 1}\fHGCc_{-1,0}^{\ddagger},\delta+\Delta\right)$ is not fully acyclic, there is a class represented by the star $\sigma_3$. This makes the complex with disconnected graphs $\left(\mH^{\geq 1}\fHGC_{-1,0}^{\ddagger},\delta+\Delta\right)$ far from acyclic. There are not only classes represented by graphs whose connected components are stars $\sigma_3$, but also any representative of a class in $H\left(\fGCc_0^{\geq 1}\right)$ with at least one connected component $\sigma_3$, to make the whole graph `hairy'. The second part of Proposition \ref{prop:bHGCc} shows that this cohomology is nicely governed by the hairless graph cohomology.

Therefore, the result that the complex with special hairless part $\left(\mH^\flat\fHGC_{-1,0}^{\ddagger},\delta+\Delta\right)$ is almost acyclic (Proposition \ref{prop:bHGC0'}) may be surprising. It seems that all disconnected classes are canceled with the hairless part. This is exactly what happens, as shown in Proposition \ref{prop:bHGCc}.

Recall that in the complex $\HGC_{-1,0}$ and all other complexes we are working with in this section, the degree is $d=e+1$.

\subsection{The differential $\Delta$}

Let $\chi^1:¸\mH^h\fHGC_{-1,0}\rightarrow\mH^{h+1}\fHGC_{-1,0}$ be the map that adds a hair in all possible ways. We also define $c:\mH^h\fHGC_{-1,0}\rightarrow\mH^{h-1}\fHGC_{-1,0}$,
\begin{equation}
c(\Gamma)=\sum_{x\in V(\gamma)}h(x)c_x(\Gamma)
\end{equation}
where $h(x)$ is the number of hairs on the vertex $x$ and $c_x(\Gamma)$ deletes one hair at $x$ and adds an antenna like $a_x$. An example is sketched in Figure \ref{fig:exc}.

\begin{figure}[h]
$$
\begin{tikzpicture}[baseline=0ex]
\node[int] (a) at (0,0) {};
\node[int] (b) at (90:1) {};
\node[int] (c) at (210:1) {};
\node[int] (d) at (330:1) {};
\draw (b) edge (.14,1.14);
\draw (b) edge (-.14,1.14);
\draw (c) edge (210:1.2);
\draw (a) edge (b);
\draw (a) edge (c);
\draw (a) edge (d);
\draw (b) edge (c);
\draw (b) edge (d);
\draw (c) edge (d);
\end{tikzpicture}
\quad\mxto{c}\quad 2\;
\begin{tikzpicture}[scale=1,baseline=3ex]
\node[int] (a) at (0,0) {};
\node[int] (b) at (90:1) {};
\node[int] (c) at (210:1) {};
\node[int] (d) at (330:1) {};
\node[int] (e) at (0,2) {};
\draw (b) edge (-.14,1.14);
\draw (c) edge (210:1.2);
\draw (b) edge (e);
\draw (a) edge (b);
\draw (a) edge (c);
\draw (a) edge (d);
\draw (b) edge (c);
\draw (b) edge (d);
\draw (c) edge (d);
\end{tikzpicture}
\quad + \quad
\begin{tikzpicture}[baseline=-1ex]
\node[int] (a) at (0,0) {};
\node[int] (b) at (90:1) {};
\node[int] (c) at (210:1) {};
\node[int] (d) at (330:1) {};
\node[int] (e) at (210:2) {};
\draw (b) edge (.14,1.14);
\draw (b) edge (-.14,1.14);
\draw (c) edge (e);
\draw (a) edge (b);
\draw (a) edge (c);
\draw (a) edge (d);
\draw (b) edge (c);
\draw (b) edge (d);
\draw (c) edge (d);
\end{tikzpicture}
$$
\caption{\label{fig:exc}
Example of the action $c$, transforming a hair into an antenna.}
\end{figure}
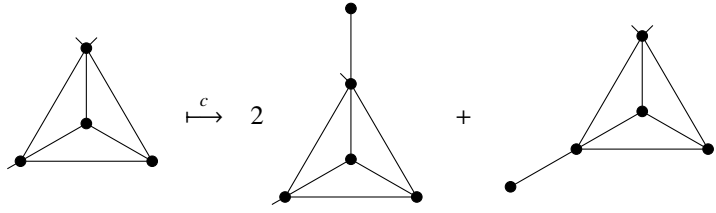

Let for $a\geq 2$
\begin{equation}
\rho_a:=\sum_{i=1}^{a-1}\frac{(-1)^i}{i!(a-1-i)!}\sigma_i\cup\lambda_{a-i}=
\sum_{i=1}^{a-1}\frac{(-1)^i}{i!(a-1-i)!}\;\;
\begin{tikzpicture}[baseline=-.5ex]
\node[int] (a) at (0,0) {};
\node at (0,.4) {$\scriptstyle i$};
\draw (a) edge (90:.2);
\draw (a) edge (360/7+90:.2);
\node[dot] at (2*360/7+90:.15) {};
\node[dot] at (2.5*360/7+90:.15) {};
\node[dot] at (3*360/7+90:.15) {};
\draw (a) edge (4*360/7+90:.2);
\draw (a) edge (5*360/7+90:.2);
\draw (a) edge (6*360/7+90:.2);
\end{tikzpicture}
\begin{tikzpicture}[baseline=-.5ex]
 \node[int] (a) at (0,0) {};
 \node[int] (b) at (.5,0) {};
 \node at (0,-.4) {$\scriptstyle a-i-1$};
 \draw (a) edge (b);
 \draw (a) edge (-360/7:.2);
 \draw (a) edge (-2*360/7:.2);
 \draw (a) edge (-3*360/7:.2);
 \draw (a) edge (-4*360/7:.2);
 \node[dot] at (-5*360/7:.15) {};
 \node[dot] at (-5.5*360/7:.15) {};
 \node[dot] at (-6*360/7:.15) {};
\end{tikzpicture}\,.
\end{equation}

As in \eqref{eq:split4} the complex $\left(\fHGC_{-1,0},\Delta\right)$ split into the double direct product of complexes, for fixed number of vertices $v$ and for fixed $a=e+h$:
\begin{equation}
\left(\fHGC_{-1,0},\Delta\right)=
\prod_{v\in\N}\prod_{a\in\Z}\left(\mA^a\mV^v\fHGC_{-1,0},\Delta\right).
\end{equation}

\begin{prop}
\label{prop:fHGCDeltaEven}
It holds that:
\begin{itemize}
\item $H\left(\mA^a\mV^1\fHGC_{-1,0},\Delta\right)$ is $1$-dimensional for $a\geq 1$, the class being represented by $\sigma_a$;
\item $H\left(\mA^a\mV^2\fHGC_{-1,0},\Delta\right)$ is acyclic for even $a\geq 2$ and $1$-dimensional for odd $a\geq 1$, the class being represented by $\lambda_a$;
\item $H\left(\mA^a\mV^3\fHGC_{-1,0},\Delta\right)$ is acyclic for odd $a\geq 1$ and $1$-dimensional for even $a\geq 2$, the class being represented by $\rho_a$;
\item $H_{a-h+1}\left(\mA^a\mV^v\fHGC_{-1,0},\Delta\right)=0$ for $v\geq 4$ and $h\geq 1$.
\end{itemize}
\end{prop}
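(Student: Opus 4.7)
The first three items concern bounded $v$ and can be handled by direct inspection, since for $n=0$ both tadpoles and multi-edges are forbidden and so $\mA^a\mV^v\fHGC_{-1,0}$ is finite-dimensional with easily enumerable generators. For $v=1$ the only graph is $\sigma_a$, and $\Delta\sigma_a=0$ trivially since there is no other vertex to receive the new edge. For $v=2$ the graphs are parametrized by triples $(h_1,h_2,k)$ with $k\in\{0,1\}$; because $\Delta\lambda_a$ would attempt to create a forbidden double edge, $\lambda_a$ is always a cycle, and a short linear-algebra calculation of $\Delta(\sigma_{h_1}\cup\sigma_{h_2})$ in terms of the $(k=1)$-generators shows $\lambda_a$ is exact exactly when $a$ is even. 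For $v=3$ the complex is still small enough to enumerate by hand; one checks that the alternating coefficients in $\rho_a=\sum_{i=1}^{a-1}\frac{(-1)^i}{i!(a-1-i)!}\sigma_i\cup\lambda_{a-i}$ cause pairwise cancellation of $\Delta$ between adjacent summands so $\Delta\rho_a=0$, and direct computation shows $\rho_a$ is a boundary precisely when $a$ is odd.

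For the last item ($v\geq 4$, $h\geq 1$) I will proceed by induction on $v$, using $v\leq 3$ as the base supplied by the first three claims, and closely mimicking the argument of Proposition \ref{prop:fHGCDeltaOdd}. By Proposition \ref{prop:Hinv} it is enough to work in $\bar\mV^v\fHGC_{-1,0}$ with distinguished vertices and take $S_v$-invariants at the end. Fix the last vertex $x$ as the chosen one, and filter by the total valence $s$ of the remaining $v-1$ vertices: an edge between non-chosen vertices contributes $2$, an edge from $x$ to a non-chosen vertex and a hair on a non-chosen vertex contribute $1$, and a hair on $x$ contributes $0$. This splits $\Delta=\Delta_0+\Delta_1$ with $\Delta_0$ the $s$-preserving part, which deletes a hair from some non-chosen vertex $y$ and joins $y$ to $x$ by a new edge.

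Define a homotopy $\beta$ summing over edges at $x$, each contributing the graph obtained by deleting the edge and placing a hair on its non-chosen endpoint, discarding any term that would leave $x$ of valence zero. A direct check yields $\Delta_0\beta+\beta\Delta_0=C\cdot\mathrm{Id}$ where $C$ equals the number of edges at $x$ plus the number of hairs on non-chosen vertices, away from the forbidden boundary. Hence $H(\bar\mV^v\fHGC_{-1,0},\Delta_0)$ is supported on two types of graphs: type (I), where $x$ is isolated with $j\geq 1$ hairs and all other vertices are hairless, giving classes $\Gamma\cup\sigma_j$ with $\Gamma\in\bar\mV^{v-1}\fGC_0^{\geq 1}$; and type (II), where $x$ is hairless and $1$-valent, giving classes $c(\Gamma):=\Delta_0(\Gamma\cup\sigma_0)$ for $\Gamma\in\bar\mV^{v-1}\fHGC_{-1,0}$. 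The isomorphism $c$ identifies the type-(II) line with $H(\bar\mV^{v-1}\fHGC_{-1,0})[-2]$. On the next page $\Delta_1$ reconnects $x$ to $\Gamma$ by a new edge and lowers the hair count on $x$ by one, coupling type (I) into type (II); the inductive hypothesis on $v-1$ vertices then forces all surviving classes in degrees $h\geq 1$ to cancel.

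The main obstacle will be the precise bookkeeping on the second page, in particular organizing the cancellation between the $j$-parameter family of type-(I) classes $\Gamma\cup\sigma_j$ (which, in contrast to the odd case of Proposition \ref{prop:fHGCDeltaOdd} where only $j=1$ survives because of hair antisymmetry, exists here for every $j\geq 1$) and the type-(II) classes coming from $H(\bar\mV^{v-1}\fHGC_{-1,0})$, keeping careful track of the factorial multiplicities $j$ and the signs of the newly added edge. The nontrivial low-$v-1\leq 3$ inductive cohomology (represented by $\sigma_a$, $\lambda_a$, $\rho_a$) is exactly what should match the $j\geq 1$ type-(I) classes on $x$, and verifying this match at the base of the induction is the most delicate bookkeeping step.
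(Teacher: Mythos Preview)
Your setup is essentially the paper's: filter by the total valence $s$ of the non-chosen vertices, split $\Delta=\Delta_0+\Delta_1$, and use the homotopy $\beta$ to reduce $E_1$ to the two families you call type (I) and type (II). That part is fine, modulo the degree shift: in the even case $d=e+1$, and $\dot c$ adds exactly one edge, so the type-(II) line is identified with $H(\bar\mV^{v-1}\fHGC_{-1,0})[-1]$, not $[-2]$.

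The real gap is your cancellation mechanism. You assert that on the next page $\Delta_1$ ``couples type (I) into type (II)'' and that the inductive hypothesis on $v-1$ vertices then kills everything with $h\ge 1$. This is not how the argument closes. The inductive hypothesis only controls the \emph{hairy} part of the $(v-1)$-vertex complex, whereas your type-(I) classes $\Gamma\cup\sigma_j$ are indexed by \emph{hairless} $\Gamma\in\bar\mV^{v-1}\fGC_0^{\ge 1}$ and by \emph{every} $j\ge 1$; there is no way the type-(II) line alone absorbs them. What the paper actually does is: (1) handle the type-(II) line (together with the single ``intruder'' type-(I) class at $j=1$) by the inductive hypothesis, leaving at most a $\mathrm{Ker}_\nabla$ at the top degree; and then (2) observe that on the \emph{third} page the surviving type-(I) classes assemble into the complex
\[
\cdots\xrightarrow{\ \nabla\ }\mH^0 V_{v-1}^{a-3}\xrightarrow{\ \nabla\ }\mH^0 V_{v-1}^{a-2}\xrightarrow{\ \nabla\ }\mathrm{Ker}_\nabla\bigl(\mH^0 V_{v-1}^{a-1}\bigr),
\]
i.e.\ a truncated $(\mV^{v-1}\fGC_0^{\ge 1},\nabla)$, which is acyclic by Corollary~\ref{cor:nabla} (an independent input from \cite{DGC1}). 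This $\nabla$-acyclicity is the missing ingredient in your sketch; without it the induction does not close.

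There is also a base-case subtlety you do not address: at $v=4$ and even $a$, the class $\dot c(\rho_a)$ survives the spectral sequence in the one-vertex-chosen complex $\dot\mV^4$, and only dies after passing to $S_4$-invariants because $c(\rho_a)=0$ (Lemma~\ref{lem:cRhoa}). Your proposal to work in $\bar\mV^v$ with all vertices distinguished would see an even larger survivor here, and you would need an explicit argument that it vanishes upon symmetrization.
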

Note that the last, general claim of the proposition does not say anything about the cohomology at degree $d=a+1$ and for $d>a+1$ it is trivially $0$.
\begin{proof}
For the matter of shortening the notation let
\begin{equation}
{W^a_v}:=\mA^a\mV^v\fHGC_{-1,0},\quad\quad{V^a_v}:=\mA^a\mV^v\mH^{\geq 1}\fHGC_{-1,0}.
\end{equation}

We prove the proposition recursively on $v$. Results for $v=1,2$ (first two claims of the proposition) are straightforward.

On ${W^a_v}$ and ${V^a_v}$ we choose one vertex and get
\begin{equation}
{\dot W^a_v}:=\mA^a\dot\mV^v\fHGC_{-1,0}, \quad {\dot V^a_v}:=\mA_a\dot\mV^v\mH^{\geq 1}\fHGC_{-1,0},
\end{equation}
the complexes with one vertex chosen, while the others are indistinguishable (see appendix \ref{app:groupinv}).

Let us set up a spectral sequence on ${\dot W^a_v}$ on the total valence $s$ of non-chosen vertices, including hairs. So, an edge between non-chosen vertices counts twice, a hair on a non-chosen vertex and an edge between a non-chosen vertex and the chosen vertex counts once, and hairs on the chosen vertex do not count. The differential can not decrease $s$ and splits $\Delta=\Delta_0+\Delta_1$ where $\Delta_0$ is the part that does not change $s$. $\Delta_0$ connects a hair from a non-chosen vertex to the chosen vertex and $\Delta_1$ connects an edge to a non-chosen vertex, increasing $s$ always by $1$.

For fixed $a=e+h$, $s$ can have only finitely many possible values, so the spectral sequence is finite and converges correctly.

On the first page of the spectral sequence there is the cohomology of $\left({\dot W^a_v},\Delta_0\right)$. Let $\chi:{\dot W^a_v}\rightarrow {\dot W^a_v}$ be the sum over all edges at the chosen vertex of deleting that edge (as a last edge in numbering, for the matter of sign) and putting a hair on the non-chosen vertex that was connected to that edge, unless it makes the chosen vertex $0$-valent, being forbidden by definition. If the chosen vertex is not hairless and $1$-valent, it is clear that $\Delta_0\chi+\chi\Delta_0=C\Id$ where $C$ is the number of edges at the chosen vertex plus the number of hairs on non-chosen vertices. So, a closed graph is exact unless the chosen vertex is isolated with some hairs and there are no other hairs, or it is hairless $1$-valent vertex.

Every graph of the form $\Gamma\dot\cup\sigma_h$, where $\Gamma\in \mH^0W_{v-1}^{a-h}$, $h\geq 1$ and $\dot\cup\sigma_h$ is the operation of gluing the star $\sigma_h$ that contains the chosen vertex, clearly represents a cohomology class. Let us call it a class of the \emph{first type}. Graphs with hairless $1$-valent chosen vertex would not form classes if the vertex would be allowed to be $0$-valent. Therefore, cutting that possibility implies that classes are represented by graphs of the form $\Delta_0(\Gamma\dot\cup\sigma_0)=:\dot c(\Gamma)$ for $\Gamma\in V_{v-1}^x$. Let us call them classes of the \emph{second type}. It is easily seen that $\dot c$ is an isomorphism of degree $1$, so classes of the second type on the second page of the spectral sequence are indeed equal to classes of $H\left(V_{v-1}^a[-1]\right)$. Classes are sketched in Figure \ref{fig:ssV}.

\begin{figure}[h]
\begin{tikzpicture}[scale=.5]
 \node at (0,-1) {$0$};
 \node at (-1,-1) {$1$};
 \node at (-2,-1) {$2$};
 \node at (-3,-1) {$3$};
 \node at (-4,-1) {$4$};
 \node at (-5,-1) {$5$};
 \node at (-6,-1) {$6$};
 \node at (-7,-1) {$7$};
 \node at (-8,-1) {$\dots$};
 \node at (0,0) {$\times$};
 \node at (-1,1) {$\otimes$};
 \node at (-2,2) {$\times$};
 \node at (-3,3) {$\times$};
 \node at (-4,4) {$\times$};
 \node at (-5,5) {$\times$};
 \node at (-6,6) {$\times$};
 \node at (-7,7) {$\times$};
 \node at (-8,8) {$\ddots$};
 \node at (-2,3) {$\odot$};
 \node at (-3,5) {$\odot$};
 \node at (-4,7) {$\odot$};
 \node at (-5,9) {$\odot$};
\end{tikzpicture}
\caption{\label{fig:ssV}Classes on the first page of the spectral sequence of ${\dot W^a_v}$. The numbers at the bottom are the number of hairs $h$, while the degree is $d=e+1=a-h+1$. Classes of the first type are labeled by $\times$ and classes of the second type are labeled by $\odot$. The position where there are both classes is labeled by $\otimes$.}
\end{figure}
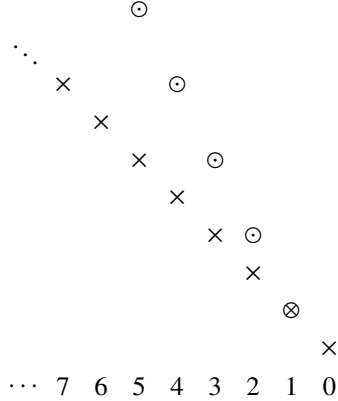

As we have already mentioned, on the second page of the spectral sequence in the line of second-type classes there is a complex isomorphic to $V_{v-1}^a[1]$, the isomorphism being $\dot c$ that transforms a hair to an antenna with the chosen vertex. But there is an ``intruder'' -- for the degree $d=a-2$ ($h=1$) there is a class of the first type $\Gamma\dot\cup\sigma_1$ in the same position with the class of the second type. All other classes of the first type clearly survive on the second page.

We continue the study first for $v=3$. It is not possible that $a<2$. For $a=2$ it is straightforward that $H\left({W^2_3}\right)$ is $1$-dimensional, the class being represented by $\rho_2=-\sigma_1\cup\lambda_1$.

If $a>2$ on the first page of the spectral sequence there is only one class of the first type, namely $\lambda_1\dot\cup\sigma_{a-1}$ of degree $2$. It does not intrude the line with the second type classes and survives until the second page. 
On the second page in the line of second-type classes we have the complex isomorphic to $V_2^a[1]$, what is equal to ${W_2^a}[1]$ in this case. So, there is a class represented by $\dot c(\lambda_a)$ of degree $3$ for odd $a$, and no class for even $a$.

Lemma $\ref{lem:V3a}$ shows that for odd $a$ the two classes cancel on further pages, so $\dot V_3^a$ is acyclic. Proposition \ref{prop:Hinv2} implies that $V_3^a$ is acyclic too.

For even $a$ the class in the degree $2$ survives. The element $\rho_a\in V_3^a$ is also an element of $\dot V_3^a$ where it is equal to the sum over all vertices to be chosen in $\rho_a$. It contains $\lambda_1\dot\cup\sigma_{a-1}$ that forms a class that survives all pages, and since $\Delta(\rho_a)=0$ (Lemma \ref{lem:DeltaRho}), $\rho_a$ represents the class in $H\left(\dot V_3^a\right)$. Proposition \ref{prop:Hinv2} implies that it is also a class in $H\left(V_3^a\right)$. That concludes the third claim of the proposition.

For $v\geq 4$ we continue to prove by induction that $H_{a-h+1}\left({\dot W^a_v}\right)=0$ for $h>0$. That will conclude the proof of the proposition. Let us do the step of the induction, while the base ($v=4$) will be explained later.

Recall that we are not interested in the cohomology $H_{a+1}\left({\dot W^a_v}\right)$. The second type class in that degree on the first page of the spectral sequence can not kill anything after the second page, so we are not interested what remains there after the second page.

Let $\CC$ be the complex isomorphic to the line of the second-type classes on the second page of the spectral sequence and it is depicted in Figure \ref{fig:ssSecond}.
We split the degree-$(a+1)$ term $\mH^1V_{v-1}^{a}[1]$ into the sum $\mathrm{Ker}_\Delta\left(\mH^1V_{v-1}^{a}\right)\oplus\mathrm{Im}_\Delta\left(\mH^1V_{v-1}^{a}\right)$ and form the 2-row spectral sequence with the intruder $\Gamma\dot\cup\sigma_1$, $\Gamma\in \mH^0V_{v-1}^{a-1}$, and $\mathrm{Im}_\Delta\left(\mH^1V_{v-1}^{a}\right)$ in the first row. The intruder $\Gamma\dot\cup\sigma_1$ is sent to the graph obtained from $\Gamma$ by adding an antenna in all possible ways. The isomorphic element in $\mH^1V_{v-1}^{a}$ is $\chi^1(\Gamma)$ where $\chi^1$ adds a hair in all possible ways. That element is further split into $\chi^1_1(\Gamma)\in\mathrm{Ker}_\Delta\left(\mH^1V_{v-1}^{a}\right)$ and $\chi^1_0(\Gamma)=\Delta\left(\chi^1(\Gamma)\right)=\nabla(\Gamma)\in\mathrm{Im}_\Delta\left(\mH^1V_{v-1}^{a}\right)\subset \mH^0V_{v-1}^{a}$.

\begin{figure}[h]
\begin{tikzpicture}[scale=2]
 \node (b1) at (-.4,.7) {$\mathrm{Im}_\Delta(\mH^1V_{v-1}^{a})\subset \mH^0V_{v-1}^{a}$};
 \node (b2) at (-2,.7) {$\mH^0V_{v-1}^{a-1}$};
 \node (a1) at (-.75,0) {$\mathrm{Ker}_\Delta(\mH^1V_{v-1}^{a})$};
 \node (a2) at (-2,0) {$\mH^2V_{v-1}^{a}$};
 \node (a3) at (-3,0) {$\mH^3V_{v-1}^{a}$};
 \node (a4) at (-4,0) {$\mH^4V_{v-1}^{a}$};
 \node (a5) at (-5,0) {$\mH^5V_{v-1}^{a}$};
 \node (a6) at (-6,0) {$\dots$};
 \draw (a6) edge[->] node[below] {$\Delta$} (a5);
 \draw (a5) edge[->] node[below] {$\Delta$} (a4);
 \draw (a4) edge[->] node[below] {$\Delta$} (a3);
 \draw (a3) edge[->] node[below] {$\Delta$} (a2);
 \draw (a2) edge[->] node[below] {$\Delta$} (a1);
 \draw (b2) edge[->] node[above] {$\nabla$} (b1);
 \draw (b2) edge[->] node[above] {$\chi^1_1$} (a1);
 \node at (-.75,-.5) {$a+1$};
 \node at (-2,-.5) {$a$};
 \node at (-3,-.5) {$a-1$};
 \node at (-4,-.5) {$a-2$};
 \node at (-5,-.5) {$a-3$};
 \node at (-6,-.5) {$d=$};
\end{tikzpicture}
\caption{\label{fig:ssSecond}Complex $\CC$ split into 2-row spectral sequence.}
\end{figure}

The second row of this spectral sequence of $\CC$ is the cut version of ${W^a_{v-1}}[1]$ without the degree-$(a+1)$ part, and its cohomology is equal to $H_{a-h+2}\left({W^a_{v-1}}[1]\right)$ for $h>0$. By the induction hypothesis (for $v>4$) it is zero, and for the base ($v=4$) there is for even $a$ a class represented by $\rho_a$ in degree $2$. That class survives the spectral sequence because even if $a=2$ it cannot be killed by the first row, since in that case in the first row there is $\mH^0V_3^{1}=0$. So, in any case $H_2(\CC)=\mathrm{Ker}_\nabla\left(\mH^0V_{v-1}^{a-1}\right)$, and the rest of the cohomology is zero except for $v=4$ there is an extra class in $H_{2}(\CC)$ being represented by $\rho_a$.

On the third page of the spectral sequence we have classes of the first type $\Gamma\dot\cup\sigma_h$ in degree $d\leq a-1$ and $\mathrm{Ker}_\nabla\left(\mH^0V_{v-1}^{a-1}\right)$ in the degree $a$. More precisely, in degree $a$ we have classes represented by $\Gamma\dot\cup\sigma_1$ for $\Gamma\in\mathrm{Ker}_\nabla\left(\mH^0V_{v-1}^{a-1}\right)$. All classes appear in even $s$. If $h>1$ the whole differential $\Delta=\Delta_0+\Delta_1$ sends the representative of the class as pictured in Figure \ref{fig:ssMaps}.
\begin{figure}[h]
\begin{tikzpicture}[baseline=-.65ex]
 \node (a1) at (0,0) {
\begin{tikzpicture}[scale=.5]
 \node (a) at (0,0) {$\Gamma$};
 \node[int] (b) at (1.5,0) {};
 \node (c) at (2.2,0.7) {$\scriptstyle h$};
 \draw (b) edge (c);
\end{tikzpicture} 
 };
 \node (a2) at (4,0) {0};
 \node (b1) at (0,-2) {
\begin{tikzpicture}[scale=.5]
 \node (a) at (0,0) {$\chi^1(\Gamma)$};
 \node[int] (b) at (1.5,0) {};
 \node (c) at (2.2,0.7) {$\scriptstyle h-1$};
 \draw (b) edge (c);
\end{tikzpicture} 
};
 \node (b2) at (4.7,-2) {
\begin{tikzpicture}[scale=.5]
 \node (a) at (0,0) {$\Gamma$};
 \node[int] (b) at (1.5,0) {};
 \node (c) at (2.2,0.7) {$\scriptstyle h-1$};
 \draw (a) edge (b);
 \draw (b) edge (c);
\end{tikzpicture} 
};
 \node (c2) at (5.2,-4) {
\begin{tikzpicture}[scale=.5]
 \node (a) at (0,0) {$\nabla(\Gamma)$};
 \node[int] (b) at (1.5,0) {};
 \node (c) at (2.2,0.7) {$\scriptstyle h-1$};
 \draw (b) edge (c);
\end{tikzpicture} + \; \begin{tikzpicture}[scale=.5]
 \node (a) at (0,0) {$\chi^1(\Gamma)$};
 \node[int] (b) at (2,0) {};
 \node (c) at (2.7,0.7) {$\scriptstyle h-2$};
 \draw (a) edge (b);
 \draw (b) edge (c);
\end{tikzpicture} 
};
 \draw (a1) edge[|->] node[above] {$\Delta_0$} (a2);
 \draw (a1) edge[|->] node[above right] {$\Delta_1$} (b2);
 \draw (b1) edge[|->] node[above] {$\Delta_0$} (b2);
 \draw (b1) edge[|->] node[above right] {$\Delta_1$} (c2);
\end{tikzpicture}.
\caption{\label{fig:ssMaps} Cancelling on the third page of the spectral sequence.}
\end{figure}

The very last term does not represent a class on the first page (or does not exist when $h=1$), so on the third page of the spectral sequence there is the differential $\Gamma\cup \sigma_h\mapsto\nabla(\Gamma)\cup \sigma_{h-1}$. Even if $h=2$ the differential is well defined because it certainly ends in $\mathrm{Ker}_\nabla\left(\mH^0V_{v-1}^{a-1}\right)$. So the complex on the third page looks like in Figure \ref{fig:ssThird}.

\begin{figure}[h]
\begin{tikzpicture}[scale=2]
 \node (a1) at (-.7,0) {$\mathrm{Ker}_\nabla\left(\mH^0V_{v-1}^{a-1}\right)$};
 \node (a2) at (-2,0) {$\mH^0V_{v-1}^{a-2}$};
 \node (a3) at (-3,0) {$\mH^0V_{v-1}^{a-3}$};
 \node (a4) at (-4,0) {$\mH^0V_{v-1}^{a-4}$};
 \node (a5) at (-5,0) {$\mH^0V_{v-1}^{a-5}$};
 \node (a6) at (-6,0) {$\dots$};
 \draw (a6) edge[->] node[above] {$\nabla$} (a5);
 \draw (a5) edge[->] node[above] {$\nabla$} (a4);
 \draw (a4) edge[->] node[above] {$\nabla$} (a3);
 \draw (a3) edge[->] node[above] {$\nabla$} (a2);
 \draw (a2) edge[->] node[above] {$\nabla$} (a1);
\end{tikzpicture}
\caption{\label{fig:ssThird}Third page of the spectral sequence on ${\dot W^a_v}$.}
\end{figure}

This complex is the cut version of $\left(\mV^{v-1}\fGC_0^{\geq 1},\nabla\right)$, and since $v-1\neq 2$ Corollary \ref{cor:nabla} implies that it is acyclic. So, $\dot V_v^a$ is acyclic, and because of Proposition \ref{prop:Hinv2} $V_v^a$ is acyclic too. This finishes the step of the induction.

If $v=4$ and $a>2$ even (for $a=2$ the class is out of interest because it is of degree $a+1$), there is one term left that survives on the third page, and therefore till the end: $\dot c(\rho_a)$. Lemma \ref{lem:DeltaRho} implies that $\Delta\left(\dot c(\rho_a)\right)=\dot c(\Delta(\rho_a))=0$, so $\dot c(\rho_a)$ represents the class. After taking invariants of the action of the symmetric group $S_v$ the class is sent to $c(\rho_a)=0$ (Lemma \ref{lem:cRhoa}), and Proposition \ref{prop:Hinv2} concludes the base of the induction.
\end{proof}

Note that all classes of $H\left(\mA^a\fHGC_{-1,0},\Delta\right)$ are in degrees $0$, $1$ or $a$. To simplify the result let us define another complex
\begin{equation}
\label{def:fHGC0'}
\mH^\flat\fHGC_{-1,0}:=\mH^{\geq 1}\fHGC_{-1,0}\oplus\HL_0
\end{equation}
where
\begin{equation}
\label{def:HL0}
\HL_0:=\Delta\left(\mH^1\fHGC_{-1,0}\right).
\end{equation}

Looking to the subcomplexes $\mA^a\fHGC_{-1,0}$, we have changed the term with the highest degree $\mH^0\mA^a\fHGC_{-1,0}$ with its subspace $\HL_0$, the image of the differential, to ensure the acyclicity at that degree, as stated in the following corollary.

\begin{cor}
\label{cor:Delta}
$$
H\left(\mH^\flat\fHGC_{-1,0},\Delta\right)=\begin{cases}
                        [\sigma_a]\text{ for }a\geq 1, \\
                        [\lambda_a]\text{ for odd }a\geq 3, \\
                        [\rho_a]\text{ for even }a\geq 2.
                        \end{cases}
$$
\end{cor}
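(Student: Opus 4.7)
The plan is to derive Corollary \ref{cor:Delta} from Proposition \ref{prop:fHGCDeltaEven} via a single short exact sequence of complexes. By the very definition $\HL_0=\Delta(\mH^1\fHGC_{-1,0})$, the differential $\Delta$ maps $\mH^{\geq 1}\fHGC_{-1,0}$ into $\mH^{\geq 1}\fHGC_{-1,0}\oplus\HL_0$, so $\mH^\flat\fHGC_{-1,0}$ is a subcomplex of $(\fHGC_{-1,0},\Delta)$. The quotient $\fHGC_{-1,0}/\mH^\flat\fHGC_{-1,0}\cong\mH^0\fHGC_{-1,0}/\HL_0$ lives entirely in the hairless part; after splitting by $a=e+h$ as in \eqref{eq:split4}, each $\mA^a$-piece is concentrated in the single degree $d=a+1$ with zero differential. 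This yields
\[
0 \to \mH^\flat\fHGC_{-1,0} \to \fHGC_{-1,0} \to \mH^0\fHGC_{-1,0}/\HL_0 \to 0.
\]

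The associated long exact sequence, run inside each finite-dimensional subcomplex $\mA^a\mV^v\fHGC_{-1,0}$, gives an isomorphism $H^d(\mA^a\mV^v\mH^\flat\fHGC_{-1,0})\cong H^d(\mA^a\mV^v\fHGC_{-1,0})$ for $d\leq a$. At the top degree $d=a+1$, the only incoming differential is $\Delta\colon\mH^1\to\mH^0$, so $H^{a+1}(\mA^a\mV^v\fHGC_{-1,0}) = \mA^a\mV^v\mH^0/\HL_0$, which is exactly the quotient complex, and the induced map to it is the identity. Exactness then forces $H^{a+1}(\mA^a\mV^v\mH^\flat\fHGC_{-1,0})=0$. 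This is the one place where the ad hoc definition of $\HL_0$ actually gets used: $\HL_0$ is tailored precisely to kill this top-degree cohomology.

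It then remains to read off the surviving classes from Proposition \ref{prop:fHGCDeltaEven}. The representatives $\sigma_a$ ($a\geq 1$), $\lambda_a$ (odd $a\geq 3$) and $\rho_a$ (even $a\geq 2$) all have at least one hair, hence lie in $\mH^{\geq 1}\subset\mH^\flat\fHGC_{-1,0}$. They sit at degrees $1$, $2$ and $2$ respectively, and in every listed range these satisfy $d\leq a$, so the previous paragraph promotes them to classes of $H(\mH^\flat\fHGC_{-1,0},\Delta)$. The one borderline to verify is the $v=2$, $a=1$ case, where Proposition \ref{prop:fHGCDeltaEven} produces $[\lambda_1]$ at degree $2=a+1$; this dies in $\mH^\flat$, consistent with $\lambda_1$ being hairless and not lying in $\HL_0$ (there is no $v=2$, $h=1$, $e=0$ graph with both vertices at least $1$-valent, so $\Delta(\mH^1\mA^1\mV^2\fHGC_{-1,0})=0$). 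For $v\geq 4$ the hairy cohomology already vanishes by the last item of the proposition, and the long exact sequence kills the hairless contribution as well, so those $\mA^a\mV^v$-pieces are acyclic. The only real point of care is the identification of the boundary map in paragraph two; once that is pinned down explicitly from the top of each $\mA^a$-column, the bookkeeping is automatic.
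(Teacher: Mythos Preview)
Your argument is correct and matches the paper's intent. The paper does not spell out a proof of Corollary~\ref{cor:Delta}; it simply remarks that replacing the top-degree term $\mH^0\mA^a\fHGC_{-1,0}$ by the image $\HL_0=\Delta(\mH^1\fHGC_{-1,0})$ ``ensures the acyclicity at that degree'' and then states the corollary. Your long exact sequence for $0\to\mH^\flat\fHGC_{-1,0}\to\fHGC_{-1,0}\to\mH^0\fHGC_{-1,0}/\HL_0\to 0$ is precisely the formalization of that sentence, and your case check (including the $\lambda_1$ boundary case) is accurate.
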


\subsection{The differential $\delta+\Delta$}
Since $\delta$ and $\Delta$ anti-commute, $\mH^\flat\fHGC_{-1,0}$ is closed under the operation $\delta$, and the complex $\left(\mH^\flat\fHGC_{-1,0},\delta+\Delta\right)$ is a subcomplex of $\left(\fHGC_{-1,0},\delta+\Delta\right)$. We study its cohomology in the following proposition.

\begin{prop}
\label{prop:fHGC'Even}
All classes of the cohomology $H\left(\mH^\flat\fHGC_{-1,0},\delta+\Delta\right)$ consist of the graphs with at most $1$ edge, i.e.\ $H_{e+1}\left(\mH^\flat\fHGC_{-1,0},\delta+\Delta\right)=0$ for $e\geq 2$.
\end{prop}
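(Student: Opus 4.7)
The plan is to set up a spectral sequence whose first differential is $\Delta$ and invoke Corollary \ref{cor:Delta}. The critical input is that $\Delta$ preserves $a := e + h$ while $\delta$ strictly increases $a$ by $1$: each of $s_x$, $a_x$, $h(x)e_x$ adds exactly one edge while leaving the hair count unchanged, whereas $\Delta$ trades one hair for one edge. Therefore the decreasing filtration
\begin{equation*}
F^a := \mA^{\geq a}\mH^\flat\fHGC_{-1,0}
\end{equation*}
is preserved by $\delta + \Delta$, and the induced $E_0$-differential on the associated graded $F^a/F^{a+1} = \mA^a\mH^\flat\fHGC_{-1,0}$ is exactly $\Delta$.

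First I would split the complex as $\mH^\flat\fHGC_{-1,0} = \prod_{f\in\Z}\mF^f\mH^\flat\fHGC_{-1,0}$, using that both differentials preserve $f = e + h - v$ (each basic operation of $\delta$ and $\Delta$ adjusts $v$, $e$, $h$ so that $e+h-v$ is invariant), and work separately inside each $\mF^f$-factor. In $\mF^f$ at a fixed total degree $d = e + 1$, the parameters satisfy $e = d-1$ and $h = v + f - e$, so $a = e + h = v + f$ is strictly increasing in $v \geq 1$; in particular $F^a$ is bounded below in each fixed degree and is Hausdorff-complete as a product over the vertex number.

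Next I would identify $E_1 = H(\mH^\flat\fHGC_{-1,0}, \Delta)$, which by Corollary \ref{cor:Delta} is spanned by the classes $[\sigma_a]$ for $a \geq 1$ (sitting in total degree $d = 1$, since $e = 0$) together with $[\lambda_a]$ (odd $a \geq 3$) and $[\rho_a]$ (even $a \geq 2$) (each sitting in degree $d = 2$, since $e = 1$). Consequently $E_1^{p,q} = 0$ whenever $p + q \geq 3$, and hence $E_r^{p,q} = E_\infty^{p,q} = 0$ in those bidegrees as well.

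Finally, by the standard convergence arguments for Hausdorff-complete filtrations (cf.\ \cite[Appendix C]{DGC1}) applied inside each $\mF^f$, the vanishing of $E_\infty$ in total degrees $\geq 3$ forces $H_d(\mF^f\mH^\flat\fHGC_{-1,0}, \delta+\Delta) = 0$ for $d \geq 3$, and taking the product over $f$ completes the proof. The hard part will be getting the convergence bookkeeping right: since $F^a$ is unbounded above in a fixed total degree, splitting into $\mF^f$-factors (rather than filtering directly by $v$ as in Proposition \ref{prop:fHGC-1'}) is what makes the standard spectral sequence machinery apply.
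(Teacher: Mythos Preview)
Your proposal is correct and is essentially the paper's own proof: split into the $\mF^f$-pieces, filter so that $\Delta$ is the $E_0$-differential, read off from Corollary~\ref{cor:Delta} that $E_1$ vanishes in total degree $\geq 3$, and use a standard convergence argument. The only cosmetic difference is that the paper filters by the number of vertices $v$ rather than by $a=e+h$; but since $a=v+f$ inside each $\mF^f$, these are literally the same filtration up to an index shift, so your remark contrasting the two is moot.
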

\begin{proof}
We set up a spectral sequence on the number of vertices, such that the first differential is $\Delta$. 
By Corollary \ref{cor:Delta} the first page of spectral sequence has only zeros in relevant degrees. After splitting the complex
$$
\left(\mH^\flat\fHGC_{-1,0},\delta+\Delta\right)=\bigoplus_f\left(\mF^f\mH^\flat\fHGC_{-1,0},\delta+\Delta\right)
$$
the standard spectral sequence argument (e.g.\ \cite[Proposition 19]{DGC1}) implies that the spectral sequence converges correctly, hence the result.
\end{proof}

Using Corollary \ref{cor:Delta} and easy constructions, all classes of the cohomology $H\left(\mH^\flat\fHGC_{-1,0},\delta+\Delta\right)$ can be written down. However, classes consist of complicated sums, and since we do not need them later, we will not calculate them. Let us say that some classes are represented by $\alpha$ and $\Sigma_j$ defined in \eqref{def:Sigma}, as the consequence of Lemma \ref{lem:Sigma}.

\subsection{Bounded complex}
Let
\begin{equation}
\label{def:bHGC0'}
\mH^\flat\fHGC_{-1,0}^\ddagger:=\mH^{\geq 1}\fHGC_{-1,0}^\ddagger\oplus\HL_0\subset\mH^\flat\fHGC_{-1,0},
\end{equation}
where $\HL_0$ is as in \eqref{def:HL0} and $\fHGC_{-1,0}^\ddagger$ is defined in Definition \ref{defi:bounded}.

\begin{prop}
\label{prop:bHGC0'}
All classes of the cohomology $H\left(\mH^\flat\fHGC_{-1,0}^\ddagger,\delta+\Delta\right)$ consist of the graphs that have at most 1 edge, i.e.\ $H_{e+1}\left(\mH^\flat\fHGC_{-1,0}^\ddagger,\delta+\Delta\right)=0$ for $e\geq 2$.
\end{prop}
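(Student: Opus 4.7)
The plan is to deduce the vanishing from the known cohomology of the full complex $\mH^\flat\fHGC_{-1,0}$ via the short exact sequence of $(\delta+\Delta)$-cochain complexes
\begin{equation*}
0\to\mH^\flat\fHGC_{-1,0}^{\ddagger}\to\mH^\flat\fHGC_{-1,0}\to\fHGC_{-1,0}^{/\ddagger}\to 0,
\end{equation*}
where the quotient equals the unbounded complex $\fHGC_{-1,0}^{/\ddagger}$ because the common hairless summand $\HL_0$ cancels between the two $\mH^\flat$-complexes. Equivalently, one sets up the two-row spectral sequence of this filtration, in the style of the proof of Proposition \ref{prop:bHGC-1}. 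Correct convergence of the induced long exact sequence follows by splitting both ambient complexes as products of their subcomplexes with fixed $f=e+h-v$, exactly as in the proof of Proposition \ref{prop:fHGC'Even}: once a degree is fixed inside an $\mF^f$-summand, the number of edges is pinned down and the complexes are bounded.

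Combining the known cohomologies — Corollary \ref{cor:uHGC}, which gives $H^{\bullet}(\fHGC_{-1,0}^{/\ddagger},\delta+\Delta)$ two-dimensional and concentrated in degrees $1$ and $2$ (generators $[\alpha]$ and $[\lambda\cup\alpha]$), and Proposition \ref{prop:fHGC'Even}, which gives $H^{d}(\mH^\flat\fHGC_{-1,0},\delta+\Delta)=0$ for $d\geq 3$ — the long exact sequence immediately yields $H^{d}(\mH^\flat\fHGC_{-1,0}^{\ddagger},\delta+\Delta)=0$ for every $d\geq 4$, since both flanking terms $H^{d-1}(\fHGC_{-1,0}^{/\ddagger})$ and $H^{d}(\mH^\flat\fHGC_{-1,0})$ vanish in that range.

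The remaining and most delicate case is $d=3$. The relevant segment of the long exact sequence reads
\begin{equation*}
H^{2}(\mH^\flat\fHGC_{-1,0})\to H^{2}(\fHGC_{-1,0}^{/\ddagger})\xrightarrow{\partial}H^{3}(\mH^\flat\fHGC_{-1,0}^{\ddagger})\to 0,
\end{equation*}
so $H^{3}(\mH^\flat\fHGC_{-1,0}^{\ddagger})$ is the cokernel of the restriction map on $H^{2}$, and it vanishes precisely when the generator $[\lambda\cup\alpha]$ lifts to a $(\delta+\Delta)$-closed cocycle in $\mH^\flat\fHGC_{-1,0}$. Such a lift is supplied by the cocycle denoted $\Sigma_1$ (built from Lemma \ref{lem:Sigma}) at the end of the proof of Proposition \ref{prop:fHGC'Even}: $\Sigma_1$ is $(\delta+\Delta)$-closed in $\mH^\flat\fHGC_{-1,0}$ and projects onto $\lambda\cup\alpha$ modulo $\fHGC_{-1,0}^{\ddagger}$, which forces $\partial=0$. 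The main obstacle of the whole argument is this explicit lift: one must absorb the cross-terms produced by $\Delta$ acting across the components of $\lambda\cup\alpha$ — the three-vertex-path contributions already visible in $\Delta(\lambda\cup\sigma_1)$ — by a correction supported in $\mH^\flat\fHGC_{-1,0}^{\ddagger}$, using $(\delta+\Delta)\alpha=0$ from Lemma \ref{lem:HC} together with $\delta\lambda=0$ and the fact that the obstructing cross-terms lie in $\HL_0\subset\mH^\flat\fHGC_{-1,0}^{\ddagger}$.
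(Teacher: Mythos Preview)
Your overall strategy is exactly the paper's: a two-row filtration/spectral sequence (equivalently, the long exact sequence of the pair) with rows $\fHGC_{-1,0}^{/\ddagger}$ and $\mH^\flat\fHGC_{-1,0}^\ddagger$, and the vanishing for $d\ge 4$ follows immediately from Proposition~\ref{prop:fHGC'Even} and Corollary~\ref{cor:uHGC}. Your identification of what must be checked at $d=3$ --- that the connecting map kills $[\lambda\cup\alpha]$, i.e.\ that this class lifts to a closed element of $\mH^\flat\fHGC_{-1,0}$ --- is also correct.

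The gap is in how you produce the lift. Your claim that $\Sigma_1$ does the job is wrong on its face: by definition $\Sigma_1=-\tfrac{1}{6}\sigma_3$, which lives in degree $e+1=1$, not degree $2$, and moreover $\sigma_3\in\fHGC_{-1,0}^{\ddagger}$, so it projects to $0$ in the quotient rather than to $\lambda\cup\alpha$. The ensuing paragraph about ``absorbing cross-terms by a correction'' is therefore built on a false premise and does not constitute a proof.

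The actual fix is simpler than what you attempted, and it is what the paper uses (already implicitly in the proof of Corollary~\ref{cor:uHGC}): $\lambda\cup\alpha$ is \emph{itself} $(\delta+\Delta)$-closed in $\mH^\flat\fHGC_{-1,0}$. Since $\delta\lambda=0$, $\Delta\lambda=0$, and $(\delta+\Delta)\alpha=0$, the only potential obstruction is the cross-term of $\Delta$ connecting a hair of some $\sigma_1$ in $\alpha$ to a vertex of $\lambda$; this produces multiples of the three-vertex path. But in the even case edges are odd, and the three-vertex path has an automorphism swapping its two edges, so it is zero in $\fHGC_{-1,0}$. Hence the cross-terms vanish identically, $\partial[\lambda\cup\alpha]=0$, and $H^3(\mH^\flat\fHGC_{-1,0}^\ddagger)=0$ follows.
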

\begin{proof}
On $\mH^\flat\fHGC_{-1,0}$ we set up a spectral sequence of two rows: $\fHGC_{-1,0}^{/\ddagger}$ and $\mH^\flat\fHGC_{-1,0}^\ddagger$. For the degree $d\geq 3$ ($e\geq 2$) the total complex is acyclic by Proposition \ref{prop:fHGC'Even} and in the first row by Corollary \ref{cor:uHGC} there are only two classes in the degrees $1$ and $2$ that go to zero by the whole differential, so they can not cancel anything in the second row. That concludes the proof.
\end{proof}

\subsection{The morphisms $\pi_f$}
\label{ss:pif}
For $j\geq 1$ we define the following.
\begin{equation}
\label{def:Sigma}
\Sigma_j:=\sum_{\substack{k_i\geq 0\\ \sum_iik_i=j}}\prod_i\frac{(-1)^{k_i}}{k_i!((2i+1)!)^{k_i}}\bigcup_i \sigma_{2i+1}^{\cup k_i}\in\fHGC_{-1,0}^\ddagger.
\end{equation}
For example, 
$$
\Sigma_3=
\frac{-1}{6\cdot 6^3}\;
\begin{tikzpicture}[baseline=-.5ex]
\node[int] (a) at (0,0) {};
\draw (a) edge (90:.2);
\draw (a) edge (120+90:.2);
\draw (a) edge (240+90:.2);
\end{tikzpicture}\;
\begin{tikzpicture}[baseline=-.5ex]
\node[int] (a) at (0,0) {};
\draw (a) edge (90:.2);
\draw (a) edge (120+90:.2);
\draw (a) edge (240+90:.2);
\end{tikzpicture}\;
\begin{tikzpicture}[baseline=-.5ex]
\node[int] (a) at (0,0) {};
\draw (a) edge (90:.2);
\draw (a) edge (120+90:.2);
\draw (a) edge (240+90:.2);
\end{tikzpicture}
+\frac{1}{6\cdot 5!}\;
\begin{tikzpicture}[baseline=-.5ex]
\node[int] (a) at (0,0) {};
\draw (a) edge (90:.2);
\draw (a) edge (120+90:.2);
\draw (a) edge (240+90:.2);
\end{tikzpicture}\;
\begin{tikzpicture}[baseline=-.5ex]
\node[int] (a) at (0,0) {};
\draw (a) edge (90:.2);
\draw (a) edge (72+90:.2);
\draw (a) edge (144+90:.2);
\draw (a) edge (216+90:.2);
\draw (a) edge (288+90:.2);
\end{tikzpicture}
+\frac{-1}{7!}\;
\begin{tikzpicture}[baseline=-.5ex]
\node[int] (a) at (0,0) {};
\draw (a) edge (90:.2);
\draw (a) edge (360/7+90:.2);
\draw (a) edge (2*360/7+90:.2);
\draw (a) edge (3*360/7+90:.2);
\draw (a) edge (4*360/7+90:.2);
\draw (a) edge (5*360/7+90:.2);
\draw (a) edge (6*360/7+90:.2);
\end{tikzpicture}.
$$
Coefficients and signs are set up such that 
\begin{equation}
(\delta+\Delta)(\Sigma_j)=0
\end{equation}
as shown in Lemma \ref{lem:Sigma}. The coefficients are indeed not surprising, they divide a graph with its order of symmetry (exchanging same stars and hairs in a star) such that coefficients from the operations disappear.

Let $\chi^d:\mH^h\fHGC_{-1,0}\rightarrow\mH^{h+d}\fHGC_{-1,0}$, $\chi^d=\left(\chi^1\right)^d$, for $d\geq 2$. It adds $d$ hairs in all possible ways, but there is a multinomial coefficient $\binom{d}{k_1,k_2,\dots}$ before, where $k_i$ is the number of hairs added to the vertex $i$. We also set $\chi^d=0$ for $d<0$.

Recall that prefix $\mB^{<f,par}$ means all graphs with $e-v<f$ of the same parity as $f$. Let
\begin{equation}
\label{def:bHGCd}
\fHGCd_{-1,0}\subset \fHGC_{-1,0}
\end{equation}
be the subcomplex spanned by disconnected graphs.

\begin{defi}
For every $f\in\Z$ we define degree-0 map $\pi_f:\mB^{<f,par}\fGC_{0}^{\geq 1}[-1]\rightarrow\mF^f\mH^{\geq 1}\fHGCd_{-1,0}^\ddagger$. Let $\Gamma\in\mB^b\fGC_{0}^{\geq 1}$ for $b<f$ of the same parity as $f$. Then
\begin{equation}
\pi_f(\Gamma):=\sum_{i=0}^{\frac{m-b}{2}-1}\frac{1}{(2i)!} \chi^{2i}(\Gamma)\cup\Sigma_{\frac{f-b}{2}-i}-
\sum_{i=1}^{\frac{f-b}{2}-1}\frac{1}{(2i-1)!} \chi^{2i-1} D(\Gamma)\cup\Sigma_{\frac{f-b}{2}-i}.
\end{equation}
\end{defi}

Lemma \ref{lem:pi} implies that $\pi_f:\left(\mB^{<f,par}\fGC_{0}^{\geq 1}[-1],\tilde\delta\right)\rightarrow\left(\mF^f\mH^{\geq 1}\fHGCd_{-1,0}^\ddagger,\delta+\Delta\right)$ is a morphism of complexes for every $f\in\Z$. Differential $\tilde\delta$ is defined in \eqref{TildeDeltac} and is equal to $\delta+ D\nabla$ \eqref{TildeDeltac2}.

\subsection{Proof of the main result}

\begin{prop}
\label{prop:bHGCc}
\begin{itemize}\item[]
\item The cohomology $H\left(\mH^{\geq 1}\fHGCc_{-1,0}^\ddagger,\delta+\Delta\right)$ is one-dimensional, the class being represented by the star $\sigma_3$,
\item $\pi_f:\left(\mB^{<f,par}\fGC_{0}^{\geq 1}[-1],\tilde\delta\right)\rightarrow\left(\mF^f\mH^{\geq 1}\fHGCd_{-1,0}^\ddagger,\delta+\Delta\right)$ is a quasi-isomorphism in every degree $d\geq 2$ ($e\geq 1$) for every $f\in\Z$.
\end{itemize}
\end{prop}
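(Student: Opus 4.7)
The plan is to establish Part 2 first via an explicit spectral-sequence comparison, and then to extract Part 1 from it by combining with Proposition \ref{prop:bHGC0'} and two short exact sequences.

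For Part 2, I would filter the target $\mF^f\mH^{\geq 1}\fHGCd_{-1,0}^\ddagger$ by the total number of vertices in the hairy connected components, producing a spectral sequence whose first differential is $\Delta$ acting within each hairy component. By Proposition \ref{prop:fHGCDeltaEven}, together with the $\ddagger$ constraint that kills $\sigma_1$ and $\lambda_2$, the only surviving hairy components are the stars $\sigma_{2i+1}$ for $i\geq 1$ (the building blocks of $\Sigma_j$), the graphs $\lambda_a$ for odd $a\geq 3$, and the combinations $\rho_a$ for even $a\geq 2$; all other hairy components vanish, while hairless components pass through untouched. On the next page, $\delta$ acts together with the merging part of $\Delta$ (which attaches a star, lambda, or rho onto a vertex of the hairless piece). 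The crucial bookkeeping computation is that the $\lambda_a$- and $\rho_a$-contributions combine to produce precisely the composite operator $D\nabla$ on the hairless piece, so the induced differential matches $\tilde\delta = \delta + D\nabla$ on $\fGC_0^{\geq 1}[-1]$. The map $\pi_f$ is built to realise this identification on the nose, and I would conclude by checking it induces an isomorphism at the appropriate page of the spectral sequence.

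For Part 1, first observe that $\HL_0 = \Delta(\mH^1\fHGC_{-1,0})$ is a subcomplex of $\mH^\flat\fHGC_{-1,0}^\ddagger$: for $y = \Delta(z) \in \HL_0$ with $z \in \mH^1\fHGC_{-1,0}$, the anticommutation $\delta\Delta = -\Delta\delta$ plus the hair-preserving property of $\delta$ give $\delta(y) = -\Delta(\delta(z)) \in \Delta(\mH^1\fHGC_{-1,0}) = \HL_0$. The short exact sequence
\begin{equation*}
0 \to \HL_0 \to \mH^\flat\fHGC_{-1,0}^\ddagger \to \mH^{\geq 1}\fHGC_{-1,0}^\ddagger \to 0
\end{equation*}
combined with Proposition \ref{prop:bHGC0'} expresses $H_d(\mH^{\geq 1}\fHGC_{-1,0}^\ddagger)$ for $d\geq 3$ in terms of $H_*(\HL_0,\delta)$, which is a specific subspace of $\fGC_0^{\geq 1}[-1]$ controllable via Corollary \ref{cor:even2}. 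Combining this with the connected--disconnected short exact sequence
\begin{equation*}
0 \to \mH^{\geq 1}\fHGCc_{-1,0}^\ddagger \to \mH^{\geq 1}\fHGC_{-1,0}^\ddagger \to \mH^{\geq 1}\fHGCd_{-1,0}^\ddagger \to 0
\end{equation*}
and Part 2 forces $H_d(\mH^{\geq 1}\fHGCc_{-1,0}^\ddagger) = 0$ for $d \geq 2$: every disconnected class lifts (via $\pi_f$-style constructions) to the full complex, matching the $\mH^\flat$-computation, and the long exact sequence then leaves no room for extra connected classes in high degrees. In the remaining low degrees, I would verify directly that $\sigma_3$ is a cocycle ($\Delta\sigma_3 = 0$ for lack of a second vertex, and the vertex-splitting terms of $\delta\sigma_3$ cancel pairwise against $a_x$ and $3 e_x$) and that it cannot be exact, producing the unique surviving class $[\sigma_3]$ in degree $d=1$.

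The main obstacle is the bookkeeping in Part 2: the $\lambda_a$- and $\rho_a$-classes combine through higher differentials of the spectral sequence into the composite operator $D\nabla$ on the hairless side, and tracking the signs and combinatorial factors -- which is precisely what forces the explicit constants in $\Sigma_j$ and in the definition of $\pi_f$ -- will be the most delicate part of the argument.
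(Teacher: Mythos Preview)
Your architecture differs fundamentally from the paper's, and the difference hides a real gap. The paper proves the two statements by \emph{simultaneous} induction on the degree $d$: Part~1 in degrees $\leq d$ is used as an input to prove Part~2 in degree $d$, and Part~2 in degree $d-1$ is used to prove Part~1 in degree $d$. You attempt to prove Part~2 outright, without any inductive input from Part~1, and this is where your argument breaks down.

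Concretely, your Part~2 spectral sequence claims that the surviving ``hairy components'' after taking $\Delta$-cohomology are $\sigma_{2i+1}$, $\lambda_a$, and $\rho_a$, and that the latter two recombine on later pages to produce the operator $D\nabla$ on the hairless piece. But $\rho_a = \sum_i \frac{(-1)^i}{i!(a-1-i)!}\,\sigma_i\cup\lambda_{a-i}$ is a \emph{disconnected} graph, not a connected hairy component, so it cannot appear as a ``hairy component'' in your decomposition; Proposition~\ref{prop:fHGCDeltaEven} computes $H(\fHGC,\Delta)$ for the full (disconnected) complex, not component by component. More seriously, the assertion that $\lambda_a$ and $\rho_a$ conspire to yield exactly $D\nabla$ is the heart of the matter and you give no mechanism for it; I do not see how deleting a vertex ($D$) arises from attaching a $\lambda$- or $\rho$-class. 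The paper avoids this entirely: it filters instead by the number $p$ of hairy connected components, so that on the associated graded the map $\pi_f$ becomes simply $\Gamma\mapsto C\,\Gamma\cup\sigma_3^{\cup(f-b)/2}$, and then invokes the inductive hypothesis (Part~1 up to degree $d$) to identify the target cohomology as a symmetric product of $[\sigma_3]$'s with hairless classes.

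Your Part~1 outline is closer in spirit to the paper's three-row spectral sequence on $\mH^\flat\fHGC_{-1,0}^\ddagger$ with rows $\mH^{\geq1}\fHGCd$, $\mH^{\geq1}\fHGCc$, $\HL_0$, but it omits the crucial step: one must rule out cancellations between the connected row and $\HL_0$ on the second page. The paper does this via an explicit homotopy built from $D^{(1)}$ and $D^{(2)}$ (Lemma~\ref{prop:1cancel}, using Propositions~\ref{prop:D1}--\ref{prop:D3}), showing that any would-be $\HL_0$-class killed by a connected class is already $\delta$-exact in $\HL_0$. Your appeal to Corollary~\ref{cor:even2} for controlling $H(\HL_0,\delta)$ does not supply this; that corollary concerns $\fGC_0^{\geq 2}$ with $\tilde\delta$, not $\HL_0$ with $\delta$.
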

\begin{proof}
We prove both statements simultaneously by the induction on the degree $d=e+1$.

By Lemma \ref{lem:bHGCc0} the first claim holds for degrees $d=1$ and $2$.

Let $d\geq 2$ and suppose that the first claim holds for all degrees $\leq d$. We want to prove the second claim in degree $d$.

In general, a complex $(\CC,d)$ is acyclic in the degree $i$ if and only if the \emph{cut complex} $F^i(\CC,d)$ defined as
\begin{equation}
\left(F^i(\CC,d)\right)_j:=
\left\{
\begin{array}{ll}
\CC_i&j=i,\\
d(\CC_i)&j=i+1,\\
\CC_{i-1}/\mathrm{Ker}(d)\quad&j=i-1,\\
0&\text{otherwise,}
\end{array}
\right.
\end{equation}
is acyclic. Similarly, a map of complexes $\phi:(\CC,d)\rightarrow(\DD,d)$ is a quasi-isomorphism in degree $i$ if the induced map $\phi:F^i(\CC,d)\rightarrow F^i(\DD,d)$ is a quasi-isomorphism. So, we need to prove that $\pi_f:F^e\left(\mB^{<f,par}\fGC_{0}^{\geq 1}[-1],\tilde\delta\right)\rightarrow F^e\left(\mF^f\mH^{\geq 1}\fHGCd_{-1,0}^\ddagger,\delta+\Delta\right)$ is a quasi-isomorphism. We do that by proving that the mapping cone is acyclic.

Recall that $\mP^{<x}\CC$ is the subcomplex of the graph complex $\CC$ with the number of hairy connected components $c$ smaller than $x$.
On $\mB^{<f,par}\fGC_{0}^{\geq 1}[-1]$ and $\mF^f\mH^{\geq 1}\fHGCd_{-1,0}^\ddagger$ we consider the filtrations on $x$: $\mB^{\geq 2x-f}\mB^{<f,par}\fGC_{0}^{\geq 1}[-1]$, respectively $\mP^{\leq x}\mF^f\mH^{\geq 1}\fHGCd_{-1,0}^\ddagger$.

\begin{lemma}
For every $f\in\Z$ the map $\pi_f$ respects the filtrations $\mB^{\geq 2x-f}\mB^{<f,par}\fGC_{0}^{\geq 1}[-1]$ and $\mP^{\leq x}\mF^f\mH^{\geq 1}\fHGCd_{-1,0}^\ddagger$, i.e.\ for every $x\in\Z$ $\pi_f\left(\mB^{\geq 2x-f}\mB^{<f,par}\fGC_{0}^{\geq 1}[-1]\right)\subset\mP^{\leq x}\mF^f\mH^{\geq 1}\fHGCd_{-1,0}^\ddagger$.
\end{lemma}
\begin{proof}
Let $\Gamma\in\mB^{\geq 2x-f}\mB^{<f,par}\fGC_{0}^{\geq 1}[-1]$ be a graph with $v$ vertices and $e$ edges. All those constraints mean that $b:=e-v$ is of the same parity as $f$ and $2x-f\leq b<f$.

$\Sigma_j$ has at most $j$ hairy connected components. Therefore, the first sum in $\pi_f(\Gamma)$ has the most number of hairy connected components for $n=0$ and it is $c+\frac{f-e+v}{2}\leq x$. The second sum has even less hairy connected components, hence the result.
\end{proof}

Actually, we set up a spectral sequence on the cut version of complexes. The lemma implies that those filtrations induce a filtration of the mapping cone. One checks that the filtration is bounded above, so it converges correctly. It is now enough to show that the complex with the first differential is acyclic. The complex with the first differential is the mapping cone of $\pi^0_f:F^e\left(\mB^{<f,par}\fGC_{0}^{\geq 1}[-1],\delta\right)\rightarrow F^e\left(\mF^f\mH^{\geq 1}\fHGCd_{-1,0}^\ddagger,\delta+\Delta_P\right)$ where $\Delta_P$ is the part of the differential that does not change the number of hairy connected components and $\pi_f^0:\mB^{<f,par}\fGC_{0}^{\geq 1}[-1]\rightarrow\mF^f\mH^{\geq 1}\fHGC_{-1,0}^\ddagger$ is
\begin{equation}
\pi^0_f(\Gamma)=C\,\Gamma\cup \sigma_3^{\cup \frac{f-b}{2}},
\end{equation}
where $C$ is an irrelevant coefficient. The acyclicity of this mapping cone is equivalent to $\pi^0_f:\left(\mB^{<f,par}\fGC_{0}^{\geq 1}[-1],\delta\right)\rightarrow \left(\mF^f\mH^{\geq 1}\fHGCd_{-1,0}^\ddagger,\delta+\Delta_P\right)$ being a quasi-isomorphism in degree $d$, and that are we going to show. The ``trip'' to the cut version of the complex was needed only to ``save'' the degree through the mapping cones.

Let $\Delta_{CP}$ be the part of $\Delta$ that fixes both number of hairy connected components and number of connected components. The assumption says that $H\left(\mH^{\geq 1}\fHGCc_{-1,0}^\ddagger,\delta+\Delta_{CP}\right)$ is one-dimensional up to degree $d$, the class being represented by the star $\sigma_3$. It implies that $H\left(\fHGCc_{-1,0}^\ddagger,\delta+\Delta_{CP}\right)$ is generated by the star $\sigma_3$ and classes of $H\left(\fGCc_0^{\geq 1},\delta\right)[-1]$, up to the degree $d$.

The complex $\left(\fHGC_{-1,0}^\ddagger,\delta+\Delta_{CP}\right)$ is the symmetric product of $\left(\fHGCc_{-1,0}^\ddagger,\delta+\Delta_{CP}\right)$. Since the cohomology commutes with the symmetric product, its cohomology up to degree $d$ is generated with the classes that are union of hairless classes and any number of $[\sigma_3]$. A class in $H_d\left(\mH^{\geq 1}\fHGC_{-1,0}^\ddagger,\delta+\Delta_{CP}\right)$ has to have at least one star class $[\sigma_3]$, and also a hairless component. Therefore, it also represents a class in the disconnected part $H_d\left(\mH^{\geq 1}\fHGCd_{-1,0}^\ddagger,\delta+\Delta_{CP}\right)$. The union of hairless components is itself a class in $H_d\left(\fGC_0^{\geq 1}[-1],\delta\right)$, and adding stars is exactly the map $\pi^0_f$ for some $f$, so that proves that all classes of $H_d\left(\mH^{\geq 1}\fHGCd_{-1,0}^\ddagger,\delta+\Delta_{CP}\right)$ come bijectively from classes in $H_d\left(\fGC_0^{\geq 1}[-1],\delta\right)$ and some $f\in\Z$ by the map $\pi^0_f$. That proves that $\pi^0_f:\left(\mB^{<f,par}\fGC_{0}^{\geq 1}[-1],\delta\right)\rightarrow \left(\mF^f\mH^{\geq 1}\fHGCd_{-1,0}^\ddagger,\delta+\Delta_{CP}\right)$ is a quasi-isomorphism at the degree $d$.

We now need to change $\Delta_{CP}$ with $\Delta_P$. Simply set up a spectral sequence on $\left(\mF^f\mH^{\geq 1}\fHGCd_{-1,0}^\ddagger,\delta+\Delta_P\right)$ on the number of connected components, such as the first differential is $\Delta_{CP}$. The spectral sequence obviously converges correctly. A class on the first page has a hairless connected components and some stars $\sigma_3$. The differential on further pages comes from the part $\Delta_P-\Delta_{CP}$ that connects a hair of one component to a hairless component, making a hairy component. That map clearly can not produce a star or anything in its class because it has too few edges, so there can not be cancellations in this spectral sequence and $\left(\mF^f\mH^{\geq 1}\fHGCd_{-1,0}^\ddagger,\delta+\Delta_P\right)$ and $\left(\mF^f\mH^{\geq 1}\fHGCd_{-1,0}^\ddagger,\delta+\Delta_{CP}\right)$ are quasi-isomorphic. That was to be demonstrated for the second claim of the proposition.

Now let $d\geq 3$ and suppose that the second claim holds for the degree $d-1$. We want to prove the first claim in degree $d$.

On $\left(\mH^\flat\fHGC_{-1,0}^\ddagger,\delta+\Delta\right)$ we set up a spectral sequence of three rows: $\mH^{\geq 1}\fHGCd_{-1,0}^\ddagger$, $\mH^{\geq 1}\fHGCc_{-1,0}^\ddagger$ and $\HL_0$. On the first page there are $H\left(\mH^{\geq 1}\fHGCd_{-1,0}^\ddagger,\delta+\Delta\right)$, $H\left(\mH^{\geq 1}\fHGCc_{-1,0}^\ddagger,\delta+\Delta\right)$ and $H\left(\HL_0,\delta\right)$. Proposition \ref{prop:bHGC0'} implies that $H_d\left(\mH^\flat\fHGC_{-1,0}^\ddagger,\delta+\Delta\right)=0$, so all classes of the first page in the degree $d$ cancel on further pages.

Let us take a class in the first row $H_{d-1}\left(\mH^{\geq 1}\fHGCd_{-1,0}^\ddagger,\delta+\Delta\right)$. The assumption of the induction implies that it is generated by $\pi_f(\Gamma)$ for some $f\in\Z$ and $\Gamma\in\mB^{<f,par}\fGC_{0}^{\geq 1}[-1]$.
Lemma \ref{lem:pi} implies that $(\delta+\Delta)$ maps that class to $0$ in $\mH^{\geq 1}\fHGC_{-1,0}^\ddagger$, so it may only have non-zero part in the third row $\HL_0$. So, all classes of the first row in degree $d-1$ map directly to the third row, and there are no cancellations between first two rows between degrees $d-1$ and $d$ on the second page of the spectral sequence.

\begin{lemma}
\label{prop:1cancel}
In the spectral sequence of $\left(\mH^\flat\fHGC_{-1,0}^\ddagger,\delta+\Delta\right)$ containing rows $\mH^{\geq 1}\fHGCd_{-1,0}^\ddagger$, $\mH^{\geq 1}\fHGCc_{-1,0}^\ddagger$ and $\HL_0$ classes of $H\left(\mH^{\geq 1}\fHGCc_{-1,0}^\ddagger,\delta+\Delta\right)$ and $H\left(\HL_0,\delta\right)$ from the first page do not cancel on the second page.
\end{lemma}
\begin{proof}
Suppose the opposite, i.e.\ there is $\Gamma\in\mH^{\geq 1}\fHGCc_{-1,0}^\ddagger$ and $\gamma\in\HL_0$ that represent classes in $H\left(\mH^{\geq 1}\fHGCc_{-1,0}^\ddagger,\delta+\Delta\right)$, respectively $H\left(\HL_0,\delta\right)$, and that cancel each other. Then, $(\delta+\Delta)\Gamma$ is in the class of $\gamma$. We may choose $\gamma$ such that $(\delta+\Delta)\Gamma=\gamma$.

Let $\Gamma=\sum_{i\geq 1}\mH^i\Gamma$ where $\mH^i\Gamma$ is the part with $i$ hairs. Propositions \ref{prop:D1}, \ref{prop:D2} and \ref{prop:D3} imply
$$
\delta D^{(1)}\left(\mH^1\Gamma\right)-D^{(1)}\delta\left(\mH^1\Gamma\right)=\Delta\left(\mH^1\Gamma\right),
$$
$$
\delta D^{(2)}\left(\mH^2\Gamma\right)-D^{(2)}\delta\left(\mH^2\Gamma\right)=-D^{(1)}\Delta\left(\mH^2\Gamma\right),
$$
$$
D^{(2)}\Delta\left(\mH^3\Gamma\right)=0
$$
and summing all those equalities implies
$$
\delta\left(D^{(1)}\left(\mH^1\Gamma\right)+D^{(2)}\left(\mH^2\Gamma\right)\right)=
D^{(1)}\left(\delta\left(\mH^1\Gamma\right)-\Delta\left(\mH^2\Gamma\right)\right)
+D^{(2)}\left(\delta\left(\mH^2\Gamma\right)-\Delta\left(\mH^3\Gamma\right)\right)
+\Delta\left(\mH^1\Gamma\right)=\gamma.
$$
Lemma \ref{lem:D1'} and Proposition \ref{prop:D2} imply that $D^{(1)}\left(\mH^1\Gamma\right)+D^{(2)}\left(\mH^2\Gamma\right)\in\HL_0$, so $\gamma$ is exact in $\left(\HL_0,\delta\right)$, contradicting the assumption.
\end{proof}

Therefore, a class in the middle row at degree $d$, i.e.\ in $H_d\left(\mH^{\geq 1}\fHGCc_{-1,0}^\ddagger,\delta+\Delta\right)$, can not cancel with anything. Since everything cancels, there can not be a class in $H_d\left(\mH^{\geq 1}\fHGCc_{-1,0}^\ddagger,\delta+\Delta\right)$. That was to be demonstrated.
\end{proof}

Corollary \ref{cor:HGC0} says that the inclusion $\left(\HGC_{-1,0},\delta+\Delta\right)\hookrightarrow\left(\mH^{\geq 1}\fHGCc_{-1,0}^\ddagger,\delta+\Delta\right)$ is quasi-isomorphism. Therefore Proposition \ref{prop:bHGCc} concludes the proof of Theorem \ref{thm:main}.

\appendix

\section{Group action}
\label{app:groupinv}

In this section of appendix we clarify one way of calculating cohomology of a graph complex, by doing so first with distinguishing some or all vertices.

In defining graph complexes we start with a graph with distinguishable elements (vertices, edges, hairs). The graph complex is the space of coinvariants of a finite group that permutes elements acting on the starting space (recall subsections \ref{ss:GC} and \ref{ss:HGC}). Since the groups are finite, the space of coinvariants is isomorphic to the space of invariants. For the simplicity we work with the latter space.

In this paper we are interested only in the action of the symmetric group $S_v$ that permutes vertices, taking as a starting space the space that already is the space of invariants of the action of the other groups. The same can similarly be done with the other elements of the graph.

The space of invariants of the action $\rho$ of finite group $G$ on the space $V$ is
\begin{equation}
V^G=\left\{\gamma\in V|\rho_g(\gamma)=\gamma\text{ for all }g\in G\right\}.
\end{equation}
Let $(V,d)$ be a complex and $G$ a finite group acting on $\CC$ by the action of degree $0$ $\rho_g:\CC\rightarrow \CC$ for $g\in G$. Let the action and the differential commute, i.e.\
\begin{equation}
\rho_g d(\gamma)=d\rho_g(\gamma)
\end{equation}
for every $g\in G$ and $\gamma\in\CC$.
The action of the group can be extended to cohomology of $\CC$ as $\rho_g([\gamma])=[\rho_g(\gamma)]$ for $[\gamma]\in H(\CC)$.

\begin{prop}
\label{prop:Hinv}
\begin{equation*}
H\left(\CC^G,d\right)=H(\CC,d)^G.
\end{equation*}
\end{prop}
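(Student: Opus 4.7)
The plan is to use the standard Maschke-style averaging argument, which works because we are over a field of characteristic zero (so $|G|$ is invertible) and $G$ is finite. First, I would introduce the averaging projector
\[
 \pi := \frac{1}{|G|}\sum_{g\in G}\rho_g : \CC \to \CC.
\]
A one-line check shows $\pi^2 = \pi$ and $\mathrm{im}(\pi) = \CC^G$; moreover $\pi$ is a chain map (i.e.\ $d\pi = \pi d$) because each $\rho_g$ commutes with $d$ by hypothesis. Thus $\pi$ restricts/corestricts to a chain map $\CC \to \CC^G$, and the inclusion $\iota : \CC^G \hookrightarrow \CC$ is also a chain map.

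Next I would set up the two induced maps on cohomology. The inclusion gives $\iota_*: H(\CC^G,d) \to H(\CC,d)$. I would observe that its image lies in $H(\CC,d)^G$: if $[\gamma] \in H(\CC^G,d)$ with $\gamma \in \CC^G$, then $\rho_g \gamma = \gamma$ at the chain level, so a fortiori $\rho_g[\iota(\gamma)] = [\iota(\gamma)]$. Hence $\iota_*$ corestricts to a map
\[
 \iota_* : H(\CC^G,d) \;\longrightarrow\; H(\CC,d)^G,
\]
and $\pi$ induces $\pi_* : H(\CC,d) \to H(\CC^G,d)$.

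Then I would verify that these two maps are mutually inverse when $\iota_*$ is viewed as landing in $H(\CC,d)^G$. The composite $\pi\circ\iota$ is the identity on $\CC^G$ at the chain level, so $\pi_*\iota_* = \mathrm{id}$ on $H(\CC^G,d)$. Conversely, for $[\gamma]\in H(\CC,d)^G$ the hypothesis $\rho_g[\gamma]=[\gamma]$ means $[\rho_g\gamma]=[\gamma]$ for every $g$, whence
\[
 [\pi(\gamma)] \;=\; \frac{1}{|G|}\sum_{g\in G}[\rho_g\gamma] \;=\; [\gamma] \quad\text{in } H(\CC,d),
\]
so $\iota_*\pi_*$ is the identity on $H(\CC,d)^G$. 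Combining the two identities gives the claimed isomorphism $H(\CC^G,d)\cong H(\CC,d)^G$.

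There is no substantial obstacle here; the only subtlety worth flagging is that $\iota_*$ need not be injective into the full $H(\CC,d)$ without the $G$-invariance restriction in the target, but one never needs this: the averaging argument at the cohomology level (using precisely the commutativity $\rho_g d = d\rho_g$) provides the section $\pi_*$ on $H(\CC,d)^G$ directly. The whole proof is three short displays.
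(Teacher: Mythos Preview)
Your argument is correct and is exactly the standard averaging/Maschke argument that the paper's one-line ``Straightforward verification'' is pointing to; there is nothing to add. (One nitpick: your closing remark that $\iota_*$ ``need not be injective into the full $H(\CC,d)$'' is actually false, since $\pi_*\iota_*=\mathrm{id}$ already forces injectivity into all of $H(\CC,d)$ --- but this does not affect the proof.)
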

\begin{proof}
Straightforward verification.
\end{proof}

In particular, we are interested in the graph complex $(\CC,d)$ where $d$ does not change the number of vertices, i.e.\
\begin{equation}
(\CC,d)=\prod_v\left(\mV^v\CC,d\right).
\end{equation}

In those cases differential $d$ can already be defined on the space with $v$ distinguishable vertices $\bar\mV^v\CC$, where $\mV^v\CC=(\bar\mV^v\CC)^{S_v}$ and the differential on $\mV^v\CC$ is induced from the one on $\bar\mV^v\CC$. The proposition \ref{prop:Hinv} gives us an easy tool to calculate the cohomology of the graph complex:
\begin{equation}
H(\CC,d)=\prod_v H\left(\bar\mV^v\CC,d\right)^{S_v}.
\end{equation}
Cohomology is often easier to calculate on the space $\bar\mV\CC$ with distinguishable vertices.

The more interesting use in this paper will be an intermediate step: distinguishing one vertex and indistinguishing other vertices. On $\bar\mV^v\CC$ there is the action of $S_{v-1}$ that permutes first $v-1$ vertices, while always leaving the last vertex fixed. It is the sub-action of the action of the whole $S_v$. We define
\begin{equation}
\dot\mV^v\CC:=\left(\bar\mV^v\CC\right)^{S_{v-1}}.
\end{equation}
The inclusion $i:\dot\mV^v\CC\hookrightarrow\bar\mV^v\CC$ induces the map $i:H\left(\dot\mV^v\CC\right)\rightarrow H\left(\bar\mV^v\CC\right)$. The following proposition states that it is enough to consider classes of $H\left(\dot\mV^v\CC\right)$ in finding $H\left(\mV^v\CC\right)$.

\begin{prop}
\label{prop:Hinv2}
$$
H\left(\mV^v\CC\right)=\left\{c\in i\left(H\left(\dot\mV^v\CC\right)\right)| \rho_g(c)=c\text{ for all }g\in G\right\}.
$$
\end{prop}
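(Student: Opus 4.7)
The plan is to reduce everything to subspaces of $H\bigl(\bar\mV^v\CC\bigr)$ via Proposition \ref{prop:Hinv}, after which the claim becomes a purely group-theoretic observation about the inclusion $S_{v-1}\subset S_v$.

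First, I would apply Proposition \ref{prop:Hinv} to the two pairs of complexes. The differential on $\bar\mV^v\CC$ commutes with the $S_v$-action, hence also with the restricted $S_{v-1}$-action. Taking $G=S_v$, Proposition \ref{prop:Hinv} gives $H(\mV^v\CC) = H\bigl(\bar\mV^v\CC\bigr)^{S_v}$, where the identification is through the induced map coming from the inclusion $\mV^v\CC\hookrightarrow\bar\mV^v\CC$. Taking $G=S_{v-1}$, the same Proposition applied to the inclusion $i:\dot\mV^v\CC\hookrightarrow\bar\mV^v\CC$ gives that the induced map on cohomology is injective and identifies $H(\dot\mV^v\CC)$ with $H\bigl(\bar\mV^v\CC\bigr)^{S_{v-1}}$. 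Hence $i\bigl(H(\dot\mV^v\CC)\bigr)=H\bigl(\bar\mV^v\CC\bigr)^{S_{v-1}}$.

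Next, I would invoke the elementary fact that since $S_{v-1}\subset S_v$ as subgroups, any element of $H\bigl(\bar\mV^v\CC\bigr)$ fixed by $S_v$ is a fortiori fixed by $S_{v-1}$, giving the inclusion $H\bigl(\bar\mV^v\CC\bigr)^{S_v}\subset H\bigl(\bar\mV^v\CC\bigr)^{S_{v-1}}$. Combining this with the identifications above,
\begin{equation*}
H\bigl(\bar\mV^v\CC\bigr)^{S_v} = \bigl\{c\in H\bigl(\bar\mV^v\CC\bigr)^{S_{v-1}}\,\big|\,\rho_g(c)=c\text{ for all }g\in S_v\bigr\} = \bigl\{c\in i\bigl(H(\dot\mV^v\CC)\bigr)\,\big|\,\rho_g(c)=c\text{ for all }g\in S_v\bigr\},
\end{equation*}
which is exactly the right-hand side of the claimed equality.

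There is essentially no hard step: the only thing one needs to verify carefully is that the induced map $i:H(\dot\mV^v\CC)\to H\bigl(\bar\mV^v\CC\bigr)$ really is injective with image the $S_{v-1}$-invariants, but this is precisely the content of Proposition \ref{prop:Hinv} applied to the subgroup $S_{v-1}\subset S_v$. All the other steps are formal manipulation of invariant subspaces under a nested pair of finite groups, so no additional obstruction arises.
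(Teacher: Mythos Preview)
Your proof is correct and is exactly the intended ``straightforward verification'': you apply Proposition~\ref{prop:Hinv} once for $S_v$ and once for $S_{v-1}$, then use the inclusion $S_{v-1}\subset S_v$ to conclude. The paper's own proof is just the phrase ``Straightforward verification,'' and your argument is precisely the unpacking of that.
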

\begin{proof}
Straightforward verification.
\end{proof}

\section{Standard techniques}
\label{app:standard}

In this section of appendix we get some new results needed in this paper, but using techniques that have already been used in the previous papers about graph complexes. Therefore, for the reader familiar to those techniques, proofs here may be straightforward.

The following corollary extends the result of Corollary \ref{cor:H>fHGCc3-H>fHGCc} to the extra differential. The proof uses standard spectral sequence argument that sees a known differential at the first page.
\begin{cor}
\label{cor:HGC0}
The inclusions $\left(\HGC_{-1,n},\delta+\Delta\right)
\hookrightarrow\left(\mH^{\geq 1}\fHGCc^{\geq 2}_{-1,n},\delta+\Delta\right)
\hookrightarrow\left(\mH^{\geq 1}\fHGCc_{-1,n}^\ddagger,\delta+\Delta\right)$ are quasi-isomorphisms.
\end{cor}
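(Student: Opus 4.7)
The plan is to show that each of the two inclusions has an acyclic mapping cone by running a standard spectral sequence in which the first differential is $\delta$. Since both inclusions are handled identically, I focus on the outer one $\iota : \left(\mH^{\geq 1}\fHGCc^{\geq 2}_{-1,n},\delta+\Delta\right)\hookrightarrow\left(\mH^{\geq 1}\fHGCc_{-1,n}^\ddagger,\delta+\Delta\right)$; the inner inclusion is treated analogously.

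First I would split both complexes by $f=e+h-v$ as in \eqref{eq:split5}, since this decomposition is preserved by $\delta+\Delta$ and is respected by $\iota$. The mapping cone splits correspondingly, so it suffices to check acyclicity in each $\mF^f$ slice separately. On each restricted mapping cone I set up the spectral sequence associated to the increasing filtration by the number of hairs $h$: the differential $\delta$ preserves $h$ while $\Delta$ lowers it by one, so $\delta+\Delta$ respects the filtration and the associated graded differential is just $\delta$.

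For correct convergence, I would observe that in $\mF^f$ at fixed degree $d$ the number of edges is determined (combining $d=1+vn+(1-n)e-nh$ with $e+h-v=f$ gives $e=d-1+nf$), while connectedness forces $e-v\geq -1$, so from $e-v=f-h$ one gets $h\leq f+1$. Hence in each fixed degree the filtration is bounded above, and the spectral sequence converges correctly by the standard arguments of \cite[Appendix C]{DGC1}. The first page is the mapping cone of $\iota$ with only the differential $\delta$, i.e.\ the mapping cone of $\left(\mH^{\geq 1}\fHGCc^{\geq 2}_{-1,n},\delta\right)\hookrightarrow\left(\mH^{\geq 1}\fHGCc_{-1,n}^\ddagger,\delta\right)$, which is acyclic by Corollary \ref{cor:H>fHGCc3-H>fHGCc}. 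The same argument applied to the inclusion $\left(\HGC_{-1,n},\delta+\Delta\right)\hookrightarrow\left(\mH^{\geq 1}\fHGCc^{\geq 2}_{-1,n},\delta+\Delta\right)$ then finishes the proof.

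The main obstacle is the convergence argument: the bound $h\leq f+1$ crucially exploits the connectedness hypothesis (and would fail in the disconnected setting, where one would need the more subtle analysis of Proposition \ref{prop:bHGCc}). Once convergence is established, the argument reduces cleanly to the already-known cohomological coincidences at the level of $\delta$.
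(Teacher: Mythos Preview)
Your proof is correct and follows essentially the same approach as the paper: split by $f=e+h-v$, run a spectral sequence whose $E^1$ differential is $\delta$, invoke Corollary~\ref{cor:H>fHGCc3-H>fHGCc} on the first page, and check bounded convergence. The paper filters by $b=e-v$ rather than by $h$, but in each $\mF^f$ slice these are the same filtration since $b=f-h$; your explicit bound $h\leq f+1$ from connectedness is exactly the boundedness check the paper alludes to.
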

\begin{proof}
First of all we split all complexes as in \eqref{eq:split5}:
$$
\left(\CC,\delta+\Delta\right)=\prod_{f\in\Z}\left(\mF^f\CC,\delta+\Delta\right)
$$
where $f=e+h-v$. On the mapping cone of any inclusion we set up a spectral sequence on the number $e-v$. One checks that in each degree $d=1+vn+(1-n)e-nh$ the spectral sequence is bounded, so it converges correctly. On the first page of the spectral sequence there is a mapping cone of the inclusion of complexes only with the standard differential $\delta$, so it is acyclic by Corollary \ref{cor:H>fHGCc3-H>fHGCc}. Therefore the whole  mapping cone is acyclic, and the inclusion is quasi-isomorphism.
\end{proof}

The following is the corollary of \cite[Corollary 3]{DGC1}. 
\begin{cor}
\label{cor:nabla}
$H\left(\fGC_0^{\geq 1},\nabla\right)$ is $1$-dimensional, the class being represented by $\lambda_1$.
\end{cor}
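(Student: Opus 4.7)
The plan is to deduce the disconnected version from the connected statement \cite[Corollary 3]{DGC1}, which I take to assert that $H\left(\fGCc_0^{\geq 1},\nabla\right) = \K[\lambda_1]$, concentrated in degree $1$. I will filter $\fGC_0^{\geq 1}$ by the number of connected components: since $\nabla$ only adds edges, it either preserves the component count or decreases it by one, so $\mC^{\leq k}\fGC_0^{\geq 1}$ is a subcomplex for every $k\geq 1$.

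The associated graded $\mC^{\leq k}\fGC_0^{\geq 1}/\mC^{\leq k-1}\fGC_0^{\geq 1}$ is spanned by graphs with exactly $k$ components, and the induced differential is the ``intracomponent'' part of $\nabla$ (the terms adding an edge between two distinct components are killed in the quotient). This identifies the associated graded with the $k$-th graded symmetric power $\bigl(\mathrm{Sym}^k\fGCc_0^{\geq 1},\nabla\bigr)$. Over $\K$ of characteristic zero the Künneth formula yields
\[
H\left(\mathrm{Sym}^k\fGCc_0^{\geq 1},\nabla\right) \;=\; \mathrm{Sym}^k H\left(\fGCc_0^{\geq 1},\nabla\right) \;=\; \mathrm{Sym}^k\bigl(\K[\lambda_1]\bigr).
\]
Because $\lambda_1$ has odd degree $1$ --- equivalently, an edge transposition in $\lambda_1\cup\lambda_1$ acts by $-1$ on the $\sgn_e$-coinvariants, so $\lambda_1^{\cup 2}=0$ in $\fGC_0$ --- this graded symmetric power equals $\K[\lambda_1]$ for $k=1$ and vanishes for $k\geq 2$.

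Next I run the spectral sequence of the filtration. To secure correct convergence I first split $\fGC_0^{\geq 1}=\prod_{v}\mV^v\fGC_0^{\geq 1}$ (both $\nabla$ and the filtration preserve $v$); on each $\mV^v\fGC_0^{\geq 1}$ the filtration is bounded since $c\leq v$, so the spectral sequence converges in every summand and thus in the product. The $E_1$ page is concentrated in filtration degree $1$ and carries the single class $[\lambda_1]$, so the sequence collapses at $E_1$ and $H\left(\fGC_0^{\geq 1},\nabla\right) = \K[\lambda_1]$.

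The main obstacle is combinatorial and sign bookkeeping: checking that the induced differential on the associated graded really is the intracomponent $\nabla$ (the inter-component terms strictly decrease $c$ and are filtered out), and confirming that the $\sgn_e$ convention in the definition of $\fGC_0$ matches the Koszul signs in the graded symmetric power so that odd-degree classes genuinely have vanishing higher powers. Both facts are routine once made explicit, and then the collapse of the spectral sequence is immediate.
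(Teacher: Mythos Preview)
Your proof is correct and follows essentially the same approach as the paper: both deduce the result from the connected case via a spectral sequence on the number of connected components, identify the $E_1$ page with symmetric powers of $H(\fGCc_0^{\geq 1},\nabla)=\K[\lambda_1]$, and observe that $\lambda_1^{\cup 2}=0$ for sign reasons so only the single class $[\lambda_1]$ survives. Your write-up is more explicit about convergence (splitting over $v$) and the Koszul sign justification, but the argument is the same.
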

\begin{proof}
The \cite[Corollary 3]{DGC1} clearly implies that $H\left(\fGCc_0^{\geq 1},\nabla\right)$ is one-dimensional, the class being represented by $\lambda_1$.
On $\fGC_0^{\geq 1}$ we set up a spectral sequence on the number of connected components. The first differential does not change that number, so the cohomology is the symmetric product of $\lambda_1$. But there can not be more than one $\lambda_1$ because of the symmetry reasons, so the only class remaining is connected graph $\lambda_1$, concluding the proof. 
\end{proof}

\begin{lemma}
\label{lem:antennas}
The complex $\fHGC_{-1,n}^{\dagger\ddagger}$ with the differential $\Gamma\mapsto\Delta(\Gamma) + \sum_{x\in V(\Gamma)}\frac{1}{2} s_x$ is acyclic.
\end{lemma}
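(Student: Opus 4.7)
My plan is to argue via a spectral sequence on the decreasing filtration by vertex number, $F^k = \mV^{\ge k}\fHGC^{\dagger\ddagger}_{-1,n}$. Since $\Delta$ preserves $v$ while $\tfrac12\sum_x s_x$ strictly increases it by one, the filtration is preserved by $D = \Delta + \tfrac12\sum_x s_x$, and the $E_0$-differential on the associated graded is simply $\Delta$. Hence $E_1 = \bigoplus_v H\bigl(\mV^v\fHGC^{\dagger\ddagger}_{-1,n},\Delta\bigr)$. To ensure correct convergence I would first split the complex as $\prod_{f\in\Z}\mF^f\fHGC^{\dagger\ddagger}_{-1,n}$ along the $D$-invariant $f = e+h-v$; within each $\mF^f$ piece at fixed total degree, both $e$ and $v-h$ are determined, and the $\ddagger$-exclusion of $\sigma_1,\lambda_2$ bounds $v$ from above, because every isolated vertex must belong to a component $\sigma_a$ with $a\ge 2$ (so at most half of the hairs sit on isolated vertices) while every non-isolated vertex has edge-valence at least one (and therefore contributes to the total $2e$); combining yields $v\le 4e-(v-h)$.

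Propositions \ref{prop:fHGCDeltaOdd} and \ref{prop:fHGCDeltaEven}, together with the $^{\dagger\ddagger}$-exclusions, pin down $E_1$ concretely: for odd $n$ the anti-symmetry of hairs makes $E_1$ very sparse, while for even $n$ the remaining classes are generated by $\sigma_a$ ($a\ge 2$), $\lambda_a$ (odd $a\ge 3$), the combinations $\rho_a$ (even $a\ge 6$, since every summand of $\rho_a$ for $a\le 4$ uses an excluded component and so $\rho_a=0$ there), together with their allowed symmetric products. I would then cancel each of these $E_1$-classes using the higher differentials of the spectral sequence, which are induced by the $\tfrac12\sum_x s_x$-part of $D$: each $\sigma_a$ maps to a class at $v=2$ coming from its vertex splittings, each $\lambda_a$ to a class at $v=3$, and so on, with the disconnected classes cancelling by products of these basic pairings.

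The main obstacle is verifying these higher-page pairings rigorously, especially for disconnected graphs (where the cross terms of $\Delta$ between components must be tracked simultaneously with the vertex-splitting terms) and for the larger $\rho_a$-classes with several nontrivial summands. A more robust alternative would be to construct an explicit contracting homotopy $K = K_s + K_\Delta$ of degree $-1$: $K_s$ contracts an edge (inverting a vertex splitting) and $K_\Delta$ deletes an edge while re-creating a hair on one of its endpoints (inverting a $\Delta$-step); the computation of $DK+KD$ should then yield a positive integer, essentially counting the incident edges and hairs, times the identity on each graph, with the $^{\dagger\ddagger}$-constraints ensuring both that $K$ stays within the subcomplex and that the resulting integer is strictly positive.
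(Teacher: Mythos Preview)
Your spectral-sequence plan has a genuine gap in the $E_1$ computation. Propositions~\ref{prop:fHGCDeltaOdd} and~\ref{prop:fHGCDeltaEven} control $H\bigl(\mV^v\fHGC_{-1,n},\Delta\bigr)$ only in degrees with $h\ge 1$; both explicitly exclude the hairless top degree. Since $\Delta$ vanishes identically on $\mH^0$, every hairless graph lies in $\ker\Delta$, and the cokernel $\mH^0\mV^v\fHGC^{\dagger\ddagger}_{-1,n}\big/\Delta(\mH^1)$ survives to $E_1$ for every $v\ge 1$. This is a large, uncontrolled piece (for $n=0$ it is exactly the complement of $\HL_0$), so your list of $E_1$-classes is badly incomplete, and the ``cancel on higher pages'' step would in effect require computing $H\bigl(\mH^0\fHGC^{\dagger\ddagger},\tfrac12\sum_x s_x\bigr)$ intertwined with all the hairy classes. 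Two smaller issues compound this: for odd $n$, Proposition~\ref{prop:fHGCDeltaOdd} concerns $\Delta+D^{(1)}$ on an enlarged complex, not $\Delta$ alone, so it does not directly give you $E_1$; and your truncated $\rho_a$ (retaining only the $\dagger\ddagger$-allowed summands) is no longer $\Delta$-closed, because in the full complex the vanishing of $\Delta(\rho_a)$ uses cancellations against $\Delta$ of the excluded summands, which land back in the subcomplex.

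The paper's argument is entirely different and much more direct. It introduces the notion of an \emph{antenna}: a maximal connected subgraph built from one $1$-valent vertex and a chain of $2$-valent vertices. Filtering by $l-e$, where $l$ is the total antenna length, one finds that the first differential simply extends an antenna by one vertex (even-length non-linear antennas grow, odd-length ones are sent to zero, with the parity reversed for linear antennas). An explicit homotopy---shorten an odd-length antenna, or an even-length linear antenna other than $\lambda$---then kills the $E_1$ page outright. The $\dagger\ddagger$-exclusions of $\sigma_1$, $\lambda_1$, $\lambda_2$ are precisely what keeps this homotopy inside the complex and guarantees every graph has a usable antenna; convergence is handled by the same $\mF^f$-splitting you proposed. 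This bypasses all global $\Delta$-cohomology computations and treats both parities of $n$ uniformly in a few lines. Your alternative homotopy $K_s+K_\Delta$ is in spirit closer to this, but as stated the cross terms $\Delta K_s+K_s\Delta$ and $(\tfrac12\sum s_x)K_\Delta+K_\Delta(\tfrac12\sum s_x)$ are not controlled, and the claim that $DK+KD$ is a positive scalar is unsupported.
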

\begin{proof}
Let an \emph{antenna} be a maximal connected subgraph consisting of at least one 1-valent vertex and 2-valent vertices. Note that there are two kinds of linear graphs (those that do not have 3- or more-valent vertex) that are entirely considered an antenna: hairless one (that is longer than two vertices) and the one with a hair on one end. They can also be a connected component in a graph, and we call them \emph{linear antennas}.

The length of an antenna is the number of vertices in it. Let $l$ be the total length of all antennas in the graph. We set up a spectral sequence on the number $l-e$. The differential can not increase that number. To ensure the correct converging of the spectral sequence, we split the complex into the product of complexes for fixed $c=e+h-v$. For fixed degree $e$, $h-v$ is fixed too. Since from the definition of $\fHGC_{-1,n}^{\dagger\ddagger}$ there is no $\sigma_1$ in the graph as a connected component, with the fixed number of edges, the number of vertices $v$ is bounded, and so is the number of hairs $h$. Therefore, for the fixed degree $e$ and the row in the spectral sequence $l-e$ the space is finite-dimensional, and the standard spectral sequence argument (e.g.\ \cite[Proposition 19]{DGC1}) implies that the spectral sequence converges correctly.

It is easy to check that the first part of the differential is only extending an antenna. When summed all together, even-length antennas (including ``$0$-length antennas'', i.e.\ vertices that are not in an antenna) are extended by one vertex, and odd-length antennas are sent to $0$. For linear antennas it is the opposite. There is a homotopy that contracts an odd-length antenna and even-length linear antenna different than $\lambda$ (because the result would be $\sigma$ what is not allowed). There is always another antenna (at least $0$-length) in the graph, so the homotopy implies the acyclicity.
\end{proof}

The following proposition is the consequence of Proposition \ref{prop:HGCdis-1}. It transforms the result from the whole complex to the connected part.
\begin{prop}
\label{prop:fHGCc1}
The complex $\left(\mH^{\geq 1}\fHGCc_{-1,1}^*,\delta+\Delta\right)$ is acyclic.
\end{prop}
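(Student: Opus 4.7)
The plan is to deduce acyclicity of $\left(\mH^{\geq 1}\fHGCc^*_{-1,1},\delta+\Delta\right)$ from the acyclicity of the full disconnected version, Proposition~\ref{prop:HGCdis-1}. The main tool is a spectral sequence on the disconnected complex filtered by the number of connected components $c$, together with an induction on the degree. Every graph in $\mH^{\geq 1}\fHGC^*_{-1,1}$ decomposes uniquely into a hairless part and a non-empty hairy part, giving an isomorphism of graded vector spaces
\[
 \mH^{\geq 1}\fHGC^*_{-1,1}\;\cong\; \mathrm{Sym}(A)\otimes\mathrm{Sym}^{\geq 1}(B),
\]
where $A:=\mH^0\fHGCc^*_{-1,1}$, $B:=\mH^{\geq 1}\fHGCc^*_{-1,1}$, and $\mathrm{Sym}$ denotes the appropriate free commutative graded construction, with signs coming from $\sgn_v\otimes\sgn_h$. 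Both $\delta$ and $\Delta$ preserve $c$, the only exception being that $\Delta$ can decrease $c$ by one when it converts a hair at a vertex in one component into an edge to a vertex in another. Hence the filtration by $\mC^{\leq k}\mH^{\geq 1}\fHGC^*_{-1,1}$ is respected by $\delta+\Delta$.

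On the associated graded the differential is the $c$-preserving part, which acts as a derivation on both tensor factors; within any single connected component $\Delta$ has no components to merge, so on $B$ the restricted differential is the whole $\delta+\Delta$. K\"unneth in characteristic zero then gives
\[
 E_1\;\cong\;\mathrm{Sym}(H(A,\delta))\otimes\mathrm{Sym}^{\geq 1}(H(B,\delta+\Delta)).
\]
To secure correct convergence one first splits by $f=e+h-v$; the valence constraints in $\fHGC^*_{-1,1}$ combined with at most one hair per vertex force $h\leq f$, and then both $v$ and $e$ are bounded for fixed $d$, so each $\mF^f\mH^{\geq 1}\fHGC^*_{-1,1}$ is finite-dimensional in every degree. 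For each $f$ the spectral sequence converges to $H(\mF^f\mH^{\geq 1}\fHGC^*_{-1,1},\delta+\Delta)=0$.

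The induction proceeds on the degree $d$, using the crucial bounds $d=1+v-h\geq 1$ on $B$ (from $h\leq v$) and $d=1+v\geq 2$ on $A$. Assume $H(B)_{d'}=0$ for all $d'<d$, and let $[b]\in H(B)_d$. On $E_1$ the class $[b]$ sits at $(c=1,d)$ and can only disappear by being hit by some higher differential $d_r$ coming from $(c=1+r,d-1)$. Any summand $\mathrm{Sym}^iH(A)\otimes\mathrm{Sym}^jH(B)$ at that position has $i+j=1+r$, $j\geq 1$, and total degree at least $2i+jd$, so one would need $2i+(j-1)d\leq -1$; but $i\geq 0$, $j\geq 1$, and $d\geq 1$ make this impossible. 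Thus nothing hits $[b]$, so it survives to $E_\infty=0$, forcing $[b]=0$ and completing the induction.

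The hard part will be rigorously checking the first-page identification: one must verify that the decomposition $\mH^{\geq 1}\fHGC^*_{-1,1}\cong\mathrm{Sym}(A)\otimes\mathrm{Sym}^{\geq 1}(B)$ is compatible, up to the Koszul signs coming from $\sgn_v$ and $\sgn_h$, with the $c$-preserving part of $\delta+\Delta$, so that K\"unneth actually produces the claimed $E_1$. Once this algebraic setup is in place, the convergence argument and the numerical estimate $2i+(j-1)d\geq 0$ are both routine.
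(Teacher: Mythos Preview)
Your overall strategy—filter by the number of connected components $c$, use Proposition~\ref{prop:HGCdis-1}, and induct on the degree—is exactly the paper's. However, your $E_1$ computation has a genuine gap.

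On the associated graded at level $c$, the $c$-preserving differential does \emph{not} respect the splitting into hairless and hairy factors. Concretely, if $\beta\in B=\mH^{\geq 1}\fHGCc^*_{-1,1}$ is a connected $1$-hair graph, then the intra-component part of $\Delta$ removes that hair and adds an edge inside the component, producing an element of $A=\mH^0\fHGCc^*_{-1,1}$. Hence the derivation on $\mathrm{Sym}(A\oplus B)$ induced by $\delta+\Delta$ on $\fHGCc^*_{-1,1}$ has a nonzero $B\to A$ piece, and K\"unneth does \emph{not} yield $\mathrm{Sym}(H(A,\delta))\otimes\mathrm{Sym}^{\geq 1}(H(B,\delta+\Delta))$; what it yields is $\mathrm{Sym}^c\!\big(H(\fHGCc^*_{-1,1},\delta+\Delta)\big)$ modulo the purely hairless part. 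Your phrase ``on $B$ the restricted differential is the whole $\delta+\Delta$'' conflates the quotient complex $B$ (where $\Delta$ on $\mH^1$ is set to zero) with the subspace $B\subset\fHGCc^*_{-1,1}$ (where it is not). There is also a missing degree shift: one has $\mC^k\fHGC^*_{-1,1}=\big((\fHGCc^*_{-1,1})^{\otimes k}\big)^{S_k}[k-1]$, so your inequality ``$2i+(j-1)d\leq -1$'' should read $(j-1)(d-1)+i+1\leq 0$, which is still impossible—so this second issue is cosmetic.

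The paper avoids the $B\to A$ mixing by \emph{not} splitting $C:=\fHGCc^*_{-1,1}$ into $A\oplus B$ on the first page. It takes a putative cycle $\Gamma\in B$ with $e$ edges, produces a preimage $\gamma$ in the disconnected complex via Proposition~\ref{prop:HGCdis-1}, and then uses the identity $H\big(\mC^k\fHGC^*_{-1,1},\delta+\Delta_0\big)=\mathrm{Sym}^k\!\big(H(C,\delta+\Delta)\big)[k-1]$ together with the induction hypothesis—which, through the long exact sequence of $0\to A\to C\to B\to 0$, forces $H(C,\delta+\Delta)$ to be hairless in all degrees corresponding to $<e$ edges—to show that the top $\mC^k$-piece of $\gamma$ can be corrected by an exact term into the hairless part, where $\Delta$ vanishes and it can be discarded. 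To salvage your argument along your own lines, replace the incorrect $E_1$ description by this computation of $H(C)$; the numerical estimate then goes through once the $[c-1]$ shift is taken into account.
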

\begin{proof}
The complex splits as:
\begin{equation*}
\left(\mH^{\geq 1}\fHGCc_{-1,1}^*,\delta+\Delta\right)=\prod_{f\in\Z}\left(\mF^f\mH^{\geq 1}\fHGCc_{-1,1}^*,\delta+\Delta\right).
\end{equation*}
In each of the subcomplexes the degree $d=v+1-h=e+1-f$ is determined by the number of edges $e$. We will prove the proposition simultaneously for all subcomplexes by the induction on the number of edges $e$, i.e.\ that $H_{e+1-f}\left(\mF^f\mH^{\geq 1}\fHGCc_{-1,1}^*\delta+\Delta\right)=0$. For $e=0$ it is clear. Let us suppose that the claim holds for every number of edges less than $e$. We prove the claim for $e$ edges.

Suppose the opposite, that there is a class represented by $\Gamma$ with $e$ edges. By Proposition \ref{prop:HGCdis-1} there is $\gamma\in\mH^{\geq 1}\fHGC_{-1,1}^*$ such that $\Gamma=(\delta+\Delta)(\gamma)$. It clearly has $e-1$ edges.

Since in $\mH^{\geq 1}\fHGC_{-1,1}^*$ every connected component has at least one edge, the number of connected components is bounded.
We write $\gamma=\sum_{i=1}^k\mC^i\gamma$ where $\mC^i\gamma$ is the part with $i$ connected components. Choose $\gamma$ such that $k$ is minimal possible. If $k=1$ we are done, so suppose that $k>1$.

Let $\Delta_0$ be the part of $\Delta$ that does not connect two connected components. We now switch to the complexes that include hairless graphs and write:
$$
\left(\mC^k\fHGC^*_{-1,1},\delta+\Delta_0\right)=
\left(\left(\fHGCc^*_{-1,1},\delta+\Delta\right)^{\otimes k}\right)^{S_k}[k-1].
$$
Taking cohomology commutes with the symmetric product, so
$$
H\left(\mC^k\fHGC^*_{-1,1},\delta+\Delta_0\right)=
\left(H\left(\fHGCc^*_{-1,1},\delta+\Delta\right)^{\otimes k}\right)^{S_k}[k-1].
$$
In particular, because of the assumption of the induction, cohomology lives in the hairless part for less than $e$ edges.

It holds that $\mC^k\gamma\in\mC^k\fHGC^*_{-1,1}$ and $(\delta+\Delta_0)\mC^k\gamma=0$. Since $\mC^k\gamma$ has $e-1$ edges, there is $\gamma'\in\mC^k\fHGC^*_{-1,1}$ such that $(\delta+\Delta_0)\gamma'-\mC^k\gamma$ is hairless.

Now $\gamma-(\delta+\Delta)\gamma'$ is also mapped to $\Gamma$ by $(\delta+\Delta)$. $\Delta$ does not act on hairless part, so the part with $k$ connected components $\left(\delta+\Delta_0\right)\gamma'-\mC^k\gamma$ can be removed and the resulting element is still mapped to $\Gamma$ by $(\delta+\Delta)$. It has less than $k$ connecting components, contradicting the minimality of $k$.
\end{proof}

\section{Technical results}
\label{app:technical}

\subsection{Unbounded remainder}
The following lemma calculates the cohomology of $\left(\HC_{-1,n},\delta+\Delta\right)$ defined in Definition \ref{defi:unbounded}.
\begin{lemma}
\label{lem:HC}
$H\left(\HC_{-1,n},\delta+\Delta\right)$ is $1$-dimensional, the class being represented by $\alpha=\sum_{n\geq 1}\frac{1}{n!} \sigma_1^{\cup n}$.
\end{lemma}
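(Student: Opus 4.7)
My plan is to reduce the computation to an explicit two-term complex on which a straightforward linear algebra argument applies.

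First, I would enumerate a basis for $\HC_{-1,n}$. By definition the graphs are disjoint unions $\sigma_1^{\cup a}\cup\lambda_2^{\cup b}$. For both parities of $n$ one has $\lambda_2^{\cup 2}=0$: when $n$ is even, swapping two copies of $\lambda_2$ swaps the two edges and contributes $\sgn_e=-1$; when $n$ is odd, it swaps the two hairs and contributes $\sgn_h=-1$. Hence a basis of $\HC_{-1,n}$ consists of $\sigma_1^{\cup a}$ for $a\geq 1$ together with $\sigma_1^{\cup a}\cup\lambda_2$ for $a\geq 0$. Substituting into the degree formula $d=1+vn+(1-n)e-nh$ shows that, regardless of $n$, the graphs $\sigma_1^{\cup a}$ all sit in degree $1$ and $\sigma_1^{\cup a}\cup\lambda_2$ all sit in degree $2$, so $\HC_{-1,n}$ is concentrated in degrees $1$ and $2$.

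Next I would compute the restriction of $\delta$ and $\Delta$ modulo $\HC_{-1,n}^c$. Both $\delta(\lambda_2)$ and $\Delta(\lambda_2)$ produce graphs with at least two edges among only two or three vertices, none of which is of the form $\sigma_1^{\cup a}\cup\lambda_2^{\cup b}$, so these contributions vanish in $\HC_{-1,n}$. The only surviving pieces are
\[
\delta(\sigma_1)=-\lambda_2,\qquad \Delta\bigl(\sigma_1^{\cup a}\bigr)=a(a-1)\,\sigma_1^{\cup(a-2)}\cup\lambda_2.
\]
The first follows from evaluating the three terms $\tfrac{1}{2}s_x$, $-a_x$, and $-e_x$ on a single hairy vertex, each of which produces $\pm\lambda_2$ and sum up to $-\lambda_2$. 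The second counts the $a(a-1)$ ordered pairs of distinct $\sigma_1$-vertices $(x,y)$ for which $\tilde\Delta_x$ deletes the hair at $x$ and inserts an edge to $y$, merging those two components into a $\lambda_2$.

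Finally, using the normalized basis $e_a:=\sigma_1^{\cup a}/a!$ for $a\geq 1$ and $f_a:=\sigma_1^{\cup a}\cup\lambda_2/a!$ for $a\geq 0$, the formulas above translate into
\[
(\delta+\Delta)(e_a)=-f_{a-1}+f_{a-2}\qquad\text{with }f_{-1}:=0.
\]
Collecting coefficients, the kernel condition on $\sum c_ae_a$ is $c_{a+1}=c_a$, giving a one-dimensional kernel spanned by $\sum_{a\geq 1}e_a=\alpha$; in particular $\alpha$ is a cycle. Surjectivity of the differential onto the degree-$2$ part follows by induction: $(\delta+\Delta)(e_1)=-f_0$ yields $f_0$, and then $(\delta+\Delta)(e_{a+2})=-f_{a+1}+f_a$ recursively yields every $f_a$. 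Consequently $H\left(\HC_{-1,n},\delta+\Delta\right)=\K\cdot[\alpha]$ in degree $1$ and is zero elsewhere.

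The only mildly delicate step is keeping track of signs in $\delta(\sigma_1)$ and $\Delta(\sigma_1^{\cup a})$ so that the coefficients $\tfrac{1}{n!}$ in $\alpha$ really produce a cycle; once this is settled, the remainder is routine linear algebra.
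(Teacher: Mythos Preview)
Your proof is correct. The paper takes a slightly different but equivalent route: it sets up the spectral sequence on the number of vertices, so that the first differential is $\Delta$. The row with $v=1$ contains only $\sigma_1$, which survives; each row with $v\geq 2$ contains exactly the two basis elements $\sigma_1^{\cup v}$ and $\sigma_1^{\cup(v-2)}\cup\lambda_2$, and $\Delta$ sends the former onto a nonzero multiple of the latter, so these rows are acyclic. One then checks directly that $(\delta+\Delta)(\alpha)=0$ to identify the representative. Your argument bypasses the spectral sequence by first observing that $\HC_{-1,n}$ is concentrated in degrees $1$ and $2$, then writing the single nontrivial differential explicitly in a normalized basis and solving the resulting linear recursion by hand. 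This is more elementary and makes the role of the coefficients $1/n!$ in $\alpha$ completely explicit; the paper's version is terser and conforms to the spectral-sequence pattern used throughout. Both arguments rest on the same two observations you isolate, namely $\lambda_2^{\cup 2}=0$ and the pairing of $\sigma_1^{\cup v}$ with $\sigma_1^{\cup(v-2)}\cup\lambda_2$.
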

\begin{proof}
On $\HC_{-1,n}$ we set up a spectral sequence on the number of vertices. The spectral sequence clearly converges correctly. The first differential is $\Delta$. In the first row there is just $\sigma_1$, and it survives on the first page. All other rows have two terms, one with and one without $\lambda_2$, that cancel on the first page. Therefore, the cohomology must be $1$-dimensional. One easily checks that $(\delta+\Delta)\alpha=0$ and since $\alpha$ has $\sigma_1$ in the first row, it represents the class.
\end{proof}

\subsection{Results for Proposition \ref{prop:fHGCDeltaEven}}
The following three lemmas are necessarily calculations used in the proof of Proposition \ref{prop:fHGCDeltaEven}.
\begin{lemma}
\label{lem:V3a}
$\left(\mA^a\dot\mV^3\fHGC_{-1,0},\Delta\right)$ is acyclic for odd $a\geq 3$.
\end{lemma}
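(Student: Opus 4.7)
The plan is to continue the spectral sequence from the proof of Proposition~\ref{prop:fHGCDeltaEven}, filtering $\dot W_3^a := \mA^a\dot\mV^3\fHGC_{-1,0}$ by $s$, the total valence of the two non-chosen vertices, and to track the surviving classes to higher pages. By the analysis in that proof, the only candidate surviving classes on the second page for $v = 3$ and odd $a \geq 3$ are the first-type class $[\lambda_1 \dot\cup \sigma_{a-1}]$ in degree $d = 2$---the unique first-type class, since $\mH^0 \mA^k\mV^2\fHGC_{-1,0}$ is zero for $k \geq 2$ (multiple edges are forbidden in $\fHGC_{-1,0}$) and equals $\K\langle\lambda_1\rangle$ for $k = 1$---together with the second-type class $[\dot c(\lambda_a)]$ in degree $d = 3$, coming from the unique cohomology class $[\lambda_a] \in H(V_2^a)$ for odd $a$ (the base case $v = 2$ of the main proposition).

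For acyclicity, I want these two classes to cancel on a later page. First I would compute $\Delta_1(\lambda_1 \dot\cup \sigma_{a-1})$: it is a nonzero multiple of a single path graph, namely the path $w\text{--}u\text{--}v$ (with chosen endpoint $w$) carrying all remaining $a - 2$ hairs on $w$. On subsequent pages of the spectral sequence this intermediate graph must either be a $\Delta_0$-boundary or pair with an earlier class; iterating the zig-zag $\Delta_0^{-1}\circ\Delta_1$ along the filtration slides one hair at a time from the endpoint onto the middle vertex. After $a - 2$ such steps all residual hairs are concentrated on the middle non-chosen vertex, producing a nonzero multiple of $\dot c(\lambda_a)$. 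This identifies the differential $d_r$ on page $r = a - 1$ connecting $[\lambda_1 \dot\cup \sigma_{a-1}]$ to $[\dot c(\lambda_a)]$, so that both classes die together and the complex becomes acyclic.

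The main obstacle is verifying that the combinatorial scalar threaded through the $a - 2$ zig-zag steps does not vanish. This requires careful bookkeeping of the edge-permutation signs $\sgn_e$ (new edges get the next free number at each step), of the hair-count factors $h(x)$ arising in $\Delta$, and of the condition that each intermediate 2-edge path graph avoids both tadpoles and multiple edges, so that the $\Delta_0$-primitives used in the zig-zag are well defined. A cleaner alternative---which I would adopt if the iterated zig-zag becomes unwieldy---is to construct a direct witness element $\omega \in \dot W_3^a$, built as a linear combination of path graphs with the $a - 2$ hairs distributed across the three vertices, with coefficients chosen so that
\[
\Delta\omega \;\equiv\; \lambda_1\dot\cup\sigma_{a-1} + C\cdot\dot c(\lambda_a) \pmod{\operatorname{im}\Delta_0}, \qquad C\neq 0,
\]
exhibiting both classes as coboundaries after adding explicit $\Delta_0$-primitives for the residual terms.

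Finally, for $a = 3$ one further hairless graph---the triangle---lives in $\dot W_3^3$ and must also be handled. It is immediate that the triangle is $\Delta$-exact: applying $\Delta$ to the path $u\text{--}w\text{--}v$ with a single hair on $u$ forces the hair to migrate to $v$ (the move to $w$ is blocked by the existing edge), producing exactly the triangle up to a nonzero scalar. Hence the triangle is killed by $\Delta$, and together with the cancellation of the two first/second-type classes above this completes the proof that $\dot W_3^a$ is acyclic for all odd $a \geq 3$.
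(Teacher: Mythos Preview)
Your approach is essentially the same as the paper's: set up the spectral sequence on $s$, identify the two surviving classes $[\lambda_1\dot\cup\sigma_{a-1}]$ and $[\dot c(\lambda_a)]$, and show they cancel. The paper carries out precisely your ``cleaner alternative'': it writes down an explicit witness
\[
2a\,\xi+\dot c(\nu),\qquad \xi=\sum_{i=1}^{a-1}\sum_{j=0}^{a-i-1}(-1)^i\binom{a-1}{i,j,a-i-j-1}\,(\text{disconnected graph with hairs }i,j,a{-}i{-}j{-}1),
\]
together with an auxiliary $\nu$, and verifies by direct computation that $\Delta(2a\xi+\dot c(\nu))=-2a\,\dot c(\lambda_a)$ while $2a\xi+\dot c(\nu)$ contains $\lambda_1\dot\cup\sigma_{a-1}$ as its lowest-$s$ term. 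Your zig-zag description is the right intuition but is too schematic to serve as a proof: the $\Delta_0$-primitives at each step are disconnected graphs (a $\lambda$-piece union a star on the chosen vertex), and $\Delta_1$ on them produces several terms, not a single path graph, so ``sliding one hair at a time'' is not literally what happens. The explicit-witness route avoids tracking these branches.

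Your separate treatment of the triangle for $a=3$ is unnecessary. In $\dot\mV^3\fHGC_{-1,0}$ the swap of the two non-chosen vertices transposes the two edges incident to the chosen vertex, picking up a sign from $\sgn_e$; hence the triangle is equal to its own negative and vanishes identically. More generally, for odd $a>3$ there are no hairless graphs at all (three vertices admit at most three simple edges), so the $E_2$-page really consists only of the two classes you named, and their cancellation is the whole story.
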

\begin{proof}
Like in the proof of Proposition \ref{prop:fHGCDeltaEven} we set up a spectral sequence on the total valence of nonchosen vertices, including hairs, and get two classes that survive second page: $\lambda_1\dot\cup\sigma_{a-1}$ and $\dot c(\lambda_a)$. Let
\begin{equation*}
\xi:=\sum_{i=1}^{a-1}\sum_{j=0}^{a-i-1}(-1)^i\binom{a-1}{i,j,a-i-j-1}
\begin{tikzpicture}[scale=.7,baseline=1ex]
 \node[int] (a) at (0,0) {};
 \node[int] (b) at (1,0) {};
 \draw (a) edge (b);
 \node (as) at (0,-.6) {$\scriptstyle a-i-j-1$};
 \node (bs) at (1,-.6) {$\scriptstyle j$};
 \draw (a) edge (as);
 \draw (b) edge (bs);
 \node[int] (c) at (.5,.75) {};
 \node (cs) at (.5,1.35) {$\scriptstyle i$};
 \draw (c) edge (cs);
 \node at (1.2,0) {};
\end{tikzpicture}
\in\dot V_3^a
\end{equation*}
where  the upper vertex is chosen. We will work also with
\begin{equation*}
\bar\xi:=\sum_{i=0}^{a-1}\sum_{j=0}^{a-i-1}(-1)^i\binom{a-1}{i,j,a-i-j-1}
\begin{tikzpicture}[scale=.7,baseline=1ex]
 \node[int] (a) at (0,0) {};
 \node[int] (b) at (1,0) {};
 \draw (a) edge (b);
 \node (as) at (0,-.6) {$\scriptstyle a-i-j-1$};
 \node (bs) at (1,-.6) {$\scriptstyle j$};
 \draw (a) edge (as);
 \draw (b) edge (bs);
 \node[int] (c) at (.5,.75) {};
 \node (cs) at (.5,1.35) {$\scriptstyle i$};
 \draw (c) edge (cs);
 \node at (1.2,0) {};
\end{tikzpicture}
=\xi+\sum_{j=0}^{a-1}\binom{a-1}{j}
\begin{tikzpicture}[scale=.7,baseline=1ex]
 \node[int] (a) at (0,0) {};
 \node[int] (b) at (1,0) {};
 \draw (a) edge (b);
 \node (as) at (0,-.6) {$\scriptstyle a-j-1$};
 \node (bs) at (1,-.6) {$\scriptstyle j$};
 \draw (a) edge (as);
 \draw (b) edge (bs);
 \node[int] (c) at (.5,.75) {};
 \node at (1.2,0) {};
\end{tikzpicture}.
\end{equation*}
It is not an element of $\mA^a\dot\mV^3\fHGC_{-1,0}$ because it has terms with an isolated vertex. It holds that
\begin{equation*}
\begin{split}
\Delta(\bar\xi) & = \sum_{j=0}^{a-1}\sum_{i=0}^{a-j-1}(-1)^i\binom{a-1}{i,j,a-i-j-1}\Delta\left(
\begin{tikzpicture}[scale=.7,baseline=1ex]
 \node[int] (a) at (0,0) {};
 \node[int] (b) at (1,0) {};
 \draw (a) edge (b);
 \node (as) at (0,-.6) {$\scriptstyle a-i-j-1$};
 \node (bs) at (1,-.6) {$\scriptstyle j$};
 \draw (a) edge (as);
 \draw (b) edge (bs);
 \node[int] (c) at (.5,.75) {};
 \node (cs) at (.5,1.35) {$\scriptstyle i$};
 \draw (c) edge (cs);
 \node at (1.3,0) {};
\end{tikzpicture}
\right) \\
& = 2\sum_{j=0}^{a-1}\sum_{i=1}^{a-j-1}(-1)^i\frac{(a-1)!}{(i-1)!j!(a-i-j-1)!}
\begin{tikzpicture}[scale=.7,baseline=1ex]
 \node[int] (a) at (0,0) {};
 \node[int] (b) at (1,0) {};
 \draw (a) edge (b);
 \node (as) at (0,-.6) {$\scriptstyle a-i-j-1$};
 \node (bs) at (1,-.6) {$\scriptstyle j$};
 \draw (a) edge (as);
 \draw (b) edge (bs);
 \node[int] (c) at (.5,.75) {};
 \draw (a) edge (c);
 \node (cs) at (.5,1.35) {$\scriptstyle i-1$};
 \draw (c) edge (cs);
 \node at (1.3,0) {};
\end{tikzpicture} \\
& \quad + 2\sum_{j=0}^{a-1}\sum_{i=0}^{a-j-2}(-1)^i\frac{(a-1)!}{i!j!(a-i-j-2)!}
\begin{tikzpicture}[scale=.7,baseline=1ex]
 \node[int] (a) at (0,0) {};
 \node[int] (b) at (1,0) {};
 \draw (a) edge (b);
 \node (as) at (0,-.6) {$\scriptstyle a-i-j-2$};
 \node (bs) at (1,-.6) {$\scriptstyle j$};
 \draw (a) edge (as);
 \draw (b) edge (bs);
 \node[int] (c) at (.5,.75) {};
 \draw (a) edge (c);
 \node (cs) at (.5,1.35) {$\scriptstyle i$};
 \draw (c) edge (cs);
 \node at (1.3,0) {};
\end{tikzpicture} = 0,
\end{split}
\end{equation*}
\begin{equation*}
\Delta(\xi)=\Delta(\bar\xi)-\Delta\left(
\sum_{j=0}^{a-1}\binom{a-1}{j}
\begin{tikzpicture}[scale=.7,baseline=1ex]
 \node[int] (a) at (0,0) {};
 \node[int] (b) at (1,0) {};
 \draw (a) edge (b);
 \node (as) at (0,-.6) {$\scriptstyle a-j-1$};
 \node (bs) at (1,-.6) {$\scriptstyle j$};
 \draw (a) edge (as);
 \draw (b) edge (bs);
 \node[int] (c) at (.5,.75) {};
 \node at (1.3,0) {};
\end{tikzpicture}
\right)=-\dot c\left(\sum_{j=0}^{a-1}\binom{a-1}{j}
\begin{tikzpicture}[scale=.7,baseline=-1ex]
 \node[int] (a) at (0,0) {};
 \node[int] (b) at (1,0) {};
 \draw (a) edge (b);
 \node (as) at (0,-.6) {$\scriptstyle a-j-1$};
 \node (bs) at (1,-.6) {$\scriptstyle j$};
 \draw (a) edge (as);
 \draw (b) edge (bs);
 \node at (1.2,0) {};
\end{tikzpicture}
\right).
\end{equation*}
Let
\begin{equation*}
\nu:=\sum_{j=1}^{a-1}\binom{a}{j}
\begin{tikzpicture}[scale=.7,baseline=-1ex]
 \node[int] (a) at (0,0) {};
 \node[int] (b) at (1,0) {};
 \node (as) at (0,-.6) {$\scriptstyle a-j$};
 \node (bs) at (1,-.6) {$\scriptstyle j$};
 \draw (a) edge (as);
 \draw (b) edge (bs);
 \node at (1.2,0) {};
\end{tikzpicture},
\end{equation*}
\begin{equation*}
\bar\nu:=\sum_{j=0}^{a}\binom{a}{j}
\begin{tikzpicture}[scale=.7,baseline=-1ex]
 \node[int] (a) at (0,0) {};
 \node[int] (b) at (1,0) {};
 \node (as) at (0,-.6) {$\scriptstyle a-j$};
 \node (bs) at (1,-.6) {$\scriptstyle j$};
 \draw (a) edge (as);
 \draw (b) edge (bs);
 \node at (1.2,0) {};
\end{tikzpicture}=\nu+2
\begin{tikzpicture}[scale=.7,baseline=-1ex]
 \node[int] (a) at (0,0) {};
 \node[int] (b) at (1,0) {};
 \node (as) at (0,-.6) {$\scriptstyle a$};
 \draw (a) edge (as);
 \node at (-.2,0) {};
 \node at (1.2,0) {};
\end{tikzpicture}.
\end{equation*}
It holds that
\begin{equation*}
\Delta(\bar\nu)=\sum_{j=0}^{a}\frac{a!}{j!(a-j)!}\left((a-j)
\begin{tikzpicture}[scale=.7,baseline=-1ex]
 \node[int] (a) at (0,0) {};
 \node[int] (b) at (1,0) {};
 \draw (a) edge (b);
 \node (as) at (0,-.6) {$\scriptstyle a-j-1$};
 \node (bs) at (1,-.6) {$\scriptstyle j$};
 \draw (a) edge (as);
 \draw (b) edge (bs);
 \node at (1.2,0) {};
\end{tikzpicture}+j
\begin{tikzpicture}[scale=.7,baseline=-1ex]
 \node[int] (a) at (0,0) {};
 \node[int] (b) at (1,0) {};
 \draw (a) edge (b);
 \node (as) at (0,-.6) {$\scriptstyle a-j$};
 \node (bs) at (1,-.6) {$\scriptstyle j-1$};
 \draw (a) edge (as);
 \draw (b) edge (bs);
 \node at (1.2,0) {};
\end{tikzpicture}
\right)=
\sum_{j=0}^{a-1}2a\binom{a-1}{j}
\begin{tikzpicture}[scale=.7,baseline=-1ex]
 \node[int] (a) at (0,0) {};
 \node[int] (b) at (1,0) {};
 \draw (a) edge (b);
 \node (as) at (0,-.6) {$\scriptstyle a-j-1$};
 \node (bs) at (1,-.6) {$\scriptstyle j$};
 \draw (a) edge (as);
 \draw (b) edge (bs);
 \node at (1.2,0) {};
\end{tikzpicture},
\end{equation*}
\begin{equation*}
\Delta(\nu)=\Delta(\bar\nu)-2a\lambda_a,
\end{equation*}
\begin{equation*}
\Delta(2a\xi+\dot c(\nu))=2a\Delta(\xi)+\dot c(\Delta(\nu))=-2a\dot c(\lambda_a).
\end{equation*}
Element $2a\xi+\dot c(\nu)$ contains $\lambda_1\dot\cup\sigma_{a-1}$ that survives the second page of the spectral sequence, and the target is exactly multiple of $\lambda_a$ that also survives, so the two cancel each other. That was to be demonstrated.
\end{proof}

\begin{lemma}
\label{lem:DeltaRho}
For even $a\geq 2$ it holds that
$$
\Delta(\rho_a)=0.
$$
\end{lemma}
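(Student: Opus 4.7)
The statement is a purely computational identity, so the plan is to expand $\Delta(\rho_a)$ directly. The graph $\sigma_i \cup \lambda_{a-i}$ has three vertices: the star vertex $v_0$ with $i$ hairs, and the two vertices $v_1$ (with $a-i-1$ hairs) and $v_2$ (bare) of $\lambda_{a-i}$, joined by the unique old edge. Applying $\Delta$ removes one hair and joins its vertex to one of the other vertices by a new edge, subject to the no-tadpole and no-multi-edge constraints. Each output is a three-vertex path graph carrying $a-2$ hairs; write $L_2(j,k,l)$ for the path with hair counts $j,k,l$ read along its length. Since in $\fHGC_{-1,0}$ edges are odd while vertices and hairs are even, the endpoint-swap automorphism of the path swaps the two edges and so yields the antisymmetry $L_2(j,k,l) = -L_2(l,k,j)$; in particular $L_2(j,k,j) = 0$.

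The first step is to write out $\Delta(\sigma_i \cup \lambda_{a-i})$ as a signed sum of three families of terms, one for each choice of the hair-bearing vertex and its target: (i) a hair of $v_0$ attaching to $v_1$, giving $\pm i\, L_2(i-1, a-i-1, 0)$; (ii) a hair of $v_0$ attaching to $v_2$, giving $\pm i\, L_2(i-1, 0, a-i-1)$; and (iii) a hair of $v_1$ attaching to $v_0$ (the choice $v_1 \to v_2$ is blocked by the existing edge), giving $\pm(a-i-1)\, L_2(i, a-i-2, 0)$. Multiplying each by $\frac{(-1)^i}{i!(a-1-i)!}$ and summing over $i$ gives three separate sums, which I then regroup by the hair count on the middle vertex of the resulting path.

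The contributions from families (i) and (iii) both yield paths of the shape $L_2(\ast,\ast,0)$; after the reindexings $j = i-1$ and $j = i$ respectively, the two sums telescope against each other and leave only a scalar multiple of $L_2(0,a-2,0)$, which vanishes by the antisymmetry above. The contribution from family (ii) gives a sum of the form $\sum_j \frac{(-1)^j}{j!(a-j-2)!}\, L_2(j,0,a-j-2)$. Pairing the summand indexed by $j$ with the one indexed by $a-2-j$ and invoking $L_2(a-j-2,0,j) = -L_2(j,0,a-j-2)$, the combined coefficient of each graph becomes proportional to $(-1)^j - (-1)^{a-j-2}$, which vanishes precisely because $a$ is even; the self-paired index $j = (a-2)/2$ contributes zero by $L_2(j,0,j) = 0$.

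The only delicate point is careful bookkeeping of the signs arising from the ordering of the two edges of the output path graph, i.e.\ applying $L_2(j,k,l) = -L_2(l,k,j)$ in the correct direction in each of the three families. Once the sign conventions are pinned down, the cancellations above are mechanical, and the evenness of $a$ enters exactly in the pairing step for family (ii).
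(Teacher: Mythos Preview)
Your proposal is correct and follows essentially the same route as the paper's proof: both expand $\Delta(\rho_a)$ into three families of path graphs, observe that the first and third families (your (i) and (iii)) cancel after a shift of index, and that the second family (your (ii)) vanishes for even $a$ by the endpoint-swap antisymmetry of the path. You are in fact slightly more careful than the paper in one place: the telescoping of (i) against (iii) leaves the boundary term $L_2(0,a-2,0)$, which you correctly dispose of via $L_2(j,k,j)=0$, whereas the paper simply asserts that the two sums cancel.
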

\begin{proof}
\begin{equation*}
\begin{split}
\Delta(\rho_a) & = \sum_{i=1}^{a-1}\frac{(-1)^i}{i!(a-i-1)!}\Delta\left(
\begin{tikzpicture}[scale=.7,baseline=1ex]
 \node[int] (a) at (0,0) {};
 \node[int] (b) at (1,0) {};
 \draw (a) edge (b);
 \node (as) at (0,-.6) {$\scriptstyle a-i-1$};
 \draw (a) edge (as);
 \node[int] (c) at (.5,.75) {};
 \node (cs) at (.5,1.35) {$\scriptstyle i$};
 \draw (c) edge (cs);
 \node at (1.3,0) {};
\end{tikzpicture}
\right) \\
& = \sum_{i=1}^{a-1}\frac{(-1)^i}{(i-1)!(a-i-1)!}
\begin{tikzpicture}[scale=.7,baseline=1ex]
 \node[int] (a) at (0,0) {};
 \node[int] (b) at (1,0) {};
 \draw (a) edge (b);
 \node (as) at (0,-.6) {$\scriptstyle a-i-1$};
 \draw (a) edge (as);
 \node[int] (c) at (.5,.75) {};
 \draw (a) edge (c);
 \node (cs) at (.5,1.35) {$\scriptstyle i-1$};
 \draw (c) edge (cs);
 \node at (1.3,0) {};
\end{tikzpicture} \\
& \quad + \sum_{i=1}^{a-1}\frac{(-1)^i}{(i-1)!(a-i-1)!}
\begin{tikzpicture}[scale=.7,baseline=1ex]
 \node[int] (a) at (0,0) {};
 \node[int] (b) at (1,0) {};
 \draw (a) edge (b);
 \node (as) at (0,-.6) {$\scriptstyle a-i-1$};
 \draw (a) edge (as);
 \node[int] (c) at (.5,.75) {};
 \draw (b) edge (c);
 \node (cs) at (.5,1.35) {$\scriptstyle i-1$};
 \draw (c) edge (cs);
 \node at (1.3,0) {};
\end{tikzpicture} \\
& \quad + \sum_{i=1}^{a-2}\frac{(-1)^i}{i!(a-i-2)!}
\begin{tikzpicture}[scale=.7,baseline=1ex]
 \node[int] (a) at (0,0) {};
 \node[int] (b) at (1,0) {};
 \draw (a) edge (b);
 \node (as) at (0,-.6) {$\scriptstyle a-i-2$};
 \draw (a) edge (as);
 \node[int] (c) at (.5,.75) {};
 \draw (a) edge (c);
 \node (cs) at (.5,1.35) {$\scriptstyle i$};
 \draw (c) edge (cs);
 \node at (1.3,0) {};
\end{tikzpicture}
\end{split}
\end{equation*}
The first and the third term cancel each other, while the second term is easily seen to be $0$ for even $a$, concluding the proof.
\end{proof}

\begin{lemma}
\label{lem:cRhoa}
For even $a\geq 2$ it holds that
$$
c(\rho_a)=0.
$$
\end{lemma}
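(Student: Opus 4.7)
The approach is to compute $c(\rho_a)$ directly and exploit the symmetries available in $\fHGC_{-1,0}$, where only $\sgn_e$ contributes signs. Writing $c=\sum_x h(x)c_x$, the action on a single summand $\sigma_i\cup\lambda_{a-i}$ splits into a \emph{type A} contribution (acting at the vertex of $\sigma_i$, which carries $i$ hairs) that transforms $\sigma_i$ into $\lambda_i$, producing up to sign $i\cdot\lambda_i\cup\lambda_{a-i}$; and a \emph{type B} contribution (acting at the hairy vertex of $\lambda_{a-i}$, which carries $a-i-1$ hairs) producing up to sign $(a-i-1)\cdot\sigma_i\cup\pi_{a-i}$, where $\pi_{a-i}$ denotes the three-vertex path with $a-i-2$ hairs on its middle vertex.

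The key observation is that type B vanishes identically in $\fHGC_{-1,0}$. The graph $\pi_{a-i}$ admits the automorphism swapping its two valence-1 end-vertices, and this automorphism induces a transposition of the two edges. Since only edge permutations carry a sign in the even case, the automorphism acts as $-1$ on the representative, forcing $\pi_{a-i}=0$. Collecting only the type A contributions gives, up to an overall sign $C=\pm 1$ (coming from the convention for numbering the new vertex and edge produced by $c_x$ on the disjoint union),
$$c(\rho_a)=C\sum_{i=1}^{a-1}\frac{(-1)^i}{(i-1)!\,(a-i-1)!}\,\lambda_i\cup\lambda_{a-i}.$$

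To conclude, I will apply the same edge-transposition principle on the whole graph: swapping the two components of $\lambda_i\cup\lambda_{a-i}$ transposes the two edges and induces a product of two vertex transpositions, hence the net sign is $-1$, so $\lambda_i\cup\lambda_{a-i}=-\lambda_{a-i}\cup\lambda_i$. Reindexing the sum by $i\mapsto a-i$ and applying this identity yields $c(\rho_a)=-(-1)^a c(\rho_a)$; for even $a$ this gives $2c(\rho_a)=0$ and hence $c(\rho_a)=0$ in characteristic zero. The main bookkeeping challenge is tracking the overall sign $C$ in the type A expression, but since the final argument relies only on an involutive symmetry it is irrelevant---only the uniform $-1$ from edge transposition matters, and the cancellation is forced.
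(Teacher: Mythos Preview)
Your proof is correct and follows essentially the same approach as the paper. The paper records the type~B vanishing as the single remark ``because of the symmetry $c(\lambda_a)=0$'' (which is exactly your observation that the three-vertex path with hairs on the middle vertex dies under the edge-swap automorphism), then writes down the same type~A sum and declares it zero; your reindexing $i\mapsto a-i$ combined with $\lambda_i\cup\lambda_{a-i}=-\lambda_{a-i}\cup\lambda_i$ makes explicit the symmetry cancellation that the paper leaves implicit.
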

\begin{proof}
Because of the symmetry $c(\lambda_a)=0$ for every $a\geq 1$. Therefore
$$
c(\rho_a)=
\sum_{i=1}^{a-1}\frac{(-1)^i}{i!(a-1-i)!}c(\sigma_i)\cup\lambda_{a-i}=
\sum_{i=1}^{a-1}\frac{(-1)^i}{(i-1)!(a-1-i)!}\lambda_i\cup\lambda_{a-i}=0
$$
\end{proof}

\subsection{The morphisms $\pi_f$}
The following three lemmas are needed to define the morphisms $\pi_f$ in Subsection \ref{ss:pif}. Recall \eqref{def:Sigma}:
\begin{equation*}
\Sigma_m=\sum_{\substack{k_i\geq 0\\ \sum_iik_i=m}}\prod_i\frac{(-1)^{k_i}}{k_i!((2i+1)!)^{k_i}}\bigcup_i \sigma_{2i+1}^{\cup k_i}.
\end{equation*}

\begin{lemma}
\label{lem:Sigma}
For every $m\geq 1$ it holds that
$$
(\delta+\Delta)(\Sigma_m)=0.
$$
\end{lemma}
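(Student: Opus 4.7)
The plan is to show that $(\delta+\Delta)(\Sigma_m)=0$ by direct coefficient bookkeeping: every term on the right-hand side appears from exactly two sources (one via $\delta$, one via $\Delta$) that cancel. Write $\Sigma_m = \sum_{\vec k} c_{\vec k}\,\Gamma_{\vec k}$, with $\Gamma_{\vec k}=\bigcup_{i\geq 1}\sigma_{2i+1}^{\cup k_i}$ and $c_{\vec k}=\prod_i \frac{(-1)^{k_i}}{k_i!\,((2i+1)!)^{k_i}}$, the sum running over tuples $\vec k=(k_1,k_2,\dots)$ with $\sum_i i k_i = m$. First I would compute the action on a single star. For $\sigma_{2i+1}$ the splitting $\tfrac12 s_x$ produces, after coinvariants, $\binom{2i+1}{k}$ copies of the two-vertex one-edge graph with $k$ and $2i+1-k$ hairs for $k=0,\dots,i$; the summand $k=0$ cancels with $-a_x$ and the summand $k=1$ cancels with $-h(x)e_x=-(2i+1)e_x$, leaving
\[
\delta(\sigma_{2i+1}) = \sum_{k=2}^{i}\binom{2i+1}{k}\mu_{k,\,2i+1-k},
\]
where I write $\mu_{p,q}$ for the connected two-vertex graph with one edge and $p,q$ hairs on its endpoints. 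Moreover $\Delta(\sigma_{2i+1})=0$ since there is no second vertex. A direct inspection gives $\Delta(\sigma_P\cup\sigma_Q)=P\,\mu_{P-1,Q}+Q\,\mu_{P,Q-1}$.

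Next I would use that $\delta$ is a derivation of disjoint union (Koszul signs are trivial here because every $\sigma_{2i+1}$ has zero edges) and that $\Delta$ vanishes on any individual star. Consequently every term of $(\delta+\Delta)(\Sigma_m)$ has the form $\mu_{a,b}\cup\Gamma_{\vec k'}$ with $a,b\geq 2$ and $a+b$ odd; I fix such a target and compute its coefficient in two ways. The $\delta$-contribution comes from the term of $\Sigma_m$ with $\vec k=\vec k'+e_i$, $i=(a+b-1)/2$: there are $k_i'+1$ copies of $\sigma_{2i+1}$ available and each yields $\binom{2i+1}{a}\mu_{a,b}$, so the contribution is
\[
c_{\vec k'+e_i}\,(k_i'+1)\binom{2i+1}{a}\;=\;-\,\frac{c_{\vec k'}}{a!\,b!},
\]
using $c_{\vec k'+e_i}=-c_{\vec k'}/\bigl((k_i'+1)(2i+1)!\bigr)$ and $\binom{a+b}{a}/(a+b)!=1/(a!\,b!)$. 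The $\Delta$-contribution comes from the term with $\vec k=\vec k'+e_{i_1}+e_{i_2}$, where after choosing $a$ odd and $b$ even one takes $i_1=(a-1)/2$ and $i_2=b/2$; the ``connecting'' piece of $\Delta$ applied to $\sigma_a\cup\sigma_{b+1}$ produces the coefficient $(b+1)$ in front of $\mu_{a,b}$, and combining with $c_{\vec k'+e_{i_1}+e_{i_2}}$ one checks the contribution equals $+c_{\vec k'}/(a!\,b!)$. These cancel, so $(\delta+\Delta)(\Sigma_m)=0$.

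The main obstacle is the bookkeeping in the $\Delta$-contribution in the edge case $i_1=i_2$, which occurs precisely when $b=a-1$: here the two stars being glued are both $\sigma_a$, the ordered pair of distinct copies available is $(k_{i_1}'+2)(k_{i_1}'+1)$, and the relevant combinatorial factor from $\Delta(\sigma_a\cup\sigma_a)$ is $2a$ rather than $(b+1)+a$. One has to verify, using the $1/(k_{i_1}'+2)!$ factor built into $c_{\vec k}$ and the identity $a/(a!)^2 = 1/(a!(a-1)!) = 1/(a!\,b!)$, that this case produces the same $+c_{\vec k'}/(a!\,b!)$ as the generic case. (Conceptually, one recognises that $S:=-\sum_{i\geq 1}\sigma_{2i+1}/(2i+1)!$ is a Maurer--Cartan element for the bracket $\{\sigma_P,\sigma_Q\}=P\mu_{P-1,Q}+Q\mu_{P,Q-1}$, i.e.\ $\delta(S)+\tfrac12\{S,S\}=0$, and $\Sigma_m$ is the $m$-graded piece of $\exp(S)$; this perspective explains why the two cases must agree, but the explicit coefficient check is what makes the proof rigorous.)
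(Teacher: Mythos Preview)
Your proof is correct and follows essentially the same strategy as the paper: a direct coefficient computation showing that the contributions of $\delta$ and $\Delta$ cancel term by term. The paper expands $\delta(\Sigma_m)$ and $\Delta(\Sigma_m)$ fully and then matches the two sums, whereas you fix a target monomial $\mu_{a,b}\cup\Gamma_{\vec k'}$ and compute its coefficient from each side; the underlying combinatorics (the identities $c_{\vec k'+e_i}(k'_i+1)\binom{2i+1}{a}=-c_{\vec k'}/(a!b!)$ and its $\Delta$-counterpart, including the diagonal case $i_1=i_2$) are identical. Your Maurer--Cartan remark is a pleasant conceptual gloss not present in the paper, but the substance of the argument is the same.
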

\begin{proof}
In all sums $i$, $i'$ and $j$ are $\geq 1$.
\begin{equation*}
\begin{split}
\delta(\Sigma_m)
&=\sum_{\substack{k_i\geq 0\\ \sum_iik_i=m}}\prod_i\frac{(-1)^{k_i}}{k_i!((2i+1)!)^{k_i}}\left(\sum_ik_i\delta\left(\sigma_{2i+1}\right)\cup\sigma_{2i+1}^{\cup k_i-1}\cup\bigcup_{j\neq i} \sigma_{2j+1}^{\cup k_j}\right)=\\
&=\sum_{\substack{k_i\geq 0\\ \sum_iik_i=m}}\sum_{\substack{i\\ k_i>0}}\frac{(-1)^{k_i}}{(k_i-1)!((2i+1)!)^{k_i}}\prod_{j\neq i}\frac{(-1)^{k_j}}{k_j!((2j+1)!)^{k_j}}
\sum_{h=1}^{i-1}\binom{2i+1}{2h}
\begin{tikzpicture}[scale=.7,baseline=-1ex]
 \node[int] (a) at (0,0) {};
 \node[int] (b) at (1,0) {};
 \draw (a) edge (b);
 \node (as) at (0,-.6) {$\scriptstyle 2h$};
 \node (bs) at (1,-.6) {$\scriptstyle 2i-2h+1$};
 \draw (a) edge (as);
 \draw (b) edge (bs);
 \node at (1.2,0) {};
\end{tikzpicture}
\cup\sigma_{2i+1}^{\cup k_i-1}\cup\bigcup_{j\neq i} \sigma_{2j+1}^{\cup k_j}=\\
&=-\sum_{\substack{k_i\geq 0\\ \sum_iik_i=m}}\sum_{\substack{i\\ k_i>0}}\sum_{h=1}^{i-1}\left(\frac{1}{(2h)!(2i-2h+1)!}
\begin{tikzpicture}[scale=.7,baseline=-1ex]
 \node[int] (a) at (0,0) {};
 \node[int] (b) at (1,0) {};
 \draw (a) edge (b);
 \node (as) at (0,-.6) {$\scriptstyle 2h$};
 \node (bs) at (1,-.6) {$\scriptstyle 2i-2h+1$};
 \draw (a) edge (as);
 \draw (b) edge (bs);
 \node at (1.2,0) {};
\end{tikzpicture}\right)\\
&\qquad\qquad\qquad\qquad
\cup\left(\frac{1}{(k_i-1)!}\left(\frac{-1}{(2i+1)!}\sigma_{2i+1}\right)^{\cup k_i-1}\right)\cup\bigcup_{j\neq i} \left(\frac{1}{k_j!}\left(\frac{-1}{(2j+1)!}\sigma_{2j+1}\right)^{\cup k_j}\right)=\\
&=-\sum_{i,i'}\sum_{\substack{k_j\geq 0\\ i+i'+\sum_jjk_j=m}}\left(\frac{1}{(2i)!(2i'+1)!}
\begin{tikzpicture}[scale=.6,baseline=-1ex]
 \node[int] (a) at (0,0) {};
 \node[int] (b) at (1,0) {};
 \draw (a) edge (b);
 \node (as) at (0,-.6) {$\scriptstyle 2i$};
 \node (bs) at (1,-.6) {$\scriptstyle 2i'+1$};
 \draw (a) edge (as);
 \draw (b) edge (bs);
 \node at (1.2,0) {};
\end{tikzpicture}\right)
\cup\bigcup_{j} \left(\frac{1}{k_j!}\left(\frac{-1}{(2j+1)!}\sigma_{2j+1}\right)^{\cup k_j}\right),
\end{split}
\end{equation*}
\begin{equation*}
\begin{split}
\Delta(\Sigma_m)
&=\sum_{\substack{k_i\geq 0\\ \sum_iik_i=m}}\prod_i\frac{(-1)^{k_i}}{k_i!((2i+1)!)^{k_i}}
\left(\sum_{i}k_i(k_i-1)(2i+1)
\begin{tikzpicture}[scale=.6,baseline=-1ex]
 \node[int] (a) at (0,0) {};
 \node[int] (b) at (1,0) {};
 \draw (a) edge (b);
 \node (as) at (0,-.6) {$\scriptstyle 2i$};
 \node (bs) at (1,-.6) {$\scriptstyle 2i+1$};
 \draw (a) edge (as);
 \draw (b) edge (bs);
 \node at (1.2,0) {};
\end{tikzpicture}
\cup \sigma_{2i+1}^{\cup k_i-2} \cup \bigcup_{j\neq i} \sigma_{2j+1}^{\cup k_j}+\right.\\
&\qquad\qquad\qquad\qquad\qquad\qquad+\left.\sum_{\substack{i,i'\\ i\neq i'}}k_ik_{i'}(2i+1)
\begin{tikzpicture}[scale=.6,baseline=-1ex]
 \node[int] (a) at (0,0) {};
 \node[int] (b) at (1,0) {};
 \draw (a) edge (b);
 \node (as) at (0,-.6) {$\scriptstyle 2i$};
 \node (bs) at (1,-.6) {$\scriptstyle 2i'+1$};
 \draw (a) edge (as);
 \draw (b) edge (bs);
 \node at (1.2,0) {};
\end{tikzpicture}
\cup \sigma_{2i+1}^{\cup k_i-1} \cup \sigma_{2i'+1}^{\cup k_{i'}-1} \cup \bigcup_{j\neq i,i'} \sigma_{2j+1}^{\cup k_j}\right)=\\
&=\sum_{\substack{k_i\geq 0\\ \sum_iik_i=m}}\sum_{\substack{i\\ k_i\geq 2}}
\left(\frac{1}{(2i)!(2i+1)!}
\begin{tikzpicture}[scale=.6,baseline=-1ex]
 \node[int] (a) at (0,0) {};
 \node[int] (b) at (1,0) {};
 \draw (a) edge (b);
 \node (as) at (0,-.6) {$\scriptstyle 2i$};
 \node (bs) at (1,-.6) {$\scriptstyle 2i+1$};
 \draw (a) edge (as);
 \draw (b) edge (bs);
 \node at (1.2,0) {};
\end{tikzpicture}\right)
\cup\left(\frac{1}{(k_i-2)!}\left(\frac{-1}{(2i+1)!}\sigma_{2i+1}\right)^{\cup k_i-2}\right)\cup\bigcup_{j\neq i} \left(\frac{1}{k_j!}\left(\frac{-1}{(2j+1)!}\sigma_{2j+1}\right)^{\cup k_j}\right)+\\
&\qquad+\sum_{\substack{k_i\geq 0\\ \sum_iik_i=m}}\sum_{\substack{i,i'\\ i\neq i';k_i,k_{i'}\geq 1}}
\left(\frac{1}{(2i)!(2i'+1)!}
\begin{tikzpicture}[scale=.6,baseline=-1ex]
 \node[int] (a) at (0,0) {};
 \node[int] (b) at (1,0) {};
 \draw (a) edge (b);
 \node (as) at (0,-.6) {$\scriptstyle 2i$};
 \node (bs) at (1,-.6) {$\scriptstyle 2i'+1$};
 \draw (a) edge (as);
 \draw (b) edge (bs);
 \node at (1.2,0) {};
\end{tikzpicture}\right)
\cup\left(\frac{1}{(k_i-1)!}\left(\frac{-1}{(2i+1)!}\sigma_{2i+1}\right)^{\cup k_i-1}\right)\\
&\qquad\qquad\qquad\qquad\cup\left(\frac{1}{(k_{i'}-1)!}\left(\frac{-1}{(2i'+1)!}\sigma_{2i'+1}\right)^{\cup k_{i'}-1}\right)\cup\bigcup_{j\neq i} \left(\frac{1}{k_j!}\left(\frac{-1}{(2j+1)!}\sigma_{2j+1}\right)^{\cup k_j}\right)=\\
&=\sum_i\sum_{\substack{k_j\geq 0\\ 2i+\sum_jjk_j=m}}
\left(\frac{1}{(2i)!(2i'+1)!}
\begin{tikzpicture}[scale=.6,baseline=-1ex]
 \node[int] (a) at (0,0) {};
 \node[int] (b) at (1,0) {};
 \draw (a) edge (b);
 \node (as) at (0,-.6) {$\scriptstyle 2i$};
 \node (bs) at (1,-.6) {$\scriptstyle 2i+1$};
 \draw (a) edge (as);
 \draw (b) edge (bs);
 \node at (1.2,0) {};
\end{tikzpicture}\right)
\cup\bigcup_{j} \left(\frac{1}{k_j!}\left(\frac{-1}{(2j+1)!}\sigma_{2j+1}\right)^{\cup k_j}\right)\\
&\qquad+\sum_{\substack{i,i'\\ i\neq i'}}\sum_{\substack{k_j\geq 0\\ i+i'+\sum_jjk_j=m}}
\left(\frac{1}{(2i)!(2i'+1)!}
\begin{tikzpicture}[scale=.6,baseline=-1ex]
 \node[int] (a) at (0,0) {};
 \node[int] (b) at (1,0) {};
 \draw (a) edge (b);
 \node (as) at (0,-.6) {$\scriptstyle 2i$};
 \node (bs) at (1,-.6) {$\scriptstyle 2i'+1$};
 \draw (a) edge (as);
 \draw (b) edge (bs);
 \node at (1.2,0) {};
\end{tikzpicture}\right)\cup\bigcup_{j} \left(\frac{1}{k_j!}\left(\frac{-1}{(2j+1)!}\sigma_{2j+1}\right)^{\cup k_j}\right)=-\delta(\Sigma_m).
\end{split}
\end{equation*}
\end{proof}

\begin{lemma}
\label{lem:GammaSigma}
For every $\Gamma\in\fGC_0^{\geq 1}$ and $m\geq 0$ it holds that
$$
\sum_{n=0}^{m-1}\frac{1}{(2n)!}\left(\Delta \left(\chi^{2n}(\Gamma)\cup\Sigma_{m-n}\right)-\Delta \chi^{2n}(\Gamma)\cup\Sigma_{m-n}-\chi^{2n}(\Gamma)\cup\Delta(\Sigma_{m-n})\right)=-
\sum_{n=1}^m\frac{1}{(2n)!}\left(\delta \chi^{2n}(\Gamma)-\chi^{2n}\delta(\Gamma)\right)\cup\Sigma_{m-n}.
$$
\begin{multline*}
\sum_{n=1}^{m-1}\frac{1}{(2n-1)!}\left(\Delta \left(\chi^{2n-1}(\Gamma)\cup\Sigma_{m-n}\right)-\Delta \chi^{2n-1}(\Gamma)\cup\Sigma_{m-n}-\chi^{2n-1}(\Gamma)\cup\Delta(\Sigma_{m-n})\right)=\\
=-
\sum_{n=1}^m\frac{1}{(2n-1)!}\left(\delta \chi^{2n-1}(\Gamma)-\chi^{2n-1}\delta(\Gamma)\right)\cup\Sigma_{m-n}.
\end{multline*}
\end{lemma}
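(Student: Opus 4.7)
I would prove both claims by reducing each side to a more explicit combinatorial form and matching term-by-term. First, using Lemma \ref{lem:Sigma}, i.e.\ $(\delta+\Delta)\Sigma_{m-n}=0$, the LHS simplifies to a sum of pure cross-$\Delta$ contributions, namely the parts of $\Delta(\chi^{2n}(\Gamma)\cup\Sigma_{m-n})$ that move a hair between the two disjoint components. These cross terms split into two types: (A) deleting a hair at a vertex $x$ of $\chi^{2n}(\Gamma)$ and joining $x$ by a new edge to the central vertex of some $\sigma_{2i+1}$ in $\Sigma_{m-n}$ (attaching the full $(2i+1)$-hair star to $\Gamma$ at $x$); and (B) deleting a hair at the center of some $\sigma_{2i+1}$ in $\Sigma_{m-n}$ and joining that center by a new edge to a vertex $x$ of $\chi^{2n}(\Gamma)$ (attaching a $(2i)$-hair star to $\Gamma$ at $x$).

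For the RHS, since $\Gamma$ is hairless the splitting and edge-adding parts of $\delta$ commute with $\chi^{2n}$, so only the hair-extraction term $-h(x)e_x$ survives in the commutator:
\[
\delta\chi^{2n}(\Gamma)-\chi^{2n}\delta(\Gamma)=-\sum_x h(x)e_x\chi^{2n}(\Gamma)=-2n\sum_x E_x\bigl(\chi^{2n-1}(\Gamma)\bigr),
\]
where $E_x$ attaches $\sigma_1$ to $x$ by a single edge, and the factor $2n$ comes from $\binom{2n}{k_1,\dots,k_v}\cdot k_x = 2n\binom{2n-1}{k_1,\dots,k_x-1,\dots,k_v}$. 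Substituting and simplifying transforms the RHS of Claim 1 into $\sum_{n\geq 1}\tfrac{1}{(2n-1)!}\sum_x E_x(\chi^{2n-1}(\Gamma))\cup\Sigma_{m-n}$, a sum of single-hair-antenna-attached copies of $\chi^{2n-1}(\Gamma)$.

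The heart of the proof is then to show that the contributions of Types A and B in the LHS, summed over $n$ and $i$ with the prescribed coefficients, telescope to leave exactly the $\sigma_1$-attachment from Type A with $i=0$, matching the RHS. Concretely, for each $i\geq 1$ the Type-A contribution attaching $\sigma_{2i+1}$ should cancel against an appropriately-indexed Type-B contribution attaching $\sigma_{2i}$, the cancellation being governed by the same coefficient identity $\prod_i(-1)^{k_i}/(k_i!((2i+1)!)^{k_i})$ that underlies Lemma \ref{lem:Sigma}. Claim 2 follows by the same mechanism with $\chi^{2n}$ replaced by $\chi^{2n-1}$ throughout. The main obstacle is the coefficient bookkeeping, which simultaneously juggles the multinomial factors from $\chi^{2n}$, the factorials from $\Sigma_{m-n}$, and the overall prefactor $1/(2n)!$. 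A cleaner route may be to apply Lemma \ref{lem:Sigma} not to $\Sigma_m$ directly but to an augmented expression in which $\Gamma$ is ``hooked'' to one of the stars via an additional edge; from this viewpoint the identity becomes a direct consequence of Lemma \ref{lem:Sigma} transplanted into the hooked context.
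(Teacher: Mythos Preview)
Your overall setup --- identifying the left-hand side as the cross-$\Delta$ terms and splitting them into Types~A and~B --- matches the paper's approach exactly (the paper calls the resulting attachments ``flowers'' and writes $f_k$ for the operation of attaching a $k$-hair star by one edge). The use of Lemma~\ref{lem:Sigma} at the start is unnecessary, since the expression $\Delta(A\cup B)-\Delta A\cup B-A\cup\Delta B$ is already the cross-term by definition.

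The real problem is in your analysis of the right-hand side. The claim that ``the splitting and edge-adding parts of $\delta$ commute with $\chi^{2n}$'' is only half right: splitting does commute, but $a_x$ does \emph{not}. In $\chi^{2n}a_x(\Gamma)$ the new antenna vertex receives its share of the $2n$ hairs, whereas in $a_x\chi^{2n}(\Gamma)$ it stays bare. The $k=0$ part of this discrepancy is what cancels against $-a_x\chi^{2n}(\Gamma)$, and the $k=1$ part is what cancels against $-h(x)e_x\chi^{2n}(\Gamma)$; what survives is
\[
\delta\chi^{2n}(\Gamma)-\chi^{2n}\delta(\Gamma)=\sum_{k=2}^{2n}\binom{2n}{k}\,f_k\chi^{2n-k}(\Gamma),
\]
i.e.\ flowers with $k\geq 2$ hairs, not single-hair antennae. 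Thus the RHS already consists of the same flower terms as the LHS, and there is no telescoping of Type~A against Type~B down to a lone $\sigma_1$-attachment; instead, Types~A and~B together furnish the odd- and even-hair flowers respectively, and after reindexing the sum over $n$ the two sides coincide term by term. Your proposed cancellation mechanism would produce only $f_1$-terms on the right, which is the one term that is \emph{absent}.
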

\begin{proof}
Let us prove only the first equation, while the second equation is similar. We make notations such that the claim is $\sum L_n=-\sum R_n$.
$L_n$ is exactly the part of $\Delta \left(\chi^{2n}(\Gamma)\cup\Sigma_{m-n}\right)$ that connects $\chi^{2n}(\Gamma)$ with $\Sigma_{m-n}$. It can be done in two ways, a hair from $\chi^{2n}(\Gamma)$ connects to a star in $\Sigma_{m-n}$, or a star from $\Sigma_{m-n}$ connects to $\chi^{2n}(\Gamma)$. In both cases ``a flower'' with 2 or more hairs is added to $\chi^{2n}(\Gamma)$.
Let $f_k$ be a map that adds a flower with $k$ hairs to a vertex in all possible ways.
In all sums $i$ and $j$ are $\geq 1$. It holds that:
\begin{equation*}
\begin{split}
L_n&=\frac{1}{(2n)!}\sum_{\substack{k_j\geq 0\\ \sum_jjk_j=m-n}}\prod_j\frac{(-1)^{k_j}}{k_j!((2j+1)!)^{k_j}}\sum_ik_i
\left(2nf_{2i+1}\chi^{2n-1}(\Gamma)+(2i+1)f_{2i}\chi^{2n}(\Gamma)\right)\cup\sigma_{2i+1}^{\cup k_i-1}\cup\bigcup_{j\neq i} \sigma_{2j+1}^{\cup k_j}=\\
&=-\sum_i\sum_{\substack{k_j\geq 0\\ i+\sum_jjk_j=m-n}}\left(\frac{1}{(2i+1)!(2n-1)!}f_{2i+1}\chi^{2n-1}(\Gamma)+\frac{1}{(2i)!(2n)!}f_{2i}\chi^{2n}(\Gamma)\right)
\cup\bigcup_{j} \left(\frac{1}{k_j!}\left(\frac{-1}{(2j+1)!}\sigma_{2j+1}\right)^{\cup k_j}\right)=\\
&=-\sum_i\left(\frac{1}{(2i+1)!(2n-1)!}f_{2i+1}\chi^{2n-1}(\Gamma)+\frac{1}{(2i)!(2n)!}f_{2i}\chi^{2n}(\Gamma)\right)
\cup\Sigma_{m-n-i}.
\end{split}
\end{equation*}

On the other side there is
\begin{equation*}
R_n=\frac{\Sigma_{m-n}}{(2n)!}\cup\sum_x\left(\frac{1}{2}s_x(\chi^{2n}(\Gamma))-a_x(\chi^{2n}(\Gamma))-h(x)e_x(\chi^{2n}(\Gamma))-\frac{1}{2}\chi^{2n}(s_x(\Gamma))+\chi^{2n}(a_x(\Gamma))\right),
\end{equation*}
where $x$ runs through vertices of $\Gamma$. It is clear that $s_x(\chi^{2n}(\Gamma))=\chi^{2n}(s_x(\Gamma))$. In $\chi^{2n}(a_x(\Gamma))$ hairs are added to vertices of $\Gamma$ or to the new vertex of the antenna. Terms $-a_x(\chi^{2n}(\Gamma))$ and $-h(x)e_x(\chi^{2n}(\Gamma))$ cancel exactly the terms of $\chi^{2n}(a_x(\Gamma))$ where no or one hair is added to the new vertex. So
$$
R_n=\frac{\Sigma_{m-n}}{(2n)!}\cup\sum_{i=2}^{2n}\binom{2n}{i} f_i(\chi^{2n-i}(\Gamma))=
\sum_{i=2}^{2n}\frac{1}{i!(2n-i)!}f_i(\chi^{2n-i}(\Gamma))\cup\Sigma_{m-n}.
$$

Now a simple play with the sums leads to the result.
\end{proof}

\begin{lemma}
\label{lem:pi}
For every $\Gamma\in\mB^b\fGC_{0}^{\geq 1}$ and $m\in\Z$ of the same parity as $b$ it holds that
$$
(\delta+\Delta)\pi_m(\Gamma)=\pi_m\tilde\delta(\Gamma)
$$
in the complex $\mH^{\geq 1}\fHGC_{-1,0}^\ddagger$.
\end{lemma}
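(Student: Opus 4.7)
The plan is to apply $(\delta+\Delta)$ term-by-term to $\pi_m(\Gamma)$, organise the output via the Leibniz rule for $\cup$, and compare against $\pi_m(\delta\Gamma)+\pi_m(D\nabla\Gamma)=\pi_m(\tilde\delta\Gamma)$. For each summand $\chi^k(\Gamma)\cup\Sigma_l$ the expansion decomposes into three kinds of contribution: (i) the differential acting purely on $\Sigma_l$, (ii) the differential acting purely on the first factor $\chi^k(\Gamma)$, and (iii) the ``interaction'' terms where $\Delta$ connects a hair of one factor to the other (since $\delta$ is local it creates no bridges, so the only cross-terms come from $\Delta$). By Lemma \ref{lem:Sigma}, type (i) vanishes identically. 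Lemma \ref{lem:GammaSigma}, in its even and odd versions, is tailor-made to rewrite type (iii) as a sum of terms of the form $-(\delta\chi^{k}\Gamma-\chi^{k}\delta\Gamma)\cup\Sigma_{\cdot}$, which merges with part of (ii).

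Next I would analyse the remaining ``first-factor'' contributions from (ii). Since $\Gamma\in\fGC_0^{\geq 1}$ is hairless, a direct combinatorial identity using the multinomial convention for $\chi^k$ gives
\[
\Delta\chi^{k}(\Gamma)=k\,\chi^{k-1}\nabla(\Gamma),
\]
because converting one of the freshly added hairs into an edge is the same as adding the edge first and then distributing the remaining $k-1$ hairs. A parallel bookkeeping check gives $\delta\chi^{k}(\Gamma)=\chi^{k}\delta(\Gamma)$ (the $e_x$-corrections appearing in $\delta(\chi^k\Gamma)$ are absorbed by the $a_x$-term when $\Gamma$ itself is hairless). After these substitutions the surviving contributions all have the shape $\chi^{j}(\Gamma)\cup\Sigma_{\cdot}$, $\chi^{j}(\delta\Gamma)\cup\Sigma_{\cdot}$, $\chi^{j}(\nabla\Gamma)\cup\Sigma_{\cdot}$ or $\chi^{j}(D\Gamma)\cup\Sigma_{\cdot}$, $\chi^{j}(\delta D\Gamma)\cup\Sigma_{\cdot}$.

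The final step is a telescoping/index-shift argument. A $\Delta\chi^{2i}=2i\,\chi^{2i-1}\nabla$ contribution from the even sum in $\pi_m$ has exactly the parity and power needed to pair with an odd-index summand involving $D\nabla\Gamma$, once one invokes the commutation relations from Section \ref{s:D}: $\delta D-D\delta=\nabla$ (Proposition \ref{DelEvenDd}) to recognise $\chi^{2i-1}D\delta\Gamma\cup\Sigma_{\cdot}$ inside $\pi_m(\delta\Gamma)$, and $D\nabla+\nabla D=0$ (Proposition \ref{DelEvenDnab}) to convert the leftover $\chi^{2i-1}\nabla(\Gamma)$ terms into the $\chi^{2i-1}(D\nabla\Gamma)$-contributions demanded by $\pi_m(D\nabla\Gamma)$. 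The endpoint terms of the two finite sums either vanish because of $\nabla D^2=0$ (Proposition \ref{DelEvenD2nab}) or cancel against each other in the telescoping pair.

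The main obstacle is the bookkeeping: matching the coefficients $\tfrac{1}{(2i)!}$ and $\tfrac{1}{(2i-1)!}$ across the two sums of $\pi_m$ after $\chi$ is stripped via $\Delta\chi^{k}=k\chi^{k-1}\nabla$, and tracking the signs introduced by the degree-$(-1)$ shift on $\OCC$ and by the ordering conventions for $\cup$. The choice of an \emph{even}-index sum for $\chi^{2i}(\Gamma)$ and an \emph{odd}-index sum for $\chi^{2i-1}D(\Gamma)$ in the definition of $\pi_m$ is exactly what makes the parities match in this telescoping, and the sign $-$ in front of the second sum is forced by $D\nabla=-\nabla D$; so verifying that the whole reorganisation closes up will essentially reverse-engineer the ansatz for $\pi_m$.
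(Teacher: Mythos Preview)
Your overall strategy matches the paper's proof exactly: expand $(\delta+\Delta)\pi_m(\Gamma)$ via a Leibniz-type decomposition, kill the $\Sigma$-piece with Lemma~\ref{lem:Sigma}, use Lemma~\ref{lem:GammaSigma} for the cross-terms, apply $\Delta\chi^k=k\chi^{k-1}\nabla$, and close up with the commutation relations of Section~\ref{s:D}. So the architecture is right.

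There is one genuine error. The identity $\delta\chi^{k}(\Gamma)=\chi^{k}\delta(\Gamma)$ is \emph{false} for hairless $\Gamma$, and this is not a minor slip: it is precisely the content of Lemma~\ref{lem:GammaSigma} that the cross-terms of $\Delta$ equal $-\sum(\delta\chi^{k}-\chi^{k}\delta)(\Gamma)\cup\Sigma_{\cdot}$, and those are nonzero. Concretely, $s_x$ does commute with $\chi^k$, but $\chi^k(a_x\Gamma)$ allows any number of the new hairs to land on the fresh antenna vertex; the terms $a_x(\chi^k\Gamma)$ and $h(x)e_x(\chi^k\Gamma)$ cancel only the cases where the new vertex receives $0$ or $1$ hair. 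What survives in $\chi^{k}\delta(\Gamma)-\delta\chi^{k}(\Gamma)$ is exactly the ``flower'' contributions $f_i$ with $i\ge 2$ hairs on the new vertex, and these are what match the cross-terms of $\Delta$ connecting a hair of $\chi^k\Gamma$ to a star of $\Sigma$ or vice versa. You actually used this correctly two sentences earlier when invoking Lemma~\ref{lem:GammaSigma}; the subsequent standalone claim contradicts it. The fix is simply to delete that claim: once you have merged (iii) with the $\delta$-part of (ii), the result \emph{is} $\chi^k\delta(\Gamma)\cup\Sigma_{\cdot}$, but this is a consequence of the lemma, not of any direct commutation.

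A smaller inaccuracy: the $D\nabla$ contributions in $\pi_m(\tilde\delta\Gamma)$ do not arise from ``converting'' the even-sum leftovers $\chi^{2i-1}\nabla(\Gamma)$ into $\chi^{2i-1}D\nabla(\Gamma)$ via $D\nabla+\nabla D=0$ (those are different elements). What actually happens is that $\frac{1}{(2i-1)!}\chi^{2i-1}\nabla(\Gamma)$ from $\Delta\chi^{2i}$ cancels against the $\nabla$ in $-\chi^{2i-1}\delta D\Gamma=-\chi^{2i-1}(D\delta+\nabla)\Gamma$, while the required $\chi^{2n-2}D\nabla(\Gamma)$ terms come from $\Delta$ acting on the \emph{odd} sum: $-\Delta\chi^{2i-1}D\Gamma=-(2i-1)\chi^{2i-2}\nabla D\Gamma=(2i-1)\chi^{2i-2}D\nabla\Gamma$. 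The vanishing of $D^2\nabla$ (via Propositions~\ref{DelEvenDnab} and~\ref{DelEvenD2nab}) then kills the unwanted $\chi^{2n-1}D(D\nabla\Gamma)$ term in $\pi_m(D\nabla\Gamma)$.
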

\begin{proof}
The left-hand side is
\begin{equation*}
\begin{split}
(\delta+\Delta)\pi_m(\Gamma)&=(\delta+\Delta)\left(\sum_{n=0}^{\frac{m-b}{2}-1}\frac{1}{(2n)!} \chi^{2n}(\Gamma)\cup\Sigma_{\frac{m-b}{2}-n}-
\sum_{n=1}^{\frac{m-b}{2}-1}\frac{1}{(2n-1)!} \chi^{2n-1} D(\Gamma)\cup\Sigma_{\frac{m-b}{2}-n}\right)=\\
&=\sum_{n=0}^{\frac{m-b}{2}-1}\left(\frac{\delta\left(\chi^{2n}(\Gamma)\right)}{(2n)!}\cup\Sigma_{\frac{m-b}{2}-n}+
\frac{\chi^{2n}(\Gamma)}{(2n)!}\cup\delta\Sigma_{\frac{m-b}{2}-n}\right)\\
&\qquad-
\sum_{n=1}^{\frac{m-b}{2}-1}\left(\frac{\delta\left(\chi^{2n-1} D(\Gamma)\right)}{(2n-1)!} \cup\Sigma_{\frac{m-b}{2}-n}+
\frac{\chi^{2n-1} D(\Gamma)}{(2n-1)!} \cup\delta\Sigma_{\frac{m-b}{2}-n}\right)\\
&\qquad+
\sum_{n=0}^{\frac{m-b}{2}-1}\frac{\Delta\left(\chi^{2n}(\Gamma)\cup\Sigma_{\frac{m-b}{2}-n}\right)}{(2n)!} -
\sum_{n=1}^{\frac{m-b}{2}-1}\frac{\Delta\left(\chi^{2n-1} D(\Gamma)\cup\Sigma_{\frac{m-b}{2}-n}\right)}{(2n-1)!}.
\end{split}
\end{equation*}
Using Lemmas \ref{lem:Sigma} and \ref{lem:GammaSigma}, Propositions \ref{DelEvenD4} to \ref{DelEvenD2nab} and a clear fact that $\Delta \chi^n(\Gamma)=n\chi^{n-1}\nabla(\Gamma)$ it follows that:
\begin{equation*}
\begin{split}
(\delta+\Delta)\pi_m(\Gamma)&=
\sum_{n=0}^{\frac{m-b}{2}-1}\left(\frac{\chi^{2n}\delta(\Gamma)}{(2n)!}\cup\Sigma_{\frac{m-b}{2}-n}+
\frac{\chi^{2n}(\Gamma)}{(2n)!}\cup\delta\Sigma_{\frac{m-b}{2}-n}\right)\\
&\qquad-
\sum_{n=1}^{\frac{m-b}{2}-1}\left(\frac{\chi^{2n-1}\delta D(\Gamma)}{(2n-1)!} \cup\Sigma_{\frac{m-b}{2}-n}+
\frac{\chi^{2n-1} D(\Gamma)}{(2n-1)!} \cup\delta\Sigma_{\frac{m-b}{2}-n}\right)\\
&\qquad+
\sum_{n=0}^{\frac{m-b}{2}-1}\frac{\Delta \chi^{2n}(\Gamma)\cup\Sigma_{\frac{m-b}{2}-n}+
\chi^{2n}(\Gamma)\cup\Delta\Sigma_{\frac{m-b}{2}-n}}{(2n)!}\\
&\qquad-
\sum_{n=1}^{\frac{m-b}{2}-1}\frac{\Delta \chi^{2n-1} D(\Gamma)\cup\Sigma_{\frac{m-b}{2}-n}+\chi^{2n-1} D(\Gamma)\cup\Delta\Sigma_{\frac{m-b}{2}-n}}{(2n-1)!}=\\
&=
\sum_{n=0}^{\frac{m-b}{2}-1}\left(\frac{\chi^{2n}\delta(\Gamma)}{(2n)!}\cup\Sigma_{\frac{m-b}{2}-n}\right)-
\sum_{n=1}^{\frac{m-b}{2}-1}\left(\frac{\chi^{2n-1} D\delta(\Gamma)}{(2n-1)!} \cup\Sigma_{\frac{m-b}{2}-n}\right)+
\sum_{n=1}^{\frac{m-b}{2}-1}\frac{\chi^{2n-2} D\nabla(\Gamma)}{(2n-2)!}\cup\Sigma_{\frac{m-b}{2}-n}.
\end{split}
\end{equation*}

The right-hand side is
$$
\pi_m\tilde\delta(\Gamma)=\pi_m(\delta(\Gamma)+ D\nabla(\Gamma)).
$$
The first term under $\pi_m$ is in $\mB^b\fGC_{0}^{\geq 1}$, and the second one is in $\mB^{b+2}\fGC_{0}^{\geq 1}$, so
\begin{multline*}
\pi_m\tilde\delta(\Gamma)=
\sum_{n=0}^{\frac{m-b}{2}-1}\frac{1}{(2n)!} \chi^{2n}\delta(\Gamma)\cup\Sigma_{\frac{m-b}{2}-n}-
\sum_{n=1}^{\frac{m-b}{2}-1}\frac{1}{(2n-1)!} \chi^{2n-1} D\delta(\Gamma)\cup\Sigma_{\frac{m-b}{2}-n}\\
+\sum_{n=0}^{\frac{m-b}{2}-2}\frac{1}{(2n)!} \chi^{2n} D\nabla(\Gamma)\cup\Sigma_{\frac{m-b}{2}-n-1}-
\sum_{n=1}^{\frac{m-b}{2}-2}\frac{1}{(2n-1)!} \chi^{2n-1} D D\nabla(\Gamma)\cup\Sigma_{\frac{m-b}{2}-n-1}=(\delta+\Delta)\pi_m(\Gamma).
\end{multline*}
\end{proof}

\subsection{Base of the induction in Proposition \ref{prop:bHGCc}}
The following lemma is the base of the induction in the proof of Proposition \ref{prop:bHGCc}.
\begin{lemma}
\label{lem:bHGCc0}
\begin{itemize}\item[]
\item $H_1\left(\mH^{\geq 1}\fHGCc_{-1,0}^\ddagger,\delta+\Delta\right)$ is one dimensional, the class being generated by $\sigma_3=
\begin{tikzpicture}[baseline=-.5ex]
\node[int] (a) at (0,0) {};
\draw (a) edge (90:.2);
\draw (a) edge (120+90:.2);
\draw (a) edge (240+90:.2);
\end{tikzpicture}$.
\item $H_2\left(\mH^{\geq 1}\fHGCc_{-1,0}^\ddagger,\delta+\Delta\right)=0$.
\end{itemize}
\end{lemma}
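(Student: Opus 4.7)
My approach is a direct combinatorial computation in these two low degrees. Since $d = e + 1$, the degree 1 part of $\mH^{\geq 1}\fHGCc_{-1, 0}^\ddagger$ consists of connected edge-less graphs, namely the stars $\sigma_a$ with $a \geq 2$ ($\sigma_1$ is forbidden by $\ddagger$). The degree 2 part consists of connected one-edge graphs, i.e.\ the two-vertex graphs $\gamma_{a, b}$ with $a + b \geq 1$ and $(a, b) \neq (1, 0)$. On both types the operator $\Delta$ vanishes: on $\sigma_a$ because there is no second vertex; on $\gamma_{a, b}$ because $\tilde\Delta$ would connect the removed hair to the only other vertex, producing a double edge that vanishes in $\fHGC_{-1, 0}$ by the $\sgn_e$-antisymmetry of edges. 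Thus in both relevant degrees, the cycle condition reduces to $\delta(\Gamma) = 0$.

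For $H_1$, I compute $\delta(\sigma_a) = \tfrac{1}{2} s_x(\sigma_a) - a_x(\sigma_a) - a \cdot e_x(\sigma_a)$. The $k = 0, a$ contributions of $s_x$ each produce $\lambda_{a+1}$ and cancel $a_x$; the $k = 1, a-1$ contributions each produce $\gamma_{a-1, 1}$ and cancel $a \cdot e_x$. For $a = 3$ these exhaust the sum, giving $\delta(\sigma_3) = 0$. For $a \geq 4$ the residual terms $\gamma_{k, a-k}$ with $2 \leq k \leq a - 2$ remain, so $\delta(\sigma_a) \neq 0$. For $a = 2$ the cancellation is unbalanced because the indices $k = 1$ and $k = a - 1$ coincide, leaving $\delta(\sigma_2) = -\gamma_{1, 1} \neq 0$. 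Since $\delta(\sigma_a)$ is supported in the $h = a$ subspace, these elements are linearly independent across $a$, so the kernel in degree 1 is exactly $\K \cdot \sigma_3$. With degree 0 empty, this yields $H_1 = \K [\sigma_3]$.

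For $H_2$, I split by hair number $h$ and verify, for each $h$, that the $\delta$-kernel on $\mH^h$ at degree 2 coincides with the image $\K \delta(\sigma_h)$ from degree 1 (which is one-dimensional for $h \neq 3$ and zero for $h = 3$). Direct computation of $\delta(\gamma_{a, b})$ produces a sum of three-vertex path graphs $\pi_{i, j, k}$ with $i + j + k = h$. The key combinatorial input is that any path symmetric under reversal (those with $i = k$) vanishes in the coinvariant space: the reversal automorphism transposes the two edges and contributes $-1$ via $\sgn_e$. This vanishing is precisely what ensures the expected generator $\delta(\sigma_h)$ lies in $\ker \delta$, consistent with $\delta^2 = 0$.

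The main obstacle is showing the $\delta$-kernel is no larger than $\K \delta(\sigma_h)$ in each $\mH^h$. This is a finite-rank computation: writing the matrix of $\delta$ from the basis of $\gamma_{k, h-k}$'s to the basis of nonzero $\pi_{i, j, k}$'s and checking the rank is one less than the source dimension (or equal, when $h = 3$). The cases $h = 2$ (kernel $\K \gamma_{1, 1} = -\K\delta(\sigma_2)$), $h = 3$ (trivial kernel, as $\delta(\lambda_4)$ and $\delta(\gamma_{2, 1})$ land in disjoint sets of $\pi$'s), and $h = 4$ (kernel $\K \gamma_{2, 2} = \tfrac{1}{3} \K \delta(\sigma_4)$, where $\gamma_{2, 2}$ is a cycle precisely because its $\delta$-image is a multiple of the reversal-symmetric path $\pi_{2, 0, 2} = 0$) are verified by direct enumeration, and the pattern extends uniformly to higher $h$.
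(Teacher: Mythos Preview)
Your reduction is the same as the paper's: in degrees $1$ and $2$ the differential $\Delta$ vanishes (stars have no target vertex; two-vertex graphs would acquire a double edge, which is zero for even $n$ by $\sgn_e$), so everything reduces to $\delta$ and splits over the number of hairs $h$. Your treatment of $H_1$ and of the small cases $h=2,3,4$ for $H_2$ is correct and matches what the paper checks by hand.

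The gap is your last sentence. ``The pattern extends uniformly to higher $h$'' is not an argument; for each $h\ge 5$ you are asserting, without proof, that the map from $\operatorname{span}\{\gamma_{k,h-k}\}$ to three-vertex paths has corank exactly one. The paper handles the general case not by computing ranks, but by exhibiting a filtration (its Figure~\ref{fig:2}) in which on the associated graded each $\gamma_{k,h-k}$ with $k\neq 2$ is paired with a specific nonzero path $\pi_{i,j,k}$ (namely $\gamma_{h,0}\mapsto\pi_{h,0,0}$, $\gamma_{h-1,1}\mapsto\pi_{h-1,0,1}$, and $\gamma_{h-k,k}\mapsto\pi_{h-k,1,k-1}$ for $k\ge 3$), while $\sigma_h$ is paired with $\gamma_{h-2,2}$. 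Each pair forms an acyclic two-term row, so the first page of the spectral sequence vanishes in the relevant degrees and the conclusion follows for all $h>3$ at once. To complete your proof you should either reproduce this filtration argument or otherwise exhibit, uniformly in $h$, $\lfloor h/2\rfloor$ linearly independent elements in the image of $\delta$ at degree $3$; the path graphs listed above do exactly this, since they are visibly distinct and nonzero (none of them is reversal-symmetric).
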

\begin{proof}
Relevant graphs have at most 1 edge. Connected graphs with at most $1$ edge are either a star, or a graph with $2$ vertices and an edge between them. In both cases, second differential $\Delta$ does not do anything. So we need to show the lemma for the complex $\left(\mH^{\geq 1}\fHGCc_{-1,0}^\ddagger,\delta\right)=\prod_{h\geq 1}\left(\mH^h\fHGCc_{-1,0}^\ddagger,\delta\right)$. For $h\in\{1,2,3\}$ the claim can be easily checked by hand. For $h>3$ the action of the differential $\delta$ on the degrees $1$ and $2$ in the complex with $h$ hairs is depicted in Figure \ref{fig:2}.

\begin{figure}[h]
\begin{tikzpicture}
\node (a1) at (1,0) {
\begin{tikzpicture}[scale=.7,baseline=4ex]
 \node[int] (a) at (0,0) {};
 \node[int] (b) at (1,0) {};
 \draw (a) edge (b);
 \node (as) at (0,-.6) {$\scriptstyle h$};
 \draw (a) edge (as);
 \node at (1.2,0) {};
\end{tikzpicture}
};
\node (a2) at (3.5,0) {
\begin{tikzpicture}[scale=.7,baseline=4ex]
 \node[int] (a) at (0,0) {};
 \node[int] (b) at (1,0) {};
 \node[int] (c) at (2,0) {};
 \draw (a) edge (b);
 \draw (c) edge (b);
 \node (as) at (0,-.6) {$\scriptstyle h$};
 \draw (a) edge (as);
 \node at (1.2,0) {};
\end{tikzpicture}
};
\draw (a1) edge[->] (a2);
\node (b1) at (1,-1) {
\begin{tikzpicture}[scale=.7,baseline=4ex]
 \node[int] (a) at (0,0) {};
 \node[int] (b) at (1,0) {};
 \draw (a) edge (b);
 \node (as) at (0,-.6) {$\scriptstyle h-1$};
 \draw (a) edge (as);
 \draw (b) edge (1,-.3);
 \node at (1.2,0) {};
\end{tikzpicture}
};
\node (b2) at (3.5,-1) {
\begin{tikzpicture}[scale=.7,baseline=4ex]
 \node[int] (a) at (0,0) {};
 \node[int] (b) at (1,0) {};
 \node[int] (c) at (2,0) {};
 \draw (a) edge (b);
 \draw (c) edge (b);
 \node (as) at (0,-.6) {$\scriptstyle h-1$};
 \draw (a) edge (as);
 \draw (c) edge (2,-.3);
 \node at (1.2,0) {};
\end{tikzpicture}
};
\draw (b1) edge[->] (b2);
\node (c0) at (-.8,-2) {
\begin{tikzpicture}[scale=.7,baseline=4ex]
 \node[int] (a) at (0,0) {};
 \node (as) at (0,-.6) {$\scriptstyle h$};
 \draw (a) edge (as);
 \node at (.2,0) {};
\end{tikzpicture}
};
\node (c1) at (1,-2) {
\begin{tikzpicture}[scale=.7,baseline=4ex]
 \node[int] (a) at (0,0) {};
 \node[int] (b) at (1,0) {};
 \draw (a) edge (b);
 \node (as) at (0,-.6) {$\scriptstyle h-2$};
 \node (bs) at (1,-.6) {$\scriptstyle 2$};
 \draw (a) edge (as);
 \draw (b) edge (bs);
 \node at (1.2,0) {};
\end{tikzpicture}
};
\draw (c0) edge[->] (c1);
\node (d1) at (1,-3) {
\begin{tikzpicture}[scale=.7,baseline=4ex]
 \node[int] (a) at (0,0) {};
 \node[int] (b) at (1,0) {};
 \draw (a) edge (b);
 \node (as) at (0,-.6) {$\scriptstyle h-3$};
 \node (bs) at (1,-.6) {$\scriptstyle 3$};
 \draw (a) edge (as);
 \draw (b) edge (bs);
 \node at (1.2,0) {};
\end{tikzpicture}
};
\node (d2) at (3.5,-3) {
\begin{tikzpicture}[scale=.7,baseline=4ex]
 \node[int] (a) at (0,0) {};
 \node[int] (b) at (1,0) {};
 \node[int] (c) at (2,0) {};
 \draw (a) edge (b);
 \draw (c) edge (b);
 \node (as) at (0,-.6) {$\scriptstyle h-3$};
 \node (cs) at (2,-.6) {$\scriptstyle 2$};
 \draw (a) edge (as);
 \draw (b) edge (1,-.3);
 \draw (c) edge (cs);
 \node at (1.2,0) {};
\end{tikzpicture}
};
\draw (d1) edge[->] (d2);
\node (e1) at (1,-4) {
\begin{tikzpicture}[scale=.7,baseline=4ex]
 \node[int] (a) at (0,0) {};
 \node[int] (b) at (1,0) {};
 \draw (a) edge (b);
 \node (as) at (0,-.6) {$\scriptstyle h-4$};
 \node (bs) at (1,-.6) {$\scriptstyle 4$};
 \draw (a) edge (as);
 \draw (b) edge (bs);
 \node at (1.2,0) {};
\end{tikzpicture}
};
\node (e2) at (3.5,-4) {
\begin{tikzpicture}[scale=.7,baseline=4ex]
 \node[int] (a) at (0,0) {};
 \node[int] (b) at (1,0) {};
 \node[int] (c) at (2,0) {};
 \draw (a) edge (b);
 \draw (c) edge (b);
 \node (as) at (0,-.6) {$\scriptstyle h-4$};
 \node (cs) at (2,-.6) {$\scriptstyle 3$};
 \draw (a) edge (as);
 \draw (b) edge (1,-.3);
 \draw (c) edge (cs);
 \node at (1.2,0) {};
\end{tikzpicture}
};
\draw (e1) edge[->] (e2);
\node at (1,-5) {$\vdots$};
\node at (3.5,-5) {$\vdots$};
\end{tikzpicture}
\caption{\label{fig:2}
The action of the differential $\delta$ on the degrees $1$ and $2$ in the complex $\left(\mH^h\fHGCc_{-1,0}^\ddagger,\delta\right)$. The differential does not map from the lower row to the higher one. The list goes until the left vertex in degree $d=2$ ($e=1$) has more or equal hairs as the right vertex.}
\end{figure}
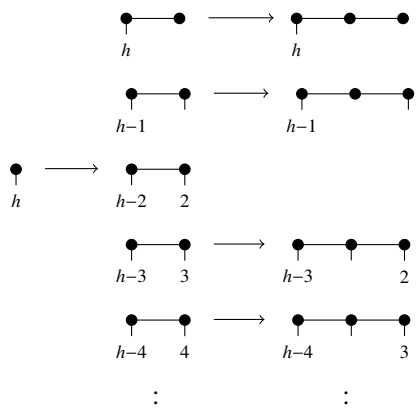

One can make a spectral sequence with the rows in the figure, such that the first differential is the one depicted by arrows. This concludes the proof.
\end{proof}

\end{document}